\documentclass[12pt,a4paper,reqno]{amsart}
\usepackage{amsmath,amsthm,amssymb,mathtools,enumitem,tikz,appendix,listings,mathrsfs,multirow,float,setspace,subcaption}
\captionsetup[subfigure]{labelfont=rm}
\usepackage[misc]{ifsym}
\usepackage{a4wide}
\usepackage{color}
\usepackage[foot]{amsaddr}% addresses appear in footer on first page
\usepackage[margin=1in]{geometry}

\usepackage[utf8]{inputenc}

\usepackage[pagebackref,colorlinks=true,citecolor=blue,linkcolor=red,urlcolor=blue]{hyperref}
%\hypersetup{
%    colorlinks = true,
%    linkcolor= red,
%    citecolor= blue,
%    urlcolor = black
%}

%\title[O\MakeLowercase{n $p$-groups related to the exceptional} C\MakeLowercase{hevalley groups}]{On $p$-groups with automorphism groups\\ related to the exceptional Chevalley groups}
\title[\MakeLowercase{$p$-groups related to exceptional} C\MakeLowercase{hevalley groups}]{O\MakeLowercase{n $p$-groups with automorphism groups}\\ \MakeLowercase{related to the exceptional }C\MakeLowercase{hevalley groups}}

\author{Saul D. Freedman}
\address{\parbox{\linewidth}{ %
Centre for the Mathematics of Symmetry and Computation,\\
The University of Western Australia, 35 Stirling Highway,\\
Crawley, WA 6009, Australia\\
\textit{Current affiliation}: School of Mathematics and Statistics, University of St Andrews, UK}}
\email{sdf8@st-andrews.ac.uk}

\date{\today}
\subjclass[2010]{20C33, 20G15, 20D15}
\thanks{\textit{\keywordsname}. $p$-group, exterior square, Lie power, exceptional Chevalley group}

\newtheorem{thm}{Theorem}[section]
\newtheorem*{thm*}{Theorem}
\newtheorem*{conj*}{Conjecture}

\newtheorem{conj}[thm]{Conjecture}
\newtheorem{lem}[thm]{Lemma}
\newtheorem*{lem*}{Lemma}
\newtheorem{prop}[thm]{Proposition}

\theoremstyle{remark}

\theoremstyle{definition}
\newtheorem{defn}[thm]{Definition}
\newtheorem{rem}[thm]{Remark}
\newcounter{claim}[thm]

\newenvironment{nscenter}
 {\parskip=0pt\par\nopagebreak\centering}
 {\par\noindent\ignorespacesafterend}

\makeatletter
\newcommand{\slteq}{%
  \mathrel{\mathpalette\sl@unlhd\relax}%
}

\newcommand{\sl@unlhd}[2]{%
  \sbox\z@{$#1\lhd$}%
  \sbox\tw@{$#1\leqslant$}%
  \dimen@=\ht\tw@
  \advance\dimen@-\ht\z@
  \ifx#1\displaystyle
    \advance\dimen@ .2pt
  \else
    \ifx#1\textstyle
      \advance\dimen@ .2pt
    \fi
  \fi
  \ooalign{\raisebox{\dimen@}{$\m@th#1\lhd$}\cr$\m@th#1\leqslant$\cr}%
}
\makeatother

\newcommand{\nonsplit}[2]{#1\raisebox{0.6ex}{$\cdot$} #2}

%\makeatletter
%\newcommand{\undersim}[1]{\mathrel{\mathpalette\@undersim{#1}}}
%\newcommand{\@undersim}[2]{%
%  \vcenter{%
%    \ialign{%
%      ##\cr
%      $\m@th#1#2$\cr
%      \noalign{\nointerlineskip\kern.2ex}
%      $\m@th#1\sim$\cr
%      \noalign{\kern-.4ex}
%    }%
%  }%
%}
%\newcommand{\gsim}{\undersim{>}}
%
%\newcommand{\isomsub}{\undersim{\leqslant}}

\renewcommand{\le}{\leqslant}
\renewcommand{\ge}{\geqslant}
\renewcommand{\trianglelefteq}{\slteq}

\newcommand{\hinfbar}{\overline{\vphantom{H^\infty}\smash{H^\infty}}}
\newcommand{\zgl}{Z_{\mathrm{GL}}}

\begin{document}

\doublespacing

\begin{abstract}
Let $\hat G$ be the finite simply connected version of an exceptional Chevalley group, and let $V$ be a nontrivial irreducible module, of minimal dimension, for $\hat G$ over its field of definition. We explore the overgroup structure of $\hat G$ in $\mathrm{GL}(V)$, and the submodule structure of the exterior square (and sometimes the third Lie power) of $V$. When $\hat G$ is defined over a field of odd prime order $p$, this allows us to construct the smallest (with respect to certain properties) $p$-groups $P$ such that the group induced by $\mathrm{Aut}(P)$ on $P/\Phi(P)$ is either $\hat G$ or its normaliser in $\mathrm{GL}(V)$.
\end{abstract}

\maketitle

\section{Introduction}
\label{sec:intro}

Let $P$ be a finite $p$-group, and let $A(P)$ denote the group induced by $\mathrm{Aut}(P)$ on the \emph{Frattini quotient} $P/\Phi(P)$ of $P$. More explicitly, $A(P)$ is the image of the natural homomorphism from $\mathrm{Aut}(P)$ to $\mathrm{Aut}(P/\Phi(P))$. By Burnside's Basis Theorem, we can identify this Frattini quotient with the vector space $\mathbb{F}_p^d$, where $d$ is the \emph{rank} of $P$, i.e., the minimum size of a generating set for $P$. Thus we can identify $A(P)$ naturally with a subgroup of the general linear group $\mathrm{GL}(d,p)$.

Observe that if $P$ is the elementary abelian $p$-group of rank $d$, then $A(P) = \mathrm{GL}(d,p)$. Thus in order to induce a proper subgroup of $\mathrm{GL}(d,p)$ on a $p$-group, this $p$-group must either have nilpotency class at least $2$ or exponent at least $p^2$, i.e., exponent-$p$ class at least $2$. Recall that the nilpotency class of a $p$-group $P$ is the length of the lower central series of $P$, defined by $\gamma_1(P):=P$ and $\gamma_{i+1}(P):=[\gamma_i(P),P]$ for each $i \ge 1$. In particular, $\gamma_2(P)$ is the derived subgroup $P'$ of $P$. Similarly, the exponent-$p$ class of $P$ is the length of the lower exponent-$p$ central series of $P$, defined by $\mathcal{P}_1(P) := P$ and $\mathcal{P}_{i+1}(P) := [\mathcal{P}_i(P),P]\mathcal{P}_i(P)^p$ for each $i \ge 1$. Here, if $G$ is a group, then we write $G^p$ to denote the characteristic subgroup $\langle x^p \mid x \in G \rangle$ of $G$.

We now summarise some known results about the group $A(P)$. First and foremost, Bryant and Kov\'acs \cite{bryant} showed that any given subgroup $H$ of $\mathrm{GL}(d,p)$, with $d > 1$, can be induced on the Frattini quotient of some $p$-group. However, the $p$-group in their proof of this result has an exponent-$p$ class comparable to $|\mathrm{GL}(d,p)|$ (depending on both $d$ and $p$), and hence a huge order. It is therefore natural to ask when these properties of the $p$-group can in fact be relatively small. With this question in mind, Bamberg, Glasby, Morgan and Niemeyer \cite{bamberg} investigated the case where $H$ is maximal in $\mathrm{GL}(d,p)$. They showed that if $p > 3$, then $H$ can be induced on a $p$-group of nilpotency class (and exponent-$p$ class) $2$, $3$ or $4$, exponent $p$, and order at most $p^{d^4/2}$, as long as $H$ does not contain $\mathrm{SL}(d,p)$ and lies in a certain subset of the \emph{Aschbacher classes} of $\mathrm{GL}(d,p)$. The Aschbacher classes of classical groups describe their maximal subgroups, via Aschbacher's Theorem \cite{aschbacherthm}; we will define these classes later in this paper. %This result also applies with some additional hypotheses when $p = 3$, in which case the $p$-group has nilpotency class $2$. Note that the exponent-$p$ class of each $p$-group in the work of Bamberg et al.~is equal to its nilpotency class.

Among the maximal subgroups considered by Bamberg et al.~are the full groups of similarities in $\mathrm{GL}(d,p)$ of bilinear forms on $\mathbb{F}_p^d$. Together with $\mathrm{GL}(d,p)$ itself, these groups are the normalisers in $\mathrm{GL}(d,p)$ of the universal covers (or particular quotients of these universal covers in finitely many cases) of the classical Chevalley groups of dimension $d$ defined over $\mathbb{F}_p$. In \cite{sdfg2}, Bamberg, Freedman and Morgan began a programme to explore the exceptional groups of Lie type in a similar way. In particular, for each odd prime $p$, they constructed a $p$-group $P$ of order $p^{14}$, nilpotency class $2$ and exponent $p$ (and hence exponent-$p$ class $2$) such that $A(P)$ is the normaliser of the exceptional Chevalley group $G_2(p)$ in $\mathrm{GL}(7,p)$. Note that $7$ is the minimal dimension of a nontrivial irreducible $\mathbb{F}_p[G_2(p)]$-module.

In this paper, we extend the aforementioned programme of study by constructing a ``small'' $p$-group related to each exceptional Chevalley group defined over $\mathbb{F}_p$ (with $p$ odd, and with $p > 3$ in some cases). In order to make this more precise, we require the following definition.

\begin{defn}
\label{def:optimalpgp}
Let $H$ be a subgroup of $\mathrm{GL}(d,p)$, and let $\mathscr{P}_H$ be the set of $p$-groups $P$ of rank $d$ such that $A(P) = H$. We say that a $p$-group $P \in \mathscr{P}_H$ is \emph{optimal}\index{optimal $p$-group} with respect to $H$ if:
\begin{enumerate}[label={(\roman*)}]
\item \label{optimal1} no group in $\mathscr{P}_H$ has a smaller exponent-$p$ class than $P$;
\item \label{optimal2} no group in $\mathscr{P}_H$ with the same exponent-$p$ class as $P$ has a smaller exponent than $P$;
\item \label{optimal3} no group in $\mathscr{P}_H$ with the same exponent-$p$ class and exponent as $P$ has a smaller nilpotency class than $P$; and
\item \label{optimal4} no group in $\mathscr{P}_H$ with the same exponent-$p$ class, nilpotency class and exponent as $P$ has a smaller order than $P$.
\end{enumerate}
\end{defn}

We can now summarise our main theorem, which we prove in \S\ref{sec:mainproof}, and which incorporates the aforementioned main result of \cite{sdfg2}.

\begin{thm}
\label{thm:mainthmsummary}
Let $\hat G \in \{G_2(p),F_4(p),E_8(p)\}$ and let $r:=2$, or let $\hat G$ be the universal cover of $E_6(p)$ or of $E_7(p)$ and let $r:=3$, where $p > r$ in either case. Additionally, let $d$ be the minimal dimension of a nontrivial irreducible $\mathbb{F}_p[\hat G]$-module. Then each optimal $p$-group with respect to $N_{\mathrm{GL}(d,p)}(\hat G)$ has exponent-$p$ class $r$, nilpotency class $r$ and exponent $p$. Furthermore, if $\hat G \in \{G_2(p),E_8(p)\}$, then each optimal $p$-group with respect to the subgroup $\hat G$ of $\mathrm{GL}(d,p)$ has exponent-$p$ class $2$, nilpotency class $2$ and exponent $p^2$.
\end{thm}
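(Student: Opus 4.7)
The plan is to construct each optimal $p$-group as a quotient of a relatively free $p$-group, and then separately establish matching upper and lower bounds for the invariants in Definition \ref{def:optimalpgp}. Let $F$ be the relatively free $p$-group of exponent-$p$ class $r$, exponent $p$, and rank $d$; then $F/\Phi(F) \cong V$ and the top layer $\mathcal{P}_r(F) = \gamma_r(F)$ carries a natural $\mathrm{GL}(V)$-module structure, namely $\wedge^2 V$ when $r = 2$ and the third Lie power $L^3 V$ when $r = 3$. A standard argument shows that for any $\mathrm{GL}(V)$-submodule $W \le \gamma_r(F)$, the quotient $P := F/W$ satisfies $A(P) = \mathrm{Stab}_{\mathrm{GL}(V)}(W)$. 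I would choose $W$ to be a specific $\hat G$-submodule whose stabiliser in $\mathrm{GL}(V)$ equals $N_{\mathrm{GL}(V)}(\hat G)$; the existence of such $W$ is supplied by the submodule-structure results for $\wedge^2 V$ and $L^3 V$, together with the overgroup analysis of $\hat G \le \mathrm{GL}(V)$ developed earlier in the paper. This gives a group witnessing the stated upper bounds on exponent-$p$ class, nilpotency class, and exponent.

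For the lower bound, the elementary abelian case has $A(P) = \mathrm{GL}(V) \ne N_{\mathrm{GL}(V)}(\hat G)$, ruling out exponent-$p$ class $1$; for $\hat G \in \{G_2(p), F_4(p), E_8(p)\}$ this already forces $r = 2$. For $\hat G$ the universal cover of $E_6(p)$ or $E_7(p)$, the main obstacle is ruling out class $2$: I would argue that no $\hat G$-submodule $W \le \wedge^2 V$ has $\mathrm{GL}(V)$-stabiliser equal to $N_{\mathrm{GL}(V)}(\hat G)$, by running through the $\hat G$-composition factors of $\wedge^2 V$ case by case and showing that every such $W$ is preserved by a natural overgroup strictly larger than $N_{\mathrm{GL}(V)}(\hat G)$ --- for $E_7(p)$ this overgroup is the symplectic group on $V$ attached to the invariant alternating form. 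Once class $r$ is established, the exponent is at least $p$ trivially, and the nilpotency class coincides with the exponent-$p$ class because the constructed group is nilpotent of class exactly $r$; all three invariants in the first assertion are then forced.

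For the second assertion, with $\hat G \in \{G_2(p), E_8(p)\}$ and target exactly $\hat G$: both groups are centreless and thus do not contain $Z(\mathrm{GL}(V))$. However, in any exponent-$p$ class 2 group of exponent $p$, the scalar $\lambda \in \mathbb{F}_p^\times$ on $V$ lifts to an automorphism that acts as $\lambda^2$ on $\wedge^2 V$, preserving every subspace; hence $A(P) \supseteq Z(\mathrm{GL}(V))$, so $A(P) \ne \hat G$. To circumvent this I would pass to exponent $p^2$ (still class 2): in the universal such group, $\Phi(P) \cong V \oplus \wedge^2 V$, where the $V$-summand is the image of the $p$-th power map, and scalars act as $\mathrm{diag}(\lambda, \lambda^2)$ on this decomposition. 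Choosing $W$ to cut diagonally across the two summands --- using an explicit $\hat G$-equivariant map between matching composition factors --- ensures that scalars with $\lambda \ne \lambda^2$, i.e.\ $\lambda \notin \{0, 1\}$, no longer preserve $W$; combining this with the overgroup analysis produces a submodule whose stabiliser is exactly $\hat G$. The exponent-$p$ class 1 case is ruled out as before, giving optimality. The hardest common ingredient throughout is producing these explicit $\hat G$-submodules with the prescribed $\mathrm{GL}(V)$-stabiliser, which is precisely the role of the preceding module-theoretic sections of the paper.
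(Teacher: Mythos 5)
Your proposal follows essentially the same route as the paper: construct the target $p$-group as a quotient of the appropriate universal group (the relatively free exponent-$p$ group $\Gamma(d,p,r)$ for the first assertion, the $p$-covering group $E^*$ of the elementary abelian group for the second), invoke the theorem identifying $A(F/W)$ with the $\mathrm{GL}(V)$-stabiliser of $W$, and locate a $\hat G$-submodule whose stabiliser is the target group using the earlier submodule-structure and overgroup results. The diagonal-submodule trick in your second assertion --- graphing a $\hat G$-equivariant map from $\wedge^2 V$ onto $V$ inside $\Phi(E^*) \cong V \oplus \wedge^2 V$ so that scalars act differently on the two coordinates --- is also the paper's mechanism (Lemma~\ref{lem:directsuml2vstab}).

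There is, however, a genuine gap in your lower-bound argument for $E_6$ and $E_7$. To establish that the optimal $p$-group has exponent-$p$ class $3$, you must rule out \emph{every} $p$-group of exponent-$p$ class $2$, not just those of exponent $p$. You argue that no $\hat G$-submodule $W \le \wedge^2 V$ has stabiliser $N_{\mathrm{GL}(V)}(\hat G)$, which only excludes quotients of $\Gamma(d,p,2)$, i.e.\ groups of exponent $p$ and nilpotency class $2$. An exponent-$p$-class-$2$ group can instead have exponent $p^2$, and such a group is $E^*/X$ for some proper subspace $X$ of $\Phi(E^*) \cong V \oplus \wedge^2 V$; these $X$ need not lie in $\wedge^2 V$. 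The reduction to submodules of $\wedge^2 V$ is valid, but it requires the structural input (Theorem~\ref{thm:finiteextsubmods}) that $V$ is \emph{not} a $\hat G$-composition factor of $\wedge^2 V$ for $E_6$ and $E_7$, so that every $\hat G$-submodule of $V \oplus \wedge^2 V$ decomposes as $(X \cap V) \oplus (X \cap \wedge^2 V)$ with $\mathrm{GL}(V)$ stabilising each coordinate; without this your argument does not close. Pleasingly, the same dichotomy explains the rest of the picture: precisely because $V$ \emph{is} a quotient of $\wedge^2 V$ when $\hat G \in \{G_2(p),E_8(p)\}$, the $\hat G$-equivariant map you invoke for the diagonal submodule exists, while for $F_4(p)$ it does not (which is why $F_4(p)$ is absent from the second assertion) --- a distinction your sketch uses implicitly but does not justify.
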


We will see in \S\ref{sec:highestweights} that the group $\hat G$ is the (finite) \emph{simply connected version} of the corresponding exceptional Chevalley group. In fact, when $r = 2$, we construct an optimal $p$-group of the normaliser of $\hat G$ in $\mathrm{GL}(d,p)$ (and of $\hat G$ itself when $\hat G \ne F_4(p)$). When $\hat G$ is the universal cover of $E_6(p)$ or of $E_7(p)$, we construct a $p$-group that satisfies the first three conditions of Definition \ref{def:optimalpgp} and a variation of condition \ref{optimal4}. In each case, we construct the $p$-group as a particular quotient of a suitable \emph{universal $p$-group}, i.e., the largest $p$-group that satisfies a certain set of properties. Although the optimal $p$-group with respect to $N_{\mathrm{GL}(7,p)}(G_2(p))$ was previously constructed by Bamberg, Freedman and Morgan, they did not induce $G_2(p)$ itself on the Frattini quotient of a $p$-group. We will show later in the paper that the construction of each $p$-group related to $\hat G$ relies on knowledge of the stabilisers in $\mathrm{GL}(d,p)$ of the submodules of a certain $\mathbb{F}_p[\hat G]$-module.

Note that no version of Theorem \ref{thm:mainthmsummary} is known in the cases where $p \le r$. Exploring these cases will likely require further developments in the theory of $p$-group generation, in particular so that we may explicitly construct the relevant universal $p$-groups.

Let $V$ be a nonzero vector space over a field $\mathbb{F}$. For elements $u$ and $v$ of the tensor algebra $T(V)$ of $V$, we can define a bracket operation $[\cdot,\cdot]:T(V) \times T(V) \to T(V)$ by $[u,v]:=u \otimes v - v \otimes u$. We use left-normed notation to denote brackets of brackets, for example, $[u,v,w]:=[[u,v],w]$ for $u,v,w \in T(V)$. The \emph{free Lie algebra} $L(V)$ on $V$ is the smallest subspace of $T(V)$ that contains $V$ and that is closed under $[\cdot,\cdot]$, while the $i$-th \emph{Lie power} $L^iV$ of $V$ is the intersection of $L(V)$ and the $i$-th tensor power of $V$ (see \cite[\S2]{bamberg}, \cite[\S1]{johnson}). It is easy to show that $L(V)$ is indeed a Lie algebra with Lie bracket $[\cdot,\cdot]$; that $L^kV$ is spanned by the set  $\{[v_1,v_2,\ldots,v_k] \mid v_1,v_2,\ldots,v_k \in V \}$; and that $\mathrm{GL}(V)$ acts linearly on $L^kV$, with $[v_1,\ldots,v_k]^\alpha := [v_1^\alpha,\ldots,v_k^\alpha]$ for all $v_1, \ldots, v_k \in V$ and all $\alpha \in \mathrm{GL}(V)$. It follows immediately that if $\mathrm{char}(\mathbb{F}) \ne 2$, then $L^2V$ is isomorphic as an $\mathbb{F}[\mathrm{GL}(V)]$-module to the exterior square $A^2V$ of $V$. Additionally, Bamberg et al.~\cite[Lemma 3.1]{bamberg} showed that if $\mathrm{char}(\mathbb{F}) \notin \{2,3\}$, then $L^3V \cong (A^2V \otimes V)/A^3V$.

The aforementioned $\mathbb{F}_p[\hat G]$-module of importance is the Lie power $L^rV$, where $V$ is a \emph{minimal} $\mathbb{F}_p[\hat G]$-module, i.e., a nontrivial irreducible $\mathbb{F}_p[\hat G]$-module of minimal dimension $d$. In fact, the structure of Lie powers of modules is of general interest; see \cite[\S5]{johnson} for a discussion of cases where this structure is known, and examples of applications. We therefore consider the more general case where $\hat G$ and $V$ are defined over any field $\mathbb{F}_q$ of odd characteristic $p$, and determine, for each  $\hat G$-submodule $U$ of $L^rV$, the stabiliser $\mathrm{GL}(d,q)_U$ of $U$ in $\mathrm{GL}(d,q) \cong \mathrm{GL}(V)$. To do this, we prove results about the groups $\hat G$ and the modules $L^rV$ that are interesting in their own right. The following two theorems summarise some of our most significant findings.

\begin{thm} \leavevmode
\label{thm:structsummary}
\begin{enumerate}[label={(\roman*)}]
\item The submodule structure of $L^2V \cong A^2V$ is given in Table \ref{table:a2vstructfinite}.
\item Suppose that $p > 3$, and that $\hat G$ is of type $E_6$ (with $q = p$ if $p = 5$) or $E_7$ (with $q = p$ if $p \in \{7,11,19\}$). Then the submodule structure of $L^3V$ is given in Figures \ref{fig:e6l3vsub} and \ref{fig:e7l3vsub}.
\end{enumerate}
\end{thm}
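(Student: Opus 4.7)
The plan is to determine the submodule structure of $L^r V$ for each exceptional $\hat G$ by combining highest-weight representation theory with the explicit algebraic structures preserved by $\hat G$ on $V$. For each type, I would first express $V$ as an irreducible highest-weight module $L(\omega)$ for the relevant fundamental weight (the $7$-dimensional module for $G_2$, the $26$-dimensional module for $F_4$, the $27$-dimensional module for $E_6$, the $56$-dimensional module for $E_7$, and the $248$-dimensional adjoint for $E_8$). In characteristic zero the decompositions of $A^2 V$ into irreducibles are classical, and the composition factors of the reductions modulo $p$ can be read off from the tables of modular weight multiplicities computed by L\"ubeck. This produces a list of composition factors of $L^2 V$, and via the isomorphism $L^3 V \cong (A^2 V \otimes V)/A^3 V$ cited earlier, of $L^3 V$, for each relevant prime.

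With the composition factors in hand, I would exhibit concrete submodules using the invariant multilinear forms attached to each $\hat G$: the alternating bilinear forms and trilinear algebras (the octonions for $G_2$, the Albert algebra for $F_4$, the cubic norm for $E_6$, and the symplectic and quartic invariants for $E_7$) provide $\hat G$-equivariant maps between $V$, $A^2 V$, $A^3 V$ and their tensor products. Contraction along these invariants yields explicit submodules and quotient maps whose images' composition factors match those predicted by the character calculation, so that a composition series can be read off directly.

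The principal obstacle is identifying which \emph{extensions} between composition factors actually occur, since this is where the small primes cause difficulties. To handle this I would use the Jantzen sum formula together with known $\mathrm{Ext}^1$ computations between simple modules of the corresponding algebraic group to narrow the possibilities, and then distinguish the remaining cases by either exhibiting a $\hat G$-equivariant splitting or ruling one out via the structure of the invariant forms. For the finitely many exceptional primes appearing in the hypothesis ($p = 5$ for $E_6$, and $p \in \{7,11,19\}$ for $E_7$), I expect direct computer-assisted calculation, for instance the MeatAxe applied to $V$ realised explicitly over $\mathbb{F}_p$, to be necessary. Finally, because the submodule lattice is Galois-stable, the structure over $\mathbb{F}_q$ coincides with that obtained over $\mathbb{F}_p$, allowing the results to be recorded in Table~\ref{table:a2vstructfinite} and Figures~\ref{fig:e6l3vsub} and~\ref{fig:e7l3vsub}.
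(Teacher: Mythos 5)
Your broad plan for computing composition factors—express $V$ as a highest-weight module, read multiplicities from L\"ubeck's data, compute $\Lambda(A^2V)$ and $\Lambda(L^3V)$ by repeated highest-weight stripping—matches the paper's \S4 exactly. Your recognition that exceptional primes require direct computer calculation also matches, as does the use of $L^3V \cong (A^2V \otimes V)/A^3V$. However, two parts of your plan deviate from or fall short of what is needed.

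For the generic-prime cases, you propose determining extensions via the Jantzen sum formula and $\mathrm{Ext}^1$ computations, then confirming splittings by contractions along invariant multilinear forms (octonions, Albert algebra, cubic norm, and so on). The paper instead proves a single clean criterion (Theorem~\ref{thm:semisimplekgmod}, built on Proposition~\ref{prop:nonsplitkgmod}): if the composition factors of $M$ are pairwise non-isomorphic and unchanged from their generic-prime values, then because any non-split two-step extension would have to be a section of a Weyl module or its dual—and those Weyl modules are irreducible here—the module $M$ is multiplicity free, and the lattice is then forced by the composition factors alone. The invariant-forms step in your plan is therefore unnecessary once multiplicity-freeness is known, and the $\mathrm{Ext}^1$ route would demand substantially more delicate case-by-case input, particularly for $E_7$ and $E_8$; the Weyl-module argument buys you a uniform proof.

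The genuine gap is the final Galois-stability claim. You assert that ``because the submodule lattice is Galois-stable, the structure over $\mathbb{F}_q$ coincides with that obtained over $\mathbb{F}_p$.'' But $\hat G$ is the group of Lie type \emph{defined over $\mathbb{F}_q$}: when $q$ changes, so does the group, not just the coefficient field, so this is not an instance of Galois descent for a fixed module. The paper transfers the $q=p$ Magma result to general $q$ for $L^2V$ at exceptional primes only by passing through the algebraic group: it compares the $\hat X$-structure (over $\mathbb{F}_p$) with the $K[G]$-structure and the $\hat G$-structure (over $\mathbb{F}_q$), exploiting the fact that all three modules are uniserial with matching composition lengths. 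For $L^3V$ at the exceptional primes $p\in\{5,7,11,19\}$ the modules are not uniserial, this argument does not close, and the paper explicitly restricts to $q=p$, recording the general-$q$ claim only as Conjecture~\ref{conj:allq}. Your proposal would therefore prove something the paper could not; the Galois step must either be replaced by the algebraic-group comparison (which only handles the $L^2V$ exceptional-prime cases and the multiplicity-free cases) or the statement weakened to $q=p$ exactly as in part (ii) of the theorem.
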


We now introduce notation similar to that used in \cite[\S2.1]{kleidman}.

\begin{defn}
\label{def:formisom}
Let $\beta$ be a form on a vector space $W$. We write $I(\beta)$ (respectively, $\Delta(\beta)$) to denote the full group of isometries (respectively, similarities) of $\beta$ in $\mathrm{GL}(W)$. We also write $S(\beta):=I(\beta) \cap \mathrm{SL}(W)$ and $\Sigma(\beta):=\Delta(\beta) \cap \mathrm{SL}(W)$.
\end{defn}

\begin{thm} \leavevmode
\label{thm:overgroupsummary}
Let $\beta$ be a non-degenerate reflexive bilinear form on $V$ with $\hat G \le I(\beta)$, if such a form exists, and otherwise let $\beta$ be the zero form. Additionally, let $R$ be the last group in the derived series of $I(\beta)$.
\begin{enumerate}[label={(\roman*)}]
\item The normaliser of $\hat G$ in $R$ is the unique maximal subgroup of $R$ that contains $\hat G$.
\item If $\beta \ne 0$, then $\Sigma(\beta)$ is the unique maximal subgroup of $\mathrm{SL}(d,q)$ that contains $\hat G$.
\end{enumerate}
\end{thm}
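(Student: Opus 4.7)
The plan is to prove both assertions by an Aschbacher-class analysis of the subgroup lattice above $\hat G$ in the ambient classical group $R$ or $\mathrm{SL}(d,q)$. In each case of Theorem~\ref{thm:mainthmsummary}, the minimal module $V$ is absolutely irreducible over $\mathbb{F}_q$, the endomorphism ring $\mathrm{End}_{\mathbb{F}_q[\hat G]}(V)$ equals $\mathbb{F}_q$, and $\hat G$ preserves at most one non-degenerate reflexive bilinear form up to a scalar; accordingly $R = \Omega(d,q)$ for $\hat G$ of type $G_2$, $F_4$ or $E_8$, $R = \mathrm{Sp}(d,q)$ for type $E_7$, and $R = \mathrm{SL}(d,q)$ for type $E_6$ (where $\beta = 0$). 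I would then invoke Aschbacher's theorem for $R$ (for part~(i)) and for $\mathrm{SL}(d,q)$ (for part~(ii)), aiming to show that each Aschbacher class yields either no maximal overgroup of $\hat G$, or precisely the overgroup claimed.

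The elimination of the geometric classes $\mathcal{C}_1, \dots, \mathcal{C}_8$ is a case-by-case check. Absolute irreducibility of $V$ dispatches $\mathcal{C}_1$ and $\mathcal{C}_3$. For $\mathcal{C}_2$, $\mathcal{C}_4$ and $\mathcal{C}_7$ I would argue, using the highest-weight description of $V$ developed in \S\ref{sec:highestweights}, that $V$ admits no proper imprimitive decomposition, no nontrivial tensor factorisation and no tensor-induced structure as a $\hat G$-module; in particular the dimensions $7, 26, 27, 56, 248$ are either incompatibly factorable, or the relevant module is not a tensor product of smaller modules of the required type. Class $\mathcal{C}_5$ is excluded because $\hat G$ is natively defined over $\mathbb{F}_q$ and its minimal module is not realised over a proper subfield; $\mathcal{C}_6$ is excluded by order comparisons and the restriction that an extraspecial-normaliser module have prime-power dimension of a compatible shape. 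For $\mathcal{C}_8$, uniqueness of $\beta$ implies that $\hat G$ lies in no additional classical form stabiliser, so no $\mathcal{C}_8$-maximal subgroup of $R$ strictly contains $\hat G$, while in $\mathrm{SL}(d,q)$ the unique $\mathcal{C}_8$-maximal subgroup containing $\hat G$ is $\Sigma(\beta)$.

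The main obstacle is the class $\mathcal{S}$ of almost simple subgroups. I would rely on the classification of $\mathcal{S}$-maximal subgroups of the relevant classical groups, due principally to Liebeck--Seitz (and to Kleidman in dimension $7$), supplemented by \textsc{Atlas} data in the small cases. In each of the five dimensions, the resulting list of almost simple irreducibles is short enough that direct inspection confirms the only $\mathcal{S}$-candidate properly containing $\hat G$ is $N_R(\hat G)$ itself (equivalently, a subgroup of $\Sigma(\beta)$ in the $\mathrm{SL}$-setting). This yields part~(i) at once. For part~(ii), any maximal overgroup $M$ of $\hat G$ in $\mathrm{SL}(d,q)$ with $\beta\neq 0$ either contains $R$, in which case maximality of $\Sigma(\beta)$ over $R$ in $\mathrm{SL}(d,q)$ forces $M = \Sigma(\beta)$, or $M\cap R$ properly contains $\hat G$ in $R$, whence by~(i) it lies in $N_R(\hat G)$ and a short normaliser chase places $M$ inside $\Sigma(\beta)$. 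The residual difficulty lies in the bookkeeping for the $E_8$ case at $d = 248$, where the list of $\mathcal{S}$-irreducibles to rule out is longest; the $\mathcal{C}_2$ check for $d = 56$ and the $\mathcal{C}_4$ check for $d = 26$ also require some direct argument beyond mere dimension counting.
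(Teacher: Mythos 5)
Your proposal follows the same broad strategy as the paper — an Aschbacher-class analysis of the overgroups of $\hat G$ in $R$ and in $\mathrm{SL}(d,q)$, with absolute irreducibility and the uniqueness of $\beta$ disposing of $\mathcal{C}_1$, $\mathcal{C}_3$, $\mathcal{C}_5$ and $\mathcal{C}_8$ exactly as in the paper, and order comparisons handling $\mathcal{C}_6$ and $\mathcal{C}_7$ — but there are three substantive gaps.

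First, the $\mathcal{C}_4$ elimination. You acknowledge that "mere dimension counting" fails here, and indeed $26 = 2 \times 13$, $27 = 3 \times 9$, $56 = 7 \times 8$ and $248 = 8 \times 31$ all admit admissible tensor factorisations. Your proposed remedy — showing "the module is not a tensor product of smaller modules" via highest weights — is not fleshed out and is not what the paper does. The paper's key device is that a $\mathcal{C}_4$-subgroup stabilising $V = V_1 \otimes V_2$ with $\dim V_1 = m$, $\dim V_2 = n$ necessarily fixes a $\binom{m}{2}\binom{n+1}{2}$-dimensional subspace of $A^2V$, and then the explicitly computed $\hat G$-submodule structure of $A^2V$ (Theorem \ref{thm:finiteextsubmods}, which is one of the paper's central results) shows that no $\hat G$-submodule of that dimension exists. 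Without that idea, or an equally concrete tensor-indecomposability argument, $\mathcal{C}_4$ is left open.

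Second, and more seriously, the $\mathcal{C}_9$ (= $\mathcal{S}$) analysis. You propose to "rely on the classification of $\mathcal{S}$-maximal subgroups of the relevant classical groups." No such classification exists for $d \in \{25, 26, 27, 56, 248\}$: Bray--Holt--Roney-Dougal cover $d \le 12$ and Schr\"oder $d \le 15$, and as the paper itself notes there is no general method. What \emph{is} available — and what the paper actually uses — is the classification of quasisimple groups admitting absolutely irreducible representations of the relevant dimensions (Hiss--Malle for cross-characteristic, L\"ubeck for defining characteristic), combined with the minimal-field-of-definition result (Proposition \ref{prop:minfieldmod}) to pin down the twisted type and field size, and then a bespoke order/dimension comparison to force $H^\infty = \hat G$. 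Your proposal treats this step as a lookup when it is in fact the technical core of the theorem's proof.

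Third, a gap in your part (ii) argument: in the branch where $M \cap R$ properly contains $\hat G$ in $R$, you invoke a "short normaliser chase" to place $M$ inside $\Sigma(\beta)$. But $R$ is not normal in $\mathrm{SL}(d,q)$, so $M \cap R$ is not automatically normal in $M$, and the chase does not run as stated. The paper handles this (Proposition \ref{prop:hatgspomegaover}) by first establishing from the geometric/$\mathcal{C}_9$ analysis that $M \le \Sigma(\beta)$, noting that $\Sigma(\beta)$ normalises $S$, and only then deducing $M \cap S \trianglelefteq M$ and $(M\cap S)^\infty = \hat G$ is characteristic in $M \cap S$. Your $\mathcal{C}_2$ discussion is also somewhat different from (and less elementary than) the paper's group-theoretic argument via $|\tilde G| \nmid |S_m|$ and Proposition \ref{prop:faithfultildegmod}, though that difference is not necessarily a gap.
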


%\begin{lem} \leavevmode
%The index of $Z(GL(d,q))\hat G$ in $N_{\mathrm{GL}(d,q)}(\hat G)$ is two if $\hat G$ is of type $E_7$; three if $\hat G$ is of type $E_6$ and $q \equiv 1 \pmod 3$; and one otherwise.
%\end{lem}

The proofs of Theorems \ref{thm:structsummary} and \ref{thm:overgroupsummary} are given in \S\ref{sec:liepowerschev} and \S\ref{sec:overgroupschev}, respectively. We also show that the submodule structure of $A^2V$ is equivalent (in terms of containments and dimensions) to that of the exterior square of the $\overline{\mathbb{F}_p}[\hat G]$-module constructed from $V$ by extending the scalars, and that of the exterior square of a related minimal $\overline{\mathbb{F}_p}[G]$-module, where $G$ is the simply connected simple linear algebraic group associated with $\hat G$. This equivalence also holds between the corresponding third Lie powers, as long as $p$ is not an exceptional prime listed in Theorem \ref{thm:structsummary}. Note that the submodule structure of $A^2V$ in the case $\hat G = G_2(q)$ was previously explored, in less detail, in \cite{sdfg2} and \cite[Ch.~9.3.2]{schroder}.

We now specify the structure of our paper. First, we discuss in \S\ref{sec:univpgps} how we can use the knowledge of the submodule structures of $V$, $L^2V$ and $L^3V$ (each defined over $\mathbb{F}_p$) and the stabilisers in $\mathrm{GL}(d,p)$ of their submodules to construct a $p$-group $P$ as a quotient of an appropriate universal $p$-group such that $A(P)$ is as required. In \S\ref{sec:highestweights}, we use highest weight theory to determine the composition factors of the second and third Lie powers of the minimal modules over $\overline{\mathbb{F}_p}$ for the simply connected simple linear algebraic groups associated with exceptional Chevalley groups. We then use this information in \S\ref{sec:liepowerschev} to determine the submodule structures of the second and third Lie powers of these modules, and of the modules for the groups $\hat G$, as detailed above. Next, in \S\ref{sec:overgroupschev}, we determine part of the overgroup structure of $\hat G$ in $\mathrm{GL}(d,q)$. This overgroup structure, and the submodule structures of the second and third Lie powers of $V$ (defined over $\mathbb{F}_q$) are then used in \S\ref{sec:liepowersubmodstab} to determine the stabiliser in $\mathrm{GL}(d,q)$ of each submodule of these Lie powers. Finally, we state and prove the full version of our main theorem in \S\ref{sec:mainproof}.

\section{Universal $p$-groups}
\label{sec:univpgps}

Let $V:=\mathbb{F}_p^d$, with $d > 1$ an integer and $p$ a prime. In this section, we discuss the relation between the submodule structures of $V$, $L^2V$ and $L^3V$ and the group induced by $\mathrm{Aut}(P)$ on $P/\Phi(P)$, where $P$ is a suitable quotient of an appropriate \emph{universal $p$-group}.

First, let $B$ be the free Burnside group $B(d,p)$ of rank $d$ and exponent $p$, i.e., the largest group of rank $d$ and exponent $p$. For each positive integer $r$, we write $\Gamma(d,p,r):=B/\gamma_{r+1}(B)$. This is a finite group of rank $d$, exponent $p$ and nilpotency class at most $r$. In fact, if there exists a $p$-group of rank $d$, exponent $p$ and nilpotency class $r$, then every such group is a quotient of $\Gamma(d,p,r)$. In this case, we call $\Gamma(d,p,r)$ the \emph{universal $p$-group} of rank $d$, exponent $p$ and nilpotency class $r$. The following theorem of Bamberg et al.~\cite[\S2]{bamberg} describes how $\Gamma(d,p,r)$ can be constructed using Lie powers of $V$ when $r \in \{2,3\}$ and $p > r$. Here, $[\cdot,\cdot]$ is the Lie bracket associated with the free Lie algebra $L(V)$.

\begin{thm} \leavevmode
\label{thm:gamma23}
\begin{enumerate}[label={(\roman*)}]
\item Let $\Gamma_2(V)$ be the set $V \times L^2V$ equipped with the multiplication defined by
\begin{equation}
\label{eq:gamma2}
(a,b)(f,g) := (a+f,b+g+[a,f])
\end{equation}
for $(a,b),(f,g) \in \Gamma_2(V)$. If $p > 2$, then $\Gamma_2(V)$ is a group of nilpotency class $2$, and $\Gamma_2(V) \cong \Gamma(d,p,2)$.
\item Let $\Gamma_3(V)$ be the set $V \times L^2V \times L^3V$ equipped with the multiplication defined by
\begin{equation}
\label{eq:gamma3}
(a,b,c)(f,g,h) := (a+f,b+g+[a,f],c+h+3([b,f]-[g,a])+[a,f,f-a])
\end{equation}
for $(a,b,c),(f,g,h) \in \Gamma_3(V)$. If $p > 3$, then $\Gamma_3(V)$ is a group of nilpotency class $3$, and $\Gamma_3(V) \cong \Gamma(d,p,3)$.
\end{enumerate}
\end{thm}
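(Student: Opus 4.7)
The plan is to verify that each $\Gamma_r(V)$, equipped with its stated multiplication, is a $p$-group of rank $d$, exponent $p$, and nilpotency class exactly $r$, and then to invoke the universal property of $\Gamma(d,p,r)$ together with an order comparison to conclude isomorphism.

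First I would check the group axioms directly. For $\Gamma_2(V)$, expanding both sides of $((a,b)(f,g))(u,v)=(a,b)((f,g)(u,v))$ using bilinearity of the bracket shows each side equals $(a+f+u,\,b+g+v+[a,f]+[a,u]+[f,u])$; the identity is $(0,0)$, and since $[a,a]=0$ the inverse of $(a,b)$ is $(-a,-b)$. For $\Gamma_3(V)$ the associativity calculation is much more involved and constitutes the main obstacle: the third coordinate of each side expands into a long combination of double and triple brackets, which must be reconciled using antisymmetry of $[\cdot,\cdot]$ and the Jacobi identity. The specific form of \eqref{eq:gamma3}---the coefficient $3$ and the combination $[a,f,f-a]$---is engineered precisely so that these rearrangements succeed. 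The identity is $(0,0,0)$, and solving $(a,b,c)(x,y,z)=(0,0,0)$ coordinatewise gives $(a,b,c)^{-1}=(-a,-b,-c)$.

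Next I would analyse the lower central series, the exponent, and the Frattini quotient. In $\Gamma_2(V)$, a direct computation gives $[(a,b),(f,g)]=(0,2[a,f])$; since $p>2$ makes $2$ invertible in $\mathbb{F}_p$ and since $L^2V$ is spanned by the brackets $[v,w]$, the derived subgroup equals $\{0\}\times L^2V$, which is central. Hence the class is exactly $2$ (for $d\ge 2$). An analogous but more elaborate calculation in $\Gamma_3(V)$, followed by a second commutator, yields elements of the form $(0,0,c\cdot[a,f,u])$ for some nonzero integer $c$ (invertible in $\mathbb{F}_p$ because $p>3$), so $\gamma_3(\Gamma_3(V))=\{0\}\times\{0\}\times L^3V$ and the class is exactly $3$. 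A short induction on $n$, using $[a,a]=0$ and the linearity of the bracket, gives $(a,b)^n=(na,nb)$ and $(a,b,c)^n=(na,nb,nc)$, so setting $n=p$ shows that the exponent is $p$. The Frattini quotient in each case is identified with $V$ via the first coordinate, so the rank is $d$.

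Finally, by the universal property of $\Gamma(d,p,r)$---the largest $p$-group of rank $d$, exponent $p$, and class at most $r$---there is a surjection $\Gamma(d,p,r)\twoheadrightarrow\Gamma_r(V)$, and it remains to compare orders. The order of $\Gamma_r(V)$ is $p^{d+\dim L^2V}$ for $r=2$ and $p^{d+\dim L^2V+\dim L^3V}$ for $r=3$. The same formula holds for $|\Gamma(d,p,r)|$, since the standard Magnus--Witt theory identifies the graded components of the associated Lie ring of $\Gamma(d,p,r)$ with the Lie powers $L^iV$ for $1\le i\le r$, valid under the hypothesis $p>r$. Matching orders promotes the surjection to the desired isomorphism.
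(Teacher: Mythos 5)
The paper itself does not prove this theorem; it is quoted from Bamberg, Glasby, Morgan and Niemeyer \cite[\S 2]{bamberg}, so there is no internal argument to compare against. The early parts of your outline are correct and are the steps any proof would take: the associativity check for $\Gamma_2(V)$ reduces both sides to $(a+f+u,\,b+g+v+[a,f]+[a,u]+[f,u])$, the commutator formula $[(a,b),(f,g)]=(0,2[a,f])$ and the power rule $(a,b)^n=(na,nb)$ are right, and analogous (much longer) bookkeeping handles $\Gamma_3(V)$. Identifying the Frattini quotient with $V$ to conclude rank $d$, and reading off exponent and class, is also the right strategy, as is the observation that a surjection from $\Gamma(d,p,r)$ exists and can be promoted to an isomorphism by counting.

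The gap is in the justification of the count. You attribute the statement that the graded pieces of the associated Lie ring of $\Gamma(d,p,r)$ are $L^1V,\dots,L^rV$ to ``standard Magnus--Witt theory,'' but the Magnus--Witt theorem concerns the lower central quotients of an \emph{absolutely free} group, and says nothing on its own about $B(d,p)$ or its class-$r$ truncation. What you actually need is the additional fact that, for $i<p$, the $p$-th-power relations in $B(d,p)$ do nothing to $\gamma_i/\gamma_{i+1}$ beyond reducing the free-group quotient mod $p$. That is a genuinely separate result about relatively free groups of exponent $p$ and class $<p$, provable via Hall's collection process or, more conceptually, via the Lazard (Mal'cev) correspondence with the free nilpotent $\mathbb{F}_p$-Lie algebra of class $r$. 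A reader who only knows Magnus--Witt for free groups cannot close your argument. Once the Lazard correspondence is on the table, though, the order comparison becomes superfluous: the correspondence sends the free class-$r$ nilpotent $\mathbb{F}_p$-Lie algebra directly to a group whose truncated Baker--Campbell--Hausdorff multiplication is, after rescaling the higher coordinates (for instance $(a,b)\mapsto(a,2b)$ when $r=2$), exactly $\Gamma_r(V)$; since the correspondence is an equivalence of categories with the same underlying sets, it carries free objects to free objects, giving $\Gamma_r(V)\cong\Gamma(d,p,r)$ directly. So your plan is viable, but as written the crucial input is misattributed, and naming the right tool would both repair the gap and shorten the final step.
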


Note that for $r \in \{2,3\}$ with $p > r$, the identity of $\Gamma_r(V)$ is the element with each coordinate equal to $0$, and the inverse of an element of $\Gamma_r(V)$ is obtained by multiplying each coordinate by $-1$. Additionally, $\mathrm{GL}(d,p) \cong \mathrm{GL}(V)$ acts on $\Gamma_r(V)$, with $(a,b)^\alpha := (a^\alpha,b^\alpha)$ and $(a,b,c)^\alpha := (a^\alpha,b^\alpha,c^\alpha)$ for each $\alpha \in \mathrm{GL}(d,p)$, $a \in V$, $b \in L^2V$ and $c \in L^3V$ \cite[Theorem 2.5]{bamberg}. Bamberg et al.~also showed that if $p > 3$, then $\Gamma(d,p,4)$ is isomorphic to a group of nilpotency class $4$ with underlying set $V \times L^2V \times L^3V \times L^4V$. However, we will not consider this group in this paper.

It is clear that if $p > 2$, then $R:=\{(0,b) \mid b \in L^2V\}$ is equal to $Z(\Gamma_2(V))$ and isomorphic to the elementary abelian group $L^2V$. Hence we can identify the subgroups of $R$ with the subspaces of $L^2V$. If $U$ is a proper subgroup of $R$, then we can identify $P_U:=\Gamma_2(V)/U$ with the set
\begin{equation}
\label{eq:setpu}
V \times (L^2V)/U,
\end{equation}
equipped with the multiplication given by
\begin{equation}
\label{eq:gamma2quo}
(a,b+U)(f,g+U) := (a+f,b+g+[a,f]+U)
\end{equation}
for $a,f \in V$ and $b,g \in L^2V$. Similarly, if $p > 3$ and if $W$ is a proper subgroup of $S:=\{(0,0,c) \mid c \in L^3V\} = Z(\Gamma_3(V))$, then $W$ is a proper subspace of $L^3V$. Here, we can identify $Q_W:=\Gamma_3(V)/W$ with the set
\begin{equation}
\label{eq:setqw}
V \times L^2V \times (L^3V)/W,
\end{equation}
equipped with the multiplication given by
\begin{equation}
\label{eq:gamma3quo}
(a,b,c+W)(f,g,h+W) := (a+f,b+g+[a,f],c+h+3([b,f]-[g,a])+[a,f,f-a]+W)
\end{equation}
for $a,f \in V$, $b,g \in L^2V$ and $c,h \in L^3W$. Note that $P_{\{0\}} = \Gamma_2(V)$ and $Q_{\{0\}} = \Gamma_3(V)$.

The following propositions describe some important properties of $P_U$ and $Q_W$. In the cases where $P_U = \Gamma_2(V)$ or $Q_W = \Gamma_3(V)$, these results follow easily from \cite[\S2]{bamberg}.

\begin{prop}
\label{prop:gamma2derphi}
Suppose that $p > 2$, and let $U$ be a proper subspace of $L^2V$. Then ${P_U}' = \Phi(P_U) = \{(0,b+U) \mid b \in L^2V\}$, which is isomorphic to $(L^2V)/U$.
%\item $Z(P_U)$ is the set of elements $(a,b+U) \in P_U$ with $b \in L^2V$ and with $a \in V$ such that $[a,f] \in U$ for all $f \in V$;
%\item $Z(\Gamma_2(V)) = \Phi(\Gamma_2(V))$;
%\item $P_U/\Phi(P_U) \cong V$; and
%\item $P_U$ is an extension of $(L^2V)/U$ by $V$.
%\item if $\{e_1,\ldots,e_d\}$ is a basis for $V$ and if $y \in L^2V$, then $\langle (e_1,y+U),\ldots,(e_d,y+U) \rangle = P_U$.
\end{prop}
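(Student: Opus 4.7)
The plan is to handle the two equalities $\Phi(P_U) = {P_U}'$ and ${P_U}' = N$, where $N := \{(0, b+U) \mid b \in L^2V\}$, separately, and then read off the claimed isomorphism by inspection.

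First I would establish that $P_U$ has exponent $p$, so that $P_U^p = 1$ and consequently $\Phi(P_U) = {P_U}' P_U^p = {P_U}'$. This follows either from the fact that $\Gamma_2(V) \cong \Gamma(d,p,2)$ is a quotient of the free Burnside group $B(d,p)$ of exponent $p$, or more directly by induction from \eqref{eq:gamma2} using $[a,a] = 0$ to obtain $(a,b)^n = (na, nb)$ in $\Gamma_2(V)$, so $(a,b)^p = (0,0)$. Since $P_U$ is a quotient of $\Gamma_2(V)$, it too has exponent $p$.

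Next, to identify ${P_U}'$, I would prove both inclusions. For ${P_U}' \le N$, the multiplication rule \eqref{eq:gamma2quo} makes the projection $(a, b+U) \mapsto a$ a surjective homomorphism from $P_U$ onto the abelian group $V$ with kernel precisely $N$. For the reverse inclusion, I would compute a single commutator: using \eqref{eq:gamma2quo}, the fact that inverses in $P_U$ are obtained by negating coordinates, and the identities $[-a,-f] = [a,f]$ and $[a+f, a+f] = 0$, a direct calculation gives
\begin{equation*}
[(a, b+U), (f, g+U)] = (0,\, 2[a,f] + U).
\end{equation*}
Because $p > 2$ the scalar $2$ is invertible in $\mathbb{F}_p$, and since the Lie brackets $[a,f]$ span $L^2V$ by definition of the Lie power, these commutators generate all of $N$.

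The isomorphism ${P_U}' \cong (L^2V)/U$ is then immediate, since \eqref{eq:gamma2quo} shows that $N$ is abelian with $(0, b+U)(0, g+U) = (0, (b+g)+U)$, so $(0, b+U) \mapsto b + U$ is the required isomorphism. Given that the multiplication formula directly encodes the relevant commutator, I do not anticipate any real obstacle here; the only subtle point is the appearance of the factor $2$ in the commutator, which is precisely why the hypothesis $p > 2$ is required.
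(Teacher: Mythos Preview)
Your proof is correct and follows essentially the same approach as the paper's: both compute the commutator $[(a,b+U),(f,g+U)] = (0,2[a,f]+U)$, use the invertibility of $2$ in $\mathbb{F}_p$ together with the fact that brackets span $L^2V$ to obtain the reverse inclusion, and invoke the exponent-$p$ property (inherited from $\Gamma_2(V)$) to conclude $\Phi(P_U) = {P_U}'$. The only cosmetic differences are that the paper orders the steps differently and writes the inverse of $2$ explicitly as $m = (p+1)/2$, while you argue the inclusion ${P_U}' \le N$ via the projection homomorphism to $V$ rather than reading it off the commutator formula directly.
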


\begin{proof}
It is clear that $R:=\{(0,b+U) \mid b \in L^2V\}$ is a subgroup of $P_U$ isomorphic to $(L^2V)/U$. Observe that the commutator of two elements $(a,b+U),(f,g+U) \in P_U$ is $(0,2[a,f]+U)$. Hence ${P_U}' \le R$. Let $\{e_1,\ldots,e_d\}$ be a basis for $V$, let $i, j \in \{1,\ldots,d\}$, and let $m:=(p+1)/2$. Then $[(e_i,U),(e_j,U)]^m = (0,[e_i,e_j]+U)$. The bilinearity of $[\cdot,\cdot]$ implies that $L^2V$ is spanned by the Lie brackets $[e_i,e_j]$, and so $R$ is generated by the commutators $[(e_i,U),(e_j,U)]$. Therefore, ${P_U}' = R$. Finally, $\Gamma_2(V)$ has exponent $p$, and hence its quotient $P_U \ne 1$ also has exponent $p$. Thus $\Phi(P_U) = {P_U}'$.
\end{proof}

\begin{prop}
\label{prop:gamma3derphi}
Suppose that $p > 3$, and let $W$ be a proper subspace of $L^3V$. Then:
\begin{enumerate}[label={(\roman*)}]
\item $\gamma_3(Q_W) = \{(0,0,c+W) \mid c \in L^3V\}$, which is isomorphic to $(L^3V)/W$; and
\item ${Q_W}' = \Phi(Q_W) = \{(0,b,c+W) \mid b \in L^2V, c \in L^3V\}$.
%\item $Q_W/\Phi(Q_W) \cong V$; and
%\item ${Q_W}'/\gamma_3(Q_W) \cong L^2V$.
%\item if $\{e_1,\ldots,e_d\}$ is a basis for $V$ and if $x \in L^2V$ and $y \in L^3V$, then $\langle (e_1,x,y+W),\ldots,(e_d,x,y+W) \rangle = Q_W$.
\end{enumerate}
\end{prop}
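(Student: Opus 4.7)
The plan is to mimic the proof of Proposition \ref{prop:gamma2derphi}, now tracking a central ``top layer'' $S := \{(0,0,c+W) \mid c \in L^3V\}$ of $Q_W$. The formula \eqref{eq:gamma3quo} immediately gives that $S$ is a central subgroup of $Q_W$ isomorphic to $(L^3V)/W$.

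For (i), I would first observe that the first-coordinate projection $\pi: Q_W \to (V,+)$ is a group homomorphism to an abelian group, so ${Q_W}' \le T := \{(0,b,c+W) \mid b \in L^2V, c \in L^3V\} = \ker\pi$. In particular, $\gamma_3(Q_W) = [\gamma_2(Q_W), Q_W] \le [T, Q_W]$, and a direct (but somewhat intricate) computation using \eqref{eq:gamma3quo} shows that the commutator of a generic element $(0,b,c+W) \in T$ with a generic element $(a',b',c'+W) \in Q_W$ has first and second coordinates $0$ and third coordinate $6[b,a']+W \in (L^3V)/W$. Hence $\gamma_3(Q_W) \le S$. For the reverse inclusion, I would fix a basis $\{e_1,\ldots,e_d\}$ of $V$, compute the basic commutator $[(e_i,0,0),(e_j,0,0)]$ in $\Gamma_3(V)$ (whose second coordinate works out to $2[e_i,e_j]$), and then apply the previous commutator formula to this element and $(e_k,0,0)$ to obtain an element of $S$ with third coordinate $12[e_i,e_j,e_k]+W$. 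Since the Lie brackets $[e_i,e_j,e_k]$ span $L^3V$ and $\gcd(12,p)=1$ (as $p > 3$), taking appropriate powers recovers all of $S$.

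For (ii), the inclusion ${Q_W}' \le T$ is immediate from the projection argument above. Combined with $\gamma_3(Q_W) = S \le {Q_W}'$ from (i), it remains to show that the second coordinates of elements of ${Q_W}'$ cover $L^2V$. Working modulo the central subgroup $S$, the commutators $[(e_i,0,0),(e_j,0,0)]$ have second coordinate $2[e_i,e_j]$, and since commutators multiply additively modulo $\gamma_3$ and $p > 2$, the fact that $\{[e_i,e_j]\}$ spans $L^2V$ then gives ${Q_W}' = T$. The equality $\Phi(Q_W) = {Q_W}'$ follows because $Q_W$, as a quotient of $\Gamma_3(V)$ (itself a quotient of the free Burnside group $B(d,p)$), has exponent $p$.

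The main obstacle is the explicit calculation of group commutators in $\Gamma_3(V)$: since inversion negates every coordinate and the third coordinate of a product involves a trilinear bracket, computing $x^{-1}y^{-1}xy$ requires careful bookkeeping and repeated use of bilinearity and antisymmetry of the Lie bracket. Once the two commutator formulas described above are in hand, however, the proof reduces to the same spanning-by-basis argument as in Proposition \ref{prop:gamma2derphi}.
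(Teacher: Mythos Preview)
Your proposal is correct and follows essentially the same strategy as the paper's proof: bound $\gamma_3(Q_W)$ above by $S$ via a commutator computation, then hit all of $S$ using the triple brackets $12[e_i,e_j,e_k]$ together with $\gcd(12,p)=1$; for (ii), reduce modulo $S$ to the $\Gamma_2(V)$ situation of Proposition~\ref{prop:gamma2derphi} and use that $Q_W$ has exponent $p$.

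The only notable difference is tactical. The paper quotes directly the triple-commutator formula
\[
[(a_1,b_1,c_1+W),(a_2,b_2,c_2+W),(a_3,b_3,c_3+W)] = (0,0,12[a_1,a_2,a_3]+W)
\]
from \cite[p.~2936]{bamberg}, and invokes the fact (from \cite{bourbaki}) that $\gamma_3$ is generated by such triple commutators. You instead factor this through the intermediate computation $[(0,b,c+W),(a',b',c'+W)] = (0,0,6[b,a']+W)$, which gives $[T,Q_W] \le S$ and hence $\gamma_3(Q_W) = [Q_W',Q_W] \le [T,Q_W] \le S$; you then recover the constant $12$ by composing with the second-coordinate $2[e_i,e_j]$ of a basic commutator. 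Your route is slightly more self-contained (it does not rely on the external citation for the triple-commutator identity), at the cost of one extra explicit computation; the paper's route is shorter because it outsources that computation. Either way the argument is the same in substance.
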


\begin{proof}
Let $a_i \in V$, $b_i \in L^2V$ and $c_i \in L^3V$ for each $i \in \{1,2,3\}$. Using \eqref{eq:gamma3}, \eqref{eq:gamma3quo}, and the formula for a commutator of three elements of $\Gamma_3(V)$ from \cite[p.~2936]{bamberg}, we see that $$[(a_1,b_1,c_1+W),(a_2,b_2,c_2+W),(a_3,b_3,c_3+W)] = (0,0,12[a_1,a_2,a_3]+W).$$ The group $\gamma_3(Q_W) = [Q_W,Q_W,Q_W]$ is generated by commutators of three elements of $Q_W$ (see \cite[Proposition 1.6.5(ii)]{bourbaki}), and so it lies in $S:={\{(0,0,c+W) \mid c \in L^3V\}}$, which is a subgroup of $Q_W$ isomorphic to the elementary abelian group $(L^3V)/W$. Now, let $\{e_1,\ldots,e_d\}$ be a basis for $V$. As $p > 3$, there is a positive integer $m < p$ such that $12m \equiv 1 \pmod p$, and hence ${(0,0,12[e_i,e_j,e_k]+W)^m} = {(0,0,[e_i,e_j,e_k]+W)}$. The bilinearity of $[\cdot,\cdot]$ implies that $L^3V$ is spanned by the Lie brackets $[e_i,e_j,e_k]$, and hence $S$ is generated by the commutators $[(e_i,0,W),(e_j,0,W),(e_k,0,W)]$. Thus $S = \gamma_3(Q_W)$.

Next, the first two coordinates of $[(a_1,b_1,c_1+W),(a_2,b_2,c_2+W)]$ are equal to the two coordinates of $[(a_1,b_1),(a_2,b_2)] \in \Gamma_2(V)$. Thus by Proposition \ref{prop:gamma2derphi}, ${Q_W}'$ lies in the subgroup $T:={\{(0,b,c+W) \mid b \in L^2V, c \in L^3V\}}$ of $Q_W$. It also follows from this proposition that for each $b \in L^2V$, we can multiply commutators of elements of $\Gamma_3(V)$ to obtain an element $(0,b,h+W)$ of $Q_W$, for some $h \in L^3V$. Since ${Q_W}'$ contains $\gamma_3(Q_W)$, we have shown above that, for each $c \in L^3V$, the element $(0,0,c-h+W)$ is a product of commutators of elements of $\Gamma_3(V)$. As $(0,b,h+W)(0,0,c-h+W) = (0,b,c+W)$, the subgroup $T$ is generated by commutators of elements of $\Gamma_3(V)$. Therefore, ${Q_W}' = T$.
\end{proof}

%For the proof of the following result, note that if $N$ is a normal subgroup of a group $G$ and if $i$ is a positive integer, then $G/N$ is nilpotent of nilpotency class $i$ if and only if $N$ contains $\gamma_{i+1}(G)$ but not $\gamma_i(G)$. In addition, $G$ and $G/N$ have the same rank if and only if $N \le \Phi(G)$.

\begin{lem}
\label{lem:gamma23quotients}
Let $P$ be a $p$-group.
\begin{enumerate}[label={(\roman*)}]
\item Suppose that $p > 2$. Then $P$ has rank $d$, exponent $p$ and nilpotency class $2$ if and only if $P \cong P_U$ for some proper subspace $U$ of $L^2V$.
\item Suppose that $p > 3$. Then $P$ has rank $d$, exponent $p$ and nilpotency class $3$ if and only if $P$ is a quotient of $\Gamma_3(V)$ by a normal subgroup that lies in $\gamma_2(\Gamma_3(V))$, and that does not contain $\gamma_3(\Gamma_3(V))$. In particular, if $P \cong Q_W$ for some proper subspace $W$ of $L^3V$, then $P$ has rank $d$, exponent $p$ and nilpotency class $3$. \label{gamma3quos}
\end{enumerate}
\end{lem}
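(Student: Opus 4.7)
The plan is to combine the universality of $\Gamma(d,p,r)$ from Theorem \ref{thm:gamma23} with the Frattini/derived-subgroup calculations already carried out in Propositions \ref{prop:gamma2derphi} and \ref{prop:gamma3derphi}. Throughout, any quotient of $\Gamma_r(V)$ automatically has exponent $p$ and nilpotency class at most $r$.

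For part (i), in the backward direction, $P_U$ is a quotient of $\Gamma_2(V) \cong \Gamma(d,p,2)$, so it has exponent $p$ and class at most $2$; Proposition \ref{prop:gamma2derphi} then gives ${P_U}' = \Phi(P_U) \cong (L^2V)/U$, which is nontrivial because $U$ is proper, so the class is exactly $2$, and $P_U/\Phi(P_U) \cong V$ has $\mathbb{F}_p$-dimension $d$, giving rank $d$. In the forward direction, the universality of $\Gamma(d,p,2)$ presents $P$ as a quotient $\Gamma_2(V)/N$. The rank-$d$ hypothesis forces $N \le \Phi(\Gamma_2(V))$, and by Proposition \ref{prop:gamma2derphi} the latter equals the central subgroup $R = \{(0,b) \mid b \in L^2V\} \cong L^2V$; identifying $N$ with the corresponding subspace $U$ of $L^2V$ then gives $P \cong P_U$, and $U$ must be proper since otherwise $P$ would have class at most $1$.

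Part (ii) follows the same pattern. For the forward direction, universality makes $P$ a quotient $\Gamma_3(V)/N$; rank $d$ forces $N \le \Phi(\Gamma_3(V)) = \gamma_2(\Gamma_3(V))$ by Proposition \ref{prop:gamma3derphi}(ii), and nilpotency class $3$ forces $\gamma_3(P) = \gamma_3(\Gamma_3(V))N/N$ to be nontrivial, i.e., $N$ does not contain $\gamma_3(\Gamma_3(V))$. Conversely, any normal subgroup $N$ satisfying these two constraints produces a quotient of exponent $p$, of class at most $3$ with $\gamma_3 \neq 1$ (hence class exactly $3$), and of rank $d$ since $N \le \Phi(\Gamma_3(V))$. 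For the ``in particular'' claim, a proper subspace $W$ of $L^3V$, viewed as the subgroup $\{(0,0,c) \mid c \in W\}$ of $\Gamma_3(V)$, lies inside $\gamma_3(\Gamma_3(V)) \le \gamma_2(\Gamma_3(V))$ by Proposition \ref{prop:gamma3derphi}(i), and is strictly smaller than $\gamma_3(\Gamma_3(V))$ because $W \subsetneq L^3V$; so the general criterion applies to $Q_W$.

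There is no serious obstacle; the proof is essentially bookkeeping with the structural information already in hand. The only subtlety worth noting is that, in part (i), any subgroup of $\Phi(\Gamma_2(V))$ is automatically normal in $\Gamma_2(V)$ (since $\Phi(\Gamma_2(V)) \le Z(\Gamma_2(V))$ in class $2$) and is automatically an $\mathbb{F}_p$-subspace of the elementary abelian group $(L^2V,+)$, so the passage from ``quotient by a normal subgroup'' to ``quotient by a subspace $U$'' requires no extra argument.
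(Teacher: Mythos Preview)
Your proof is correct and follows essentially the same approach as the paper: both rely on the universality of $\Gamma_r(V)$ together with Propositions \ref{prop:gamma2derphi} and \ref{prop:gamma3derphi}, using the standard facts that $G/N$ has nilpotency class $i$ iff $\gamma_{i+1}(G) \le N$ and $\gamma_i(G) \not\le N$, and that $G/N$ has the same rank as $G$ iff $N \le \Phi(G)$. The paper's proof is simply a condensed version of yours, while you spell out each direction and the ``in particular'' clause explicitly.
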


\begin{proof}
If $N$ is a normal subgroup of a group $G$ and if $i$ is a positive integer, then $G/N$ is nilpotent of nilpotency class $i$ if and only if $N$ contains $\gamma_{i+1}(G)$ but not $\gamma_i(G)$. In addition, $G$ and $G/N$ have the same rank if and only if $N \le \Phi(G)$. The result therefore follows from Propositions \ref{prop:gamma2derphi} and \ref{prop:gamma3derphi}, and from the fact that, for each $r \in \{2,3\}$, the group $\Gamma_r(V)$ is the universal group of rank $d$, exponent $p$ and nilpotency class $r$.
%
%Let $r \in \{2,3\}$, suppose that $p > r$, and let $N$ be a normal subgroup of $G:=\Gamma_r(V)$. Since $\gamma_{r+1}(G) = 1$, $G/N$ is nilpotent of nilpotency class $r$ if and only if $N$ does not contain $\gamma_{r}(G)$. Additionally, as $G$ has rank $d$, $G/N$ also has rank $d$ if and only if $N \le \Phi(G)$. Moreover, $G$ has exponent $p$, and hence so does each nontrivial quotient of $G$.
%
%It follows from Proposition \ref{prop:gamma2derphi} that if $r = 2$, then $G/N$ has rank $d$, exponent $p$ and nilpotency class $2$ if and only if $N$ is a proper subgroup of $\{(0,b) \mid b \in L^2V\}$, i.e., a proper subspace of $L^2V$. The definition of $P_U$ implies that if $P \cong P_U$ for some proper subspace $U$ of $L^2V$, then $P$ has rank $d$, exponent $p$ and nilpotency class $2$. Conversely, if $P$ is a $p$-group of rank $d$, exponent $p$ and nilpotency class $2$, then $P$ is a quotient of $G$. Hence $P \cong P_U$ for some proper subspace $U$ of $L^2V$.
%
%If instead $r = 3$, then Proposition \ref{prop:gamma3derphi} implies that $G/N$ has rank $d$, exponent $p$ and nilpotency class $3$ if and only if $\gamma_3(\Gamma_3(V)) \not\le N \le \gamma_2(\Gamma_3(V))$. In particular, this is the case if $N$ is a proper subspace of $L^3V$. The remainder of the result follows from the fact that each $p$-group of rank $d$, exponent $p$ and nilpotency class $3$ is a quotient of $G$.
\end{proof}

Note that if $N$ is a normal subgroup of $\Gamma_3(V)$ that lies in $\gamma_2(\Gamma_3(V))$ and neither contains nor lies in $\gamma_3(\Gamma_3(V))$, then $\Gamma_3(V)/N$ is a $p$-group of rank $d$, exponent $p$ and nilpotency class $3$ that cannot be expressed as $Q_W$ for any subspace $W$ of $L^3V$.

Now, if $B=B(d,p)$, and if $r$ is any positive integer such that $\gamma_r(B) \ne 1$, then $\gamma_r(B)/\gamma_{r+1}(B)$ is an $\mathbb{F}_p[\mathrm{GL}(d,p)]$-module isomorphic to $L^rV$. We can therefore identify the subspaces of $L^rV$ with the quotients $M/\gamma_{r+1}(B)$, for the subgroups $M$ of $\gamma_r(B)$ that contain $\gamma_{r+1}(B)$. In each case, $M \trianglelefteq B$, and we have $$\Gamma(d,p,r)/(M/\gamma_{r+1}(B)) = (B/\gamma_{r+1}(B))/(M/\gamma_{r+1}(B)) \cong B/M.$$

\begin{thm}[{\cite[Theorem 2.2]{bamberg}}]
\label{thm:burnsideap}
Let $r$ be a positive integer, and let $M$ be a proper subgroup of $\gamma_r(B)$ that contains $\gamma_{r+1}(B)$. Then the group $A(B/M)$ induced by $\mathrm{Aut}(B/M)$ on the Frattini quotient of $B/M$ is the stabiliser of $M/\gamma_{r+1}(B)$ in $\mathrm{GL}(d,p)$.
\end{thm}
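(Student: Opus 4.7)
The plan is to identify both $A(B/M)$ and $\mathrm{GL}(d,p)_{M/\gamma_{r+1}(B)}$ with the image in $\mathrm{GL}(V)$ of the single subgroup $H := \{\phi \in \mathrm{Aut}(B) : \phi(M) = M\}$, where $V := B/\Phi(B)$. Since $B$ has exponent $p$ we have $\Phi(B) = \gamma_2(B)$, so in the interesting case $r \ge 2$ the containment $M \le \gamma_r(B) \le \Phi(B)$ identifies the Frattini quotient of $B/M$ canonically with $V$, and each $\phi \in H$ descends to an automorphism of $B/M$. I will use two facts without proof: the universal property of $B$ as the free object of rank $d$ in the variety of exponent-$p$ groups yields a surjection $\rho : \mathrm{Aut}(B) \twoheadrightarrow \mathrm{GL}(V)$, and under $\rho$ the action of $\mathrm{Aut}(B)$ on $\gamma_r(B)/\gamma_{r+1}(B)$ matches the natural $\mathrm{GL}(V)$-action on $L^rV$ via the Magnus/Witt identification $\gamma_r(B)/\gamma_{r+1}(B) \cong L^rV$, which is valid here since $r < p$ (for $r = 1$ the statement is trivial, as then necessarily $M = \gamma_2(B)$).

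For the identity $\rho(H) = A(B/M)$, I would show that the descent map $H \to \mathrm{Aut}(B/M)$ is surjective. Given $\alpha \in \mathrm{Aut}(B/M)$, fix a generating set $x_1,\ldots,x_d$ of $B$, pick $y_i \in B$ with $y_iM = \alpha(x_iM)$, and define $\phi \in \mathrm{End}(B)$ on generators by $\phi(x_i) := y_i$ using the universal property of $B$. By construction $\phi$ descends modulo $M$ to $\alpha$, so $\phi(M) \le M$. Because $\alpha$ induces an invertible map on the Frattini quotient $V$, so does $\phi$, and hence $\phi \in \mathrm{Aut}(B)$ by the Burnside basis theorem. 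A cardinality argument then upgrades $\phi(M) \le M$ to $\phi(M) = M$, placing $\phi$ in $H$. Conversely, every $\phi \in H$ descends to $\mathrm{Aut}(B/M)$, and $\rho|_H$ factors through this descent, giving $\rho(H) = A(B/M)$.

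For the identity $\rho(H) = \mathrm{GL}(d,p)_{M/\gamma_{r+1}(B)}$, the inclusion $\subseteq$ is immediate: if $\phi(M) = M$ then the induced map on $\gamma_r(B)/\gamma_{r+1}(B)$ preserves $M/\gamma_{r+1}(B)$, which via the compatibility with $L^rV$ says exactly that $\rho(\phi)$ stabilises this subspace. Conversely, given $g$ in the stabiliser, lift it via $\rho$ to some $\phi \in \mathrm{Aut}(B)$; compatibility then yields $\phi(M)\gamma_{r+1}(B) = M$, and since $\gamma_{r+1}(B) \le M$ and $\phi$ is bijective this forces $\phi(M) = M$, so $\phi \in H$ and $g = \rho(\phi) \in \rho(H)$.

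The two bookkeeping arguments above are routine once $H$ is introduced as a pivot; the one genuinely non-trivial ingredient is the Magnus/Witt identification of $\gamma_r(B)/\gamma_{r+1}(B)$ with $L^rV$ as $\mathbb{F}_p[\mathrm{GL}(V)]$-modules. This is precisely what justifies working with the free Burnside group (rather than an arbitrary $p$-group) and will be the main obstacle in the proof, to be invoked as a black box from Magnus/Witt/Lazard theory together with the exponent-$p$ restriction $r < p$.
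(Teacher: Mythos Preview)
The paper does not give its own proof of this statement: it is quoted verbatim as \cite[Theorem 2.2]{bamberg} and used as a black box. So there is nothing in the present paper to compare your argument against, and your sketch is essentially a reconstruction of (what is almost certainly) the original proof in the cited reference. The pivot via $H=\{\phi\in\mathrm{Aut}(B):\phi(M)=M\}$, the surjectivity of $H\to\mathrm{Aut}(B/M)$ using freeness of $B$ and Burnside's Basis Theorem, and the two-way stabiliser argument are all correct.

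One small over-assumption: you do not need the Magnus/Witt identification $\gamma_r(B)/\gamma_{r+1}(B)\cong L^rV$, nor the restriction $r<p$, to prove the theorem as stated. The statement only asserts that $A(B/M)$ equals the stabiliser of $M/\gamma_{r+1}(B)$ under the $\mathrm{GL}(d,p)$-action on $\gamma_r(B)/\gamma_{r+1}(B)$; all you need for that action to be well-defined is that $\ker\rho$ (the automorphisms of $B$ trivial on $B/\Phi(B)$) acts trivially on each lower-central quotient, which is the elementary fact that if $\phi(x)\equiv x\pmod{\gamma_2(B)}$ for all $x$ then $\phi$ induces the identity on every $\gamma_i(B)/\gamma_{i+1}(B)$. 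The identification with $L^rV$ is what the paper uses \emph{afterwards}, in Theorem~\ref{thm:univap}, to translate stabilisers into the language of Lie powers; it is not part of the present theorem.
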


The following theorem describes $A(P)$ when $P$ is a $p$-group isomorphic to $P_U$ or to $Q_W$, for some proper subspace $U$ of $L^2V$ or $W$ of $L^3V$. This result was used in \cite{sdfg2} and \cite{bamberg} to construct $p$-groups of exponent $p$ and nilpotency class $2$ or $3$.

\begin{thm}
\label{thm:univap}
Suppose that $p > r$, with $r \in \{2,3\}$, and that $P$ is a $p$-group isomorphic to $P_X$ or $Q_X$ for some proper subspace $X$ of $L^rV$. Then $A(P) = \mathrm{GL}(d,p)_X$.
\end{thm}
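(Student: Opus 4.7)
The plan is to reduce to Theorem \ref{thm:burnsideap} by identifying $P_X$ (respectively $Q_X$) with a quotient $B/M$ of the free Burnside group $B = B(d,p)$. The point is that Theorem \ref{thm:gamma23} already hands us the isomorphisms $\Gamma_2(V) \cong \Gamma(d,p,2) = B/\gamma_3(B)$ and $\Gamma_3(V) \cong \Gamma(d,p,3) = B/\gamma_4(B)$, and (crucially) these isomorphisms are $\mathrm{GL}(d,p)$-equivariant under the action on $\Gamma_r(V)$ recorded just after Theorem \ref{thm:gamma23}. Once the identification is in place, the claim is essentially a translation exercise.

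First I would handle the case $P \cong P_X$. By Proposition \ref{prop:gamma2derphi}, the subspace $X$ of $L^2V$ sits inside $Z(\Gamma_2(V)) = \{(0,b) \mid b \in L^2V\}$, and the isomorphism $\Gamma_2(V) \to B/\gamma_3(B)$ carries this centre to $\gamma_2(B)/\gamma_3(B)$, which we identify with $L^2V$ as an $\mathbb{F}_p[\mathrm{GL}(d,p)]$-module. Hence $X$ corresponds to some subgroup of the form $M/\gamma_3(B)$, where $\gamma_3(B) \le M \trianglelefteq \gamma_2(B)$, and $P_X \cong B/M$. Since $X$ is a proper subspace of $L^2V$, the subgroup $M$ is a proper subgroup of $\gamma_2(B)$. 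Theorem \ref{thm:burnsideap} then gives $A(P) = A(B/M) = \mathrm{GL}(d,p)_{M/\gamma_3(B)} = \mathrm{GL}(d,p)_X$.

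The case $P \cong Q_X$ is entirely analogous: by Proposition \ref{prop:gamma3derphi}(i), the subspace $X$ of $L^3V$ sits in $Z(\Gamma_3(V)) = \gamma_3(\Gamma_3(V))$, and under the $\mathrm{GL}(d,p)$-equivariant isomorphism $\Gamma_3(V) \to B/\gamma_4(B)$ it corresponds to a subgroup $M/\gamma_4(B)$ with $\gamma_4(B) \le M \le \gamma_3(B)$ and $M \ne \gamma_3(B)$. Thus $Q_X \cong B/M$, and applying Theorem \ref{thm:burnsideap} with $r = 3$ yields $A(P) = \mathrm{GL}(d,p)_X$.

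The only real subtlety, and the main thing that needs to be verified carefully, is that the isomorphisms furnished by Theorem \ref{thm:gamma23} are $\mathrm{GL}(d,p)$-equivariant, and that they induce precisely the natural identification of $L^rV$ with $\gamma_r(B)/\gamma_{r+1}(B)$ used in Theorem \ref{thm:burnsideap}. This equivariance on the centre essentially amounts to the observation that the commutators (and, in the cubic case, the Lie triple commutators) computed inside $\Gamma_r(V)$ in the proofs of Propositions \ref{prop:gamma2derphi} and \ref{prop:gamma3derphi} match, up to a scalar unit in $\mathbb{F}_p$, the corresponding images of basic Lie monomials under the canonical map $L^rV \to \gamma_r(B)/\gamma_{r+1}(B)$, $[e_{i_1},\ldots,e_{i_r}] \mapsto [e_{i_1},\ldots,e_{i_r}]\gamma_{r+1}(B)$. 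Both are visibly $\mathrm{GL}(d,p)$-equivariant, and both generate the relevant subgroup, so the identification of $X$ with $M/\gamma_{r+1}(B)$ is unambiguous and the appeal to Theorem \ref{thm:burnsideap} is legitimate.
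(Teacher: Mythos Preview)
Your proposal is correct and follows essentially the same route as the paper: identify $P_X$ or $Q_X$ with a quotient $B/M$ of the free Burnside group via the isomorphism $\Gamma_r(V)\cong\Gamma(d,p,r)=B/\gamma_{r+1}(B)$, so that $X$ corresponds to $M/\gamma_{r+1}(B)$, and then invoke Theorem~\ref{thm:burnsideap}. The paper's own proof is in fact terser than yours---it simply asserts the existence of $M$ and applies Theorem~\ref{thm:burnsideap}, concluding with the phrase ``up to conjugacy in $\mathrm{GL}(d,p)$''; your discussion of the $\mathrm{GL}(d,p)$-equivariance of the identification is precisely what justifies dropping that caveat, and is a welcome elaboration rather than a deviation.
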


\begin{proof}
There exists a proper subgroup $M$ of $\gamma_{r}(B)$ with $\gamma_{r+1}(B) \le M$ and $X = M/\gamma_{r+1}(B)$. We have $P \cong B/M$, and it follows from Theorem \ref{thm:burnsideap} that $A(P) = A(B/M) = \mathrm{GL}(d,p)_X$, up to conjugacy in $\mathrm{GL}(d,p)$.
\end{proof}

This theorem shows that in order to construct a $p$-group $P$ as $P_U$ (respectively, as $Q_W$) such that $A(P)$ is a particular subgroup $H$ of $\mathrm{GL}(d,p)$, then $H$ must be the stabiliser in $\mathrm{GL}(d,p)$ of some proper subspace of $L^2V$ (respectively, of $L^3V$). We must therefore be able to distinguish between $H$ and any proper overgroup of $H$ in $\mathrm{GL}(d,p)$ by comparing the subspaces of $L^2V$ (respectively, of $L^3V$) stabilised by these linear groups. Observe that if $r \in \{2,3\}$, if $p > r$, and if $P = \Gamma_r(V)$, then $A(P) = \mathrm{GL}(d,p)_{\{0\}} = \mathrm{GL}(d,p)$. Hence if $X$ is a proper subspace of $L^rV$, then $A(\Gamma_r(V)/X) \le A(\Gamma_r(V))$.

Suppose now that $p > 3$, that $H$ is a subgroup of $\mathrm{GL}(d,p)$, and that there is no $p$-group $P$ of exponent-$p$ class $2$ such that $A(P) = H$. Additionally, suppose that $W$ is a proper subgroup of $L^3V$ such that $A(Q_W) = H$, with $W$ having the largest order of such a proper subgroup. Then $Q_W$ has exponent-$p$ class $3$, exponent $p$ and nilpotency class $3$, and minimal order among the groups that can be expressed as $Q_X$, with $X$ a proper subspace of $L^3V$ such that $A(Q_X) = H$. Moreover, if $Q$ is an optimal $p$-group with respect to $H$ as in Definition \ref{def:optimalpgp}, then $Q$ has the same exponent-$p$ class, exponent and nilpotency class as $Q_W$. However, we are not able to determine the order of $Q$. This is because, if $N$ is a normal subgroup of $\Gamma_3(V)$ that satisfies the hypotheses of Lemma \ref{lem:gamma23quotients}\ref{gamma3quos} but does not lie in $\gamma_3(\Gamma_3(V)) \cong L^3V$, then Theorem \ref{thm:univap} does not yield any information about $A(\Gamma_3(V)/N)$. We therefore introduce the following definition.

\begin{defn}
\label{def:quasioptimalpgp}
Let $H$ be a subgroup of $\mathrm{GL}(d,p)$, and suppose that each $p$-group that is optimal with respect to $H$ has exponent-$p$ class $3$, exponent $p$ and nilpotency class $3$. Additionally, let $\mathscr{Q}_H$ be the set of $p$-groups $Q_W$, for proper subspaces $W$ of $L^3V$, such that $A(Q_W) = H$. The groups in $\mathscr{Q}_H$ of minimal order are called \emph{quasi-optimal}\index{quasi-optimal $p$-group} with respect to $H$.
\end{defn}

Observe that a $p$-group that is quasi-optimal with respect to $H$ satisfies conditions \ref{optimal1}--\ref{optimal3} of Definition \ref{def:optimalpgp} and a variation of condition \ref{optimal4}. Although we are aware of $p$-groups that are quasi-optimal with respect to certain linear groups (such as those described in \S\ref{sec:mainproof} and \cite[Table 6.1]{bamberg}), we do not know whether or not any of these $p$-groups are optimal with respect to the corresponding linear groups.

Now, let $Q$ be a $p$-group of rank $n>1$ and exponent-$p$ class $c$. We summarise some definitions from a paper by O'Brien \cite[\S2]{obrien} related to the \emph{$p$-covering group} $Q^*$ of $Q$. Here, $Q^*$ is the largest group among $p$-groups $P$ of rank $n$ that contain an elementary abelian subgroup $M_P$, with $M_P \le Z(P) \cap \Phi(P)$ and $P/M_P \cong Q$, and every such $p$-group $P$ is a quotient of $Q^*$. The subgroup $M:=M_{Q^*}$ of $Q^*$ is called the \emph{$p$-multiplicator} of $Q$. A subgroup $X$ of $M$ is called \emph{allowable} if $R:=Q^*/X$ has rank $n$ and exponent-$p$ class $c+1$, and if $R/\mathcal{P}_{c+1}(R) \cong Q$, where $\mathcal{P}_{c+1}(R)$ is the $(c+1)$-th group in the lower exponent-$p$ central series of $R$. In fact, a subgroup $X$ of $M$ is allowable if and only if $X < M$ and $X \mathcal{P}_{c+1}(Q^*) = M$ \cite[Theorem 2.4]{obrien}.

Throughout the rest of this section, let $E$ denote the elementary abelian $p$-group of rank $d$. In the following proposition, we consider the $p$-covering group $E^*$ of $E$.

\begin{prop}
\label{prop:vstar}
A $p$-group $P$ has rank $d$ and exponent-$p$ class $2$ if and only if $P$ is a quotient of $E^*$ by a proper subgroup of $\Phi(E^*)$. In particular, $E^*$ has exponent-$p$ class $2$. Moreover, $M_{E^*} = \Phi(E^*)$.
\end{prop}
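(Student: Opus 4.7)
The plan is to leverage the universal property of the $p$-covering group together with the definitions of the $p$-multiplicator and of exponent-$p$ class, treating the three assertions in a convenient order: first $M_{E^*}=\Phi(E^*)$, then the exponent-$p$ class of $E^*$, and finally the equivalence.

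First I would establish $M_{E^*}=\Phi(E^*)$. The inclusion $M_{E^*}\le\Phi(E^*)$ is built into the definition of the $p$-multiplicator recalled in the paragraph preceding the proposition, while the reverse inclusion uses that $E^*/M_{E^*}\cong E$ is elementary abelian, so $\Phi(E^*)$, which is the smallest normal subgroup with elementary abelian quotient, lies in $M_{E^*}$.

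Next I would show that $E^*$ has exponent-$p$ class exactly $2$. For the upper bound, since $E^*/M_{E^*}\cong E$ is elementary abelian we get $\mathcal{P}_2(E^*)=[E^*,E^*](E^*)^p\le M_{E^*}$; since $M_{E^*}$ is elementary abelian and central in $E^*$, one more step gives $\mathcal{P}_3(E^*)\le[M_{E^*},E^*]M_{E^*}^p=1$. For the lower bound, I would invoke the existence of a rank-$d$ $p$-group of exponent-$p$ class $2$ (for instance $\Gamma_2(V)$ from Theorem \ref{thm:gamma23}, using $d>1$): such a group satisfies all the defining properties of a quotient of the $p$-covering group of $E$, so it is a quotient of $E^*$ and therefore forces $E^*$ itself to have exponent-$p$ class at least $2$.

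For the equivalence, in the forward direction let $P$ be a $p$-group of rank $d$ and exponent-$p$ class $2$. Then $\Phi(P)=\mathcal{P}_2(P)$, and from $\mathcal{P}_3(P)=1$ the subgroup $\Phi(P)$ is elementary abelian and central in $P$. Moreover $P/\Phi(P)\cong E$, so $P$ lies in the class of $p$-groups of rank $d$ admitting an elementary abelian central subgroup contained in the Frattini subgroup with quotient $E$; by the universal property of the $p$-covering group, $P$ is a quotient of $E^*$, say $P\cong E^*/K$. Because $P$ has the same rank as $E^*$, a generating set of $P$ lifts to a generating set of $E^*$ modulo $K$, forcing $K\le\Phi(E^*)$; since $P$ is non-abelian (or at any rate non-elementary-abelian), $K\ne\Phi(E^*)$, so $K<\Phi(E^*)$. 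Conversely, if $P=E^*/X$ with $X<\Phi(E^*)$, then $X\le\Phi(E^*)$ ensures $P$ has rank $d$, the class bound for $E^*$ gives $P$ exponent-$p$ class at most $2$, and $X\ne\Phi(E^*)$ means $P\not\cong E$, so the exponent-$p$ class is exactly $2$.

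The only mild obstacle is the non-triviality of the exponent-$p$ class bound from below; invoking $\Gamma_2(V)$ (or equally any non-abelian $p$-group of rank $d$ and exponent $p$) disposes of it cleanly, since the standing hypothesis $d>1$ of the section guarantees such a group exists. The rest is a direct reading of the universal property.
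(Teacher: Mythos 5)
Your proof is correct and takes essentially the same route as the paper's: identify $M_{E^*}=\Phi(E^*)$ from the definitions, bound $\mathcal{P}_3(E^*)$ using that the $p$-multiplicator is elementary abelian and central, and read off the equivalence from the universal property of the $p$-covering group together with the observation that a quotient $E^*/K$ retains rank $d$ exactly when $K\le\Phi(E^*)$. Where the paper cites O'Brien's allowability criterion to handle the converse direction, you argue directly from the class bound; in this special case ($Q=E$, $c=1$, $\mathcal{P}_2(E^*)=M_{E^*}$) the two formulations coincide.

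One small point worth flagging: the witnesses you offer for $\Phi(E^*)>1$ (equivalently, for the exponent-$p$ class of $E^*$ being at least $2$), namely $\Gamma_2(V)$ or a non-abelian $p$-group of rank $d$ and exponent $p$, exist only for $p>2$, whereas Proposition~\ref{prop:vstar} and the standing hypotheses of \S\ref{sec:univpgps} place no parity restriction on $p$. The paper sidesteps this by exhibiting the direct product of $d$ copies of $C_{p^2}$, which has rank $d$ and exponent-$p$ class $2$ for every prime $p$; substituting that example for $\Gamma_2(V)$ removes the implicit restriction and makes your argument uniform in $p$.
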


\begin{proof}
First assume that $P$ has rank $d$ and exponent-$p$ class $2$. Then $\Phi(P)$ is an elementary abelian subgroup of $Z(P)$, and $P/\Phi(P) \cong E$ by Burnside's Basis Theorem. Thus by the definition of $E^*$, there exists a normal subgroup $Y$ of $E^*$ such that $P \cong E^*/Y$. As $E^*$ and $P$ both have rank $d$, and as $P$ is not elementary abelian, we have $Y < \Phi(E^*)$, as required. Note that the direct product of $d$ copies of the cyclic group $C_{p^2}$ has rank $d$ and exponent $p$-class $2$, and so $\Phi(E^*) > 1$.

To prove the converse, let $M:=M_{E^*}$. Since $M \le \Phi(E^*)$ and $E^*/M \cong E$ is elementary abelian, we have $M = \Phi(E^*)$. Additionally, $E$ has exponent-$p$ class $1$, and $\mathcal{P}_{2}(E^*) = \Phi(E^*)$. Therefore, every proper subgroup $X$ of $M$ is allowable, i.e., $E^*/X$ has rank $d$ and exponent-$p$ class $2$. As $\Phi(E^*) > 1$, it follows that $E^*$ itself has exponent-$p$ class $2$.
\end{proof}

We can therefore consider $E^*$ as the universal $p$-group of rank $d$ and exponent-$p$ class $2$. Additionally, each automorphism $\alpha$ of $E$ lifts to an automorphism $\alpha^*$ of $E^*$, via the natural epimorphism from $E^*$ to $E$ with kernel $\Phi(E^*)$ \cite[p.~2275]{eick}. In fact, $\mathrm{Aut}(E)$ acts on $M_{E^*}=\Phi(E^*)$, with $x^\alpha:=x^{\alpha^*}$ for each $x \in \Phi(E^*)$ and each $\alpha \in \mathrm{Aut}(E)$. In particular, if $V$ is the vector space $\mathbb{F}_p^d$, then the natural linear action of $\mathrm{Aut}(E) \cong \mathrm{GL}(V)$ on $E \cong V$ induces an action of $\mathrm{GL}(V)$ on $\Phi(E^*)$.

Observe that the exponent-$p$ class of a $p$-group of exponent $p$ is equal to its nilpotency class. Supposing that $p > 2$, Theorem \ref{thm:gamma23} implies that the universal $p$-group $\Gamma(d,p,2)$ of rank $d$, exponent $p$ and nilpotency class $2$ is also the universal $p$-group of rank $d$, exponent $p$ and exponent-$p$ class $2$. Thus $\Gamma(d,p,2)$ is the largest quotient of $E^*$ of exponent $p$, i.e., $\Gamma(d,p,2) \cong E^*/(E^*)^p$. %This also means that $(E^*)^p$ is a proper subgroup of $\Phi(E^*)$.
Moreover, $\Phi(E^*) = (E^*)^p \oplus (E^*)'$, with $(E^*)^p \cong V$ and ${(E^*)' \cong A^2V}$ as $\mathbb{F}_p[\mathrm{GL}(V)]$-modules \cite[\S1--2]{glasbyucs}. Since $A^2V \cong L^2V$, and since ${E^*/\Phi(E^*) \cong V}$ by Burnside's Basis Theorem, we deduce the following.

\begin{prop}
\label{prop:vstarext}
Suppose that $p > 2$. Then $E^*$ is an extension of $V \oplus L^2V$ by $V$.
\end{prop}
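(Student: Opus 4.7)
The plan is to observe that the proposition is essentially an assembly of the facts listed in the paragraph immediately preceding it. The natural short exact sequence to consider is
\[
1 \to \Phi(E^*) \to E^* \to E^*/\Phi(E^*) \to 1,
\]
so I need to identify the two end terms with $V \oplus L^2V$ and $V$ respectively.

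First I would handle the quotient: since $E^*$ has rank $d$ by construction and $V = \mathbb{F}_p^d$, Burnside's Basis Theorem gives $E^*/\Phi(E^*) \cong V$. Second, for the kernel, I would invoke the isomorphism $\Gamma(d,p,2) \cong E^*/(E^*)^p$ noted in the paragraph above, together with the decomposition $\Phi(E^*) = (E^*)^p \oplus (E^*)'$ as $\mathbb{F}_p[\mathrm{GL}(V)]$-modules from \cite{glasbyucs}, where $(E^*)^p \cong V$ and $(E^*)' \cong A^2V$. (This decomposition is legitimate as an internal direct sum of groups because $\Phi(E^*)$ is elementary abelian, being contained in $Z(E^*) \cap \Phi(E^*)$ by the definition of the $p$-covering group together with Proposition \ref{prop:vstar}.) Finally, since $p > 2$, we have $A^2V \cong L^2V$, so $\Phi(E^*) \cong V \oplus L^2V$ as required.

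Since every ingredient has already been established or cited, there is no genuine obstacle here; the proof is just a matter of explicitly writing the short exact sequence and substituting the known isomorphisms for the kernel and cokernel. The only minor point worth stating clearly is that the direct sum $V \oplus L^2V$ refers to a group-theoretic (abelian) direct sum inside $\Phi(E^*)$, which is justified by the fact that $\Phi(E^*)$ is elementary abelian by Proposition \ref{prop:vstar}.
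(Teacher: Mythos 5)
Your proposal is correct and follows essentially the same route the paper takes: the paper does not give a separate displayed proof for this proposition but states it as a direct consequence of the facts recorded in the preceding paragraph, namely $E^*/\Phi(E^*)\cong V$ (Burnside), $\Phi(E^*)=(E^*)^p\oplus(E^*)'$ with $(E^*)^p\cong V$ and $(E^*)'\cong A^2V$ as $\mathbb{F}_p[\mathrm{GL}(V)]$-modules (cited to Glasby et al.), and $A^2V\cong L^2V$ for $p>2$. You have simply made the implicit short exact sequence explicit.
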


We can also use the above decomposition of $\Phi(E^*)$ into $\mathrm{GL}(V)$-submodules to describe $A(P)$ when $P$ is a $p$-group of exponent-$p$ class $2$. A version of the following result, which is a generalisation of the $r  = 2$ case of Theorem \ref{thm:univap}, previously appeared in \cite[\S2]{glasbyucs}.

\begin{thm}
\label{thm:expclass2stab}
Suppose that $p > 2$, and that $P$ is a $p$-group of exponent-$p$ class $2$, i.e., that $P \cong E^*/X$ for a proper subgroup $X$ of $\Phi(E^*) \cong V \oplus L^2V$. Then $A(P) = \mathrm{GL}(d,p)_X$. Furthermore:
\begin{enumerate}[label={(\roman*)}]
\item $P$ is abelian if and only if $X$ contains the direct summand $L^2V$. In this case, $P$ has exponent $p^2$; and $A(P) = \mathrm{GL}(d,p)_{X \cap V}$. \label{exppsqstab}
\item $P$ has exponent $p$ if and only if $X$ contains the direct summand $V$. In this case, $P$ has nilpotency class $2$; $A(P) = \mathrm{GL}(d,p)_{X \cap L^2V}$; and $P \cong P_{X \cap L^2V}$. \label{exppnilp2stab}
\end{enumerate}
\end{thm}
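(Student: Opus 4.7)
The plan is to derive everything from the $\mathrm{GL}(V)$-equivariant decomposition $\Phi(E^*) = (E^*)^p \oplus (E^*)' \cong V \oplus L^2V$ given just before Proposition \ref{prop:vstarext}, together with the canonical lift $\mathrm{Aut}(E) \to \mathrm{Aut}(E^*)$ supplied by the $p$-covering-group construction.

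For the identity $A(P) = \mathrm{GL}(d,p)_X$, I would argue as follows. Given $\alpha \in \mathrm{GL}(V)_X$, the lift $\alpha^* \in \mathrm{Aut}(E^*)$ satisfies $\alpha^*(X) = X$, because the $\mathrm{GL}(V)$-action on $\Phi(E^*) \supseteq X$ is literally that of $\alpha^*$. Hence $\alpha^*$ descends to an automorphism of $P = E^*/X$ that induces $\alpha$ on $P/\Phi(P) \cong E$, so $\alpha \in A(P)$. Conversely, every $\beta \in \mathrm{Aut}(P)$ lifts to some $\tilde\beta \in \mathrm{Aut}(E^*)$ with $\tilde\beta(X) = X$ by the universal property of the $p$-covering group, and the induced element of $\mathrm{GL}(V)$ then stabilises $X$. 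The cleanest reference for this second direction is \cite[Theorem 2.5]{obrien}, which is the $p$-covering-group analogue of Theorem \ref{thm:burnsideap} and yields the identity in one step.

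For (i), $P$ is abelian iff $(E^*)' \le X$, i.e., iff $X$ contains the $L^2V$-summand of $\Phi(E^*)$. In that case $X = (X \cap V) \oplus L^2V$ and $P \cong (E^*/L^2V)/(X \cap V)$. The quotient $E^*/L^2V$ is abelian of exponent-$p$ class $2$ and rank $d$; the image of the $p$-th-power subgroup $V$ of $E^*$ is nonzero in it, so $E^*/L^2V$ has exponent $p^2$ (in fact, a dimension count shows $E^*/L^2V \cong (C_{p^2})^d$). Since $X < \Phi(E^*)$ forces $X \cap V < V$, the further quotient $P$ is still not elementary abelian, so has exponent $p^2$. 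The $\mathrm{GL}(V)$-invariance of the $L^2V$-summand then gives $\mathrm{GL}(V)_X = \mathrm{GL}(V)_{X \cap V}$.

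Part (ii) is dual: $P$ has exponent $p$ iff $(E^*)^p \le X$ iff $X$ contains the $V$-summand, in which case $X = V \oplus (X \cap L^2V)$ and $P \cong (E^*/V)/(X \cap L^2V)$. The quotient $E^*/V$ is the universal rank-$d$ exponent-$p$ group of exponent-$p$ class $2$, which by the discussion preceding Proposition \ref{prop:vstarext} is $\Gamma(d,p,2) \cong \Gamma_2(V)$. Hence $P \cong \Gamma_2(V)/(X \cap L^2V) = P_{X \cap L^2V}$, which has nilpotency class $2$ because $X$ proper forces $X \cap L^2V < L^2V$, so $P$ is not abelian. Applying Theorem \ref{thm:univap} with $r = 2$ now gives $A(P) = \mathrm{GL}(V)_{X \cap L^2V}$, in agreement with the $\mathrm{GL}(V)$-invariance of the $V$-summand. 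The main obstacle is the first step: one must know both that the canonical lift $\mathrm{Aut}(E) \to \mathrm{Aut}(E^*)$ acts well-definedly on $\Phi(E^*)$, and that every automorphism of $E^*/X$ arises from one of $E^*$ preserving $X$. Both facts belong to the general theory of $p$-covering groups; once they are invoked, parts (i) and (ii) reduce to reading off the two direct summands of $\Phi(E^*)$.
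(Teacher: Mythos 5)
Your proposal is correct and follows essentially the same approach as the paper: both rest on the $p$-covering-group machinery (cited, not reproved) to establish $A(P) = \mathrm{GL}(d,p)_X$, and then read off parts (i) and (ii) from the $\mathrm{GL}(d,p)$-invariant decomposition $\Phi(E^*) \cong V \oplus L^2V$. A few small comparisons are worth making. First, your justification that every $\beta \in \mathrm{Aut}(P)$ lifts to $\tilde\beta \in \mathrm{Aut}(E^*)$ with $\tilde\beta(X)=X$ ``by the universal property'' slightly understates the difficulty: this is a genuine theorem about allowable subgroups, not an immediate consequence of the universal mapping property of $E^*$ (which speaks to surjections onto immediate descendants, not directly to lifting automorphisms of the quotient). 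The paper cites the precise results (\cite[Theorem 3.2]{eick} and \cite[Theorem 2.10]{obrien}); since you are already appealing to O'Brien for this step, the hand-wave is harmless but should be replaced by the correct citation. Second, in (i) and (ii) the paper uses a quicker route to the exponent and nilpotency claims — e.g.\ an abelian $p$-group of exponent-$p$ class exactly $2$ must have exponent exactly $p^2$ simply because $\mathcal{P}_2(P)=P^p \ne 1$ while $\mathcal{P}_3(P)=1$ — where you instead identify $E^*/L^2V \cong (C_{p^2})^d$ and $E^*/V \cong \Gamma_2(V)$ explicitly; both work, and your version has the merit of making the isomorphism $P \cong P_{X\cap L^2V}$ in (ii) especially transparent. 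Third, in (ii) you derive $A(P)=\mathrm{GL}(d,p)_{X\cap L^2V}$ by invoking Theorem~\ref{thm:univap} after establishing $P\cong P_{X\cap L^2V}$, whereas the paper deduces it directly from $A(P)=\mathrm{GL}(d,p)_X$ and the $\mathrm{GL}(d,p)$-invariance of the $V$-summand (as you note parenthetically); both are valid, and the paper's version has the slight advantage of not re-importing Theorem~\ref{thm:burnsideap} through Theorem~\ref{thm:univap}.
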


\begin{proof}
Let $Q: = E^*/X$. Since $X$ is an allowable subgroup of $E^*$, the image of the natural action $\theta: \mathrm{Aut}(Q) \to \mathrm{Aut}(Q/\mathcal{P}_2(Q))$ is the stabiliser of $X$ in $\mathrm{Aut}(E) \cong \mathrm{GL}(d,p)$ (see \cite[Theorem 3.2]{eick} and \cite[Theorem 2.10]{obrien}). We have $\mathcal{P}_2(Q) = \Phi(Q)$, and hence $\mathrm{GL}(d,p)_X = A(Q)$, which is equal to $A(P)$ up to conjugacy in $\mathrm{GL}(d,p)$.

Now, $P$ is abelian if and only if $X$ contains $(E^*)' \cong L^2V$. Here, as $P$ has exponent-$p$ class $2$, it must have exponent $p^2$. Additionally, $X = (X \cap V) \oplus L^2V$, and hence $A(P) = \mathrm{GL}(d,p)_{(X \cap V) \oplus L^2V}$. However, $\mathrm{GL}(d,p)$ stabilises $L^2V$, and hence $A(P) = \mathrm{GL}(d,p)_{X \cap V}$.

Finally, $P$ has exponent $p$ if and only if $X$ contains $(E^*)^p \cong V$. Here, $P$ has nilpotency class $2$, and $A(P) = \mathrm{GL}(d,p)_{V \oplus (X \cap L^2V)}$. Since $\mathrm{GL}(d,p)$ stabilises $V$, we have $A(P) = \mathrm{GL}(d,p)_{X \cap L^2V}$. We also have $P \cong E^*/X \cong (E^*/(E^*)^p)/(X/(E^*)^p)$, with $E^*/(E^*)^p \cong \Gamma(d,p,2)$ and ${X/(E^*)^p \cong (V \oplus (X \cap L^2V))/V \cong X \cap L^2V}$. Hence $P \cong P_{X \cap L^2V}$.
\end{proof}

Thus in order to construct a $p$-group $P$ of exponent-$p$ class $2$ such that $A(P)$ is a particular subgroup $H$ of $\mathrm{GL}(d,p)$, we must be able to distinguish between $H$ and any proper overgroup of $H$ in $\mathrm{GL}(d,p)$ by comparing the subspaces of $V \oplus L^2V$ stabilised by these linear groups. Observe that if $P$ is a $p$-group satisfying condition \ref{exppsqstab} of Theorem \ref{thm:expclass2stab}, with $X \cap V \ne \{0\}$, then $A(P)$ acts reducibly on $V$. In fact, $A(P)$ is a maximal subgroup of $\mathrm{GL}(d,p)$ that lies in the Aschbacher class denoted $\mathcal{C}_1$ \cite[Remark 6.3]{bamberg}. If instead $P = E^*$, then $A(P) = \mathrm{GL}(d,p)_{\{0\}} = \mathrm{GL}(d,p)$.

The final theorem in this section reveals more information about a given $p$-group of exponent-$p$ class $2$.

\begin{thm}[{\cite[Theorem 4]{glasbyucs}}]
\label{thm:ucsgroups}
Suppose that $p > 2$, and that $P$ and $X$ are as in Theorem \ref{thm:expclass2stab}. Then $P$ has a unique nontrivial proper characteristic subgroup if and only if $\mathrm{GL}(d,p)_X$ acts irreducibly on each of $V$ and $(V \oplus L^2V)/X$.
\end{thm}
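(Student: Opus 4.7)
The plan is to classify the characteristic subgroups of $P$ in terms of $H$-invariant subspaces, where $H := A(P) = \mathrm{GL}(d,p)_X$. Since $P$ has exponent-$p$ class $2$, the subgroup $\Phi(P)$ is central and elementary abelian, and is identified with $(V \oplus L^2V)/X$. First I would show that the natural action of $\mathrm{Aut}(P)$ on $\Phi(P)$ factors through $H$. If $\alpha \in \mathrm{Aut}(P)$ acts trivially on $P/\Phi(P)$, then $\alpha(g) = g z_g$ with $z_g \in \Phi(P) \subseteq Z(P)$; since $g$ and $z_g$ commute and $\Phi(P)$ has exponent $p$, one computes $\alpha(g^p) = (g z_g)^p = g^p z_g^p = g^p$ and $\alpha([g,h]) = [g z_g, h z_h] = [g,h]$, so $\alpha$ is trivial on $\Phi(P)$. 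Combined with Theorem \ref{thm:expclass2stab}, this shows that $\mathrm{Aut}(P)$ acts on $V \cong P/\Phi(P)$ and on $\Phi(P) \cong (V \oplus L^2V)/X$ through the natural actions of $H$ on these spaces.

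Assuming the irreducibility hypothesis, I would then show that every characteristic subgroup $N$ of $P$ is $1$, $\Phi(P)$ or $P$. If $N \le \Phi(P)$, then $N$ is an $H$-invariant subspace of $\Phi(P)$, hence $0$ or $\Phi(P)$ by irreducibility. If $\Phi(P) \le N$, then $N/\Phi(P)$ is an $H$-invariant subspace of $V$, hence $N = \Phi(P)$ or $P$. The main obstacle is to rule out the ``mixed'' case with $\Phi(P) \not\le N$ and $N \not\le \Phi(P)$. In this case $N \cap \Phi(P)$ is a characteristic subgroup of $P$ contained in $\Phi(P)$, so $H$-invariant and proper in $\Phi(P)$, hence $0$; similarly $N\Phi(P) = P$, making $N$ a complement to $\Phi(P)$. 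I would show this is impossible: $[N,N] \le N \cap \Phi(P) = 1$ forces $N$ abelian, and since $\Phi(P) \le Z(P)$, we obtain $P = N \times \Phi(P)$, an elementary abelian group of rank $d + \mathrm{rank}(\Phi(P)) > d$ as $\Phi(P) \ne 1$, contradicting $\mathrm{rank}(P) = d$.

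For the converse direction, I would show that if $H$ acts reducibly on $V$ or on $\Phi(P)$, then $P$ has a nontrivial proper characteristic subgroup distinct from $\Phi(P)$. Indeed, a proper nonzero $H$-invariant subspace $U$ of $V$ lifts to a characteristic subgroup $\pi^{-1}(U)$ with $\Phi(P) \lneq \pi^{-1}(U) \lneq P$ (where $\pi \colon P \to P/\Phi(P)$), while a proper nonzero $H$-invariant subspace $W$ of $\Phi(P)$ is itself a characteristic subgroup with $1 \lneq W \lneq \Phi(P)$. In either case $P$ has at least two nontrivial proper characteristic subgroups, so uniqueness fails, completing the equivalence.
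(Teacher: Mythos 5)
Your proof is correct. The paper itself does not prove this statement: it simply cites \cite[Theorem~4]{glasbyucs} (Glasby, P\'alfy and Schneider), so there is no internal proof against which to compare. Your argument is self-contained and valid. The key preliminary observation, that the kernel of $\mathrm{Aut}(P)\to\mathrm{Aut}(P/\Phi(P))$ also acts trivially on $\Phi(P)$ because $\Phi(P)$ is central, elementary abelian, and generated by $p$-th powers and commutators, is exactly what is needed to translate ``characteristic'' into ``$H$-invariant''. In the mixed case, after obtaining $N\cap\Phi(P)=1$ and $N\Phi(P)=P$, you could cut slightly shorter: $N$ abelian and $\Phi(P)\le Z(P)$ force $P$ abelian, whence $\Phi(P)=P^p=(N\times\Phi(P))^p=1$ (both factors have exponent $p$), directly contradicting exponent-$p$ class $2$ without invoking a rank count. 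The converse direction is handled correctly, using that $\Phi(P)$ itself is always a nontrivial proper characteristic subgroup of a $p$-group of exponent-$p$ class $2$.
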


A $p$-group with a unique (nontrivial proper) characteristic subgroup is called a \emph{UCS $p$-group} \cite{glasbyucs}. Since each group in the lower exponent-$p$ central series of a $p$-group is characteristic, each UCS $p$-group has exponent-$p$ class at most $2$.

%If $P$ satisfies these equivalent conditions, then we say that $P$ is a \emph{UCS $p$-group}.

\section{Minimal modules for exceptional algebraic groups}
\label{sec:highestweights}

In this section, we consider Lie powers of minimal modules of the linear algebraic groups associated with the exceptional Chevalley groups, from the perspective of highest weight theory. This will allow us, in the next section, to derive corresponding results about the Lie powers of minimal modules of the Chevalley groups themselves (or their universal covers). Malle and Testerman \cite{malle} and L\"ubeck \cite{lubeck} give excellent introductions to linear algebraic groups and highest weight theory, and we assume that the reader is relatively familiar with these topics. In this paper, all linear algebraic groups are assumed to be simple of simply connected type.

Throughout this section and the next, we use the following notation:
\begin{itemize}
\item $q$ is a power of a prime $p$;
\item $\tilde G = {}^t Y_\ell(q)$ is a finite simple group of Lie type defined over $\mathbb{F}_q$;
\item $J$ is the universal cover of $\tilde G$, with $Z(J)$ the Schur multiplier of $\tilde G$;
\item $\hat G$ is the quotient of $J$ by the unique Sylow $p$-subgroup of $Z(J)$;
\item $K:= \overline {\mathbb{F}_p}$;
\item $G$ is the (simple, simply connected) linear algebraic group of type $Y_\ell$ over $K$, so that $\hat G$ is the group of fixed points of a particular Steinberg endomorphism of $G$;
\item $T$ is a fixed maximal torus of $G$;
\item $X(T) \cong \mathbb{Z}^\ell$ is the character group of $T$;
\item $W$ is the Weyl group of $G$;
\item $\Delta = \{\alpha_1,\ldots,\alpha_\ell\}$ is a fixed base of the root system of $G$; and
\item $\{\lambda_1,\ldots,\lambda_\ell\}$ is the set of fundamental dominant weights of $T$ (with respect to $\Delta$).
\end{itemize}

Note that we do not consider the Tits group ${{}^2 F_4(2)}'$ as a group of Lie type. We follow the convention of \cite[Ch.~5.1.1]{BHRD} and refer to $\hat G$ as the (finite) \emph{simply connected version} of $\tilde G$. In fact, for all but a finite number of simple groups of Lie type $\tilde G$, the Sylow $p$-subgroup of $Z(J)$ is trivial, and hence $\hat G = J$. Even when this is not the case, $\tilde G \cong \hat G/Z(\hat G)$. Moreover, if $Z(J)$ is a $p$-group, then $\hat G = J/Z(J) \cong \tilde G$. In particular, if $\tilde G$ is an exceptional Chevalley group, then $\hat G \ne J$ if and only if $\tilde G \in \{G_2(3),G_2(4),F_4(2)\}$, and $\hat G \not\cong \tilde G$ if and only if $\tilde G = E_6(q)$ with $q \equiv {1 \pmod 3}$ or $\tilde G = E_7(q)$ with $q$ odd \cite[Theorem 5.1.4]{kleidman}. In the former $\hat G \not\cong \tilde G$ case, $\hat G = J = \nonsplit{3}{E_6(q)}$ \cite[Ch.~4.10.6]{wilson}, and in the latter, $\hat G = J = \nonsplit{2}{E_7(q)}$ \cite[Ch.~4.12]{wilson}. Here, and throughout the paper, we denote group extensions using \textsc{Atlas} \cite{ATLAS} notation.

We recall that a group $H$ is \emph{quasisimple} if $H$ is perfect and $H/Z(H)$ is non-abelian and simple. Observe that since $\hat G$ is a quotient of the perfect group $J$, and since $\hat G/Z(\hat G)$ is isomorphic to the non-abelian simple group $\tilde G$, the group $\hat G$ is quasisimple.

As the above notation suggests, if $r$ is a power of $p$, and if ${}^s Y_\ell(r)$ is a simple group of Lie type, then $G$ is the linear algebraic group associated with both $\tilde G$ and ${}^s Y_\ell(r)$. Moreover, $G$ can be considered as a group of matrices over $K$. We can also identify $X(T)$ with a subset of the Euclidean space $\mathbb{R}^\ell$, with $\Delta \subset X(T)$ a basis for $\mathbb{R}^\ell$. The Weyl group $W$ is the group generated by the reflections in each hyperplane orthogonal to a root in $\Delta$, and the action of $W$ on $\mathbb{R}^\ell$ induces an action on $X(T)$. %Furthermore, $W$ preserves a unique positive definite symmetric bilinear form $(\cdot,\cdot)$ on $\mathbb{R}^\ell$
%
%Recall that the set of fundamental dominant weights $\{\lambda_1, \ldots, \lambda_\ell\}$ of $T$ are the characters in $X(T)$ that uniquely satisfy $$2\frac{(\lambda_i,\alpha_j)}{(\alpha_j,\alpha_j)} = \begin{cases} 1, & \text{if}\ i=j \\ 0, & \text{if}\ i \ne j. \end{cases}$$
Each character $\alpha \in X(T)$ is a $\mathbb{Z}$-linear combination of the fundamental dominant weights $\lambda_i$, and if the coefficient of each $\lambda_i$ is nonnegative, then we say that $\alpha$ is \emph{dominant}.

In this paper, whenever we refer to a $K[G]$-module, we mean a finite-dimensional rational $K[G]$-module. We recall the following standard definition.

\begin{defn}
\label{def:weights}
Let $V$ be a $K[G]$-module. For $\lambda \in X(T)$, we define $$V_\lambda := {\{v \in V \mid v^t = (t)\lambda v \text{ for all } t \in T\}}.$$ If $V_\lambda \ne \{0\}$, then $\lambda$ is a \emph{weight} of $V$ with (respect to $T$) with \emph{multiplicity} $\dim(V_\lambda)$, and $V_\lambda$ is a \emph{weight space} of $V$.
\end{defn}

We will write $\Lambda(V)$ to denote the weight multiset for $V$, with each weight represented as many times as its multiplicity. Each weight space of $V$ is clearly a $T$-submodule of $V$. In fact, $V$ decomposes as the direct sum of its weight spaces, and thus $|\Lambda(V)| = \dim(V)$. It is easy to show that if $U$ is a $K[G]$-module isomorphic to $V$, then $\Lambda(U) = \Lambda(V)$. We will shortly summarise methods of calculating the weight multisets for $K[G]$-modules constructed from other $K[G]$-modules. First, we define one such construction that applies to a module for any group over any field.

Let $V$ be a module for a group $H$ over a field $\mathbb{F}$, and let $\alpha \in \mathrm{Aut}(H)$. Then $V^\alpha$ denotes the module obtained by \emph{twisting} $V$ by $\alpha$. Specifically, if $V$ affords the representation $\rho$, then $V^{\alpha}$ affords the representation $\rho_\alpha$, where $(h)\rho_\alpha := (h^\alpha)\rho$ for each $h \in H$. For reasons that will become clear, we are particularly interested in the modules $V^{\phi^n}$, where $n$ is a positive integer, $V$ is a $K[G]$-module, and $\phi$ is the field automorphism of $G$ that maps each matrix entry in an element of $G$ to its $p$-th power. Note that $\phi$ is an abstract group automorphism, but not a linear algebraic group automorphism; in fact, it is a Frobenius endomorphism of $G$.

%\begin{defn}
%\label{def:twistedmod}
%Let $U$ be a module for a group $H$ over a field $\mathbb{F}$, and let $\rho$ be the representation afforded by $U$. In addition, let $\alpha \in \mathrm{Aut}(H)$, and let $\rho_\alpha$ be the representation defined by $(h)\rho_\alpha := (h^\alpha)\rho$ for each $h \in H$. Then the module $U^\alpha$ obtained by \emph{twisting} $U$ by $\alpha$ is the $\mathbb{F}[H]$-module that affords the representation $\rho_\alpha$.
%\end{defn}
%
%%If $V$ is a module for a group $H$ over a field $\mathbb{F}$, and if $\alpha \in \mathrm{Aut}(H)$, then $V^\alpha$ denotes the module obtained by \emph{twisting} $V$ by $\alpha$. Specifically, if $V$ affords the representation $\rho$, then $V^{\alpha}$ affords the representation $\rho_\alpha$, where $(h)\rho_\alpha := (h^\alpha)\rho$ for each $h \in H$.
%Let $\phi$ denote the field automorphism of $G$ that maps each matrix entry in an element of $G$ to its $p$-th power. Note that $\phi$ is an abstract group automorphism, but not a linear algebraic group automorphism; in fact, it is a Frobenius endomorphism of $G$. We will see that if $V$ is a $K[G]$-module, then the modules $V^{\phi^n}$ for positive integers $n$ have great significance.

Hall \cite[Lemma 10.37]{bhall} states the first result of the following lemma; Samelson \cite[p.~106--108]{samelson} states the second and third; and the fourth follows from \cite[p.~135]{malle}.

\begin{lem}
\label{lem:tensorprodweights}
Let $V$ be a $K[G]$-module.
\begin{enumerate}[label={(\roman*)}]
\item Suppose that $U$ is a submodule of $V$. Then $\Lambda(V/U) = \Lambda(V)\setminus\Lambda(U)$.
\item Suppose that $U$ is a $K[G]$-module. Then $\Lambda(U \otimes V)$ is the multiset $$[\lambda + \mu \mid \lambda \in \Lambda(U), \mu \in \Lambda(V)],$$ with $\lambda_1 + \mu_1$ and $\lambda_2 + \mu_2$ considered distinct if and only if $\lambda_1 \ne \lambda_2$ and $\mu_1 \ne \mu_2$.
\item Suppose that $n$ is an integer at least $2$ and at most $m := \dim(V)$, and that $\Lambda(V) = [\lambda_1, \ldots, \lambda_m]$.
Then $$\Lambda(A^nV) = [\lambda_{i_1} + \cdots + \lambda_{i_n} \mid 1 \le i_1 < \cdots < i_n \le m].$$
\item Suppose that $n$ is a nonnegative integer. Then $\Lambda(V^{\phi^n}) = [p^n \lambda \mid \lambda \in \Lambda(V)]$. \label{tensorweights4}
\end{enumerate}
\end{lem}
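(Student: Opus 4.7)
The plan is to handle the four parts in the order given, with the weight space decomposition as the common underlying tool. The key observation throughout is that $T$ is a torus, so every $K[T]$-module decomposes as a direct sum of weight spaces; hence any $K[G]$-module $V$ splits as $V = \bigoplus_{\lambda \in X(T)} V_\lambda$, and the weight multiset $\Lambda(V)$ is exactly the list in which each $\lambda$ appears $\dim V_\lambda$ times.

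For part (i), I would begin by noting that any $K[G]$-submodule $U$ of $V$ is in particular $T$-invariant, so the diagonalisability of $T$-actions gives $U = \bigoplus_\lambda (U \cap V_\lambda) = \bigoplus_\lambda U_\lambda$. Therefore $(V/U)_\lambda = V_\lambda/U_\lambda$, which yields the stated set-difference formula for the weight multisets. For part (ii), I would verify on pure tensors that if $u \in U_\lambda$ and $v \in V_\mu$, then $t$ acts on $u \otimes v$ as the scalar $(t)\lambda \cdot (t)\mu = (t)(\lambda + \mu)$. This shows $U_\lambda \otimes V_\mu \subseteq (U \otimes V)_{\lambda+\mu}$, and summing over $\lambda, \mu$ accounts for all of $U \otimes V$ by dimension, yielding the multiset identity.

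For part (iii), I would pick a basis $v_1, \ldots, v_m$ of $V$ consisting of weight vectors, with $v_i$ of weight $\lambda_i$. Then the tensors $v_{i_1} \otimes \cdots \otimes v_{i_n}$ give a weight basis of $V^{\otimes n}$, and the standard description of $A^n V$ as a subspace (using $p > n$, which is implicit in the characteristic-free argument invoked via Samelson) gives the basis $\{v_{i_1} \wedge \cdots \wedge v_{i_n} : i_1 < \cdots < i_n\}$, each a weight vector of weight $\lambda_{i_1} + \cdots + \lambda_{i_n}$ by part (ii) and the diagonal $T$-action. Part (iv) is the delicate one and the main technical point: I would use the fact that the Frobenius $\phi$ preserves $T$ setwise and induces multiplication by $p$ on $X(T)$. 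Concretely, if $v \in V_\lambda$ and $\rho_{\phi^n}$ is the twisted representation, then for $t \in T$ we have $v \cdot (t)\rho_{\phi^n} = v \cdot (\phi^n(t))\rho = ((\phi^n(t))\lambda) v = ((t)\lambda)^{p^n} v = (t)(p^n \lambda) v$, so $v \in (V^{\phi^n})_{p^n\lambda}$. Running this over a weight basis of $V$ gives the claim.

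The main obstacle is (iv): one must be careful that the Frobenius on $G$ really does induce the map $\lambda \mapsto p^n \lambda$ on $X(T)$, which rests on the choice of $T$ being $\phi$-stable and on the identification of $X(T)$ with $\mathrm{Hom}(T,K^*)$ via characters whose values get raised to the $p^n$-th power by composition with $\phi^n$. The other three parts are essentially formal consequences of the weight space decomposition and the behaviour of $T$ on tensor, quotient, and antisymmetrised constructions, and would be handled in a few lines each once the decomposition is set up.
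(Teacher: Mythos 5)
Your proposal is correct in substance, and it follows the standard weight-space-decomposition arguments that the paper implicitly invokes: the paper gives no proof of its own here, but simply cites Hall for (i), Samelson for (ii)--(iii), and Malle--Testerman for (iv), and those sources argue exactly as you do. Parts (i), (ii) and (iv) are fine as written; in particular your treatment of (iv) correctly isolates the key point that $\phi^n$ is polynomial and raises matrix entries to the $p^n$-th power, so composing a character $\lambda:T\to K^*$ with $\phi^n$ yields $\lambda^{p^n} = p^n\lambda$ in additive notation.

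The one thing to fix is your parenthetical appeal to $p > n$ in part (iii). That hypothesis is not available --- the paper applies the lemma with $n = 2$ and $p = 2$ (see the $p=2$ rows of the tables in Appendix~\ref{sec:compkga2v}) and with $n = 3$ and $p \in \{2,3\}$ --- and it is also not needed. The restriction $p > n$ would only be required if you insisted on realising $A^nV$ inside $V^{\otimes n}$ as the image of the antisymmetriser. If instead you take $A^nV$ to be the quotient of $V^{\otimes n}$ by the span of the tensors with a repeated factor (the definition compatible with the paper's use of $A(V) = T(V)/I$), then the images $v_{i_1}\wedge\cdots\wedge v_{i_n}$ with $i_1 < \cdots < i_n$ form a basis of $A^nV$ in every characteristic, they are weight vectors of weight $\lambda_{i_1}+\cdots+\lambda_{i_n}$ because the ideal $I$ is $T$-stable so the quotient inherits the weight decomposition by your part (i), and the multiset formula follows. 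Dropping the $p>n$ remark and routing the argument through the quotient description makes the proof valid in full generality.
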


Throughout this paper, when we refer to the composition factors of $V$, we mean the multiset of composition factors, with possible isomorphisms between these factors.

\begin{prop}
\label{prop:chainweights}
Let $V$ be a $K[G]$-module. Then $\Lambda(V)$ is the disjoint union of the weight multisets for the composition factors of $V$.
\end{prop}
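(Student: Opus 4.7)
The plan is to proceed by induction on the composition length of $V$. The base case, where $V$ is either zero or simple, is immediate: if $V$ is simple it is its own unique composition factor, so $\Lambda(V)$ is trivially the required multiset, and if $V = 0$ both sides are empty.

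For the inductive step, fix a composition series $0 = V_0 < V_1 < \cdots < V_n = V$ of length $n \ge 2$, whose composition factors are $V_i/V_{i-1}$ for $1 \le i \le n$. The key input is Lemma \ref{lem:tensorprodweights}(i), which gives $\Lambda(V/V_{n-1}) = \Lambda(V) \setminus \Lambda(V_{n-1})$. I would first want to make sure I can read this as a multiset identity
\[
\Lambda(V) = \Lambda(V_{n-1}) \sqcup \Lambda(V/V_{n-1}),
\]
which requires observing that each weight space decomposes compatibly with $V_{n-1}$, i.e.\ $V_\lambda \cap V_{n-1} = (V_{n-1})_\lambda$ and the image of $V_\lambda$ in $V/V_{n-1}$ is $(V/V_{n-1})_\lambda$; this follows from the fact that $T$ acts diagonalisably on any $K[G]$-module, so $V$ decomposes as $\bigoplus_\lambda V_\lambda$ and every $T$-submodule and quotient inherits such a decomposition. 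Granting this, Lemma \ref{lem:tensorprodweights}(i) applied to the short exact sequence $0 \to V_{n-1} \to V \to V/V_{n-1} \to 0$ yields the displayed multiset equation.

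Now $V_{n-1}$ has a composition series of length $n-1$ whose factors are exactly $V_1, V_2/V_1, \ldots, V_{n-1}/V_{n-2}$, so by the inductive hypothesis $\Lambda(V_{n-1})$ is the disjoint union of the weight multisets of these $n-1$ factors. Combining this with the displayed equation gives $\Lambda(V)$ as the disjoint union of the weight multisets of $V_1, V_2/V_1, \ldots, V_n/V_{n-1}$, completing the induction.

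The only real subtlety is the multiset bookkeeping: I would want to state Lemma \ref{lem:tensorprodweights}(i) (which is asserted only as $\Lambda(V/U) = \Lambda(V) \setminus \Lambda(U)$) so that the multiplicities add correctly, and confirm that this follows from the eigenspace decomposition of $T$ on rational modules. No new algebraic input is needed beyond this observation and the cited lemma, and the conclusion is independent of the chosen composition series by the Jordan--H\"older theorem applied to $K[G]$-modules.
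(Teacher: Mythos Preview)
Your proof is correct and follows essentially the same approach as the paper: both arguments induct along a fixed composition series using Lemma~\ref{lem:tensorprodweights}(i) to split off one composition factor at a time. The only cosmetic difference is that the paper peels factors off the top (applying the lemma to $V/V_k$ with submodule $V_{k+1}/V_k$ and inducting on $k$), whereas you peel off the bottom (applying the lemma to $V$ with submodule $V_{n-1}$ and inducting on the composition length); your treatment of the multiset bookkeeping is in fact slightly more explicit than the paper's.
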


\begin{proof}
Let $\{0\} = V_0 \subset V_1 \subset \cdots \subset V_n = V$ be a composition series for $V$. If $0 \le k \le n-1$, then $V/V_{k+1} \cong (V/V_k)/(V_{k+1}/V_k)$, and so $$\Lambda(V/V_{k+1}) = \Lambda((V/V_k)/(V_{k+1}/V_k)) = {\Lambda(V/V_k)\setminus\Lambda(V_{k+1}/V_k)}.$$ This means that $\Lambda(V/V_k)$ is the disjoint union of $\Lambda(V/V_{k+1})$ and $\Lambda(V_{k+1}/V_k)$. It follows by induction on $k$ that $\Lambda(V) = \Lambda(V/V_0)$ is the disjoint union of $\Lambda(V_{k+1}/V_k)$ for all $i \in \{1,2,\ldots,n-1\}$, as required.
%
%Let $\{0\} = V_0 \subset V_1 \subset \cdots \subset V_n = V$ be a composition series for $V$. Additionally, for each $1 \le k \le n-1$, let $L_k$ be the disjoint union of $\Lambda(V/V_k)$ and each $\Lambda(V_i/V_{i-1})$ with $i \in \{1,2,\ldots,k\}$. It suffices to prove that $\Lambda(V) = L_{n-1}$; we do this via induction on $k$. First, $V_1/V_0 = V_1/\{0\} = V_1$, and so ${\Lambda(V/V_1) = \Lambda(V)\setminus\Lambda(V_1/V_0)}$. Thus $\Lambda(V) = L_1$. For the inductive step, suppose that $\Lambda(V) = L_k$ for some $1 \le k \le n-2$. We have $(V/V_k)/(V_{k+1}/V_k) \cong V/V_{k+1}$, and so $$\Lambda(V/V_{k+1}) = \Lambda((V/V_k)/(V_{k+1}/V_k)) = {\Lambda(V/V_k)\setminus\Lambda(V_{k+1}/V_k)}.$$ Therefore, $\Lambda(V/V_k)$ is the disjoint union of $\Lambda(V/V_{k+1})$ and $\Lambda(V_{k+1}/V_k)$. This means that $\Lambda(V) = L_{k+1}$. Hence induction gives $\Lambda(V) = L_{n-1}$.
\end{proof}

Let $\le$ be the relation on $X(T)$ such that $\alpha \le \beta$ for $\alpha, \beta \in X(T)$ if and only if $\beta - \alpha$ is a linear combination of roots in $\Delta$, with the coefficient of each root nonnegative. Then $\le$ is a partial order. The following well-known theorem (see \cite[Ch.~15--16]{chevalley}) shows that there is a 1-1 correspondence between isomorphism classes of irreducible $K[G]$-modules and dominant characters of $T$.

\begin{thm}[Chevalley]\leavevmode
\label{thm:highestweightirred}
\begin{enumerate}[label={(\roman*)}]
\item Let $V$ be an irreducible $K[G]$-module. Then there is a unique weight $\lambda \in \Lambda(V)$, called the \emph{highest weight} of $V$, such that $\mu \le \lambda$ for all $\mu \in \Lambda(V)$. Moreover, $\lambda$ is dominant and has multiplicity $1$.
\item If $\lambda \in X(T)$ is dominant, then there exists an irreducible $K[G]$-module $V$ with highest weight $\lambda$.
\item Two $K[G]$-modules are isomorphic if and only if they have the same highest weight.
\end{enumerate}
\end{thm}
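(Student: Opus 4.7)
The plan is to follow the standard highest weight machinery for the simply connected algebraic group $G$. Fix a Borel subgroup $B = TU$ containing $T$, where $U$ is the unipotent radical generated by the positive root subgroups with respect to $\Delta$, and let $U^-$ denote the opposite unipotent subgroup. The whole argument rests on the existence, for every irreducible rational $K[G]$-module $V$, of a nonzero \emph{maximal vector}: a simultaneous $T$-eigenvector fixed by $U$. This is furnished by the Lie--Kolchin theorem applied to the connected solvable group $B$ acting rationally on the finite-dimensional space $V$: one obtains a common $B$-eigenvector $v$, and since $U = [B,B]$ is unipotent it acts trivially on $v$, so $v \in V_\lambda$ for some $\lambda \in X(T)$.

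For part (i), the density of the open cell $U^- B$ in $G$, together with the irreducibility of $V$, forces $V$ to be spanned by the $U^-$-translates of $v$. Applying root group elements $u_{-\alpha}(s)$ for positive roots $\alpha$ to $v$ produces vectors whose $T$-weights have the form $\lambda - \sum_{\alpha > 0} n_\alpha \alpha$ with $n_\alpha \ge 0$, so every $\mu \in \Lambda(V)$ satisfies $\mu \le \lambda$, and $\lambda$ is the unique maximal element of $\Lambda(V)$. Dominance follows by restricting to the $\mathrm{SL}_2$-subgroup $G_{\alpha_i}$ associated with each simple root $\alpha_i$: the finite-dimensional $\mathrm{SL}_2$-module generated by $v$ forces $\langle \lambda, \alpha_i^\vee \rangle$ to be a nonnegative integer, else a nonzero weight vector of weight $\lambda + \alpha_i$ would be produced, contradicting maximality. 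The multiplicity-one claim $\dim V_\lambda = 1$ is deduced by identifying $V$ with the irreducible quotient $L(\lambda) = V(\lambda)/\mathrm{rad}(V(\lambda))$ of the Weyl module $V(\lambda)$, whose highest weight space is one-dimensional by construction.

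For part (ii), given a dominant $\lambda \in X(T)$, I would extend $\lambda$ to a one-dimensional $B$-module $K_\lambda$ (letting $U$ act trivially) and form the induced module $H^0(\lambda) := \mathrm{ind}_B^G(K_\lambda)$, which is nonzero by Kempf's vanishing theorem precisely because $\lambda$ is dominant. The socle of $H^0(\lambda)$ is then the desired irreducible module of highest weight $\lambda$. Part (iii) is then Schur's lemma: if $V, V'$ are irreducible with the same highest weight, a $K[G]$-equivariant map sending a maximal vector of $V$ to one of $V'$ exists by Frobenius reciprocity applied to the embeddings $V, V' \hookrightarrow H^0(\lambda)$, and it must be an isomorphism by irreducibility on both sides.

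The main obstacle is the multiplicity-one statement in part (i). In characteristic $p > 0$ the dimensions of weight spaces of irreducible modules are genuinely delicate (the Weyl dimension formula generally fails, and composition factors of Weyl modules are highly nontrivial to control), so reducing the claim to the Weyl module construction itself requires the full apparatus of induction from Borel subgroups and Kempf vanishing. In practice, as in the present paper, one simply cites this foundational result and proceeds to exploit it; the subsequent analysis of $L^2V$ and $L^3V$ via Lemma \ref{lem:tensorprodweights} and Proposition \ref{prop:chainweights} depends only on the consequences of Theorem \ref{thm:highestweightirred}, not on a reproof of it.
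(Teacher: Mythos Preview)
The paper does not prove this theorem at all: it is stated as a well-known foundational result and attributed to Chevalley with the citation ``(see \cite[Ch.~15--16]{chevalley})'', with no proof environment following. Your proposal correctly recognises this in its final paragraph. The sketch you give is a faithful outline of the standard argument (Lie--Kolchin for the existence of a maximal vector, density of the big cell for the weight bound, induced modules and Kempf vanishing for existence, Frobenius reciprocity and Schur for uniqueness), and is entirely consistent with what one finds in the cited reference or in the textbooks of Jantzen, Malle--Testerman, or Humphreys. Since the paper offers no proof to compare against, there is nothing further to assess here.
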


For a dominant character $\lambda \in X(T)$, we write $L(\lambda)$ to denote the unique (up to isomorphism) irreducible $K[G]$-module with highest weight $\lambda$. Lemma \ref{lem:tensorprodweights}\ref{tensorweights4} implies that if $n$ is a positive integer, then $L(\lambda)^{\phi^n}$ is isomorphic to the irreducible module $L(p^n \lambda)$. The only weight of the trivial irreducible module is the trivial character $0$, and hence this module is equal to $L(0)$. For an arbitrary $K[G]$-module $V$, we say that a weight $\lambda$ in a subset $D$ of $\Lambda(V)$ is a highest weight of $D$ if $\mu \le \lambda$ for all $\mu \in D$.

\begin{lem}
\label{lem:highestweightfactor}
Let $C$ be a (possibly empty) disjoint union of weight multisets for composition factors of the $K[G]$-module $V$. Additionally, let $\lambda \in D:= \Lambda(V) \setminus C$ be such that $\lambda \not < \mu$ for all $\mu \in D$. Then $L(\lambda)$ is a composition factor of $V$. In particular, this is the case if $D$ has a highest weight $\lambda$, or if $V$ itself has a highest weight $\lambda$.
\end{lem}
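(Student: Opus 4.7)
The plan is to combine Proposition \ref{prop:chainweights} with Theorem \ref{thm:highestweightirred} in a purely set-theoretic way: a composition factor contributes its weights to $\Lambda(V)$ as a single ``block'' capped by its highest weight, and the hypothesis on $C$ guarantees that this block is either entirely inside $C$ or entirely inside $D$. Once $\lambda \in D$, the block containing $\lambda$ also lies in $D$, so its highest weight is available to compare with $\lambda$.

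First I would invoke Proposition \ref{prop:chainweights} to write $\Lambda(V) = \bigsqcup_{i=1}^{k} \Lambda(L(\mu_i))$, where $L(\mu_1), \ldots, L(\mu_k)$ are the composition factors of $V$. By hypothesis $C$ is a disjoint union of weight multisets of composition factors, so after reindexing we may write $C = \bigsqcup_{i \in J} \Lambda(L(\mu_i))$ for some $J \subseteq \{1,\ldots,k\}$, and then
\[
D = \bigsqcup_{i \notin J} \Lambda(L(\mu_i)).
\]

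Since $\lambda \in D$, there exists $s \notin J$ with $\lambda \in \Lambda(L(\mu_s))$. By Theorem \ref{thm:highestweightirred}(i), every element of $\Lambda(L(\mu_s))$ is $\le \mu_s$, and in particular $\lambda \le \mu_s$. Crucially, the highest weight $\mu_s$ itself lies in $\Lambda(L(\mu_s)) \subseteq D$, so by the standing assumption we have $\lambda \not< \mu_s$. Together with $\lambda \le \mu_s$ this forces $\lambda = \mu_s$, and therefore $L(\lambda) = L(\mu_s)$ is a composition factor of $V$.

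For the ``in particular'' clauses: if $D$ has a highest weight $\lambda$, then $\lambda \in D$ and $\mu \le \lambda$ for every $\mu \in D$, so $\lambda \not< \mu$ on $D$ is automatic. If instead $V$ has a highest weight $\lambda$, choose $C = \emptyset$, so that $D = \Lambda(V)$, and apply the previous observation. The only real content is the block-preservation argument, which is straightforward once one is careful with multiset notation; I do not anticipate any genuine obstacle beyond that bookkeeping.
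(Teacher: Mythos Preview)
Your proof is correct and follows essentially the same approach as the paper's: both invoke Proposition~\ref{prop:chainweights} to decompose $\Lambda(V)$ as a disjoint union of the weight multisets of composition factors, observe that $D$ is itself such a union, locate $\lambda$ in one block $\Lambda(L(\mu_s)) \subseteq D$, and then use $\lambda \le \mu_s$ together with $\lambda \not< \mu_s$ to conclude $\lambda = \mu_s$. Your treatment is slightly more explicit in the indexing and in spelling out the ``in particular'' clauses, but there is no substantive difference.
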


\begin{proof}
By Proposition \ref{prop:chainweights}, $\Lambda(V)$ is the disjoint union of the weight multisets for the composition factors of $V$. The subset $D$ of $\Lambda(V)$ is also a disjoint union of weight multisets for composition factors of $V$, and hence $\lambda \in \Lambda(U)$ for some composition factor $U$ of $V$. Theorem \ref{thm:highestweightirred} implies that $U = L(\mu)$, for some $\mu \in \Lambda(U) \subseteq D$ with $\lambda \le \mu$. By the definition of $\lambda$, we therefore have $\mu = \lambda$ and $U = L(\lambda)$.
\end{proof}

We can apply this lemma iteratively to a $K[G]$-module to increase the multiset of known composition factors of $V$, corresponding to $C$, and decrease the multiset of unknown composition factors, corresponding to $D$.

Now, for each $i \in \{1,2\}$, let $\mathbb{F}_i$ be the algebraic closure of a finite field, and let $H_i$ be a linear algebraic group of type $Y_\ell$ defined over $\mathbb{F}_i$, with maximal torus $T_i$. Additionally, let $\{\lambda_1,\ldots,\lambda_\ell\}$ and $\{\mu_1,\ldots,\mu_\ell\}$ be the fundamental dominant weights of $X(T_1)$ and $X(T_2)$, respectively, each ordered according to a fixed labelling of the Dynkin diagram of type $Y_\ell$. Through the rest of this section, we will identify two characters $\alpha \in X(T_1)$ and $\beta \in X(T_2)$ if the $\mathbb{Z}$-linear combinations $\alpha = \sum_{j=1}^\ell a_j \lambda_j$ and $\beta = \sum_{j=1}^\ell b_j \mu_j$ satisfy $a_j = b_j$ for all $j$. The following definition describes a related identification of modules for linear algebraic groups defined over different fields.

\begin{defn}
\label{def:fieldchangeequiv}
For each $i \in \{1,2\}$, let $V_i$ be an $\mathbb{F}_i[H_i]$-module with composition factor multiset $\mathcal{U}_i$. We say that $V_1$ and $V_2$ are \emph{compositionally equivalent} if there exists a bijection $\theta: \mathcal{U}_1 \to \mathcal{U}_2$ such that $\Lambda((U)\theta) = \Lambda(U)$ for all $U \in \mathcal{U}_1$.
\end{defn}

The following result is based on the discussion in \cite[\S3]{lubeck}. We will soon use this and the subsequent result to derive an important structural property of certain $K[G]$-modules.

\begin{lem}
\label{lem:weylmodule}
Let $\lambda$ be a dominant character of $T_1$. There exists an $\mathbb{F}_1[H_1]$-module $V(\lambda)$, called the \emph{Weyl module}\index{Weyl module} corresponding to $\lambda$, such that:
\begin{enumerate}[label={(\roman*)}]
\item $\dim(V(\lambda))$ and $\Lambda(V(\lambda))$ depend only on $Y$, $\ell$ and $\lambda$, and not on $\mathbb{F}_1$;
\item the irreducible $\mathbb{F}_1[H_1]$-module $L(\lambda)$ is a quotient of $V(\lambda)$; and
%\item if $\mathbb{F} = \mathbb{C}$, then $V(\lambda) \cong L(\lambda)$; and
\item there exists a positive integer $r$ such that, if $\mathrm{char}(\mathbb{F}_1) \ge r$, then $V(\lambda) \cong L(\lambda)$.
\end{enumerate}
\end{lem}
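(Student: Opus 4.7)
The statement is standard representation theory of simple algebraic groups, so I would prove it by constructing $V(\lambda)$ via reduction modulo $p$ from characteristic zero, following the Chevalley--Kostant construction. Let $\mathfrak{g}_{\mathbb{C}}$ be the complex simple Lie algebra of type $Y_\ell$, with a Chevalley basis, and let $U_{\mathbb{Z}}$ be Kostant's $\mathbb{Z}$-form of its universal enveloping algebra (generated by the divided powers $e_\alpha^n/n!$ and the binomial coefficients in the Cartan). Let $V_{\mathbb{C}}(\lambda)$ be the (finite-dimensional) irreducible highest-weight $\mathfrak{g}_{\mathbb{C}}$-module with highest weight $\lambda$, fix a highest weight vector $v^+$, and set
\[
V_{\mathbb{Z}}(\lambda) := U_{\mathbb{Z}} \cdot v^+ \subseteq V_{\mathbb{C}}(\lambda), \qquad V(\lambda) := V_{\mathbb{Z}}(\lambda) \otimes_{\mathbb{Z}} \mathbb{F}_1.
\]
Standard results (Steinberg, Humphreys) show that $V_{\mathbb{Z}}(\lambda)$ is an admissible lattice stable under a Chevalley group scheme, and by base change $V(\lambda)$ carries a rational $H_1$-module structure.

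For (i), I would note that $V_{\mathbb{Z}}(\lambda)$ decomposes as a direct sum of weight spaces $V_{\mathbb{Z}}(\lambda)_\mu$ for $\mu \in X(T_1)$, each of which is a free $\mathbb{Z}$-module whose rank equals $\dim_{\mathbb{C}} V_{\mathbb{C}}(\lambda)_\mu$ (Kostant). Tensoring with $\mathbb{F}_1$ preserves ranks, so $\dim V(\lambda)_\mu = \dim_{\mathbb{C}} V_{\mathbb{C}}(\lambda)_\mu$, giving $\dim V(\lambda) = \dim_{\mathbb{C}} V_{\mathbb{C}}(\lambda)$ and $\Lambda(V(\lambda)) = \Lambda(V_{\mathbb{C}}(\lambda))$; both are determined by Freudenthal's formula from $Y$, $\ell$ and $\lambda$ alone. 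For (ii), the image of $v^+ \otimes 1$ is a weight vector of weight $\lambda$, and the module $V(\lambda)$ is generated by it and admits a unique maximal submodule $M(\lambda)$ (consisting of all submodules, which are sums of weight spaces for weights strictly below $\lambda$, since the $\lambda$-weight space is one-dimensional). The quotient $V(\lambda)/M(\lambda)$ is then an irreducible module of highest weight $\lambda$, which by Theorem \ref{thm:highestweightirred} must be $L(\lambda)$.

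For (iii), the plan is to invoke Jantzen's sum formula: the radical filtration of $V(\lambda)$ satisfies
\[
\sum_{i>0} \mathrm{ch}\, V(\lambda)^i \;=\; \sum_{\alpha>0} \sum_{0<mp<\langle\lambda+\rho,\alpha^\vee\rangle} \nu_p(mp)\, \chi(s_{\alpha,mp}\cdot\lambda),
\]
where $\chi$ denotes the Weyl character. The right-hand side vanishes identically whenever $p$ exceeds $\max_{\alpha>0}\langle\lambda+\rho,\alpha^\vee\rangle$, a quantity that depends only on $Y$, $\ell$ and $\lambda$. Taking $r$ to be this bound gives $V(\lambda) = L(\lambda)$ whenever $\mathrm{char}(\mathbb{F}_1) \ge r$.

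The main obstacle in writing a genuinely self-contained proof is (iii): constructing the Jantzen filtration and proving his sum formula is substantial machinery. In practice I would not reprove it — I would cite Jantzen's book on representations of algebraic groups (or the treatment in \cite{malle,lubeck}) for the existence of $r$, and concentrate on setting up the Kostant $\mathbb{Z}$-form carefully enough that (i) and (ii) follow cleanly by tensor-product and highest-weight-vector arguments. Parts (i) and (ii) are then essentially bookkeeping with the weight-space decomposition of the admissible lattice.
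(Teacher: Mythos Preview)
Your proposal is correct and in fact goes well beyond what the paper does: the paper simply states this lemma as a known result, attributing it to the discussion in \cite[\S3]{lubeck}, and gives no proof at all. Your outline via the Kostant $\mathbb{Z}$-form and Jantzen's sum formula is the standard route and is sound; the only point worth noting is that for (iii) one does not strictly need Jantzen's filtration --- it already follows from the linkage principle (or even more crudely from the fact that every composition factor $L(\mu)$ of $V(\lambda)$ has $\mu \le \lambda$ and $\mu$ linked to $\lambda$, which forces $\mu = \lambda$ once $p$ exceeds the relevant bound) --- but your argument is entirely correct as stated.
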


We now return our focus to the linear algebraic group $G$ defined over $K = \overline{\mathbb{F}_p}$. In the following proposition, $w_0$ is the \emph{longest element} of $W$, i.e., the unique element of $W$ such that $\Delta^{w_0} = -\Delta$. Note that if $\lambda \in X(T)$ is dominant, then so is $-\lambda^{w_0}$ \cite[p.~125, p.~132]{malle}. In fact, $-\lambda^{w_0} = \lambda$ unless $G$ is equal to $A_\ell$ (with $\ell > 1$), $D_\ell$ (with $\ell$ odd) or $E_6$ \cite[p.~133]{malle}.

\begin{prop}[{\cite[p.~24, p.~118]{humphreysmod}}]
\label{prop:nonsplitkgmod}
Let $\lambda, \mu \in X(T)$ be dominant, and let $M$ be a $K[G]$-module whose composition factors are exactly $L(\lambda)$ and $L(\mu)$, with $L(\mu)$ isomorphic to a submodule $U$ of $M$. If $L(\lambda)$ is not isomorphic to a submodule $W$ of $M$, with $M = U \oplus W$, then:
\begin{enumerate}[label={(\roman*)}]
\item $\mu < \lambda$, and $M$ is isomorphic to a quotient of the Weyl module $V(\lambda)$; or
\item $\lambda < \mu$, and $M$ is isomorphic to a submodule of the dual module $(V(-\mu^{w_0}))^*$.
\end{enumerate}
\end{prop}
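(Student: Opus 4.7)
The plan is to leverage the universal property of the Weyl module $V(\lambda)$: every $K[G]$-module generated by a \emph{maximal vector} (a $U^+$-fixed weight vector) of weight $\lambda$ is a quotient of $V(\lambda)$, where $U^+$ is the unipotent radical of a Borel subgroup $B^+ \supseteq T$. Since duality interchanges quotients of $V(\nu)$ with submodules of $(V(\nu))^*$, and sends $L(\nu)$ to $L(-\nu^{w_0})$, my aim is to establish case (i) directly and then obtain case (ii) by applying it to the dual $M^*$.

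For case (i), assume $\mu < \lambda$. Then $\lambda$ is the unique maximum of $\Lambda(M) = \Lambda(L(\lambda)) \sqcup \Lambda(L(\mu))$, and $\dim M_\lambda = 1$ because $\lambda \notin \Lambda(L(\mu))$. I would pick a nonzero $v \in M_\lambda$ and argue that it is a maximal vector: for any positive root $\alpha$ and $t \in K$, the difference $x_\alpha(t)\cdot v - v$ lies in $\bigoplus_{k \ge 1} M_{\lambda + k\alpha}$, which is $\{0\}$ because no weight of $M$ exceeds $\lambda$; since such one-parameter subgroups generate $U^+$, the vector $v$ is $U^+$-fixed. The universal property of $V(\lambda)$ then supplies a surjection $V(\lambda) \twoheadrightarrow \langle Gv \rangle \le M$. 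The composition factors of $\langle Gv \rangle$ form a subcollection of those of $V(\lambda)$, all of which are $L(\lambda)$ (the head) together with factors $L(\beta)$ for $\beta < \lambda$; combined with the fact that the composition factors of $M$ are only $L(\lambda)$ and $L(\mu)$, this forces $\langle Gv \rangle \in \{L(\lambda), M\}$. The option $\langle Gv \rangle \cong L(\lambda)$ would exhibit $L(\lambda)$ as a submodule of $M$ complementing $U$, contradicting non-splitness. Hence $\langle Gv \rangle = M$ and $M$ is a quotient of $V(\lambda)$.

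For case (ii), with $\lambda < \mu$, I would dualize: the short exact sequence $0 \to U \to M \to M/U \to 0$ becomes $0 \to L(-\lambda^{w_0}) \to M^* \to L(-\mu^{w_0}) \to 0$, and $M^*$ remains non-split since duality is an equivalence of categories. Because $w_0$ acts on $\Delta$ as $-1$ composed with a diagram automorphism, the map $\nu \mapsto -\nu^{w_0}$ preserves the partial order $\le$ on dominant weights; in particular, $-\lambda^{w_0} < -\mu^{w_0}$. Thus $M^*$ fits exactly into the setting of case (i), with $L(-\lambda^{w_0})$ as the submodule and $L(-\mu^{w_0})$ as the quotient of higher weight, yielding a surjection $V(-\mu^{w_0}) \twoheadrightarrow M^*$. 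Dualizing produces the desired embedding $M \hookrightarrow (V(-\mu^{w_0}))^*$.

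The main technical hurdle is the maximal-vector claim in case (i); the strict inequality $\mu < \lambda$ is essential for the weight-shift argument to collapse the positive-root contributions. A related subtlety is that the statement implicitly excludes the incomparable case and the case $\lambda = \mu$: in the incomparable case, the same maximal-vector argument applied at weight $\lambda$ still produces $L(\lambda)$ as a submodule of $M$, contradicting non-splitness, while $\lambda = \mu$ would require $\mathrm{Ext}^1_G(L(\lambda), L(\lambda)) = 0$ for simply connected $G$, a consequence of the linkage principle. These exclusions would be flagged explicitly in the proof.
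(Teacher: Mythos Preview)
The paper does not supply its own proof of this proposition: it is quoted directly from Humphreys \cite[p.~24, p.~118]{humphreysmod} and stated without argument. So there is nothing in the paper to compare against beyond the citation itself.

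Your argument is correct and is essentially the standard one underlying the cited pages: for (i), use maximality of $\lambda$ in $\Lambda(M)$ to produce a $U^+$-fixed weight vector $v$ of weight $\lambda$, invoke the universal property of $V(\lambda)$ to get a surjection onto the submodule generated by $v$, and use non-splitness to force that submodule to be all of $M$; then obtain (ii) by dualising, using that $\nu \mapsto -\nu^{w_0}$ preserves the partial order (since $-w_0$ permutes the simple roots). Your handling of the excluded cases is also fine: when $\lambda$ and $\mu$ are incomparable, no weight of $L(\mu)$ lies above $\lambda$, so the same maximal-vector argument forces $L(\lambda)$ to embed in $M$ and hence complement $U$; and $\lambda = \mu$ is ruled out by $\mathrm{Ext}^1_G(L(\lambda),L(\lambda)) = 0$. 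One small wording point: in (i) you write that the composition factors of $\langle Gv\rangle$ lie among those of $V(\lambda)$, but what you actually need (and use) is that they lie among those of $M$, namely $L(\lambda)$ and $L(\mu)$, together with the fact that $\langle Gv\rangle$ must contain $L(\lambda)$ since $\lambda \notin \Lambda(L(\mu))$.
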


Let $S$ be a finite subset of $X(T)$, with each $\lambda \in S$ dominant. Lemma \ref{lem:weylmodule} implies that there exists a positive integer $r(S)$ such that, if we vary $p$ with the restriction that $p \ge r(S)$, then for each $\lambda \in S$, each of $\dim(L(\lambda))$ and $\Lambda(L(\lambda))$ do not depend on $p$. We can also construct a new $K[G]$-module $M$ using irreducible modules $L(\lambda)$ with $\lambda \in S$, via tensor products, exterior powers and quotients (of submodules constructed similarly). The weight multiset for $M$ can be expressed in terms of the weights of the modules $L(\lambda)$, as described in Lemma \ref{lem:tensorprodweights}, and so if we vary $p$ (again, with $p \ge r(S)$), then $\dim(M)$ and $\Lambda(M)$ remain fixed. On the other hand, the way in which the weights of $M$ are distributed among its composition factors (per Proposition \ref{prop:chainweights}) may vary with $p$. To rectify this, we define a new set $S_M$ as the union of $S$ and the dominant weights of $M$. We deduce from Lemma \ref{lem:highestweightfactor} that if we vary $p$ with $p \ge r(S_M)$, then $M$ remains fixed up to compositional equivalence. In the following theorem, we set $r:=r(S_M)$.

Recall also that a module $V$ (for any group, over any field) is \emph{multiplicity free} if it is a semisimple module with pairwise non-isomorphic irreducible submodules. Moreover, $V$ is multiplicity free if and only if its composition factors are pairwise non-isomorphic, with each isomorphic to a submodule of $V$.

%let $S$ be a finite subset of $X(T_1)$, with each $\lambda \in S$ dominant. It follows from Lemma \ref{lem:weylmodule} that there exists a positive integer $r$ such that, for all $\lambda \in S$, the dimension and weight multiset for $L(\lambda)$ remain constant for all $p \ge r$. We can also construct a new $K[G]$-module $M$ using irreducible modules $L(\lambda)$ with $\lambda \in S$, via tensor products, exterior powers and quotients (of submodules constructed similarly). As long as $p \ge r$, the dimension and weight multiset for $M$ will not vary with $p$, since these depend only on the weight multisets for the module $L(\lambda)$, by Lemma \ref{lem:tensorprodweights}. If we redefine $S$ to include all dominant weights of $M$, then we deduce that there exists a positive integer $r$ such that, for all $p \ge r$, $M$ has the same number of composition factors, with each having a constant dimension and highest weight. In the following theorem, which gives a sufficient condition for $M$ to be multiplicity free, $r$ denotes this positive integer.

\begin{thm}
\label{thm:semisimplekgmod}
Let $M$ be a $K[G]$-module constructed from irreducible $K[G]$-modules via tensor products, exterior powers and quotients. If $M$ is compositionally equivalent to the corresponding module defined over $\overline{\mathbb{F}_r}$, and if the composition factors of $M$ are pairwise non-isomorphic, then $M$ is multiplicity free.
\end{thm}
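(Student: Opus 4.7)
The plan is to argue by contradiction: if $M$ fails to be semisimple, find a length-two subquotient that is a nonsplit extension, then apply Proposition \ref{prop:nonsplitkgmod} to derive a contradiction from the irreducibility of the relevant Weyl modules over $K$.

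First I would establish the preliminary fact that $V(\lambda) \cong L(\lambda)$ over $K$ for every highest weight $\lambda$ of a composition factor of $M$. Compositional equivalence (Definition \ref{def:fieldchangeequiv}) implies that each such $L(\lambda)$ has the same weight multiset, and hence the same dimension, over $K$ as over $\overline{\mathbb{F}_r}$. Since $\lambda \in S_M$ and $r \ge r(S_M)$, Lemma \ref{lem:weylmodule} yields $V(\lambda) = L(\lambda)$ over $\overline{\mathbb{F}_r}$. Combining these observations with the field-independence of $\dim V(\lambda)$ from the same lemma, we obtain $\dim V(\lambda) = \dim L(\lambda)$ over $K$; since $L(\lambda)$ is the unique simple quotient of $V(\lambda)$, this forces $V(\lambda) \cong L(\lambda)$ over $K$. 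The standard identity $\dim V(\lambda) = \dim V(-\lambda^{w_0})$ paired with $\dim L(\lambda) = \dim L(-\lambda^{w_0})$ (a consequence of $L(-\lambda^{w_0}) \cong L(\lambda)^*$) then gives $V(-\lambda^{w_0}) \cong L(-\lambda^{w_0})$ over $K$ as well.

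Now suppose for a contradiction that $M$ is not semisimple, and fix a composition series $\{0\} = M_0 \subset M_1 \subset \cdots \subset M_n = M$. A short induction on the length shows that if every length-two subquotient $M_i/M_{i-2}$ were semisimple, then so would be $M$; hence some $N := M_i/M_{i-2}$ is a nonsplit extension, with irreducible submodule $L(\mu) = M_{i-1}/M_{i-2}$ and irreducible quotient $L(\lambda) = M_i/M_{i-1}$. The hypothesis on composition factors forces $\lambda \neq \mu$, so Proposition \ref{prop:nonsplitkgmod} applies to $N$: in case (i), $N$ is a quotient of $V(\lambda) \cong L(\lambda)$ and hence irreducible, contradicting that $N$ has two composition factors; in case (ii), $N$ embeds in $(V(-\mu^{w_0}))^* \cong L(-\mu^{w_0})^* \cong L(\mu)$, which is likewise irreducible. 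This completes the contradiction, so $M$ is semisimple, and as its composition factors are pairwise non-isomorphic, $M$ is multiplicity free.

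The main obstacle is the first paragraph: parlaying the purely numerical hypothesis of compositional equivalence into the structural statement that both $V(\lambda)$ and $V(-\lambda^{w_0})$ are irreducible over $K$, so that \emph{both} branches of Proposition \ref{prop:nonsplitkgmod} can be ruled out. Once this is achieved, the remainder is a clean dichotomy.
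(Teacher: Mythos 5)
Your first paragraph, establishing that $V(\lambda)\cong L(\lambda)$ and $V(-\lambda^{w_0})\cong L(-\lambda^{w_0})$ over $K$ for each composition factor $L(\lambda)$ of $M$, is correct and essentially matches the opening of the paper's proof. The problem is in the second paragraph, where you claim: ``A short induction on the length shows that if every length-two subquotient $M_i/M_{i-2}$ were semisimple, then so would be $M$.'' This is false as stated. Consider pairwise non-isomorphic simples $A,B,C$ with a nonsplit extension $0 \to A \to E \to C \to 0$, and set $M := E \oplus B$ with the composition series $0 \subset A \subset A\oplus B \subset M$. Then $M_2/M_0 = A\oplus B$ and $M_3/M_1 \cong C\oplus B$ are both semisimple, yet $M$ is not (its socle is $A\oplus B$). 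The failure of the induction is visible already at the inductive step: knowing $M_{n-1}$ and $M/M_1$ are semisimple does not force the preimage in $M$ of a simple summand of $M/M_1$ to be a split extension of $M_1$.

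What \emph{is} true is that a finite-length module that is not semisimple always possesses \emph{some} nonsplit length-two subquotient, found via the socle: take a simple $T \le M/\mathrm{soc}(M)$, pull back to $\tilde T \le M$, and project onto the summand of $\mathrm{soc}(M)$ that makes the extension nonsplit. If you replace your fixed-composition-series induction with this socle argument, your contradiction via Proposition~\ref{prop:nonsplitkgmod} goes through exactly as you wrote it, because the nonsplit extension produced has its two composition factors among those of $M$, so the Weyl-module irreducibility you established in paragraph one applies. That repaired argument (prove the $\mathrm{Ext}^1$-vanishing between distinct composition factors, hence semisimplicity) would be a genuinely different, arguably more transparent, route than the paper's, which instead repeatedly invokes Proposition~\ref{prop:nonsplitkgmod} to \emph{rewrite} the composition series, sliding each factor down to the bottom and thereby exhibiting every composition factor as a direct summand. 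But as written, the ``short induction'' is a real gap, not a minor slip, since it asserts a general-module fact that is simply untrue and makes no use of the structural information established in your first paragraph.
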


\begin{proof}
We may assume that $M$ is reducible. Let $\{0\} = M_0 \subset M_1 \subset \cdots \subset M_n = M$ be a composition series for $M$. Then for each $i \in \{1,2,\ldots,n\}$, Theorem \ref{thm:highestweightirred} implies that $M_i/M_{i-1} \cong L_i := L(\mu_i)$ for some dominant $\mu_i \in X(T)$. Since $M$ is compositionally equivalent to the corresponding module defined over $\overline{\mathbb{F}_r}$, Lemma \ref{lem:weylmodule} implies that $L_i \cong V(\mu_i)$ for each $i$. Thus $V(\mu_i)$ is irreducible. Furthermore, $L(-\mu_i^{w_0}) \cong L_i^*$ \cite[Proposition 16.1]{malle}, and so $\dim(L(-\mu_i^{w_0})) = \dim(L_i^*) = \dim(L_i)$ is equal to the dimension of the corresponding module defined over $\overline{\mathbb{F}_s}$ for any $s \ge r$. Hence $V(-\mu_i^{w_0}) \cong L(-\mu_i^{w_0})$ by Lemma \ref{lem:weylmodule}. Since the duals of isomorphic modules are isomorphic to each other, we have $(V(-\mu_i^{w_0}))^* \cong (L(-\mu_i^{w_0}))^*$, which is isomorphic to $(L_i^*)^*$. This is isomorphic to $L_i$ since $\dim(L_i)$ is finite, and thus $(V(-\mu_i^{w_0}))^*$ is irreducible.

Now, for a given $j \in \{2,3,\ldots,n\}$, the composition factors of $M_j/M_{j-2}$ are $M_j/M_{j-1} \cong L_j$ and $M_{j-1}/M_{j-2} \cong L_{j-1}$, with the latter isomorphic to a maximal submodule of $M_j/M_{j-2}$. Since $V(\mu_j)$ and $(V(-\mu_{j-1}^{w_0}))^*$ are irreducible, Proposition \ref{prop:nonsplitkgmod} implies that $L_j$ is also isomorphic to a maximal submodule of $M_j/M_{j-2}$. It follows that $M$ has a composition series $\{0\} = M_0 \subset M_1 \subset \cdots \subset M_{j-2} \subset U \subset M_j \subset \cdots \subset M_n = M,$ with $U/M_{j-2} \cong L_j$. This process of finding a new composition series can be performed a total of $(j-1)$ times in order to obtain a composition series whose smallest nonzero module is isomorphic to $L_j$. Hence each composition factor of $V$ is isomorphic to a submodule of $V$. As these composition factors are pairwise non-isomorphic, $M$ is multiplicity free.
\end{proof}

McNinch \cite[Corollary 1.1.1]{mcninch} proved a more uniform result about (not necessarily multiplicity free) semisimple modules: if $G$ is of type $Y_\ell$, then each $K[G]$-module of dimension at most $\ell p$ is semisimple. However, we will use Theorem \ref{thm:semisimplekgmod} to prove the semisimplicity of $K[G]$-modules whose dimensions are higher than this upper bound.

For the remainder of this section, we assume that $G$ is a linear algebraic group associated with an exceptional Chevalley group. We will shortly calculate the composition factors of the exterior square $A^2V$ of each minimal $K[G]$-module $V$, and in some cases, the composition factors of $(A^2V \otimes V)/A^3V$. Recall that $A^2V \cong L^2V$ if $p > 2$, and that $(A^2V \otimes V)/A^3V \cong L^3V$ if $p > 3$. Although we are mainly interested in the cases where these isomorphisms hold, we also include the small prime cases where they do not hold. Our calculations use data from a paper of L\"ubeck \cite{lubeck}, which, for each $G$, lists the dimension and highest weight of each irreducible $K[G]$-module whose dimension is below a certain value. We also use L\"ubeck's supplementary data \cite{lubeckws}, which, for each of these modules, lists the multiplicity of each of the module's dominant weights. Since each orbit of the action of the Weyl group $W$ on $X(T)$ contains exactly one dominant weight, and since weight multiplicity for a given module is constant on Weyl orbits, we can use this data to compute the multiplicity of every weight of the module. Note that L\"ubeck also considers classical linear algebraic groups of relatively small rank, and that in some cases, his supplementary data covers higher ranks and dimensions than his paper.

Table \ref{table:minmodweights} gives the highest weight and dimension of the minimal $K[G]$-modules for each $G$, up to isomorphism and twisting by a field automorphism of $G$, based on the data from \cite{lubeck}. Observe that the highest weight in each case is a fundamental dominant weight of $T$. Our ordering of the fundamental dominant weights of $T$, which corresponds to a certain labelling of the Dynkin diagram of $G$, follows the convention of Malle and Testerman \cite[Table 9.1]{malle} and the Magma \cite{magma} computer algebra system. In some cases, this is different from the ordering used by L\"ubeck \cite[Appendix A.1]{lubeck}. If $p \ne 3$, then the $K[G_2]$-module $L(\lambda_2)$ has dimension $14$, and if $p \ne 2$, then the $K[F_4]$-module $L(\lambda_1)$ has dimension $52$.

\begin{table}[ht]
\centering
\renewcommand{\arraystretch}{1.1}
\caption{The highest weights and dimensions of minimal $K[G]$-modules, up to isomorphism and twisting by a field automorphism of $G$.}
\label{table:minmodweights}
\begin{tabular}{ |c|c|c|c| }
\hline
$G$ & Highest weight & Condition on $p$ & Dimension\\
\hline
\hline
\multirow{2}{*}{$G_2$} & \multirow{2}{*}{$\lambda_1$} & $p = 2$ & $6$\\
 & & $p > 2$ & $7$\\
\hline
$G_2$ & $\lambda_2$ & $p=3$ & $7$\\
\hline
$F_4$ & $\lambda_1$ & $p=2$ & $26$\\
\hline
\multirow{2}{*}{$F_4$} & \multirow{2}{*}{$\lambda_4$} & $p = 3$ & $25$\\
 & & $p \ne 3$ & $26$\\
\hline
$E_6$ & $\lambda_1$ & All $p$ & $27$\\
\hline
$E_6$ & $\lambda_6$ & All $p$ & $27$\\
\hline
$E_7$ & $\lambda_7$ & All $p$ & $56$\\
\hline
$E_8$ & $\lambda_8$ & All $p$ & $248$\\
\hline
\end{tabular}
\end{table}

Each irreducible $K[G]$-module is self-dual, unless $G = E_6$ \cite[Appendix A.3]{lubeck}. If $G = E_6$ and $\lambda = \sum_{i=1}^6 c_i \lambda_i$, then the highest weight of $(L(\lambda))^*$ is $\sum_{i=1}^6 c_{(i)\tau} \lambda_i$, where $\tau$ is the permutation $(16)(35)$ associated with the nontrivial automorphism of the Dynkin diagram of $G$ \cite[Appendix A.3]{lubeck}. It follows that $(L(\lambda))^*$ is isomorphic to the module obtained by twisting $L(\lambda)$ by the graph automorphism of $E_6$. In particular, this automorphism interchanges $L(\lambda_1)$ and $L(\lambda_6) \cong (L(\lambda_1))^*$. Similarly, if $G = G_2$ and $p = 3$ (respectively, if $G = F_4$ and $p = 2$), then the graph automorphism of $G$ interchanges $L(\lambda_1)$ and $L(\lambda_2)$ (respectively, $L(\lambda_1)$ and $L(\lambda_4)$).

We use Magma to calculate the composition factors of $W \in \{A^2V,(A^2V \otimes V)/A^3V\}$, via the following process.
\begin{enumerate}[label={(\roman*)}]
\item Determine the weight multiset $\Lambda(V)$, e.g., by calculating the appropriate Weyl orbits from L\"ubeck's data.
\item Calculate the weight multiset $D:=\Lambda(W)$, using Lemma \ref{lem:tensorprodweights}.
\item Find a weight $\lambda \in D$ such that $\lambda \not < \mu$ for all $\mu \in D$.
\item Add $L(\lambda)$ to the list of composition factors of $W$ (per Lemma \ref{lem:highestweightfactor}).
\item Determine the weight multiset $\Lambda(L(\lambda))$ using L\"ubeck's data and Weyl orbit calculations.
\item Exclude each element of $\Lambda(L(\lambda))$ from $D$.
\item Repeat steps (iii)--(vi) until $D$ is empty.
\end{enumerate}

The following theorem summarises our results for $A^2V$ in each case. For completeness, we consider the cases where $(G,V) \in \{(G_2,L(\lambda_2)),(F_4,L(\lambda_1))\}$, even when $p$ is such that $V$ is not a minimal module. Here, and in the subsequent theorem related to ${(A^2V \otimes V)/A^3V}$, the statement that certain modules are multiplicity free is a consequence of Theorem \ref{thm:semisimplekgmod}.

\begin{thm}
\label{thm:a2vkgmod}
Suppose that $(G,V) \in \{(G_2,L(\lambda_2)),(F_4,L(\lambda_1))\}$, or that $V$ is a minimal $K[G]$-module whose highest weight is a fundamental dominant weight of $T$. Then the composition factors of $A^2V$, for each prime $p$, are listed in the tables in Appendix \ref{sec:compkga2v}. Furthermore, for the values of $p$ given in the final row of each table, $A^2V$ is multiplicity free. In particular, if $G = E_6$ and $p > 2$, then $A^2V$ is irreducible.
\end{thm}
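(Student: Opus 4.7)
The proof will be essentially computational, following the seven-step algorithm laid out immediately before the theorem statement. The plan is to run this algorithm, for each pair $(G,V)$ listed and for each prime $p$, in order to produce the composition factor tables in Appendix \ref{sec:compkga2v}; then to deduce multiplicity-freeness via Theorem \ref{thm:semisimplekgmod}.

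First, for each of the eight $(G,V)$ pairs in Table \ref{table:minmodweights} (together with the two ``completeness'' cases $(G_2,L(\lambda_2))$ and $(F_4,L(\lambda_1))$ at all relevant primes), I would extract $\Lambda(V)$ from L\"ubeck's data \cite{lubeck,lubeckws}. Concretely, L\"ubeck lists the multiplicities of the dominant weights of $L(\lambda)$; saturating these under the Weyl group orbit action (computed in Magma) gives the full multiset $\Lambda(V)$. I would then apply Lemma \ref{lem:tensorprodweights}(iii) to obtain the multiset $\Lambda(A^2V)$ as a formal sum of pairs $\lambda_i+\lambda_j$ with $i<j$. The iterative procedure is then mechanical: pick a maximal weight $\lambda$ in the remaining multiset $D$, declare $L(\lambda)$ a composition factor by Lemma \ref{lem:highestweightfactor}, subtract $\Lambda(L(\lambda))$ (again extracted from L\"ubeck's tables via Weyl orbits) from $D$, and repeat until $D$ is empty. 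The outcome is recorded in the appendix tables.

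Two points need extra care, and form the main obstacle to a fully rigorous argument. First, step (v) requires knowing $\Lambda(L(\lambda))$ for every highest weight $\lambda$ that arises, which in the $E_7$ and $E_8$ cases can involve modules of quite large dimension; I would have to verify that every such $\lambda$ lies within the range of L\"ubeck's tabulated data, and in the handful of borderline cases fall back on a direct weight computation (e.g.\ via a Weyl character calculation and a comparison of dimensions against $V(\lambda)$). Second, step (iii) is only well-defined if a maximal remaining weight exists at each stage; since $D$ is finite and $\le$ is a partial order, maximal elements always exist, but if several exist simultaneously the algorithm is nondeterministic, so I would need to check that the final composition factor multiset is independent of the order of choices --- which follows from Proposition \ref{prop:chainweights} (the multiset is determined by $V$ alone).

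For the multiplicity-freeness claim in the final row of each table, I would apply Theorem \ref{thm:semisimplekgmod}. Its two hypotheses are: (a) $A^2V$ is compositionally equivalent to the analogous module over $\overline{\mathbb{F}_r}$ for sufficiently large $r$, and (b) the composition factors are pairwise non-isomorphic. Condition (b) is read off directly from the computed tables --- it is precisely the condition that cuts out the primes listed in the final row. Condition (a) requires that for the relevant $p$, the Weyl modules $V(\mu_i)$ for each highest weight $\mu_i$ appearing are irreducible, equivalently $\dim L(\mu_i)$ equals its dimension over $\overline{\mathbb{F}_r}$; this follows by comparing L\"ubeck's dimension entries across characteristics and is the reason certain small primes are excluded. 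Finally, the assertion that $A^2V$ is irreducible for $G=E_6$ and $p>2$ is immediate once the table shows a single composition factor in this range, which by dimension count must be $A^2V$ itself.
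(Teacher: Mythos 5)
Your proposal reconstructs exactly the approach the paper takes: the composition factors of $A^2V$ are obtained by the stated seven-step weight-multiset algorithm (implemented in Magma using L\"ubeck's data and Weyl orbit computations), multiplicity-freeness then follows from Theorem~\ref{thm:semisimplekgmod} by checking that the factors are pairwise non-isomorphic and that the factor structure matches the large-characteristic case, and irreducibility for $E_6$ with $p>2$ follows because the table exhibits a single factor of dimension $\binom{27}{2}=351$. The paper treats this theorem as an essentially computational consequence of the preceding discussion and does not spell out a separate proof, so your filled-in argument matches both in method and in the places where care is needed.
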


Note that the first isomorphism in Table \ref{table:g2a2v2} is a consequence of Steinberg's Tensor Product Theorem (see \cite[Theorem 2.2]{lubeck}).

We will see later that the module $V$ is closely associated with an irreducible $\mathbb{F}_q[\hat G]$-module, with the Lie powers of the former associated with the Lie powers of the latter. In particular, this correspondence preserves irreducibility. In the case $q=p$, we would like to use Theorem \ref{thm:univap} to construct a $p$-group $P$ such that $\hat G$ is a subgroup of relatively small index of the group $A(P)$ induced by $\mathrm{Aut}(P)$ on $P/\Phi(P)$. As $L^2V$ is irreducible when $G = E_6$ and $p > 2$, we must therefore consider the structure of $L^3V$. Although $L^2V$ is not irreducible when $G = E_7$, we will see later in this paper that considering the structure of $L^3V$ is again necessary in this case. We therefore determine the composition factors of $(A^2V \otimes V)/A^3V$ when $G \in \{E_6,E_7\}$.

\begin{thm}
\label{thm:l3vkgmod}
Suppose that $G \in \{E_6,E_7\}$, and let $V$ be a minimal $K[G]$-module whose highest weight is a fundamental dominant weight of $T$. Then the composition factors of $(A^2V \otimes V)/A^3V$, for each prime $p$, are listed in the tables in Appendix \ref{sec:compkgl3v}. Furthermore, for the values of $p$ given in the final row of each table, $(A^2V \otimes V)/A^3V$ is multiplicity~free.
\end{thm}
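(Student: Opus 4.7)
The plan is to follow exactly the seven-step algorithmic procedure that was used to establish Theorem~\ref{thm:a2vkgmod}, now applied to $M := (A^2V \otimes V)/A^3V$. First I would compute $\Lambda(V)$ for each minimal module by taking the Weyl-group orbits of the dominant weights listed in L\"ubeck's data \cite{lubeck,lubeckws}; this gives $\Lambda(V)$ as an explicit multiset in $X(T)$. Applying Lemma~\ref{lem:tensorprodweights} successively then yields $\Lambda(A^2V)$ (from the exterior-square formula in part~(iii)), $\Lambda(A^2V \otimes V)$ (from the tensor-product formula in part~(ii)), $\Lambda(A^3V)$, and hence $\Lambda(M) = \Lambda(A^2V \otimes V) \setminus \Lambda(A^3V)$ by part~(i).

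Next, I would iterate the weight-stripping loop of Lemma~\ref{lem:highestweightfactor}. At each stage, given the current remaining multiset $D \subseteq \Lambda(M)$, select a weight $\lambda \in D$ that is not strictly less than any other element of $D$ under the root-order $\le$; conclude that $L(\lambda)$ is a composition factor of $M$; use L\"ubeck's weight-multiplicity data (combined with Weyl-orbit enumeration, since multiplicities are constant on $W$-orbits) to obtain $\Lambda(L(\lambda))$; then remove this multiset from $D$. Repeating until $D$ is empty exhausts the composition factors of $M$, and the resulting list is what appears in the tables in Appendix~\ref{sec:compkgl3v}. The computation is routine in principle, though practically it must be done by machine, since for $G = E_7$ the module $M$ has dimension $(56 \cdot 55/2)\cdot 56 - \binom{56}{3} = 58\,520$, with $\Lambda(M)$ a correspondingly large multiset.

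For the multiplicity-free claim in the last row of each table, I would apply Theorem~\ref{thm:semisimplekgmod}. The module $M$ is built from the minimal irreducible $V = L(\lambda_1)$ (or $L(\lambda_7)$) by a tensor product, an exterior-square quotient, and a further quotient, so it falls within the scope of that theorem. One verifies, using the same iterative procedure run in parallel over $\overline{\mathbb{F}_r}$ for a single sufficiently large prime $r$, that the listed composition factors are pairwise non-isomorphic and that for each of the indicated primes $p$ the module $M$ is compositionally equivalent to its analogue over $\overline{\mathbb{F}_r}$; this gives multiplicity freeness.

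The main obstacle is the small-characteristic bookkeeping. The exceptional primes $p \in \{5\}$ for $E_6$ and $p \in \{7,11,19\}$ for $E_7$ flagged in Theorem~\ref{thm:structsummary} correspond precisely to situations where some $L(\mu)$ appearing during the algorithm has strictly smaller dimension than the Weyl module $V(\mu)$, so the decomposition of the weight multiset among composition factors genuinely differs from the generic case; these must be handled separately, taking care that L\"ubeck's tables \cite{lubeck,lubeckws} still cover the relevant highest weights at the relevant characteristics. For these exceptional primes, additional composition factors appear (with possible repetitions), which is exactly why they are excluded from the multiplicity-free conclusion. Once these finitely many cases are tabulated, the generic argument via Theorem~\ref{thm:semisimplekgmod} closes out the proof.
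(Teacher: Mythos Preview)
Your proposal is correct and follows essentially the same approach as the paper: the seven-step weight-stripping algorithm (stated just before Theorem~\ref{thm:a2vkgmod}) is applied verbatim to $M=(A^2V\otimes V)/A^3V$, and the multiplicity-free conclusion for the generic primes is deduced from Theorem~\ref{thm:semisimplekgmod} exactly as you describe. The only minor discrepancy is that the full list of non-generic primes in the tables also includes $p\in\{2,3\}$ (not just the $p\in\{5\}$ and $p\in\{7,11,19\}$ you cite from Theorem~\ref{thm:structsummary}), but these are handled by the same case-by-case bookkeeping you outline.
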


\section{Minimal modules for exceptional Chevalley groups}
\label{sec:liepowerschev}

Here, we use the results of the previous section to study the minimal modules, and their Lie powers, for the exceptional Chevalley groups (or their universal covers). We retain the notation outlined at the start of the previous section, and we initially allow $\tilde G = {}^t Y_\ell(q)$ to be any finite simple group of Lie type. Additionally, let $u:=1$ if $\tilde G$ is a Suzuki or Ree group, i.e., if $t = 2$ and $Y \in \{B,G,F\}$, and otherwise let $u:=t$.

%We recall that a group $H$ is \emph{quasisimple} if $H$ is perfect and $H/Z(H)$ is non-abelian and simple.
%
%\begin{thm}[Tits {\cite[Theorem 24.17]{malle}}]
%\label{thm:hatgquasi}
%The group $\hat G$ is quasisimple.
%\end{thm}

\begin{prop}
\label{prop:hatgquotients}
Suppose that $\tilde G$ is an exceptional group of Lie type, and that $\hat G \not\cong \tilde G$. Then $Z(\hat G)$ is the unique nontrivial proper normal subgroup of $\hat G$.
\end{prop}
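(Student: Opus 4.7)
The plan is to exploit the fact (noted earlier in the excerpt) that $\hat G$ is quasisimple, together with the explicit identification $\hat G = \nonsplit{3}{E_6(q)}$ or $\hat G = \nonsplit{2}{E_7(q)}$, which forces $Z(\hat G)$ to have prime order.

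First I would reduce the problem to a standard fact about quasisimple groups. Let $N$ be any nontrivial proper normal subgroup of $\hat G$. Since $\hat G/Z(\hat G)\cong\tilde G$ is non-abelian simple, the image $NZ(\hat G)/Z(\hat G)$ is either trivial or the whole of $\tilde G$. In the latter case $\hat G = N Z(\hat G)$, so
\[
\hat G/N \;=\; NZ(\hat G)/N \;\cong\; Z(\hat G)/(N\cap Z(\hat G))
\]
is abelian; but $\hat G$ is perfect (as a quotient of the universal cover $J$), so $\hat G/N$ is perfect and abelian, hence trivial, contradicting $N<\hat G$. Therefore $N\le Z(\hat G)$.

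Next I would argue that the only possibility is $N=Z(\hat G)$, by showing $Z(\hat G)$ has prime order and so has no nontrivial proper subgroups. Under the hypothesis $\hat G\not\cong\tilde G$ with $\tilde G$ exceptional, the paragraph preceding the proposition lists the two cases: $\tilde G=E_6(q)$ with $q\equiv 1\pmod 3$, in which case $\hat G=J=\nonsplit{3}{E_6(q)}$ has $|Z(\hat G)|=3$; and $\tilde G=E_7(q)$ with $q$ odd, in which case $\hat G=J=\nonsplit{2}{E_7(q)}$ has $|Z(\hat G)|=2$. In either case $Z(\hat G)$ is cyclic of prime order, so its only subgroups are $1$ and $Z(\hat G)$. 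Combined with the previous paragraph, this forces $N=Z(\hat G)$, and of course $Z(\hat G)$ is itself a nontrivial proper normal subgroup of $\hat G$, completing the proof.

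There is essentially no main obstacle here: the argument is the standard normal-subgroup lattice of a quasisimple group, and the only input specific to the exceptional setting is the order of $Z(\hat G)$, which is already recorded in the discussion immediately before the proposition via the \textsc{Atlas}/Wilson references. The only mild care needed is to separate the roles of $J$, $\hat G$ and $\tilde G$: one uses perfectness of $J$ (inherited by $\hat G$) to rule out the quotient case, and the explicit description of $\hat G$ (not merely of $J$) to identify $|Z(\hat G)|$ as a prime.
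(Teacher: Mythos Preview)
Your argument is essentially the paper's own: first use quasisimplicity to force every proper normal subgroup into $Z(\hat G)$, then observe that $|Z(\hat G)|$ is prime. The paper compresses your first paragraph into the single sentence ``each proper normal subgroup of $\hat G$ lies in $Z(\hat G)$'' and cites \cite[Corollary 24.13]{malle} for the prime order of the centre, rather than the explicit case list.

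There is one small gap in your version. The proposition is stated for exceptional groups \emph{of Lie type}, not just exceptional Chevalley groups, whereas the paragraph you rely on for the case analysis (``$\hat G \not\cong \tilde G$ if and only if $\tilde G = E_6(q)$ with $q \equiv 1 \pmod 3$ or $\tilde G = E_7(q)$ with $q$ odd'') explicitly restricts to Chevalley groups. So your list misses ${}^2E_6(q)$ with $q \equiv -1 \pmod 3$, where $\hat G = \nonsplit{3}{{}^2E_6(q)}$ and $|Z(\hat G)|=3$. This is easily repaired by adding that case (or by citing \cite[Corollary 24.13]{malle} as the paper does), and once done your proof is complete and matches the paper's.
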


\begin{proof}
As $\hat G$ is quasisimple, each proper normal subgroup of $\hat G$ lies in $Z(\hat G)$. Using the fact that $\hat G$ is not isomorphic to $\tilde G \cong \hat G/Z(\hat G)$, we see from \cite[Corollary 24.13]{malle} that $|Z(\hat G)|$ is prime, and the result follows.
\end{proof}

The above proof also applies to certain classical groups $\tilde G$; see \cite[Corollary 24.13]{malle}.

Our next proposition is from \cite[\S13]{steinbergyale} (see also \cite[Proposition 5.4.4]{kleidman}).

\begin{prop}
\label{prop:untwistedsplitting}
The field $\mathbb{F}_{q^u}$ is a splitting field for $\hat G$.
\end{prop}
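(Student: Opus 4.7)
The plan is to invoke Steinberg's classical splitting-field theorem for finite groups of Lie type from \cite[\S13]{steinbergyale}. Since the paper already attributes the statement to Steinberg, my proof would largely consist of citing that reference, together with a brief check that passing from the universal cover $J$ to $\hat G = J/O_p(Z(J))$ preserves the splitting-field property: any irreducible $\mathbb{F}_{q^u}[\hat G]$-module inflates to an irreducible $\mathbb{F}_{q^u}[J]$-module on which $O_p(Z(J))$ acts trivially, and conversely any absolutely irreducible $\mathbb{F}_{q^u}[J]$-module with $O_p(Z(J))$ in its kernel descends to $\hat G$, so splitting-field considerations for $\hat G$ and for $J$ coincide.

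For the substance of Steinberg's theorem, the proof naturally splits into two cases. In defining characteristic $p$, the Steinberg Restriction Theorem and Tensor Product Theorem show that every irreducible $\overline{\mathbb{F}_p}[\hat G]$-module arises as a tensor product of Frobenius twists of restricted modules $L(\mu)$, indexed by the restricted dominant weights. The Steinberg endomorphism $F$ with $\hat G = G^F$ has the property that $F^u$ acts on the root (or cocharacter) lattice as the standard $q^u$-power Frobenius, and a standard highest-weight calculation then produces an $\mathbb{F}_{q^u}$-form of each such irreducible module, yielding the splitting-field property in defining characteristic. In a characteristic $\ell \ne p$, one combines Brauer's theorem on splitting fields with Steinberg's analysis of character values of $\hat G$, which lie in a cyclotomic field whose $\ell$-modular reduction is contained in $\mathbb{F}_{q^u}$, to conclude that $\mathbb{F}_{q^u}$ again suffices.

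The main obstacle, were one to redo the argument from scratch, is the delicate bookkeeping for twisted groups, particularly the Suzuki and Ree cases where $t = 2$ but $u = 1$. For these groups the Steinberg endomorphism $F$ already squares to a standard $q$-power Frobenius directly on the relevant lattice, rather than factoring as an order-two graph twist composed with a $\sqrt{q}$-Frobenius in a way that would be visible on representations, so the minimal splitting field does not enlarge from $\mathbb{F}_q$ to $\mathbb{F}_{q^2}$. Since both \cite[\S13]{steinbergyale} and the modern exposition \cite[Proposition 5.4.4]{kleidman} carry out this case analysis explicitly, my actual written proof would reduce to a one-sentence citation of these sources, plus the short descent argument from $J$ to $\hat G$ above.
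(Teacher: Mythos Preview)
The paper gives no proof of this proposition at all; it simply records the statement as coming from \cite[\S13]{steinbergyale} (with \cite[Proposition 5.4.4]{kleidman} as a secondary reference). Your proposal ultimately does the same thing, so it is correct and matches the paper's approach.

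Two remarks on the extra material you include. First, the descent from $J$ to $\hat G$ is unnecessary: Steinberg's theorem is proved directly for the group $G^F$ of fixed points of a Steinberg endomorphism on the simply connected algebraic group, and in the paper's notation that is already $\hat G$, not $J$. Second, your non-defining-characteristic paragraph is beside the point (and slightly garbled): $\mathbb{F}_{q^u}$ has characteristic $p$, so the statement concerns only the splitting of $\mathbb{F}_{q^u}[\hat G]$ in defining characteristic, which is exactly what Steinberg's restriction/tensor-product description of the irreducibles gives.
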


Now, if $\tilde G$ is not a Suzuki or Ree group, then let $\mathcal{L}$ be the set of irreducible $K[G]$-modules $L(\lambda)$ such that $\lambda$ is \emph{$q$-restricted}, i.e., $\lambda = \sum_{i=1}^\ell c_i \lambda_i$ with $0 \le c_i < q$ for each $i$. Steinberg \cite[\S11, \S13]{steinbergend} gives the following theorem, as well as the slightly more complicated definition of $\mathcal{L}$ that is required when $\tilde G$ is a Suzuki or Ree group (see also \cite[p.~191]{kleidman}).

\begin{thm}
\label{thm:steinbqrest}
The irreducible $K[\hat G]$-modules are the restrictions to $\hat G$ of the irreducible $K[G]$-modules in $\mathcal{L}$. Furthermore, if $L(\lambda), L(\mu) \in \mathcal{L}$, then the restrictions of $L(\lambda)$ and $L(\mu)$ to $\hat G$ are isomorphic if and only if $\lambda = \mu$.
\end{thm}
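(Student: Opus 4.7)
The plan is to prove this by combining Steinberg's Tensor Product Theorem with a density argument showing that the restriction of an irreducible $K[G]$-module of $q$-restricted highest weight remains irreducible on $\hat G$. I will restrict attention to the untwisted case (so $u=1$, $q=p^e$, and $\hat G = G^F$ where $F = \phi^e$), as the Suzuki/Ree cases require only minor modifications to the $p$-adic decomposition step.

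First, I would invoke Steinberg's Tensor Product Theorem to write any irreducible $K[G]$-module $L(\lambda)$ uniquely as
\[
L(\lambda) \;\cong\; L(\mu_0) \otimes L(\mu_1)^{\phi} \otimes \cdots \otimes L(\mu_n)^{\phi^n},
\]
where $\lambda = \sum_{j=0}^{n} p^j \mu_j$ and each $\mu_j$ is $p$-restricted. For a $q$-restricted weight, this decomposition terminates at $j = e-1$, so each factor involves twists by $\phi^j$ with $0 \le j < e$. The key observation is then that $\phi^e$ coincides with the Steinberg endomorphism $F$ defining $\hat G$, so while $L(\mu)^{\phi^e}$ is not isomorphic to $L(\mu)$ as a $K[G]$-module, its restriction to $\hat G$ equals the restriction of $L(\mu)$ (since $F$ is the identity on $\hat G$). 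This confirms that $\mathcal{L}$ as defined exhausts the candidates once we descend to $\hat G$: any $K[G]$-module $L(\nu)$ restricts to a $\hat G$-module isomorphic to the restriction of some $L(\lambda) \in \mathcal{L}$, by reducing each digit of the $p$-adic expansion of $\nu$ modulo $q$.

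Next I would prove that, for each $\lambda \in \mathcal{L}$, the restriction of $L(\lambda)$ to $\hat G$ remains irreducible. This is the main obstacle and the technical heart of the argument. The strategy is a density/linearisation argument: show that the $K$-linear span of the image of $\hat G$ inside $\mathrm{End}_K(L(\lambda))$ coincides with the $K$-linear span of the image of $G$. Concretely, one can work with a Chevalley basis $\{e_\alpha, h_i, f_\alpha\}$ of the associated Lie algebra, use the action of the root subgroups $x_\alpha(t)$ for $t$ ranging over $\mathbb{F}_q$, and exploit the fact that a $p$-restricted weight produces an action of $x_\alpha(t)$ that is polynomial in $t$ of degree bounded by a quantity less than $q$. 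Then a Vandermonde argument on the $q$ distinct values of $t \in \mathbb{F}_q$ allows one to recover the full action of the algebraic root subgroup $x_\alpha(K)$ on $L(\lambda)$ from its action on $x_\alpha(\mathbb{F}_q) \subset \hat G$. Since $G$ is generated by its root subgroups, any $\hat G$-invariant subspace of $L(\lambda)$ is $G$-invariant and hence equal to $0$ or $L(\lambda)$.

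Finally, I would prove the distinctness claim. Suppose $L(\lambda)$ and $L(\mu)$ restrict to isomorphic $\hat G$-modules for distinct $q$-restricted $\lambda, \mu$. Fix a maximally split torus $T_0 := T \cap \hat G$, whose characters are precisely the restrictions to $T_0$ of characters of $T$ modulo the action of $F$. Decomposing both restricted modules into $T_0$-weight spaces and matching the highest weight space (which is one-dimensional by Theorem \ref{thm:highestweightirred}), one concludes that the restrictions of $\lambda$ and $\mu$ to $T_0$ coincide; the $q$-restrictedness condition pins down each of $\lambda, \mu$ uniquely within its $F$-orbit, forcing $\lambda = \mu$. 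Combined with the irreducibility result and with a counting argument matching the number of $q$-restricted weights to the number of $p$-regular conjugacy classes of $\hat G$ (which equals the number of irreducible $K[\hat G]$-modules), this completes the proof. The density argument in the middle step is by far the hardest part; everything else is bookkeeping with $p$-adic digits and highest weights.
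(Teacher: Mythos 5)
The paper does not prove this theorem; it is stated as a known result, citing Steinberg \cite[\S11, \S13]{steinbergend} as the source (with Kleidman--Liebeck as a secondary reference). Your proposal reconstructs (roughly) Steinberg's original argument, which is a sensible approach given that the paper offers only a citation. The first two ingredients are sound: the Steinberg Tensor Product decomposition reduces the highest weight of any $L(\nu)$ modulo $\phi^e = F$ to a $q$-restricted one, and the density/Vandermonde argument shows each such restriction remains irreducible. For the degree bound ``$<q$'' in the density step, you should justify it from the tensor factorisation rather than assert it: the factor $L(\mu_j)^{\phi^j}$ contributes a polynomial in $t^{p^j}$ of degree $<p$, so the product has degree at most $\sum_{j=0}^{e-1}(p-1)p^j = q-1$.

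The distinctness step has a genuine gap. The $T_0$-weight-matching argument fails: characters of $T_0 = T^F$ in the split case are parameterised by $X(T)/(q-1)X(T)$, so distinct $q$-restricted weights can restrict to the \emph{same} character of $T_0$. Already for $\mathrm{SL}_2$ with $q = p$, both the trivial weight $0$ and the Steinberg weight $(q-1)\lambda_1$ are $q$-restricted and agree on $T_0$, yet $L(0)$ and $L((q-1)\lambda_1)$ have dimensions $1$ and $q$. The phrase ``pins down each of $\lambda,\mu$ uniquely within its $F$-orbit'' does not reflect what $F$ actually does on $X(T)$ (multiplication by $q$); the relevant identification is mod $q-1$, and $q$-restrictedness does not resolve it. The counting argument you invoke at the end is also not a proof by itself: knowing that the number of irreducible $K[\hat G]$-modules is $q^\ell$ and that each of the $q^\ell$ restrictions is irreducible does not preclude coincidences among the restrictions, in which case some irreducibles would fail to arise. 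Steinberg's argument closes the loop by showing that the Brauer characters of the $q^\ell$ restrictions are linearly independent as class functions on semisimple elements of $\hat G$, via Dedekind's independence of characters of the torus; that linear independence yields both distinctness and completeness simultaneously, and it is the missing step in your outline.
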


We now consider the relationship between $K[G]$-modules and $\mathbb{F}_{q^u}[\hat G]$-modules. Here, if $V$ is an $\mathbb{F}_{q^u}[\hat G]$-module, then $V^K$ denotes the $K[\hat G]$-module constructed from $V$ by extending the scalars.

\begin{prop}
\label{prop:extrestrictiso}
For each $i \in \{1,2\}$, let $V_i$ be an $\mathbb{F}_{q^u}[\hat G]$-module, and let $W_i$ be a $K[G]$-module such that $V_i^K \cong W_i|_{\hat G}$. Then the following statements hold:
\begin{enumerate}[label={(\roman*)}]
\item \label{extrestr1} If $V_1 \le V_2$ and $W_1 \le W_2$ (and the embeddings are compatible with the above isomorphisms), then $(V_2/V_1)^K \cong (W_2/W_1)|_{\hat G}$.
\item \label{extrestr2} $(V_1 \otimes V_2)^K \cong (W_1 \otimes W_2)|_{\hat G}$.
\item \label{extrestr3} $(A^r(V_1))^K \cong (A^r(W_1))|_{\hat G}$, for each positive integer $r$.
\end{enumerate}
\end{prop}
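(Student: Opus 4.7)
The plan is to exploit the fact that both $- \otimes_{\mathbb{F}_{q^u}} K$ and restriction from $G$ to $\hat G$ are exact functors that commute with tensor product, and hence also with exterior powers. Since $K/\mathbb{F}_{q^u}$ is a (flat) field extension, extension of scalars is exact; restriction is trivially exact because it leaves the underlying vector spaces and maps unchanged. This is the engine driving all three parts of the proposition.

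For part \ref{extrestr1}, I would apply $- \otimes_{\mathbb{F}_{q^u}} K$ to the short exact sequence $0 \to V_1 \to V_2 \to V_2/V_1 \to 0$ of $\mathbb{F}_{q^u}[\hat G]$-modules to obtain the short exact sequence $0 \to V_1^K \to V_2^K \to (V_2/V_1)^K \to 0$ of $K[\hat G]$-modules. Analogously, restriction applied to $0 \to W_1 \to W_2 \to W_2/W_1 \to 0$ yields a short exact sequence of $K[\hat G]$-modules. By the hypothesised compatibility of the embeddings with the isomorphisms $V_i^K \cong W_i|_{\hat G}$, the first two nonzero terms of the two sequences agree, so the isomorphisms descend to an isomorphism on the cokernels, which is the desired conclusion.

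For part \ref{extrestr2}, I would invoke the standard canonical isomorphism $(V_1 \otimes_{\mathbb{F}_{q^u}} V_2) \otimes_{\mathbb{F}_{q^u}} K \cong V_1^K \otimes_K V_2^K$, which is readily checked to be $K[\hat G]$-linear under the diagonal action, together with the trivial fact that the restriction to $\hat G$ of a $K$-tensor product coincides with the tensor product of the restrictions. Combining these observations with the hypothesised isomorphisms $V_i^K \cong W_i|_{\hat G}$ yields the claim. For part \ref{extrestr3}, express $A^r(U)$ as the quotient of the $r$-th tensor power $U^{\otimes r}$ by the submodule $N_r(U)$ generated by simple tensors with a repeated factor. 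Iterating part \ref{extrestr2} produces an isomorphism $(V_1^{\otimes r})^K \cong (W_1^{\otimes r})|_{\hat G}$ that, by its explicit form on simple tensors, maps $N_r(V_1)^K$ onto $N_r(W_1)|_{\hat G}$, and part \ref{extrestr1} then delivers the isomorphism on the quotients.

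No step presents a substantive obstacle; the whole argument is a routine bookkeeping exercise about naturality. The only care required is in tracking the canonical isomorphisms and verifying their $\hat G$-equivariance, and the argument uses no special feature of $\hat G$ beyond the fact that it acts on each $V_i$ in a manner compatible with the $G$-action on $W_i$ after extending scalars.
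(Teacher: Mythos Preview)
Your proposal is correct and follows essentially the same approach as the paper: the paper's proof is a single sentence asserting that quotients, tensor products and exterior powers commute with extension of scalars to $K$ and with restriction from $G$ to $\hat G$, and your argument is simply a careful unpacking of precisely that assertion via exactness and the standard natural isomorphisms.
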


\begin{proof}
Each quotient, tensor product and exterior power operation in \ref{extrestr1}--\ref{extrestr3} commutes with the extension of scalars of $\mathbb{F}_{q^u}[\hat G]$-modules to $K$, or with the restriction of $K[G]$-modules to $\hat G$, as appropriate.
\end{proof}

\begin{lem}
\label{lem:untwistedfinitemods}
Up to isomorphism, there is a 1-1 correspondence between irreducible $K[G]$-modules in $\mathcal{L}$ and irreducible $\mathbb{F}_{q^u}[\hat G]$-modules. Specifically, if $W \in \mathcal{L}$, and if $V$ is the corresponding irreducible $\mathbb{F}_{q^u}[\hat G]$-module, then $V^K \cong W|_{\hat G}$ as $K[\hat G]$-modules.
\end{lem}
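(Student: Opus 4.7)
The plan is to combine Theorem \ref{thm:steinbqrest}, which sets up a bijection between the modules in $\mathcal{L}$ and the irreducible $K[\hat G]$-modules, with the standard fact that, over a splitting field, extension of scalars to the algebraic closure yields a bijection on isomorphism classes of irreducible modules. The lemma will then follow by composing these two bijections.

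First I would invoke Proposition \ref{prop:untwistedsplitting} to conclude that $\mathbb{F}_{q^u}$ is a splitting field for $\hat G$. The standard modular representation theory of finite groups then gives me two things. On the one hand, if $V$ is an irreducible $\mathbb{F}_{q^u}[\hat G]$-module, then $V$ is absolutely irreducible, so $V^K$ is an irreducible $K[\hat G]$-module. On the other hand, because $\hat G$ is finite and $\mathbb{F}_{q^u}$ is a finite splitting field, every irreducible $K[\hat G]$-module admits an $\mathbb{F}_{q^u}$-form, which is unique up to isomorphism (this can be argued by counting: the number of isomorphism classes of irreducible modules over either coefficient ring equals the number of $p$-regular conjugacy classes of $\hat G$, and one uses a Noether--Deuring type statement to get injectivity of $V \mapsto V^K$ on isomorphism classes). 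So the assignment $V \mapsto V^K$ is a bijection from the isomorphism classes of irreducible $\mathbb{F}_{q^u}[\hat G]$-modules to those of irreducible $K[\hat G]$-modules.

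Second, Theorem \ref{thm:steinbqrest} gives me a bijection in the other direction: every irreducible $K[\hat G]$-module is the restriction $W|_{\hat G}$ of a unique (up to isomorphism) module $W \in \mathcal{L}$. Composing with the bijection of the previous paragraph, I obtain a 1-1 correspondence sending each $W \in \mathcal{L}$ to the unique (up to isomorphism) irreducible $\mathbb{F}_{q^u}[\hat G]$-module $V$ characterised by $V^K \cong W|_{\hat G}$, which is exactly the statement of the lemma.

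The only non-trivial ingredient is the splitting-field fact, which I would simply cite from a standard reference (for instance Curtis--Reiner). Given the setup provided by Proposition \ref{prop:untwistedsplitting} and Theorem \ref{thm:steinbqrest}, there is no genuine obstacle: the proof is essentially bookkeeping once both bijections are in hand.
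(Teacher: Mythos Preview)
Your proposal is correct and follows essentially the same route as the paper: invoke Proposition~\ref{prop:untwistedsplitting} to get that $\mathbb{F}_{q^u}$ is a splitting field, cite a standard reference for the resulting bijection $V \mapsto V^K$ between irreducible $\mathbb{F}_{q^u}[\hat G]$-modules and irreducible $K[\hat G]$-modules, and then compose with the bijection from Theorem~\ref{thm:steinbqrest}. The paper's only difference is cosmetic---it cites a specific corollary for the splitting-field bijection rather than sketching the counting/Noether--Deuring argument you outline.
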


\begin{proof}
Since $\mathbb{F}_{q^u}$ is a splitting field for $\hat G$ by Proposition \ref{prop:untwistedsplitting}, there is a 1-1 correspondence $V_i \longleftrightarrow V_i^K$ between distinct irreducible $\mathbb{F}_{q^u}[\hat G]$-modules and distinct irreducible $K[\hat G]$-modules \cite[Corollary 9.8]{ichar}. Theorem \ref{thm:steinbqrest} therefore implies that the restrictions to $\hat G$ of distinct elements of $\mathcal{L}$ are obtained from the distinct irreducible $\mathbb{F}_{q^u}[\hat G]$-modules by extending the scalars.
\end{proof}

\begin{lem}
\label{lem:finitemodscompfactors}
Let $W$ be a $K[G]$-module whose composition factors all lie in $\mathcal{L}$, and let $V$ be an $\mathbb{F}_{q^u}[\hat G]$-module such that $V^K \cong W|_{\hat G}$. Then the following statements hold:
\begin{enumerate}[label={(\roman*)}]
\item \label{finitemods1} The correspondence of Lemma \ref{lem:untwistedfinitemods} applies between the composition factors of $W$ and the composition factors of $V$, with multiplicities preserved.
\item \label{finitemods2} If $W$ is multiplicity free, then so are $W|_{\hat G}$ and $V$.
\end{enumerate}
\end{lem}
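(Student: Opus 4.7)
The plan is to pass structural information among three modules: $W$ as a $K[G]$-module, its restriction $W|_{\hat G}$ as a $K[\hat G]$-module, and $V$ as an $\mathbb{F}_{q^u}[\hat G]$-module with $V^K \cong W|_{\hat G}$. For part (i), I would choose a composition series $\{0\} = W_0 \subset W_1 \subset \cdots \subset W_n = W$ as $K[G]$-modules. Since each composition factor $W_i/W_{i-1}$ lies in $\mathcal{L}$ by hypothesis, Theorem \ref{thm:steinbqrest} ensures that its restriction to $\hat G$ is irreducible, so the restricted filtration is a composition series of $W|_{\hat G}$ as a $K[\hat G]$-module. On the other hand, starting from a composition series of $V$ as an $\mathbb{F}_{q^u}[\hat G]$-module and extending scalars to $K$ (using exactness of $(-)^K$ together with the fact that $V_i^K$ is irreducible for each irreducible $\mathbb{F}_{q^u}[\hat G]$-module $V_i$, by Proposition \ref{prop:untwistedsplitting}) yields a composition series of $V^K$ whose factors are the $V_i^K$. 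The Jordan--H\"older theorem forces the two multisets of composition factors of $W|_{\hat G} \cong V^K$ to agree, and the identification is precisely the correspondence of Lemma \ref{lem:untwistedfinitemods}, establishing the bijection claimed in (i) with multiplicities preserved.

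For part (ii), showing that $W|_{\hat G}$ is multiplicity free is the easy half: writing $W = \bigoplus L(\mu_i)$ with the $L(\mu_i)$ pairwise non-isomorphic irreducibles, restricting yields $W|_{\hat G} = \bigoplus L(\mu_i)|_{\hat G}$, and by Theorem \ref{thm:steinbqrest} each summand is irreducible and distinct highest weights still yield non-isomorphic restrictions. Combined with part (i), this also shows that the composition factors of $V$ are pairwise non-isomorphic. The remaining task, and the main technical obstacle, is to descend semisimplicity from $V^K$ to $V$, since in general modules can fail to split under scalar extension.

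I would handle this descent via Jacobson radicals. A finite-dimensional module over a finite-dimensional algebra $A$ is semisimple if and only if it is annihilated by $\mathrm{rad}(A)$. Because $\mathbb{F}_{q^u}$ is perfect and, by Proposition \ref{prop:untwistedsplitting}, a splitting field for $\hat G$, we have $\mathrm{rad}(K[\hat G]) = \mathrm{rad}(\mathbb{F}_{q^u}[\hat G]) \otimes_{\mathbb{F}_{q^u}} K$. The faithful flatness of $K$ over $\mathbb{F}_{q^u}$ then yields that $\mathrm{rad}(\mathbb{F}_{q^u}[\hat G]) \cdot V = 0$ if and only if $\mathrm{rad}(K[\hat G]) \cdot V^K = 0$, so $V$ is semisimple if and only if $V^K$ is. Since $V^K \cong W|_{\hat G}$ is semisimple by the first half of (ii), so is $V$; together with the distinctness of composition factors already obtained, this gives the multiplicity freeness of $V$.
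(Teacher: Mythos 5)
Your proof is correct, and for part (i) it follows essentially the same route as the paper (the paper phrases it via irreducible sections rather than composition series, but the content is the same appeal to Theorem~\ref{thm:steinbqrest}, Proposition~\ref{prop:untwistedsplitting}, and Jordan--H\"older).

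For part (ii), your overall strategy matches the paper's --- show $W|_{\hat G}$ is multiplicity free, descend semisimplicity to $V$, and then note that the composition factors of $V$ are pairwise non-isomorphic --- but you differ in two localized places. Where the paper simply cites a result from a reference for the descent of semisimplicity along the scalar extension $\mathbb{F}_{q^u} \hookrightarrow K$, you give a self-contained argument via Jacobson radicals, using separability of $K/\mathbb{F}_{q^u}$ to get $\mathrm{rad}(K[\hat G]) = \mathrm{rad}(\mathbb{F}_{q^u}[\hat G]) \otimes_{\mathbb{F}_{q^u}} K$ and faithful flatness to transfer the annihilation condition. (As you note, only perfectness of $\mathbb{F}_{q^u}$ is really doing the work here; the splitting-field property is not needed for the radical equality.) You also obtain the pairwise non-isomorphism of the irreducible summands of $V$ as a direct consequence of the bijection in part (i), whereas the paper instead passes to the summands $A^K$, $B^K$ and cites a splitting-field result to conclude $A \not\cong B$. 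Both variants are valid; yours makes the dependence on separability explicit and avoids the external citations, at the cost of invoking the radical-theoretic machinery.
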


%\begin{lem}
%\label{lem:finitemodscompfactors}
%Let $V$ be an $\mathbb{F}_{q^u}[\hat G]$-module that is constructed from irreducible $\mathbb{F}_{q^u}[\hat G]$-modules via tensor products, exterior powers and quotients (of submodules constructed similarly). In addition, let $W$ be the $K[G]$-module that is constructed via the same operations on the corresponding elements of $\mathcal{L}$. Then the following statements hold:
%\begin{enumerate}[label={(\roman*)}]
%\item \label{finitemods1} $W|_{\hat G} \cong V^K$.
%\item If the composition factors of $W$ all lie in $\mathcal{L}$, then the correspondence of Lemma \ref{lem:untwistedfinitemods} applies between these factors and the composition factors of $V$.
%\item If $W$ is multiplicity free, then so is $V$.
%\end{enumerate}
%\end{lem}

\begin{proof}\let\qed\relax \leavevmode
\begin{enumerate}[label={(\roman*)}]
%\item This is an immediate consequence of Proposition \ref{prop:extrestrictiso}.
\item Let $X$ and $Y$ be irreducible sections of $W$ and $V$, respectively. Then $X|_{\hat G}$ is a section of $W|_{\hat G}$, and $Y^K$ is a section of $V^K$. As $X \in \mathcal{L}$, Theorem \ref{thm:steinbqrest} implies that $X|_{\hat G}$ is irreducible, as is $Y^K$ since $\mathbb{F}_{q^u}$ is a splitting field for $\hat G$ by Proposition \ref{prop:untwistedsplitting}. Hence the composition factors of $W|_{\hat G}$ are the restrictions to $\hat G$ of the composition factors of $W$, and the composition factors of $V^K$ are obtained from the composition factors of $V$ by extending the scalars. Since $V^K \cong W|_{\hat G}$, the result follows.
\item As $W$ is multiplicity free, its composition factors appear as non-isomorphic submodules. These submodules restrict to non-isomorphic irreducible submodules of $W|_{\hat G}$ by Theorem \ref{thm:steinbqrest}, and hence $W|_{\hat G} \cong V^K$ is also multiplicity free. This implies that $V$ is semisimple \cite[Proposition 2.1.5]{bahturin}, i.e., $V$ is the direct sum of a set $\mathcal{U}$ of irreducible submodules of $V$. If $A,B \in \mathcal{U}$, then $A^K$ and $B^K$ are irreducible, and $A^K \not\cong B^K$ since $V^K$ is multiplicity free. It follows that $A \not\cong B$ \cite[Corollary 9.8]{ichar}, and therefore $V$ is multiplicity free. \hfill $\square$
\end{enumerate}
\end{proof}

We say that a module for an arbitrary group $H$ over a field $\mathbb{F}$ can be \emph{written over} a subfield $E$ of $\mathbb{F}$ if the module affords a representation that maps each element of $H$ to a matrix with entries in $E$. The following result describes the smallest field over which we can write an absolutely irreducible $\hat G$-module. We assume here that $\tilde G$ is not a Suzuki or Ree group; see \cite[Remark 5.4.7(b)]{kleidman} for information about these excluded cases.

\begin{prop}[{\cite[p.~193--194]{kleidman}}]
\label{prop:minfieldmod}
Suppose that $q = p^e$ for some positive integer $e$, and that $\tilde G$ is not a Suzuki or Ree group. Let $f$ be a positive integer, let $U$ be an absolutely irreducible $\mathbb{F}_{p^f}[\hat G]$-module that cannot be written over a proper subfield of $\mathbb{F}_{p^f}$, and let $W \in \mathcal{L}$ be the $K[G]$-module such that $W|_{\hat G} \cong U^K$. Additionally, if $t > 1$, then let $\gamma$ be the graph automorphism of $G$ of order $t$. Then either:
\begin{enumerate}[label={(\roman*)}]
\item $f \mid e$; $W \cong W^\gamma$ if $t > 1$; and there exists an irreducible $K[\hat G]$-module $V$ with $\dim(U) = \dim(V)^{e/f}$; or \label{minfield:case1}
\item $t > 1$; $f \mid te$; $f \nmid e$; $W \not\cong W^\gamma$; and there exists an irreducible $K[\hat G]$-module $V$ with $\dim(U) = \dim(V)^{te/f}$. \label{minfield:case2}
\end{enumerate}
\end{prop}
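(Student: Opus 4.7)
The plan is to deduce this from Steinberg's tensor product theorem (Theorem \ref{thm:steinbqrest} in the untwisted case; an analogous statement for twisted groups) together with a careful analysis of how the field automorphism $\phi$ and the graph automorphism $\gamma$ act on irreducible $K[\hat G]$-modules, since the minimal field of definition of $U$ is controlled by the orbit of $U^K$ under twisting by $\phi$.

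First I would translate the hypothesis on $U$ into a statement about $K[\hat G]$-modules. Since $U$ is absolutely irreducible, $M := U^K$ is an irreducible $K[\hat G]$-module, and the standard descent criterion says that $U$ can be realised over $\mathbb{F}_{p^{f'}}$ for $f' \mid f$ if and only if $M^{\phi^{f'}} \cong M$. Thus the hypothesis is equivalent to: $f$ is the smallest positive integer such that $M^{\phi^f} \cong M$ as $K[\hat G]$-modules. By Lemma \ref{lem:untwistedfinitemods} there is a unique $W \in \mathcal{L}$ with $W|_{\hat G} \cong M$.

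Next I would expand $W$ via Steinberg's tensor product theorem. In the untwisted case ($t=1$) write $W = L(\mu_0) \otimes L(\mu_1)^\phi \otimes \cdots \otimes L(\mu_{e-1})^{\phi^{e-1}}$ with each $\mu_j$ being $p$-restricted; this is possible because $W \in \mathcal{L}$ means the highest weight of $W$ is $q$-restricted with $q=p^e$, and the fact that $F = \phi^e$ acts trivially on $\hat G$ lets us reduce the sequence of twists modulo $e$. Twisting $W$ by $\phi$ then cyclically shifts the tuple $(\mu_0,\ldots,\mu_{e-1})$, so the minimality of $f$ with $W^{\phi^f}|_{\hat G} \cong W|_{\hat G}$ is exactly the condition that $f$ is the minimal period of this tuple under the shift; therefore $f \mid e$, and $W^\gamma \cong W$ if $t>1$ since $\gamma$ commutes with the shift in this symmetric situation. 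Setting $V := \left(L(\mu_0) \otimes L(\mu_1)^\phi \otimes \cdots \otimes L(\mu_{f-1})^{\phi^{f-1}}\right)|_{\hat G}$ produces an irreducible $K[\hat G]$-module, and the periodicity identifies $W$ as $V \otimes V^{\phi^f} \otimes \cdots \otimes V^{\phi^{(e/f - 1)f}}$, giving $\dim(U) = \dim(W) = \dim(V)^{e/f}$ as required for case (i).

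In the twisted case the Steinberg endomorphism is $F = \phi^e \gamma$, so on $\hat G$-modules the relevant triviality is $\phi^{te} \equiv 1$ rather than $\phi^e \equiv 1$; the analogous $F$-equivariant tensor product decomposition now involves $te$ factors whose twists alternate with the $\gamma$-action, and the minimal shift-period $f$ of this longer tuple must divide $te$. The dichotomy between (i) and (ii) is determined by whether this period also divides $e$: if it does, $\gamma$-invariance of the tuple forces $W^\gamma \cong W$ and case (i) applies; if it does not, then necessarily $f \nmid e$, $W \not\cong W^\gamma$, and grouping the tuple into $te/f$ equal blocks defines $V$ with $\dim(U) = \dim(V)^{te/f}$, giving case (ii). The main obstacle will be making this bookkeeping precise for twisted types --- in particular, writing down the correct Steinberg-style tensor product theorem involving both $\phi$ and $\gamma$, verifying that a minimal period $f$ of the twist pattern really corresponds to the minimal field of definition of $U$, and checking that the invariance $W \cong W^\gamma$ is equivalent to the $\phi$-orbit of $W$ being $\gamma$-stable, as opposed to $\gamma$ acting non-trivially within the orbit.
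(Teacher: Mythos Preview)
The paper does not give its own proof of this proposition: it is stated with the attribution \cite[p.~193--194]{kleidman} and used as a black box. So there is no ``paper's proof'' to compare against; the relevant comparison is with the argument in Kleidman--Liebeck.

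Your outline follows exactly the standard route taken there: translate the minimal-field hypothesis into the statement that $f$ is the order of the Frobenius twist $\phi$ on the isomorphism class of $M=U^K$, expand the highest weight of $W$ in base $p$ via Steinberg's tensor product theorem, and read off the divisibility of $f$ into $e$ (untwisted) or $te$ (twisted) from the periodicity of the resulting tuple of $p$-restricted weights. The construction of $V$ as a block of the periodic tensor factorisation, giving $\dim(U)=\dim(V)^{e/f}$ or $\dim(V)^{te/f}$, is likewise the standard one.

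Two points to tighten. First, your untwisted paragraph contains the clause ``and $W^\gamma\cong W$ if $t>1$ since $\gamma$ commutes with the shift in this symmetric situation''; this is misplaced, since in that paragraph $t=1$ and there is no $\gamma$. The $W\cong W^\gamma$ assertion in case~(i) only has content when $t>1$, and must be argued inside your twisted-case analysis (it comes from the fact that if the $\phi$-period already divides $e$, then $F=\phi^e\gamma$ fixing $W|_{\hat G}$ forces $\gamma$ to fix $W$). Second, your claim that ``$U$ can be realised over $\mathbb{F}_{p^{f'}}$ if and only if $M^{\phi^{f'}}\cong M$'' is the crux of the whole argument and deserves an explicit reference or proof; it is correct for absolutely irreducible modules in defining characteristic, but it is not Lemma~\ref{lem:untwistedfinitemods} and is not otherwise established in this paper.
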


%\begin{prop}[{\cite[p.~193--194]{kleidman}}]
%\label{prop:minfieldmod}
%Suppose that $q = p^e$ for some positive integer $e$, and that $\tilde G$ is not a Suzuki or Ree group. If $t \ne 1$, then let $\gamma$ be the graph automorphism of $G$ of order $t$. Additionally, let $f$ be a positive integer, let $U$ be an absolutely irreducible $\mathbb{F}_{p^f}[\hat G]$-module that cannot be written over a proper subfield of $\mathbb{F}_{p^f}$, and let $W \in \mathcal{L}$ be the $K[G]$-module such that $W|_{\hat G} = U^K$. Then either:
%\begin{enumerate}[label={(\roman*)}]
%\item $f \mid e$, and there exists an irreducible $K[\hat G]$-module $V$ with $\dim(U) = \dim(V)^{e/f}$; or \label{minfield:case1}
%\item $t > 1$; $W \not\cong W^\gamma$; $f \mid te$; $f \nmid e$; and there exists an irreducible $K[\hat G]$-module $V$ with $\dim(U) = \dim(V)^{te/f}$. \label{minfield:case2}
%\end{enumerate}
%\end{prop}

L\"ubeck's \cite{lubeck} lists of irreducible modules show that $\gamma$ does not fix the minimal $K[G]$-modules when $\tilde G \in \{{}^3 D_4(q),{}^2 E_6(q)\}$. Hence the corresponding minimal $\mathbb{F}_{q^u}[\hat G]$-modules (which are absolutely irreducibly by Proposition \ref{prop:untwistedsplitting}) cannot be written over $\mathbb{F}_q$ or any of its subfields. Furthermore, when $\tilde G$ is a Suzuki or Ree group, in which case $q > p$, the minimal $\mathbb{F}_q[\hat G]$-modules cannot be written over a proper subfield of $\mathbb{F}_q$ \cite[Remark 5.4.7(b)]{kleidman}. Thus no minimal $\hat G$-module can be written over $\mathbb{F}_p$ for any twisted exceptional group of Lie type $\tilde G$. This means that we cannot apply Theorem \ref{thm:univap} or Theorem \ref{thm:expclass2stab} to the images of the representations afforded by these modules. Of course, in the case $\tilde G \in \{{}^2 B_2(q),{}^2 F_4(q)\}$, the fact that $q$ is even also prohibits us from applying these theorems.

%\begin{prop}[{\cite[Remark 5.4.7]{kleidman}}]
%\label{prop:minfieldmodsuz}
%Suppose that $q = p^e$ for some positive integer $e$, and that $(t,Y,n) \in \{(2,B,4),(2,G,7),(2,F,26)\}$. Additionally, let $f$ be a positive integer, and let $U$ be an absolutely irreducible $\mathbb{F}_{p^f}[\hat G]$-module that cannot be written over a proper subfield of $\mathbb{F}_{p^f}$. Then $f \mid e$ and $n^{e/f} \le \dim(U)$. In particular, if $\dim(U) < n^2$, then $f = e$.
%\end{prop}

The following definition applies to modules for an arbitrary group.

\begin{defn}[{\cite[Ch.~1.8.2]{BHRD}}]
\label{def:quaseq}
Let $V$ and $W$ be modules for a group $H$ over the same field. We say that $V$ and $W$ are \emph{quasi-equivalent} if there exists an automorphism $\alpha$ of $H$ such that $W \cong V^\alpha$.
\end{defn}

Throughout the remainder of this section, we assume that:
\begin{nscenter}
\underline{$\tilde G$ is an exceptional Chevalley group}.
\end{nscenter}

Recall that if $\hat G \not\cong \tilde G$, then $\hat G$ is the universal cover of $\tilde G$. In this case, each automorphism of $\tilde G$ lifts to a unique automorphism of $\hat G$, and $\hat G$ has no other automorphisms \cite[Corollary 5.1.4]{gorensteincfsg3}. If $H \in \{\hat G, G\}$, and if $V$ and $W$ are minimal $\mathbb{F}_q[H]$-modules such that $V \cong W^{\beta}$ for some (possibly trivial) field automorphism $\beta$ of $H$, then we will say that $V$ and $W$ are \emph{$\mathcal{F}$-equivalent} and lie in the same \emph{$\mathcal{F}$-class} of minimal $\mathbb{F}_q[H]$-modules.

\begin{lem}
\label{lem:lieconjimages}
Let $d$ be the dimension of a minimal $K[G]$-module, as in Table \ref{table:minmodweights}. Then:
\begin{enumerate}[label={(\roman*)}]
\item \label{lieconj1} $d$ is the dimension of the minimal $\mathbb{F}_q[\hat G]$-modules, and there is a unique quasi-equivalence class $\mathcal{Q}$ of these modules;
\item \label{lieconj2} each module in $\mathcal{Q}$ is absolutely irreducible and faithful, and cannot be written over any proper subfield of $\mathbb{F}_q$;
\item \label{lieconj3} the images of the $\mathbb{F}_q$-representations afforded by modules in $\mathcal{Q}$ form a single conjugacy class of subgroups of $\mathrm{GL}(d,q)$;
\item \label{lieconj4} if $V \in \mathcal{Q}$, and if $\alpha$ is a nontrivial field automorphism of $\hat G$, then $V \not\cong V^\alpha$;
\item \label{lieconj5} if $\tilde G \ne E_6(q)$ and $(\tilde G,p) \notin \{(G_2(q),3),(F_4(q),2)\}$, then $\mathcal{Q}$ is also the unique $\mathcal{F}$-class of minimal $\mathbb{F}_q[\hat G]$-modules;
\item \label{lieconj6} if $\tilde G = E_6(q)$ or if $(\tilde G,p) \in \{(G_2(q),3),(F_4(q),2)\}$, then there are two $\mathcal{F}$-classes of minimal $\mathbb{F}_q[\hat G]$-modules, and they are interchanged by the graph automorphism of $\hat G$; and
\item \label{lieconj7} if $\tilde G = E_6(q)$, then a given minimal $\mathbb{F}_q[\hat G]$-module and its dual lie in different $\mathcal{F}$-classes of minimal $\mathbb{F}_q[\hat G]$-modules.
%\item unless $\tilde G = E_6(q)$ with $q \equiv {1 \pmod 3}$ or $\tilde G = E_7(q)$ with $q$ odd, all of the above holds with $\hat G$ replaced with $\tilde G$.
\end{enumerate}
\end{lem}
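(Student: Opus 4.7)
The plan is to exploit the dimension-preserving correspondence between irreducible $\mathbb{F}_q[\hat G]$-modules and $q$-restricted irreducible $K[G]$-modules provided by Lemma \ref{lem:untwistedfinitemods}, using that $\tilde G$ being a Chevalley group forces $u = 1$ and $\mathbb{F}_{q^u} = \mathbb{F}_q$. To prove (i), I will enumerate the minimal $K[G]$-modules in $\mathcal{L}$ from Table \ref{table:minmodweights}: up to isomorphism these are the Frobenius twists $L(\lambda_i)^{\phi^n}$ with $0 \le n < e$, together with, in the three graph-automorphism cases, the graph images of these. Each corresponding highest weight is $q$-restricted since the coefficient $p^n$ is strictly less than $q$. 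Transferring via Lemma \ref{lem:untwistedfinitemods} then yields that the minimal $\mathbb{F}_q[\hat G]$-modules all have dimension $d$, and any two are related by a field or graph automorphism of $\hat G$, so they form a single quasi-equivalence class $\mathcal{Q}$.

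For (ii), absolute irreducibility is immediate from Proposition \ref{prop:untwistedsplitting}. Faithfulness follows from Proposition \ref{prop:hatgquotients}: the only nontrivial proper normal subgroup of $\hat G$ is $Z(\hat G)$, which is nontrivial only for $E_6(q)$ with $3 \mid q-1$ and $E_7(q)$ with $q$ odd, and in both cases the highest weights $\lambda_1$ and $\lambda_7$ are readily seen to define nontrivial characters of $Z(\hat G)$. For the minimality of the field, I will invoke Proposition \ref{prop:minfieldmod}(i) in the untwisted case $t=1$: a realization of $V$ over a proper subfield $\mathbb{F}_{p^f}$ would force the existence of an irreducible $K[\hat G]$-module of dimension strictly less than $d$, contradicting minimality. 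Part (iii) then follows immediately, since the $\mathrm{GL}(d,q)$-conjugacy class of the image of a faithful absolutely irreducible representation is determined by the quasi-equivalence class of the module, and (i) supplies only one. For (iv), if $V^\alpha \cong V$ for a field automorphism $\alpha$ acting as $\phi^n$ with $0 < n < e$, then extending scalars and applying Lemma \ref{lem:untwistedfinitemods} yields $L(\lambda)|_{\hat G} \cong L(p^n\lambda)|_{\hat G}$; since both $\lambda$ and $p^n\lambda$ are $q$-restricted, Theorem \ref{thm:steinbqrest} forces $\lambda = p^n\lambda$, hence $\lambda = 0$, contradicting the nontriviality of $V$.

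For (v) and (vi), I will analyze the action of $\mathrm{Out}(\hat G)$ on isomorphism classes of minimal $\mathbb{F}_q[\hat G]$-modules. This group is generated by diagonal, field, and (when present) graph automorphisms; diagonal automorphisms preserve the maximal torus and therefore the highest weight, so they act trivially on iso classes, reducing the problem to the combined action of field and graph automorphisms. In the cases of (v), no graph automorphism is present, so $\mathcal{F}$-equivalence coincides with quasi-equivalence, and (i) yields a unique $\mathcal{F}$-class. In the three cases of (vi), the graph automorphism sends $\lambda_1$ to $\lambda_6$, $\lambda_2$, or $\lambda_4$ respectively; since these are not related to $\lambda_1$ by any Frobenius twist, the two resulting $\mathcal{F}$-orbits are distinct, and the graph automorphism interchanges them, producing exactly two $\mathcal{F}$-classes. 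Finally, (vii) will follow because in $E_6$ the dual of $L(\lambda_1)$ is $L(-\lambda_1^{w_0}) = L(\lambda_6)$, which is the graph-automorphism image of $L(\lambda_1)$ and hence lies in the other $\mathcal{F}$-class by (vi).

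The most delicate steps will be the faithfulness verification in (ii)---requiring a correct identification of $\lambda_1$ for $E_6$ and $\lambda_7$ for $E_7$ as generators of the character group of $Z(\hat G)$---and, in (v) and (vi), the confirmation that diagonal automorphisms act trivially on isomorphism classes. Both rely on standard but nontrivial facts about $\mathrm{Out}(\hat G)$ and the action of the adjoint algebraic group on modules, which I will quote from the literature rather than re-prove; the rest of the argument is systematic bookkeeping of Frobenius and graph twists of the weights appearing in Table \ref{table:minmodweights}.
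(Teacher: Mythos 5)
Your proposal is correct and follows essentially the same route as the paper: enumerating the minimal $K[G]$-modules via Table \ref{table:minmodweights} and Frobenius twisting, transferring to $\mathbb{F}_q[\hat G]$-modules via Lemma \ref{lem:untwistedfinitemods}, invoking Propositions \ref{prop:untwistedsplitting} and \ref{prop:minfieldmod} for absolute irreducibility and the field-minimality, and separating field from graph automorphisms to count $\mathcal{F}$-classes. The only noticeable deviation is in (ii): where the paper simply cites Kleidman--Liebeck for faithfulness of the minimal modules of $3.E_6(q)$ and $2.E_7(q)$, you verify it directly by noting that $\lambda_1$ and $\lambda_7$ restrict to nontrivial characters of $Z(\hat G)$; this is a correct and slightly more self-contained variation, though you should be precise that Proposition \ref{prop:hatgquotients} only applies when $\hat G \not\cong \tilde G$ (in the remaining cases $\hat G$ is simple and faithfulness is immediate).
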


\begin{proof}
By Lemma \ref{lem:untwistedfinitemods}, the minimal $K[G]$-modules and the minimal $\mathbb{F}_q[\hat G]$-modules have the same dimension. Table \ref{table:minmodweights} lists all minimal $K[G]$-modules up to $\mathcal{F}$-equivalence, and the highest weight of each listed module is a fundamental dominant weight. It follows from Lemma \ref{lem:tensorprodweights} that each minimal $K[G]$-module that lies in $\mathcal{L}$ can be written as $L(\lambda)^{\phi^i} \cong L(p^i \lambda)$, where $\lambda$ is a fundamental dominant weight and $i \in \{0,1,2,\ldots,e-1\}$, with $q = p^e$. Moreover, $L(p^i \lambda) \not\cong L(p^j \lambda)$ if $i \ne j$ by Theorem \ref{thm:highestweightirred}. Observe that if $\alpha$ is an automorphism of $G$ that induces an automorphism $\beta$ of $\hat G$, then $\beta$ fixes an $\mathbb{F}_q[\hat G]$-module if and only if $\alpha$ fixes the corresponding $K[G]$-module. Since the field automorphisms of $G$ induce those of $\hat G$, it follows from Lemma \ref{lem:untwistedfinitemods} that the distinct (up to isomorphism, or up to $\mathcal{F}$-equivalence) minimal $\mathbb{F}_q[\hat G]$-modules are those that correspond to the distinct (in the same way) minimal $K[G]$-modules in $\mathcal{L}$, which gives \ref{lieconj4}. In particular, if $\tilde G = E_6(q)$ or if $(\tilde G,p) \in \{(G_2(q),3),(F_4(q),2)\}$, then there are two $\mathcal{F}$-classes of these $\hat G$-modules. Otherwise, there is a unique $\mathcal{F}$-class, and hence \ref{lieconj5} follows.

Let $V$ be a minimal $K[G]$-module. If $\tilde G = E_6(q)$, then the highest weight of $V$ is not a scalar multiple of the highest weight of $V^*$. Hence $V$ and $V^*$ are not $\mathcal{F}$-equivalent, and \ref{lieconj7} easily follows. We also saw previously that if $\tilde G = E_6(q)$ or $(\tilde G,p) \in \{(G_2(q),3),(F_4(q),2)\}$, then the graph automorphism of $G$ does not fix $V$ up to $\mathcal{F}$-equivalence. As this graph automorphism induces the graph automorphism of $\hat G$, we obtain \ref{lieconj6}. The aforementioned $\mathcal{F}$-classes of minimal $\mathbb{F}_q[\hat G]$-modules therefore form a single quasi-equivalence class, proving \ref{lieconj1}. In all cases, the quasi-equivalence of modules implies \ref{lieconj3} \cite[p.~39--40]{BHRD}.

If $\hat G$ is isomorphic to the simple group $\tilde G$, then it is immediate that each minimal $\mathbb{F}_q[\hat G]$-module is faithful. In fact, even when $\hat G \not\cong \tilde G$, which can occur only if $\tilde G \in \{E_6(q),E_7(q)\}$, the minimal $\mathbb{F}_q[\hat G]$-modules are faithful \cite[p.~202--203]{kleidman}. Moreover, each minimal $\mathbb{F}_q[\hat G]$-module is absolutely irreducible by Proposition \ref{prop:untwistedsplitting}. Finally, as there are no nontrivial irreducible $K[G]$-modules of dimension less than $d$, Proposition \ref{prop:minfieldmod} implies that no minimal $\mathbb{F}_q[\hat G]$-module can be written over a proper subfield of $\mathbb{F}_q$, yielding \ref{lieconj2}.
\end{proof}

In fact, Theorem \ref{thm:steinbqrest} implies that the dimension of a minimal $K[\hat G]$-module is equal to the dimension of a minimal $K[G]$-module.

\begin{prop}
\label{prop:faithfultildegmod}
Let $d$ be the dimension of a minimal $K[G]$-module, as in Table \ref{table:minmodweights}. Additionally, let $H \ne 1$ be a quotient of $\hat G$, and let $V$ be a faithful $\mathbb{F}_q[H]$-module of dimension at most $d$. Then $H = \hat G$, and $V$ is irreducible of dimension $d$.
\end{prop}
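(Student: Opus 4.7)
The plan is to analyze which quotients $H$ of $\hat G$ are possible, and then apply a composition-factor argument to squeeze $\dim V$.

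Since $\hat G$ is quasisimple (noted just after Proposition \ref{prop:hatgquotients}), every proper normal subgroup lies in $Z(\hat G)$. If $\hat G \cong \tilde G$, then $Z(\hat G) = 1$ and the only nontrivial quotient of $\hat G$ is $\hat G$ itself, forcing $H = \hat G$. If instead $\hat G \not\cong \tilde G$, then Proposition \ref{prop:hatgquotients} says $Z(\hat G)$ is the unique nontrivial proper normal subgroup of $\hat G$, so $H \in \{\hat G,\tilde G\}$.

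Next I would rule out $H = \tilde G$ in the case $\hat G \not\cong \tilde G$ (which forces $\tilde G \in \{E_6(q),E_7(q)\}$ under the appropriate congruence on $q$). Pull $V$ back along $\hat G \twoheadrightarrow \tilde G$ to a (non-faithful) $\mathbb{F}_q[\hat G]$-module; its composition factors coincide with those of $V$ as a $\tilde G$-module, and are precisely the irreducible $\mathbb{F}_q[\hat G]$-modules trivial on $Z(\hat G)$. Since $\tilde G$ is nonabelian simple and acts faithfully on $V$, not every composition factor can be trivial: otherwise, in a suitable basis the image of $\tilde G$ in $\mathrm{GL}(V)$ would consist of upper-unitriangular matrices over a field of characteristic $p$, a $p$-group, contradicting nonabelian simplicity. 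Hence $V$ admits a nontrivial irreducible composition factor, whose dimension is at least $d$ by Lemma \ref{lem:lieconjimages}\ref{lieconj1} and at most $\dim V \le d$; it therefore equals $d$, so is a minimal $\mathbb{F}_q[\hat G]$-module. But by Lemma \ref{lem:lieconjimages}\ref{lieconj2} every such module is faithful, contradicting triviality on $Z(\hat G) \ne 1$.

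Thus $H = \hat G$, and the same composition-factor argument applies directly: the perfect group $\hat G$ cannot land in a unipotent subgroup of $\mathrm{GL}(V)$, so $V$ has a nontrivial irreducible composition factor $W$ with $d \le \dim W \le \dim V \le d$. Hence $W$ exhausts $V$, giving $V = W$ irreducible of dimension $d$. No major obstacle is expected; the one subtle point is the clean formulation of the ``nonabelian simple (resp.\ perfect) group cannot embed in a $p$-subgroup of $\mathrm{GL}(V)$'' step used to produce the nontrivial composition factor, which I would phrase as an appeal to the image being simultaneously perfect (or nonabelian simple) and solvable.
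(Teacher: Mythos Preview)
Your proposal is correct and follows essentially the same approach as the paper: identify $H\in\{\hat G,\tilde G\}$ via Proposition~\ref{prop:hatgquotients}, use perfectness of $H$ to force a nontrivial irreducible composition factor, and then invoke Lemma~\ref{lem:lieconjimages} (minimal modules have dimension $d$ and are faithful) to squeeze the dimension and rule out $H=\tilde G$. The paper does this more compactly by treating both cases uniformly and citing \cite[Corollary 5.3.3]{gorenstein} for the existence of the nontrivial composition factor, whereas you spell out the unitriangular/$p$-group argument explicitly; both are fine.
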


\begin{proof}
If $H$ is not equal to the quasisimple group $\hat G$, then $H = \hat G/Z(\hat G) \cong \tilde G$ by Proposition \ref{prop:hatgquotients}. Thus $H$ is perfect in each case, and so $V$ has a nontrivial composition factor $U$ (see \cite[Corollary 5.3.3]{gorenstein}). Since $\dim(V) \le d$, Lemma \ref{lem:lieconjimages} shows that we must have $H = \hat G$ and $\dim(U) = d$, i.e., $U = V$.
%
%By Proposition \ref{prop:hatgquotients}, if $\hat G$ is not simple and $H \ne \hat G$, then $H = \hat G/Z(\hat G) \cong \tilde G$. Hence $H$ is non-abelian in each case, and so the faithful $H$-module $V$ has dimension at least $2$. Suppose that $V$ is reducible. Since each irreducible $\mathbb{F}_q[H]$-module is also an irreducible $\mathbb{F}_q[\hat G]$-module, Lemma \ref{lem:lieconjimages} implies that each $\mathbb{F}_q[\hat G]$-composition factor of $V$ is the trivial irreducible module. Each $\mathbb{F}_q[H]$-composition series for $V$ is a normal series for $V$ as an abelian $p$-group, and $H$ is a group of automorphisms of this $p$-group since $V$ is a faithful $H$-module. Therefore, $H$ stabilises this normal series. This implies that $H$ is a $p$-group \cite[Corollary 5.3.3]{gorenstein}. However, $|\tilde G|$ divides $|H|$, and $\tilde G$ is not a $p$-group. This is a contradiction, and thus $V$ is irreducible. It follows immediately from Lemma \ref{lem:lieconjimages} that $\dim(V) = d$ and that $V$ is a faithful $\mathbb{F}_q[\hat G]$-module, i.e., $H = \hat G$.
\end{proof}

Recall that if $V$ is a nonzero module over a field of characteristic not equal to $2$, then $L^2V \cong A^2V$, and if the characteristic of the field is also not equal to $3$, then $L^3V \cong (A^2V \otimes V)/A^3V$. We will now describe the submodule structure of $L^2V$, where $V$ is a minimal $\mathbb{F}_q[\hat G]$-module with $p > 2$, and the structure of $L^3V$ when $\tilde G \in \{E_6(q),E_7(q)\}$, with $p > 3$. In most cases, we also show that the submodule structure of each Lie power is equivalent (in terms of containments and dimensions) to the submodule structure of the corresponding $K[G]$-module and $K[\hat G]$-module. We will say that two modules with equivalent submodule structures are \emph{$\mathcal{S}$-equivalent}. As the submodule structure of the exterior square of a module is important in many applications, we use the notation $A^2V$ in the following theorem, instead of $L^2V$.

For the proof of the following theorem, recall that if $\mathcal{M}$ is the set of irreducible submodules of a multiplicity free module $M$, then isomorphism of modules gives a 1-1 correspondence between the composition factors of $M$ and the submodules in $\mathcal{M}$. Furthermore, the set of submodules of $M$ is exactly $\{\bigoplus_{N \in \mathcal{N}} N \mid \mathcal{N} \subseteq \mathcal{M}\}$.

\begin{thm}
\label{thm:finiteextsubmods}
Let $V$ be a minimal $\mathbb{F}_q[\hat G]$-module, with $p$ odd, and let $W \in \mathcal{L}$ be the irreducible $K[G]$-module corresponding to $V$, as in Lemma \ref{lem:untwistedfinitemods}.
\begin{enumerate}[label={(\roman*)}]
\item The submodule structure\footnote{The dimensions of these modules' composition factors are also given in the tables in Appendix \ref{sec:compkga2v}.} of $A^2V$ is given in Table \ref{table:a2vstructfinite}, and $A^2V$ is $\mathcal{S}$-equivalent to each of $A^2W$ and $(A^2W)|_{\hat G}$. If $p$ is an ``exceptional prime'' for $\tilde G$, i.e., if $(\tilde G, p)$ lies in the set $\{(G_2(q),3),(F_4(q),3),(E_7(q),7),(E_8(q),3),(E_8(q),5)\},$ then $A^2V$ is uniserial. Otherwise, $A^2V$ is multiplicity free. In particular, if $\tilde G = E_6(q)$, then $A^2V$ is irreducible. \label{finitestructa2v}
\item If $\tilde G \in \{G_2(q),E_8(q)\}$ and $B \in \{V,W,W|_{\hat G}\}$, then the quotient of $A^2B$ by its largest maximal submodule is isomorphic to $B$.
\item Suppose that $\tilde G \in \{E_6(q),E_7(q)\}$ and that $p > 3$, with $q = p$ if $\tilde G = E_6(q)$ and $p = 5$, or if $\tilde G = E_7(q)$ and $p \in \{7,11,19\}$. The submodule structure\footnote{The dimensions of composition factors here are also given in the tables in Appendix \ref{sec:compkgl3v}.} of $L^3V$ is given in Figures \ref{fig:e6l3vsub} and \ref{fig:e7l3vsub}. If $p$ is not a listed ``exceptional prime'' for $\tilde G$, then $L^3V$ is multiplicity free, and is $\mathcal{S}$-equivalent to each of of $L^3W$ and $(L^3W)|_{\hat G}$. \label{finitestructl3v}
\item For a fixed group $\tilde G$ and a fixed prime $p > 3$, the dimensions of the composition factors of $L^3V$ do not depend on $q$ or on the choice of $V$.
\end{enumerate}
\end{thm}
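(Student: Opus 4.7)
The plan is to split the invariance into two independent pieces. For the independence of the choice of $V$ with $q$ fixed, any two minimal $\mathbb{F}_q[\hat G]$-modules are quasi-equivalent by Lemma \ref{lem:lieconjimages}\ref{lieconj1}, so $V' \cong V^\alpha$ for some $\alpha \in \mathrm{Aut}(\hat G)$. Since the Lie power construction commutes with twisting by an automorphism of the acting group, $L^3 V' \cong (L^3 V)^\alpha$, and twisting by $\alpha$ permutes the composition factors without altering their $\mathbb{F}_q$-dimensions.

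For the independence of $q = p^e$ with $p > 3$ fixed, I would fix the minimal $K[G]$-module $W := L(\lambda_i)$ from Table \ref{table:minmodweights} (which depends only on the Dynkin type of $G$) and, for each $q$, choose a minimal $\mathbb{F}_q[\hat G_q]$-module $V_q$ with $V_q^K \cong W|_{\hat G_q}$ via Lemma \ref{lem:untwistedfinitemods}. Proposition \ref{prop:extrestrictiso} then gives $(L^3 V_q)^K \cong (L^3 W)|_{\hat G_q}$, and since $\mathbb{F}_q$ is a splitting field for $\hat G_q$ by Proposition \ref{prop:untwistedsplitting}, extension of scalars preserves the multiset of composition factor dimensions. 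It therefore suffices to show that the composition factor dimension multiset of $(L^3 W)|_{\hat G_q}$ is independent of $q$.

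The composition factors of $L^3 W$ as a $K[G]$-module are irreducibles $L(\mu^{(1)}), \ldots, L(\mu^{(k)})$ whose multiset depends only on the type of $G$ and on $p$ (Theorem \ref{thm:l3vkgmod} and Appendix \ref{sec:compkgl3v}). The key step is to verify that each $\mu^{(j)}$ is $p$-restricted. Because the minimal modules $L(\lambda_1)$ for $E_6$ and $L(\lambda_7)$ for $E_7$ are minuscule, every weight of $W$ has fundamental-weight coefficients in $\{-1,0,1\}$; consequently any dominant weight appearing in $L^3 W$, being a sum of three such weights, has coefficients in $\{0,1,2,3\} \subseteq \{0,1,\ldots,p-1\}$ whenever $p > 3$. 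Hence each $\mu^{(j)}$ is $q$-restricted for every $q \geq p$, and $L(\mu^{(j)})|_{\hat G_q}$ is irreducible by Theorem \ref{thm:steinbqrest}. Lemma \ref{lem:finitemodscompfactors}\ref{finitemods1} then identifies the composition factors of $(L^3 W)|_{\hat G_q}$ as precisely $\{L(\mu^{(j)})|_{\hat G_q}\}$, with the fixed dimension multiset $\{\dim L(\mu^{(j)})\}$.

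The main obstacle is the $p$-restrictedness check. If some $\mu^{(j)}$ had a fundamental-weight coefficient exceeding $p-1$, Steinberg's tensor product theorem would decompose $L(\mu^{(j)})|_{\hat G_q}$ as an iterated Frobenius tensor product whose composition factor dimensions would depend on $e = \log_p q$, breaking the claimed invariance. The minuscule structure of $W$ for $G \in \{E_6, E_7\}$ gives the uniform bound of $3$ on triple-sum coefficients, which is exactly what makes the argument work simultaneously for all $p > 3$ and all $q = p^e$.
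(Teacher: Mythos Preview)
Your proposal addresses only part (iv) of the theorem, and for that part it is correct and follows essentially the same route as the paper: independence of the choice of $V$ via quasi-equivalence (Lemma~\ref{lem:lieconjimages}\ref{lieconj1}) together with $L^3(V^\alpha)\cong(L^3V)^\alpha$, and independence of $q$ via $(L^3V)^K\cong(L^3W)|_{\hat G}$ (Proposition~\ref{prop:extrestrictiso}) combined with the fact that every composition factor of $L^3W$ lies in $\mathcal{L}$, so that Lemma~\ref{lem:finitemodscompfactors}\ref{finitemods1} applies.

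The one genuine difference is in how you establish that the composition factors of $L^3W$ lie in $\mathcal{L}$. The paper simply reads this off from the explicit tables in Appendix~\ref{sec:compkgl3v}, where one sees that every highest weight occurring has fundamental-weight coefficients in $\{0,1\}$. You instead argue conceptually: since $L(\lambda_1)$ for $E_6$ and $L(\lambda_7)$ for $E_7$ are minuscule, every weight $\mu=w\lambda$ of $W$ satisfies $\langle\mu,\alpha_i^\vee\rangle=\langle\lambda,(w^{-1}\alpha_i)^\vee\rangle\in\{-1,0,1\}$, so dominant weights of $L^3W$ (being sums of three such weights) have coefficients at most $3$, hence are $p$-restricted once $p>3$. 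This is a valid and pleasant uniform argument that avoids appealing to the computed tables; the paper's approach is more direct but computation-dependent. Either way, the mechanism proving (iv) is the same.

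Note that parts (i)--(iii) are not addressed by your proposal; those rely substantially on the explicit composition-factor data (Theorems~\ref{thm:a2vkgmod} and~\ref{thm:l3vkgmod}), on Theorem~\ref{thm:semisimplekgmod} for the multiplicity-free cases, and on direct Magma computation for the uniserial and exceptional-prime cases.
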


\begin{proof}
Lemma \ref{lem:lieconjimages} implies that there is a unique quasi-equivalence class $\mathcal{Q}$ of minimal $\mathbb{F}_q[\hat G]$-modules. If $T \in \mathcal{Q}$ and if $\alpha \in \mathrm{Aut}(\hat G)$, then $A^2(T^\alpha) = (A^2T)^\alpha$, and hence all modules in the set $\{A^2T \mid T \in \mathcal{Q}\}$ are quasi-equivalent. In particular, all modules in this set are $\mathcal{S}$-equivalent. Similarly, this holds for all modules in the set $\{L^3T \mid T \in \mathcal{Q}\}$. Hence this structure does not depend on the choice of $V$.

Observe from Proposition \ref{prop:extrestrictiso} that $(A^2W)|_{\hat G} \cong (A^2V)^K$ and $(L^3W)|_{\hat G} \cong (L^3V)^K$, when these modules are defined. Furthermore, if $(\tilde G,W) \ne (G_2(3),L(\lambda_2))$, then the tables in Appendices \ref{sec:compkga2v} and \ref{sec:compkgl3v} show that each composition factor of $A^2W$ or $L^3W$ lies in $\mathcal{L}$. If instead $(\tilde G,W) = (G_2(3),L(\lambda_2))$, then the only composition factor of $A^2W$ that does not lie in $\mathcal{L}$ is $L(3\lambda_1) \cong L(\lambda_1)^\phi$. The field automorphism $\phi$ in this case induces the trivial automorphism of $\hat G$, and it follows that $(L(3\lambda_1))|_{\hat G} \cong (L(\lambda_1))|_{\hat G}$. Thus Lemma \ref{lem:finitemodscompfactors}\ref{finitemods1}, with a very slightly modified argument in the case $(\tilde G,W) = (G_2(3),L(\lambda_2))$, implies that the composition factors of $A^2V$ or $L^3V$ correspond to those of $A^2W$ or $L^3W$, respectively. In particular, for a fixed group $\tilde G$ and a fixed prime $p$, the dimensions of the composition factors of $A^2V$ or of $L^3V$ do not depend on $q$ or on the choice of $V$, nor do the composition factors of $A^2W$, $L^3W$ or their restrictions to $\hat G$.

Suppose now that $(\tilde G, p) \notin \{(G_2(q), 3),(F_4(q), 3),(E_7(q),7),(E_8(q),3),(E_8(q),5)\}$. By Theorem \ref{thm:a2vkgmod}, the module $A^2W$ is multiplicity free, and it is also irreducible when $\tilde G = E_6(q)$. Furthermore, if $\tilde G = E_6(q)$ and $p > 5$, or if $\tilde G = E_7(q)$ and $p \notin \{3,7,11,19\}$, then $L^3W$ is multiplicity free by Theorem \ref{thm:l3vkgmod}. In each case, Lemma \ref{lem:finitemodscompfactors} implies that $(A^2W)|_{\hat G}$, $(L^3W)|_{\hat G}$, $A^2V$ and $L^3V$ are irreducible or multiplicity free as specified. The submodule structure of each multiplicity free module follows from the composition factors of the $K[G]$-modules given in the tables in Appendices \ref{sec:compkga2v} and \ref{sec:compkgl3v}.

Next, suppose that $\tilde G = Y_\ell(q)$, where $(Y_\ell, p) \in \{(G_2, 3),(F_4, 3),(E_7,7),(E_8,3),(E_8,5)\}$. Additionally, let $\tilde X:= Y_\ell(p)$, let $\hat X$ be the simply connected version of $\tilde X$, and let $U$ be a minimal $\mathbb{F}_p[\hat X]$-module. Then $G$ is the linear algebraic group associated with $\tilde X$. Recall that $(A^2V)^K \cong (A^2W)|_{\hat G}$, and similarly note that $(A^2U)^K \cong (A^2W)|_{\hat X}$. Since $\hat X \le \hat G$ \cite[Lemma 5.1.6]{BHRD}, it follows that $(A^2U)^K \cong ((A^2V)^K)|_{\hat X}$. Computations using Magma show that the submodule structure of $A^2U$ is as required, and in particular, that $A^2U$ is uniserial. As $K$ is algebraic over $\mathbb{F}_p$, which is a splitting field for $\hat X$, it follows that $(A^2U)^K$ is $\mathcal{S}$-equivalent to $A^2U$. Since the uniserial module $(A^2U)^K$ is the restriction to $\hat X$ of each of $A^2W$ and $(A^2V)^K$, and since these three modules have equivalent composition factors (in terms of dimensions), they are all $\mathcal{S}$-equivalent. Since $\mathbb{F}_q$ is a splitting field for $\hat G$, the module $(A^2V)^K$ is also $\mathcal{S}$-equivalent to $A^2V$.

Now, let $\tilde G \in \{G_2(q),E_8(q)\}$, and let $S$ be the largest maximal submodule of $A^2W$. Tables \ref{table:g2a2v1}, \ref{table:g2a2v2} and \ref{table:e8a2v} list the composition factors of $W$, with $W$ defined up to $\mathcal{F}$-equivalence. It follows from these tables and Lemma \ref{lem:tensorprodweights} that either $(A^2W)/S \cong W$, or $(\tilde G,p) = (G_2(q),3)$ and the two composition factors of $S$ have the same highest weight. However, we have shown that $S$ is uniserial, and hence Proposition \ref{prop:nonsplitkgmod} implies that its two composition factors have different highest weights. Hence $(A^2W)/S \cong W$. It follows easily that $W|_{\hat G} \cong (A^2W)|_{\hat G}/S|_{\hat G} \cong ((A^2V)/N)^K$, where $S|_{\hat G}$ is the largest maximal submodule of $(A^2W)|_{\hat G}$, and $N$ is the largest maximal submodule of $A^2V$. As both $V$ and $(A^2V)/N$ correspond to $W$ via the 1-1 correspondence of Lemma \ref{lem:untwistedfinitemods}, we have $A^2V/N \cong V$.

Additional Magma computations were used to directly determine the submodule structure of $L^3V$ in the case $\tilde G = E_6(5)$. We also used Magma in the case $\tilde G = E_7(7)$ to show that $L^3V$ contains submodules of dimension $56$, $51072$ and $7392$, with the third a uniserial module whose nonzero proper submodules have dimension $6480$ and $912$, respectively. Writing $U_k$ to denote a $k$-dimensional submodule of $L^3V$, it follows from the dimensions of the composition factors of $L^3V$ that $U_{56}$ and $U_{51072}$ are irreducible, and that $L^3V = U_{56} \oplus U_{51072} \oplus U_{7392}$. No two of these three direct summands have a common composition factor, and so the submodules of $L^3V$ are exactly the direct sums of the submodules of these direct summands. This fact also yields the containments between the submodules of $L^3V$. Similar computations were performed in the case $\tilde G = E_7(11)$, where $L^3V = U_{56} \oplus U_{912} \oplus U_{67552}$; and in the case $\tilde G = E_7(19)$, where $L^3V = U_{912} \oplus U_{51072} \oplus U_{6536}$. We provide additional details about these Magma computations in Appendix \ref{sec:magmacalc}. %Finally, in the case of $\tilde G = E_7(11)$, we used Magma to show that $L^3V$ contains exactly three irreducible submodules, of dimension $56$, $912$ and $6480$, respectively, as well as a $57552$-dimensional submodule. By considering the dimensions of the composition factors of $L^3V$, we see that $U_{6480}$ is the unique irreducible submodule of $U_{57552}$. A final Magma calculation showed that $U_{57552}/U_{6480}$ has a unique irreducible submodule, of dimension $44592$. The given submodule structure of $L^3V$ therefore follows from the Correspondence Theorem, from the dimensions of the composition factors of $L^3V$, and from the fact that $M = U_{56} \oplus U_{912} \oplus U_{57552}$.
\end{proof}

\begin{table}[h]
\centering
\caption{The submodule structure of $A^2V$, where $V$ is a minimal $\mathbb{F}_q[\hat G]$-module, with $q$ a power of a prime $p > 2$.}
\label{table:a2vstructfinite}
\def\arraystretch{1.25}
\begin{tabular}{ |c|c|c|c|c| }
\hline
$\tilde G$ & $d$ & {\parbox{2.1cm}{\centering \vspace{.15cm}Exceptional\\primes\vspace{.15cm}}} & {\parbox{3.5cm}{\centering \vspace{.15cm}Standard structure\\of $A^2V$\vspace{.15cm}}} & {\parbox{3.8cm}{\centering \vspace{.15cm}Structure of $A^2V$ for\\exceptional primes\vspace{.15cm}}} \\
\hline
\hline
\raisebox{-.5\height}{$G_2(q)$} & \raisebox{-.5\height}{7} & \raisebox{-.5\height}{3} &
\raisebox{-.5\height}{\includegraphics{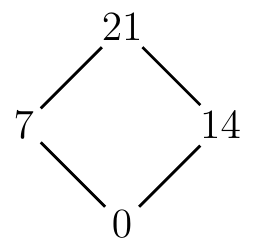}} &
\raisebox{-.5\height}{\includegraphics{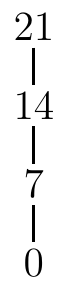}} \\
\hline
\raisebox{-.5\height}{$F_4(q)$} & {\parbox{2.2cm}{\centering \vspace{.35cm}$25$, if $p = 3$\\$26$, if $p > 3$\vspace{.15cm}}} & \raisebox{-.5\height}{3} &
\raisebox{-.5\height}{\includegraphics{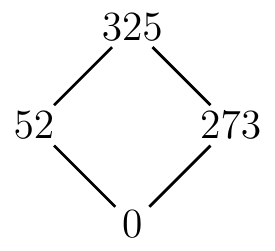}}&
\raisebox{-.5\height}{\includegraphics{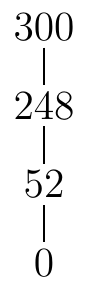}}\\
\hline
\raisebox{-.5\height}{$E_6(q)$} & \raisebox{-.5\height}{27} & \raisebox{-.5\height}{None} &
\raisebox{-.5\height}{\includegraphics{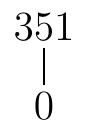}}&
\raisebox{-.5\height}{N/A} \\
\hline
\raisebox{-.5\height}{$E_7(q)$} & \raisebox{-.5\height}{56} & \raisebox{-.5\height}{7} &
\raisebox{-.5\height}{\includegraphics{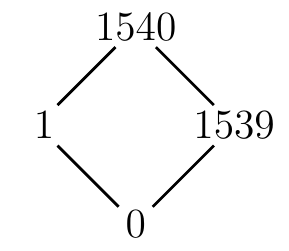}}&
\raisebox{-.5\height}{\includegraphics{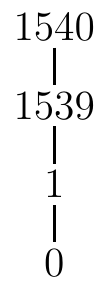}}\\
\hline
\raisebox{-.5\height}{$E_8(q)$} & \raisebox{-.5\height}{248} & \raisebox{-.5\height}{3, 5} &
\raisebox{-.5\height}{\includegraphics{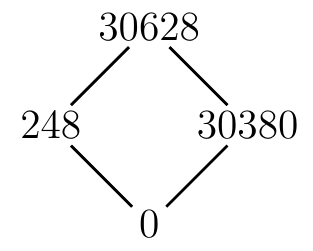}}&
\raisebox{-.5\height}{\includegraphics{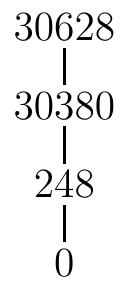}}\\
\hline
\end{tabular}
\end{table}

\begin{figure}[b]
\centering
\begin{subfigure}[b]{0.475\textwidth}
\centering
\includegraphics{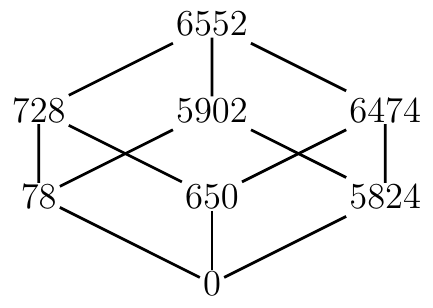}
\caption{$q$ a power of a prime $p > 5$}
\end{subfigure}
\begin{subfigure}[b]{0.475\textwidth}
\centering
\includegraphics{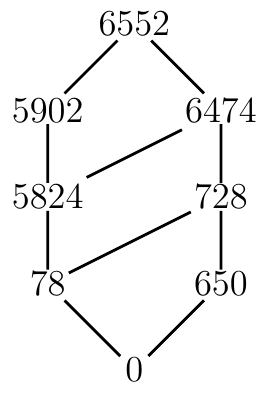}
\caption{$q = 5$}
\end{subfigure}
\caption{The submodule structure of $L^3V$, where $V$ is a minimal $\mathbb{F}_q[\hat G]$-module, with $\tilde G = E_6(q)$.}
\label{fig:e6l3vsub}
\end{figure}

\begin{figure}[t]
\centering
\begin{subfigure}[b]{0.475\textwidth}
\centering
\includegraphics{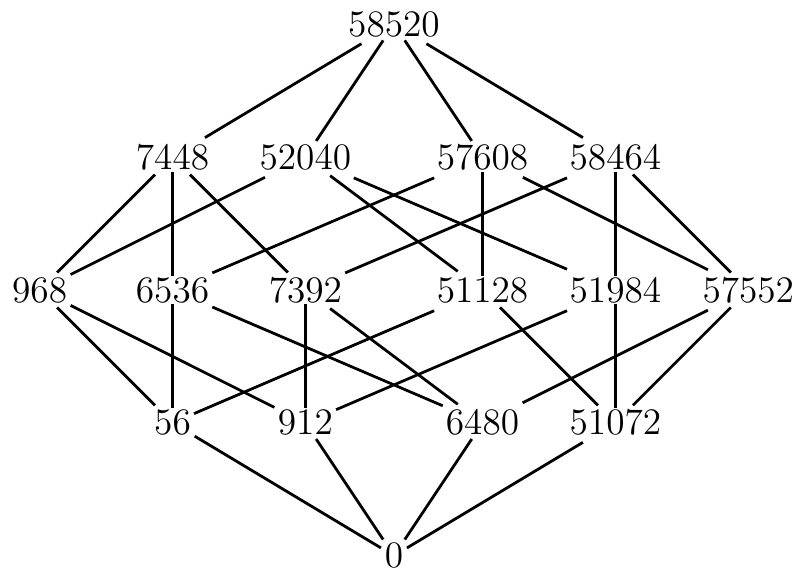}
\caption{$q$ a power of a prime $p \notin \{2,3,7,11,19\}$}
\end{subfigure}
\begin{subfigure}[b]{0.475\textwidth}
\centering
\includegraphics{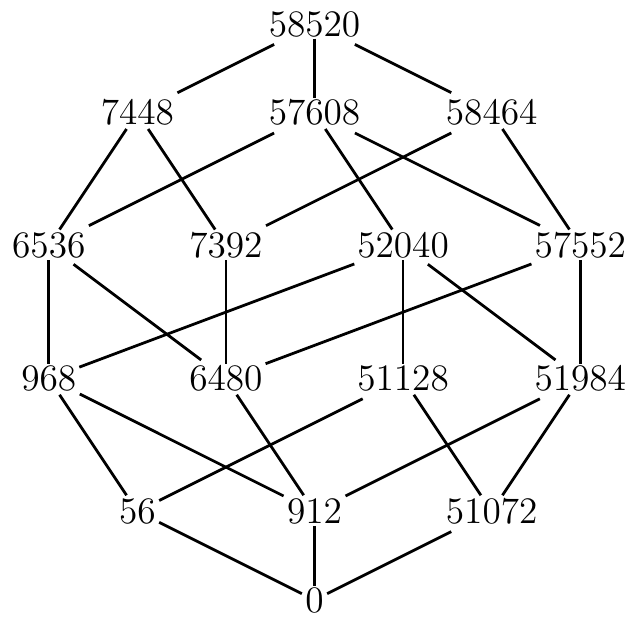}
\caption{$q = 7$}
\end{subfigure}
\vskip\baselineskip
\begin{subfigure}[b]{0.475\textwidth}
\centering
\includegraphics{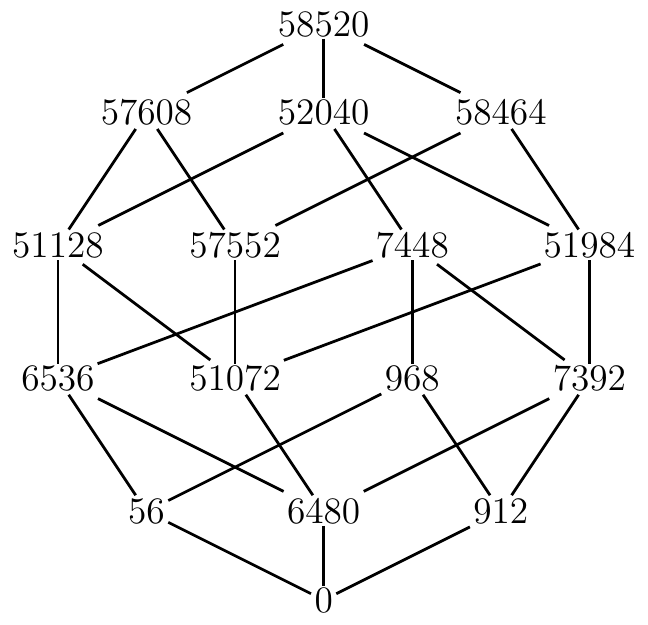}
\caption{$q = 11$}
\end{subfigure}
\begin{subfigure}[b]{0.475\textwidth}
\centering
\includegraphics{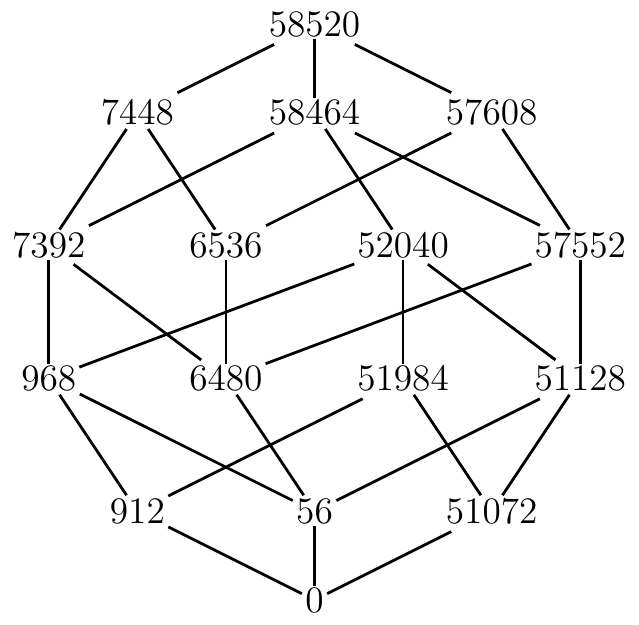}
\caption{$q = 19$}
\end{subfigure}
\caption{The submodule structure of $L^3V$, where $V$ is a minimal $\mathbb{F}_q[\hat G]$-module, with $\tilde G = E_7(q)$.}
\label{fig:e7l3vsub}
\end{figure}

As mentioned in \S\ref{sec:intro}, the submodule structure of $A^2V$ in the case $\tilde G = G_2(q)$ has been explored, in less detail, in \cite{sdfg2} and \cite[Ch.~9.3.2]{schroder}. It is also known that there exists an irreducible $7$-dimensional module over $\mathbb{R}$ for the group $G_2(\mathbb{R})$ whose exterior square is the direct sum of two irreducible submodules, of dimension $7$ and $14$, respectively \cite[p.~9]{gow}.

Observe that the dimensions of the submodules of each $\hat G$-module described in Theorem \ref{thm:finiteextsubmods} do not depend on $p$, excluding the case $(\tilde G,p) = (F_4(q),3)$ where $d$ takes an exceptional value. Furthermore, containments between these submodules in the exceptional prime cases are exactly those that are allowed by the dimensions of the module's composition factors. This observation, in the case of the smaller modules, inspired the above proof of the submodule structure of $L^3V$ with $\tilde G = E_7(p)$ and $p$ an exceptional prime. Our observation also leads to the following conjecture.

\begin{conj}
\label{conj:allq}
Theorem \ref{thm:finiteextsubmods}\ref{finitestructl3v} holds if $q \ne p$, and $L^3V$ is $\mathcal{S}$-equivalent to each of $L^3W$ and $(L^3W)|_{\hat G}$, even if $p$ is an exceptional prime for $\tilde G$.
\end{conj}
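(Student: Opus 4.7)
The plan is to adapt the exceptional-prime argument from the proof of Theorem~\ref{thm:finiteextsubmods}\ref{finitestructa2v} to the third Lie power, reducing the $q \ne p$ cases to the already-verified $q = p$ instance and then transferring the resulting $\hat X$-submodule structure up to $\hat G$. Write $\tilde G = Y_\ell(q)$ with $q = p^e$, $e > 1$, and let $\tilde X := Y_\ell(p) \le \tilde G$, with simply connected version $\hat X$ and minimal $\mathbb{F}_p[\hat X]$-module $U$. Exactly as in the proof of Theorem~\ref{thm:finiteextsubmods}\ref{finitestructa2v}, Proposition~\ref{prop:extrestrictiso} together with $\hat X \le \hat G$ gives
\[
((L^3V)^K)|_{\hat X} \;\cong\; (L^3W)|_{\hat X} \;\cong\; (L^3U)^K.
\]
The $q = p$ instance of Theorem~\ref{thm:finiteextsubmods}\ref{finitestructl3v} (obtained by direct Magma computation) provides the full submodule lattice of $L^3U$, and since $\mathbb{F}_p$ is a splitting field for $\hat X$, this lattice coincides with that of $(L^3U)^K$, hence of the restriction of $(L^3V)^K$ to $\hat X$.

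Every $\hat G$-submodule of $(L^3V)^K$ is \emph{a fortiori} an $\hat X$-submodule, so the $\hat G$-submodule lattice of $(L^3V)^K$ embeds into the known $\hat X$-lattice. Lemma~\ref{lem:finitemodscompfactors}\ref{finitemods1} additionally guarantees that the composition factors of $(L^3V)^K$ as a $\hat G$-module match those of $L^3U$ as an $\hat X$-module in both dimension and multiplicity. In each of Figures~\ref{fig:e6l3vsub}(b) and \ref{fig:e7l3vsub}(b)--(d), the module splits as a direct sum of submodules whose composition-factor supports are pairwise disjoint; a standard module-theoretic lemma then forces every submodule of $(L^3V)^K$ to respect the direct-sum decomposition seen over $\hat X$. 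This reduces the conjecture to showing that each summand has the same internal submodule structure over $\hat G$ as over $\hat X$. For the irreducible summands there is nothing further to check, and for the uniserial summands one proceeds by induction on the radical filtration, using composition-factor dimensions to pin down each successive layer.

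The main obstacle lies in the uniserial summands in which some composition factor appears with multiplicity greater than one, for instance the $912$-dimensional factor appearing at both the head and the socle of the $7392$-dimensional summand when $\tilde G = E_7(q)$ and $p = 7$, and analogous repetitions in the other exceptional figures. Over $\hat X$ there is a unique chain of submodules, but \emph{a priori} the larger $\hat G$-action could fit an additional $\hat G$-submodule isomorphic to the repeated composition factor as a ``diagonal'' sitting between the head and the socle, producing an extra $\hat G$-submodule invisible over $\hat X$. Ruling this out requires comparing $\mathrm{Ext}^1_{K[G]}$ with $\mathrm{Ext}^1_{K[\hat G]}$ for the relevant pair of irreducibles: because the highest weights involved are $q$-restricted, one expects Steinberg's tensor product theorem together with rationality results of Cline--Parshall--Scott type to force the $K[\hat G]$-submodule lattice of $(L^3W)|_{\hat G}$ to agree with the $K[G]$-submodule lattice of $L^3W$, at which point the conjecture would reduce to a purely algebraic-group calculation paralleling the highest-weight analysis already carried out in \S\ref{sec:highestweights}. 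Closing this $\mathrm{Ext}^1$-comparison gap is the step that I expect to require genuinely new input.
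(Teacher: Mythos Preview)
The statement you are attempting is a \emph{conjecture} in the paper, not a theorem: the paper explicitly leaves it open and does not supply a proof. What the paper does provide, in the paragraph immediately following the conjecture, is a partial reduction that is essentially the same as your first two paragraphs: restrict to the simply connected version $\hat X$ of $Y_\ell(p)$, identify $((L^3V)^K)|_{\hat X} \cong (L^3U)^K$, use the $q=p$ computations to pin down the $\hat X$-submodule lattice, and observe that the $\hat G$-submodule lattice embeds as a sublattice. The paper then records a sufficient condition: if the $K[G]$-module $L^3W$ already has submodules of the same dimensions as the uniserial direct summands of $L^3U$, the conjecture follows. That condition is not verified.

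Your identification of a genuine gap is correct, and your honesty that it ``require[s] genuinely new input'' matches the paper's position. However, your description of the obstruction has the direction reversed. You correctly note that every $\hat G$-submodule of $(L^3V)^K$ is an $\hat X$-submodule, so the $\hat G$-lattice sits inside the $\hat X$-lattice; but then you worry that the $\hat G$-action might create an \emph{additional} ``diagonal'' submodule invisible over $\hat X$. That cannot happen: any such $\hat G$-submodule would also be an $\hat X$-submodule, contradicting uniseriality over $\hat X$. The actual concern is the opposite one --- that some $\hat X$-submodule (in particular, one of the uniserial direct summands) might fail to be $\hat G$-invariant, so that $(L^3V)^K$ has \emph{fewer} submodules than $(L^3U)^K$ and hence a different submodule lattice. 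The paper's sufficient condition addresses exactly this, phrasing the missing step as a question about submodules of the $K[G]$-module $L^3W$ rather than as an $\mathrm{Ext}^1$ comparison between $K[G]$ and $K[\hat G]$; your $\mathrm{Ext}^1$ reformulation is reasonable but aimed at the wrong failure mode.
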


Suppose now that $p$ is an exceptional prime for $\tilde G \in \{E_6(q),E_7(q)\}$, with $\tilde X$ the corresponding group of Lie type defined over $\mathbb{F}_p$, $\hat X$ the simply connected version of $\tilde X$, and $U$ a minimal $\mathbb{F}_p[\hat X]$-module. We can adapt the proof of the uniserial case of Theorem \ref{thm:finiteextsubmods}\ref{finitestructa2v} to show that if $N$ is a uniserial submodule of $L^3U$, then $N^K$ is uniserial. As $L^3U$ is a direct sum of uniserial submodules with no common composition factors (even when $\tilde X = E_6(5)$), it follows that $L^3U$ and $(L^3U)^K$ are $\mathcal{S}$-equivalent. Hence the submodule lattice of each of $L^3W$, $(L^3V)^K \cong (L^3W)|_{\hat G}$ and $L^3V$ is equivalent to a sublattice of the submodule lattice of $L^3U$. In fact, as all composition factors of $L^3W$ lie in $\mathcal{L}$, the length of a composition series is the same for all of these modules by Lemma \ref{lem:finitemodscompfactors}. In particular, if $L^3W$ stabilises a submodule of the same dimension as each of the uniserial direct summands of $L^3U$, then so do $(L^3V)^K$ and $L^3V$. In this case, Conjecture \ref{conj:allq} holds.

\section{Overgroups of exceptional Chevalley groups}
\label{sec:overgroupschev}

We now determine some of the overgroups of the simply connected versions of the exceptional Chevalley groups in the general linear groups corresponding to their minimal modules. In the next section, we will use these results to deduce which overgroups are distinguishable from each exceptional Chevalley group in the context of Theorems \ref{thm:univap} and \ref{thm:expclass2stab}. Throughout the remainder of this paper, we use the following notation:
\begin{itemize}
\item $q$ is a power of an odd prime $p$;
\item $\tilde G$ is an exceptional Chevalley group defined over $\mathbb{F}_q$;
\item $\hat G$ is the simply connected version of $\tilde G$;
\item $V$ is a minimal $\mathbb{F}_q[\hat G]$-module;
\item $d:=\dim(V)$; and
\item $\zgl := Z(\mathrm{GL}(d,q))$.
\end{itemize}
By Lemma \ref{lem:lieconjimages}, $d$ is given in Table \ref{table:minmodweights}. Although this lemma applies when $p = 2$, we will not consider this case, as some of the important results in this section hold only when $p$ is odd. Lemma \ref{lem:lieconjimages} also shows that all minimal $\mathbb{F}_q[\hat G]$-modules are absolutely irreducible and faithful, and that there is a unique $\mathrm{GL}(d,q)$-conjugacy class of images of representations afforded by these modules. Hence no result in this section depends on the choice of $V$, and we can identify $\hat G$ with the image in $\mathrm{GL}(d,q)$ of the corresponding representation. In fact, Proposition \ref{prop:faithfultildegmod} shows that we can choose $V$ to be any faithful $d$-dimensional $\mathbb{F}_q[\hat G]$-module. Furthermore, we have from Lemma \ref{lem:lieconjimages} that no conjugate of $\hat G$ in $\mathrm{GL}(d,q)$ can be written over a proper subfield of $\mathbb{F}_q$. Recall also that $\hat G$ is quasisimple, and thus $\hat G$ lies in $(\mathrm{GL}(d,q))' = \mathrm{SL}(d,q)$. The following is another important fact about $\hat G$.

\begin{prop}
\label{prop:excepcentre}
The centraliser of $\hat G$ in $\mathrm{GL}(d,q)$ is equal to $\zgl$. In particular, $Z(\hat G)$ is a subgroup of $\zgl$.
\end{prop}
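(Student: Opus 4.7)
The plan is to deduce both assertions directly from Schur's lemma, using the fact (established in Lemma \ref{lem:lieconjimages}\ref{lieconj2}) that the minimal module $V$ is absolutely irreducible and faithful. Identifying $\hat G$ with its image in $\mathrm{GL}(V) \cong \mathrm{GL}(d,q)$, I would work inside the enveloping algebra $\mathrm{End}_{\mathbb{F}_q}(V) \cong \mathrm{Mat}_d(\mathbb{F}_q)$ rather than just in $\mathrm{GL}(d,q)$, since this makes the application of Schur's lemma cleanest.

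First, I would argue as follows. Let $M \in \mathrm{End}_{\mathbb{F}_q}(V)$ commute with every element of $\hat G$. Because $V$ is absolutely irreducible, the centraliser algebra $\mathrm{End}_{\mathbb{F}_q[\hat G]}(V)$ equals $\mathbb{F}_q \cdot \mathrm{id}_V$ (this is the version of Schur's lemma that uses absolute irreducibility, ruling out larger division algebras such as $\mathbb{F}_{q^k}$ that mere irreducibility would permit). Hence $M = \lambda \cdot \mathrm{id}_V$ for some $\lambda \in \mathbb{F}_q$. Intersecting with $\mathrm{GL}(d,q)$ gives $C_{\mathrm{GL}(d,q)}(\hat G) \subseteq \mathbb{F}_q^* \cdot \mathrm{id}_V = \zgl$. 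The reverse inclusion $\zgl \subseteq C_{\mathrm{GL}(d,q)}(\hat G)$ is immediate from the definition of the centre, giving equality.

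For the second statement, I would observe that $Z(\hat G)$ trivially centralises $\hat G$ and so, by the first part, $Z(\hat G) \le C_{\mathrm{GL}(d,q)}(\hat G) = \zgl$.

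There is no real obstacle here: the proof is a single appeal to Schur's lemma, with the only slightly subtle point being the need for \emph{absolute} irreducibility (not merely irreducibility over $\mathbb{F}_q$) to conclude that the centraliser algebra is exactly $\mathbb{F}_q$ and not a nontrivial field extension. That hypothesis is supplied by Lemma \ref{lem:lieconjimages}\ref{lieconj2}, so the argument is short.
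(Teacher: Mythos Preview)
Your proposal is correct and matches the paper's proof essentially verbatim: the paper also appeals directly to Schur's Lemma using the absolute irreducibility of $\hat G$ on $V$ (from Lemma~\ref{lem:lieconjimages}\ref{lieconj2}) to identify $C_{\mathrm{GL}(d,q)}(\hat G)$ with $\zgl$, and then notes that $Z(\hat G)$ lies in this centraliser. The only cosmetic difference is that the paper states the $Z(\hat G)$ inclusion first, whereas you state it second.
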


\begin{proof}
The centre of $\hat G$ is a subgroup of the centraliser $C_{\mathrm{GL}(d,q)}(\hat G)$. As $\hat G$ is an absolutely irreducible subgroup of $\mathrm{GL}(d,q)$, it follows from Schur's Lemma (see \cite[p.~38]{BHRD}) that this centraliser is equal to $\zgl$.
\end{proof}

We will use the following lemma many times in this section, often without reference. Here, if $\beta$ is a bilinear or unitary form on a vector space $U$, then (using the notation of Definition \ref{def:formisom}) we say that a subgroup $H$ of $\mathrm{GL}(U)$ \emph{preserves} $\beta$ if $H \le I(\beta)$, and that $H$ \emph{preserves $\beta$ up to scalars} if $H \le \Delta(\beta)$.

\begin{lem}[{\cite[p.~40--41]{BHRD}}]
\label{lem:absirredpresform}
Let $U$ be a vector space over $\mathbb{F}_q$, and let $H$ be a perfect, absolutely irreducible subgroup of $\mathrm{GL}(U)$. Suppose also that no conjugate of $H$ in $\mathrm{GL}(U)$ can be written over a proper subfield of $\mathbb{F}_q$, and that $\beta$ is a nonzero bilinear or unitary form on $U$ with $H \le \Delta(\beta)$. Then:
\begin{enumerate}[label={(\roman*)}]
\item $\beta$ is non-degenerate;
\item $H \le I(\beta)$;
\item $H$ preserves no nonzero bilinear or unitary forms on $U$ other than scalar multiples of $\beta$; and
\item $N_{\mathrm{GL}(U)}(H) \le \Delta(\beta)$.
\end{enumerate}
\end{lem}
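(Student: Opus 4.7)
The plan is to prove the four parts in order, each leaning on its predecessors. For (i), consider the radical $\mathrm{Rad}(\beta) := \{u \in U : \beta(u, v) = 0 \text{ for all } v \in U\}$; because $H$ preserves $\beta$ up to nonzero scalars, this is an $H$-invariant subspace, and absolute irreducibility combined with $\beta \ne 0$ forces $\mathrm{Rad}(\beta) = \{0\}$, giving non-degeneracy. For (ii), the similarity character $\lambda: H \to \mathbb{F}_q^\times$ defined by $\beta(u^h, v^h) = \lambda(h)\beta(u,v)$ is well-defined by (i) and is readily checked to be a group homomorphism into an abelian target; since $H$ is perfect, it factors through $H/H' = 1$, so $\lambda \equiv 1$ and $H \le I(\beta)$.

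For (iii), let $\beta'$ be any nonzero bilinear or unitary form with $H \le \Delta(\beta')$. Parts (i)--(ii) applied to $\beta'$ give non-degeneracy and $H \le I(\beta')$. If $\beta$ and $\beta'$ are of the same type (both bilinear, or both sesquilinear with the same involution $\sigma$), then each induces an $H$-equivariant isomorphism $\psi, \psi'$ from $U$ to the common twisted dual $U^{*,\sigma}$ (with $\sigma = \mathrm{id}$ in the bilinear case), and the composite $\psi^{-1}\psi': U \to U$ is an $H$-module endomorphism; Schur's lemma for the absolutely irreducible $H$ forces it to be a scalar, yielding $\beta' = c\beta$. If the types differ, writing the forms in matrix shape as $u^T B v$ and $u^T B' v^\sigma$, the two invariance identities $h^T B h = B$ and $h^T B' h^\sigma = B'$ combine to give $\sigma(h) = C h C^{-1}$ for all $h \in H$, with $C := (B')^{-1}B$ fixed; a Lang--Steinberg style descent in $\mathrm{PGL}(U)$ then realises $H$, up to $\mathrm{GL}(U)$-conjugacy, over the fixed field of $\sigma$, contradicting the subfield hypothesis.

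Part (iv) is a quick consequence of (iii). For $g \in N_{\mathrm{GL}(U)}(H)$, the form $\beta^g$ defined by $\beta^g(u, v) := \beta(u g^{-1}, v g^{-1})$ (with the analogous twist in the unitary case) is $H$-invariant and of the same type as $\beta$, since $g^{-1} H g = H$; so by (iii) it equals $c \beta$ for some $c \in \mathbb{F}_q^\times$, i.e., $g \in \Delta(\beta)$. The delicate step is the mixed-type case of (iii): turning the inner-conjugation relation $\sigma(h) = C h C^{-1}$ into an explicit conjugation of $H$ into a subfield subgroup requires a cohomological descent (the natural cocycle lives in $\mathrm{PGL}$, so one has to lift to $\mathrm{GL}$ and pay attention to scalars), and this is precisely where the hypothesis that $H$ cannot be written over a proper subfield of $\mathbb{F}_q$ enters the argument.
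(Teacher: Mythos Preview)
The paper does not give its own proof of this lemma; it is stated with a citation to \cite[p.~40--41]{BHRD} and used as a black box. So there is no ``paper's proof'' to compare against directly.

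That said, your argument is correct and follows the standard route that one finds in the cited reference. Parts (i), (ii) and (iv) are routine and your treatment is clean. For part (iii), your same-type case (both bilinear, or both $\sigma$-sesquilinear for the same $\sigma$) via Schur's lemma on $\mathrm{Hom}_H(U,U^{*,\sigma})$ is exactly right. The mixed-type case is the only place with real content, and you have identified the mechanism correctly: the two invariance conditions combine to give $h^\sigma = C^{-1} h C$ for all $h \in H$, which says the representation is isomorphic to its $\sigma$-twist; a Lang--Steinberg argument in $\mathrm{PGL}(U)$ over $\overline{\mathbb{F}_q}$ then produces a conjugating element realising $H$ over the fixed field $\mathbb{F}_{\sqrt{q}}$, contradicting the subfield hypothesis. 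Your caveat about the cocycle living in $\mathrm{PGL}$ and needing care with scalars when lifting back to $\mathrm{GL}$ is apt; in practice this is handled by noting that the obstruction lies in $H^1(\langle\sigma\rangle,\mathbb{F}_q^\times)$, which is harmless here, or more directly by the argument in \cite[Lemma~1.8.10 and surrounding discussion]{BHRD}.

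One minor point worth tightening: in (iv) you should note explicitly that $\beta^g$ is nonzero (immediate since $g$ is invertible), so that (iii) genuinely applies.
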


In particular, if $\hat G$ preserves a nonzero bilinear form on $V$ up to scalars, then the above lemma holds with $H = \hat G$ and $U = V$.

\begin{prop}
\label{prop:hatgform}
If $\tilde G = E_7(q)$, then $\hat G$ preserves a non-degenerate alternating form on $V$. If $\tilde G = E_6(q)$, then $\hat G$ preserves no nonzero unitary or reflexive bilinear form on $V$. Otherwise, $\hat G$ preserves a non-degenerate orthogonal form on $V$.
\end{prop}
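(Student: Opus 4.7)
The plan is to translate ``$\hat G$ preserves a form'' into representation-theoretic conditions on $V$ and then resolve them using highest weight theory together with the submodule data of Theorem \ref{thm:finiteextsubmods}. Since $\hat G$ is absolutely irreducible, faithful, perfect, and not writable over any proper subfield of $\mathbb{F}_q$ (Lemma \ref{lem:lieconjimages}), Lemma \ref{lem:absirredpresform} guarantees that any nonzero reflexive bilinear or unitary form preserved by $\hat G$ up to scalars is automatically non-degenerate, preserved by $\hat G$ itself, and unique up to a scalar. It therefore suffices to decide existence and to determine the symmetric/alternating type.

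For existence of a bilinear form, a nonzero $\hat G$-invariant form on $V$ is equivalent to $V \cong V^*$ by Schur's lemma, and via Lemma \ref{lem:untwistedfinitemods} this is the same as $W \cong W^*$ for the $K[G]$-module $W \in \mathcal{L}$ corresponding to $V$. Since $L(\lambda)^* \cong L(-\lambda^{w_0})$, this reduces to $-\lambda^{w_0} = \lambda$, which holds for the minimal highest weight $\lambda$ whenever $G$ is of type $G_2$, $F_4$, $E_7$ or $E_8$ (as noted after Proposition \ref{prop:nonsplitkgmod}), but fails for $G = E_6$, where $w_0$ induces the involution $(16)(35)$ on fundamental weights, so $L(\lambda_1)^* \cong L(\lambda_6) \not\cong L(\lambda_1)$. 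This rules out nonzero reflexive bilinear forms in the $E_6$ case.

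To complete the $E_6$ statement I would also rule out unitary forms. Such a form exists only when $q = q_0^2$, and yields an $\hat G$-module isomorphism $V \cong (V^\sigma)^*$, with $\sigma$ the involutory field automorphism of $\hat G$. Writing $q = p^e$, Lemma \ref{lem:untwistedfinitemods} combined with part (iv) of Lemma \ref{lem:tensorprodweights} shows that $V^\sigma$ corresponds to $L(p^{e/2}\lambda_1)$, so $(V^\sigma)^*$ corresponds to $L(p^{e/2}\lambda_6)$, whose highest weight is distinct from $\lambda_1$ in $X(T)$. Theorem \ref{thm:highestweightirred} then forbids the required isomorphism.

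The remaining step is to distinguish symmetric from alternating for $\tilde G \in \{G_2(q), F_4(q), E_7(q), E_8(q)\}$. Since $V \cong V^*$ is irreducible, Schur's lemma gives $\dim(V \otimes V)^{\hat G} = 1$, and because $p$ is odd the $\hat G$-decomposition $V \otimes V = S^2V \oplus A^2V$ forces the unique (up to scalars) invariant form to be alternating exactly when $A^2V$ has a trivial submodule. Inspecting Table \ref{table:a2vstructfinite} and the composition-factor data from Theorem \ref{thm:finiteextsubmods}, the trivial module appears as a submodule of $A^2V$ only in the $E_7(q)$ row (as the $1$-dimensional node at the bottom of the Hasse diagram, in both the standard and exceptional-prime cases), giving the non-degenerate symplectic conclusion for $E_7(q)$. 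In the $G_2(q)$, $F_4(q)$ and $E_8(q)$ rows every composition factor of $A^2V$ has dimension strictly greater than $1$, so the invariant form is symmetric, and being non-degenerate with $p$ odd this is orthogonal. The main delicacy is the unitary calculation for $E_6$, which must be handled uniformly in $q$ and requires the Steinberg-twist weight computation above; the rest of the argument is essentially a translation of Theorem \ref{thm:finiteextsubmods} into representation-theoretic language.
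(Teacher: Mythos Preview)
Your argument is correct and takes a genuinely different route from the paper. The paper's proof consists almost entirely of citations to Kleidman and Liebeck: the orthogonal form for $G_2(q)$, $F_4(q)$, $E_8(q)$ from \cite[p.~200]{kleidman}; the symplectic form for $E_7(q)$ from \cite[Proposition 5.4.18]{kleidman}; and, for $E_6(q)$, the observation from Lemma~\ref{lem:lieconjimages}\ref{lieconj7} that $V$ and $V^*$ lie in different $\mathcal{F}$-classes, combined with \cite[Lemma 2.10.15]{kleidman}. You instead settle the self-duality and unitary questions directly via highest weight theory, and then read the symmetric/alternating dichotomy off the submodule structure of $A^2V$ already computed in Theorem~\ref{thm:finiteextsubmods}. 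Your approach is more self-contained and more explanatory --- it makes visible \emph{why} $E_7$ is the unique symplectic case, namely the appearance of $L(0)$ in the socle of $A^2V$ --- at the cost of invoking the substantial Theorem~\ref{thm:finiteextsubmods} for something the paper is content to quote from the literature. One minor point worth making explicit: your implication ``invariant form is alternating $\Leftrightarrow (A^2V)^{\hat G}\neq 0$'' identifies invariant \emph{forms} (trivial quotients of $V\otimes V$) with invariant \emph{vectors} (trivial submodules). This is valid here because $V$, and hence $A^2V$, is self-dual in each of the four non-$E_6$ cases, but that step is implicit in your write-up.
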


\begin{proof}
The result follows by \cite[p.~200]{kleidman} for $\tilde G \in \{G_2(q),F_4(q),E_8(q)\}$ and by \cite[Proposition 5.4.18]{kleidman} for $\tilde G = E_7(q)$. If $\tilde G = E_6(q)$, then Lemma \ref{lem:lieconjimages} implies that $V^* \not\cong V^\alpha$ for any (trivial or not) field automorphism $\alpha$ of $\hat G$, and \cite[Lemma 2.10.15]{kleidman} yields the~result.
\end{proof}

Through the rest of this section, let $\beta$ denote the zero form if $\tilde G = E_6(q)$, and otherwise, let $\beta$ denote the non-degenerate bilinear form preserved by $\hat G$. If $\beta$ is the zero form, then $I(\beta) = \mathrm{GL}(d,q)$. If $\beta$ is alternating, then we write $\mathrm{Sp}(d,q):=I(\beta)$ and $\mathrm{CSp}(d,q):=\Delta(\beta)$. Finally, if $\beta$ is orthogonal of type $\varepsilon \in \{\circ,+,-\}$, then we write $\mathrm{GO}^\varepsilon(d,q):=I(\beta)$ and $\mathrm{SO}^\varepsilon(d,q):=S(\beta)$, while $\Omega^\varepsilon(d,q)$ denotes $I(\beta)^\infty$, i.e., the last subgroup in the derived series of $I(\beta)$. If $\varepsilon = \circ$, i.e., if $d$ is odd, then we omit the superscript $\circ$. Since $\hat G$ is quasisimple, we have $\hat G \le \Omega^\varepsilon(d,q)$ if $\tilde G \in \{G_2(q),F_4(q),E_8(q)\}$.

Note that if $\tilde G = F_4(q)$ with $p > 3$, then $\hat G \cong \tilde G$ is the automorphism group of the $27$-dimensional \emph{Albert algebra}, i.e., the algebra of $3 \times 3$ octonion Hermitian matrices, over $\mathbb{F}_q$ \cite[Ch.~4.8]{wilson}. Here, we can choose $V$ to be the $26$-dimensional subspace of the algebra consisting of trace $0$ matrices. In this case, if $A$ and $B$ are matrices in $V$, then $(A,B)\beta$ is the trace of $\frac{1}{2}(AB+BA)$. By considering the matrix of this form, it can be shown that $\beta$ is of plus type if $q \equiv {1,7 \pmod {12}}$, and of minus type otherwise. However, we will not use this fact in this paper.

Throughout the remainder of this section, we use the following notation:
\begin{itemize}
\item $T$ is equal either to $\mathrm{GL}(d,q)$, or to $I(\beta)$;
\item $S:=T^\infty$; and
\item $R$ is an arbitrary subgroup of $T$ that contains $S$.
\end{itemize}
In particular (as $d \ge 7$), if $T = \mathrm{GL}(d,q)$, then $S = \mathrm{SL}(d,q)$; if $T = \mathrm{Sp}(d,q)$, then $S = T$; and as above, if $T = \mathrm{GO}^\varepsilon(d,q)$, then $S = \Omega^\varepsilon(d,q)$. In each case, $S \le \mathrm{SL}(d,q)$.

Our next goal is to determine the maximal subgroups of $S$ that contain $\hat G$. In order to do so, we will consider the Aschbacher classes of subgroups of $T$, and the related Aschbacher's Theorem \cite{aschbacherthm}, which we describe below. Our descriptions here are based on those of Bray, Holt and Roney-Dougal \cite[Ch. 2]{BHRD}. Note that some adjustments can be made here to consider classical groups of dimension less than $7$ or of even characteristic, or to include the case where $\beta$ is a non-degenerate unitary form on $V$.

\begin{defn}
\label{def:geomsub}
The \emph{geometric subgroups} of $R$ are the subgroups that belong to (at least) one of the following classes.
\begin{enumerate}[label={$\mathcal{C}_\arabic*:$}]
\item Stabilisers of certain nonzero proper subspaces of $V$.
\item Stabilisers of direct sum decompositions of $V$ into proper equidimensional subspaces.
\item Stabilisers of extension fields $\mathbb{F}_{q^r}$ of $\mathbb{F}_q$, for primes $r$ dividing $d$.
\item Stabilisers of decompositions $V = V_1 \otimes V_2$, where each $V_i$ is a vector space over $\mathbb{F}$ equipped with a zero or non-degenerate reflexive bilinear form $\beta_i$, and $(V_1,\beta_1)$ is not similar to $(V_2,\beta_2)$.
\item Stabilisers of subfields $\mathbb{F}_{q^{1/r}}$ of $\mathbb{F}_q$, for primes $r$.
\item Normalisers of symplectic-type or extraspecial $r$-subgroups of $R$, for primes $r \ne p$ such that $d$ is a power of $r$.
\item Stabilisers of decompositions $V = V_1 \otimes \cdots \otimes V_n$ with $n > 1$, where each $V_i$ is a vector space over $\mathbb{F}$ equipped with a zero or non-degenerate reflexive bilinear form $\beta_i$, with $(V_i,\beta_i)$ similar (and in particular, equidimensional) to $(V_j,\beta_j)$ for all $i,j$.
\item Full groups of similarities of non-degenerate unitary or reflexive bilinear forms (when $T = \mathrm{GL}(d,q)$).
\end{enumerate}
\end{defn}

Note that each geometric subgroup of $R$ is the full stabiliser in $R$ of some object; we do not consider any proper subgroup of this stabiliser as a geometric subgroup associated with the same object.

Recall that a group $H$ is \emph{almost simple} if there exists a non-abelian simple group $X$ satisfying $X \trianglelefteq H \le \mathrm{Aut}(X)$.

\begin{defn}
\label{def:classc9}
A subgroup $H$ of $R$ lies in \emph{class $\mathcal{C}_9$} if all of the following hold:
\begin{enumerate}[label={(\roman*)}]
\item $H/(H \cap \zgl)$ is almost simple;
\item $H$ does not contain $S$;
\item $H^\infty$ is absolutely irreducible;
\item no conjugate of $H^\infty$ in $\mathrm{GL}(d,q)$ can be written over a proper subfield of $\mathbb{F}_q$;
\item $H^\infty$ preserves no nonzero unitary or reflexive bilinear form if $T = \mathrm{GL}(d,q)$;
\item $H^\infty$ preserves no nonzero unitary or orthogonal form if $T$ preserves a non-degenerate alternating form; and
\item $H^\infty$ preserves no nonzero unitary or alternating form if $T$ preserves a non-degenerate orthogonal form.
\end{enumerate}
\end{defn}

Note that Bray, Holt and Roney-Dougal write $\mathscr{S}$ to denote the Aschbacher class $\mathcal{C}_9$, but we use notation consistent with Bamberg et al.~\cite{bamberg}. The subgroup classes $\mathcal{C}_1, \ldots, \mathcal{C}_9$ of $R$ are the \emph{Aschbacher classes}\index{Aschbacher class} of $R$.

\begin{thm}[Aschbacher's Theorem]
\label{thm:aschthm}
Suppose that a maximal subgroup $H$ of $R$ does not contain $S$ and is not a geometric subgroup of $R$. Then $H$ is a $\mathcal{C}_9$-subgroup of $R$.
\end{thm}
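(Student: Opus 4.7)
The plan is to analyze any candidate maximal subgroup $H$ via its generalized Fitting subgroup $F^*(H) = F(H) E(H)$, where $F(H)$ is the Fitting subgroup and $E(H) = L_1 \cdots L_k$ is the product of the components (the subnormal quasisimple subgroups of $H$). Because $H$ is maximal in $R$, the standard generalised-Fitting machinery gives $C_H(F^*(H)) \le Z(F^*(H))$ and an embedding $H/F^*(H) \hookrightarrow \mathrm{Out}(F^*(H))$; so once $F^*(H)$ is pinned down, $H$ is almost determined. The remainder of the argument is to run through the possibilities for $F^*(H)$ acting on $V$ and to check that each shape forces $H$ into one of $\mathcal{C}_1,\ldots,\mathcal{C}_8$, or else into $\mathcal{C}_9$.

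First I would check whether $F^*(H)$ is absolutely irreducible on $V$. If $F^*(H)$ is reducible, Clifford theory decomposes $V$ into $F^*(H)$-homogeneous components which $H$ must permute: if one such block (or its span) is $H$-invariant we land in $\mathcal{C}_1$, and if the components are equidimensional and transitively permuted we land in $\mathcal{C}_2$. If $F^*(H)$ is irreducible but not absolutely irreducible, then Schur's Lemma identifies $C_{\mathrm{End}(V)}(F^*(H))$ with an extension field $\mathbb{F}_{q^r}$ of $\mathbb{F}_q$, realising $V$ as an $\mathbb{F}_{q^r}$-vector space preserved by $H$, which is class $\mathcal{C}_3$. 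So I may assume $F^*(H)$ is absolutely irreducible.

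Next I would split on the shape of $F^*(H)$. If $F(H)$ contains a non-central extraspecial or symplectic-type $r$-subgroup (for some prime $r \ne p$) that acts absolutely irreducibly on $V$, then $H$ sits inside its normaliser and we are in $\mathcal{C}_6$; otherwise $F(H) \le Z(H)$ and all the work is done by $E(H) = L_1 \cdots L_k$. Absolute irreducibility then forces a tensor decomposition $V \cong U_1 \otimes \cdots \otimes U_k$ on which each $L_i$ acts faithfully and absolutely irreducibly on $U_i$: if $k > 1$ with the $U_i$ isometric formed spaces and the $L_i$ conjugate under $H$, this is $\mathcal{C}_7$; if $k > 1$ with inequivalent factors it is $\mathcal{C}_4$. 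So I may assume $k = 1$, i.e., $E(H)$ is quasisimple, whence $H/(H \cap \zgl)$ is almost simple with socle $E(H)/Z(E(H))$. If a conjugate of $E(H)$ in $\mathrm{GL}(d,q)$ can be written over a proper subfield of $\mathbb{F}_q$, then the full preimage stabiliser places $H$ in $\mathcal{C}_5$; and in the linear case $T = \mathrm{GL}(d,q)$, if $E(H)$ preserves a nonzero unitary or non-degenerate reflexive bilinear form up to scalars, it lies in $\mathcal{C}_8$. In every remaining situation all the negative clauses of Definition \ref{def:classc9} hold, so $H \in \mathcal{C}_9$.

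The hard part is not the case split itself but verifying, under only the hypothesis that $H$ is maximal in $R$ and non-geometric, that this $F^*(H)$-analysis really has exhausted all geometric possibilities --- that no subgroup nominally in $\mathcal{C}_9$ is secretly contained in a geometric maximal --- and handling the overlaps and exceptional isomorphisms between the classes. This is what makes Aschbacher's original argument long: one must control carefully how a subgroup's Aschbacher label can shift when it is extended to a maximal overgroup, and, in the classical-group case, keep track of the interaction between the components of $E(H)$ and the form preserved by $R$ itself.
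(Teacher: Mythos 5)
The paper does not prove this statement: Aschbacher's Theorem is quoted with a citation to Aschbacher's original paper, and the surrounding definitions of the classes $\mathcal{C}_1,\ldots,\mathcal{C}_9$ are taken (with notational adjustments) from Bray, Holt and Roney-Dougal. So there is no in-paper proof to compare against, and your proposal should be judged as an independent sketch of the known result.

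Read that way, your proposal correctly identifies the skeleton of the standard argument: pass to $F^*(H)$, use $C_H(F^*(H)) \le Z(F^*(H))$ to control $H$ by $\mathrm{Out}(F^*(H))$, then run the case analysis by reducibility of $F^*(H)$ (giving $\mathcal{C}_1,\mathcal{C}_2,\mathcal{C}_3$), the shape of $F(H)$ (giving $\mathcal{C}_6$), the number and interaction of components (giving $\mathcal{C}_4,\mathcal{C}_7$), subfield structure ($\mathcal{C}_5$), and form-preservation ($\mathcal{C}_8$), with $\mathcal{C}_9$ as the residual class. This is indeed how Aschbacher's proof and the treatments in Kleidman--Liebeck and in Bray--Holt--Roney-Dougal are organised, so you have the right map of the territory. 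But the proposal is a roadmap, not a proof: you yourself flag at the end that the substantive content --- checking that the $F^*(H)$-analysis is exhaustive, that a maximal non-geometric subgroup really does satisfy all the negative conditions in Definition \ref{def:classc9} (in particular that $H^\infty$, rather than merely $F^*(H)$, is absolutely irreducible and not realisable over a subfield), and handling the interaction between the components of $E(H)$ and the form preserved by $R$ --- is left entirely unexamined. For the purposes of this paper that gap is irrelevant, since the theorem is imported as a black box; but a referee asked to accept the proposal as a proof would send it back for exactly the reasons you list in your final paragraph.
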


In this paper, when we say ``a maximal $\mathcal{C}_i$-subgroup of $R$'', we mean a maximal subgroup of $R$ that is also a $\mathcal{C}_i$-subgroup of $R$. All maximal subgroups of $S$ have been classified by Bray, Holt and Roney-Dougal \cite{BHRD} for $d \le 12$. Note that Kleidman \cite{kleidmanphd} previously presented a classification of the maximal geometric subgroups of $S$ for $d \le 12$, but without proof. Additionally, Kleidman and Liebeck \cite{kleidman} classified the maximal geometric subgroups for all $d > 12$, while Schr\"oder \cite{schroder} classified the maximal $\mathcal{C}_9$-subgroups for $d \in \{13,14,15\}$. However, there is no known method of classifying the maximal $\mathcal{C}_9$-subgroups uniformly for all $d$ \cite[p.~2]{BHRD}.

We now determine exactly when a geometric subgroup or a maximal $\mathcal{C}_9$-subgroup of $S$ can contain $\hat G$ (although some of our results apply more generally to subgroups of $R$). The next result follows from Definitions \ref{def:geomsub} and \ref{def:classc9}, and from the fact that if $X$ is a $\mathcal{C}_3$-subgroup of $T$, then $X^\infty$ is not absolutely irreducible \cite[p.~56]{BHRD}.

\begin{prop}
\label{prop:cinclusion}
Let $H$ be a subgroup of $R$ such that $H^\infty$ is absolutely irreducible and such that no conjugate of $H^\infty$ can be written over a proper subfield of $\mathbb{F}_q$. Then $H$ does not lie in any $\mathcal{C}_1$-, $\mathcal{C}_3$- or $\mathcal{C}_5$-subgroup of $R$. In particular, this holds if $H = \hat G$, or if $H$ is a $\mathcal{C}_9$-subgroup of $R$.
\end{prop}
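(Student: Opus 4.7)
The plan is to verify each of the three Aschbacher classes separately, using in each case the basic observation that if $H \le X$ for some geometric subgroup $X$, then $H^\infty \le X^\infty$ (since $H^{(n)} \le X^{(n)}$ by induction on $n$). In every case, a structural property of $X^\infty$ will be inherited by $H^\infty$ and contradict one of our two hypotheses. The ``in particular'' statement will then follow by checking that the hypotheses hold when $H = \hat G$ (via Lemma \ref{lem:lieconjimages}\ref{lieconj2}, noting that $\hat G$ is quasisimple so $\hat G = \hat G^\infty$) and when $H$ is a $\mathcal{C}_9$-subgroup (via parts (iii) and (iv) of Definition \ref{def:classc9}).

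For class $\mathcal{C}_1$, suppose $H$ lies in $X$, the stabiliser in $R$ of a nonzero proper subspace $U$ of $V$. Then $H^\infty \le X$ stabilises $U$ as well, so $H^\infty$ is reducible on $V$. This directly contradicts absolute irreducibility of $H^\infty$, so no $\mathcal{C}_1$-subgroup of $R$ contains $H$.

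For class $\mathcal{C}_3$, suppose $H \le X$ with $X$ a $\mathcal{C}_3$-subgroup of $R$. By the fact cited in the excerpt (from \cite[p.~56]{BHRD}), $X^\infty$ is not absolutely irreducible on $V$; equivalently, $X^\infty$ stabilises a proper nonzero subspace of $V \otimes_{\mathbb{F}_q} \overline{\mathbb{F}_q}$. Since $H^\infty \le X^\infty$, the same subspace is $H^\infty$-invariant, contradicting the absolute irreducibility of $H^\infty$.

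For class $\mathcal{C}_5$, suppose $H \le X$, where $X$ is the stabiliser in $R$ of a subfield $\mathbb{F}_{q_0}$ of $\mathbb{F}_q$ with $q = q_0^r$ for some prime $r$. By construction of the $\mathcal{C}_5$-subgroups, there exists $g \in \mathrm{GL}(d,q)$ such that $(X^\infty)^g$ consists entirely of matrices with entries in $\mathbb{F}_{q_0}$. Then $(H^\infty)^g \le (X^\infty)^g$ also has all of its matrices over $\mathbb{F}_{q_0}$, so the conjugate $(H^\infty)^g$ is written over the proper subfield $\mathbb{F}_{q_0}$ of $\mathbb{F}_q$, contradicting the second hypothesis on $H$. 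The main subtlety throughout is simply keeping track of which group (namely $H^\infty$) inherits the structural defect from $X^\infty$; once that is observed, the contradictions are immediate.
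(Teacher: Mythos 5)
Your proof is correct and is essentially the argument the paper has in mind: the paper gives this result as a one-line consequence of Definitions \ref{def:geomsub} and \ref{def:classc9} together with the cited fact about $\mathcal{C}_3$-subgroups, and you have simply spelled out those deductions (using the observation $H \le X \Rightarrow H^\infty \le X^\infty$ in each case, and checking the ``in particular'' hypotheses via Lemma \ref{lem:lieconjimages}\ref{lieconj2} and Definition \ref{def:classc9}(iii),(iv)).
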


\begin{lem}
\label{lem:hatginc2}
No $\mathcal{C}_2$-subgroup of $R$ contains $\hat G$.
\end{lem}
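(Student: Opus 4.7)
Assume for a contradiction that $\hat G$ is contained in some $\mathcal{C}_2$-subgroup $H$ of $R$; by definition $H$ stabilises a decomposition $V = V_1 \oplus \cdots \oplus V_t$ with $t \ge 2$ and each $V_i$ a nonzero proper subspace of common dimension $d/t$. The action of $\hat G$ on $\{V_1,\ldots,V_t\}$ yields a homomorphism $\varphi\colon\hat G\to S_t$. The first step is to use the irreducibility of $V$ as an $\mathbb{F}_q[\hat G]$-module (Lemma \ref{lem:lieconjimages}) to force this action to be transitive: if $\hat G$ had more than one orbit on the set of summands, the sum of those in a proper union of orbits would be a nonzero proper $\hat G$-invariant subspace of $V$. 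Hence $\hat G_{V_1}$ is a proper subgroup of $\hat G$ with $[\hat G : \hat G_{V_1}] = t \le d$.

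Next, I would analyse $\ker\varphi$ using the quasisimplicity of $\hat G$: either $\ker\varphi = \hat G$, which contradicts transitivity (each $V_i$ would be $\hat G$-invariant), or $\ker\varphi \le Z(\hat G)$, in which case $\hat G/\ker\varphi$ embeds in $S_t$ and admits $\tilde G = \hat G/Z(\hat G)$ as a quotient. This forces $|\tilde G| \le t!$, already ruling out $t \in \{2,3\}$ since $|\tilde G| \ge 60$ for every exceptional Chevalley group.

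For $t \ge 4$, the plan is to pass from $\hat G_{V_1}$ to a low-index proper subgroup of $\tilde G$ itself. The key observation is that $|Z(\hat G)| \le 3$ for every exceptional Chevalley group (the centre is trivial for $G_2$, $F_4$, $E_8$, of order $\gcd(3,q-1)$ for $E_6$, and of order $\gcd(2,q-1)$ for $E_7$), so $\hat G_{V_1}\cdot Z(\hat G)$ must be a proper subgroup of $\hat G$: were it equal to $\hat G$, then $t = [\hat G : \hat G_{V_1}]$ would divide $|Z(\hat G)|$ and so be at most $3$. Consequently $\hat G_{V_1}\cdot Z(\hat G)/Z(\hat G)$ is a proper subgroup of $\tilde G$ whose index divides $t$ and is therefore at most $d$. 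The contradiction will then come from the well-known fact that the minimum index of a proper subgroup of every finite simple exceptional Chevalley group strictly exceeds the dimension $d$ of its minimal faithful $\mathbb{F}_q$-module (see, e.g., \cite{kleidman}, with recourse to the \textsc{Atlas} \cite{ATLAS} in the small-$q$ cases).

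I expect this last invocation to be the principal obstacle. In most cases it follows from the crude inequality $|\tilde G| > d!$, but in a handful of small-characteristic instances (notably $\tilde G = F_4(3)$ with $d = 25$, as well as various small-$q$ instances for $E_7(q)$ and $E_8(q)$) the factorial bound fails, so the classification of low-index subgroups of $\tilde G$ genuinely needs to be invoked rather than derived from an order estimate.
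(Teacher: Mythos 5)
Your proof is correct and takes a genuinely different route from the paper. The paper does \emph{not} use the irreducibility of $V$ to deduce transitivity of the permutation action on the summands. Instead, it argues the other way: from the fact that $|\tilde G|$ does not divide $m!$ for any $m\le d$, combined with Proposition~\ref{prop:hatgquotients}, the paper concludes that the permutation image $(\hat G)\rho$ is \emph{trivial}. Then $\hat G$ fixes every $V_i$ and embeds in the product $\mathrm{GL}(V_1)\times\cdots\times\mathrm{GL}(V_m)$, so some projection $(\hat G)\pi_j$ is a nontrivial quotient of $\hat G$ inside $\mathrm{GL}(d/m,q)$ with $d/m<d$, which contradicts Proposition~\ref{prop:faithfultildegmod}. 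Your argument reaches the opposite conclusion about the permutation action (transitivity, not triviality), and avoids Proposition~\ref{prop:faithfultildegmod} altogether, instead finishing via the nonexistence of a low-index proper subgroup of $\tilde G$. Both routes rest on essentially the same numerical fact: $|\tilde G|\nmid t!$ for $t\le d$, which is guaranteed by comparing the $p$-part of $|\tilde G|$ with the $p$-part of $d!$. That divisibility obstruction would in fact let you bypass the split into $t\le 3$ versus $t\ge 4$ and the passage to $\hat G_{V_1}Z(\hat G)/Z(\hat G)$: once you know $\hat G/\ker\varphi\hookrightarrow S_t$ with $|\tilde G|$ dividing $|\hat G/\ker\varphi|$, the $p$-power count kills every $t\le d$ in one stroke, including the $F_4(3)$ case where the factorial bound $|\tilde G|>d!$ fails. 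So your worry about the ``principal obstacle'' can be discharged more cheaply than by invoking the full classification of low-index subgroups, though that citation is of course also valid. The paper's approach has the advantage of reusing Proposition~\ref{prop:faithfultildegmod} (which is also needed elsewhere) and of being agnostic about irreducibility; yours is shorter once the permutation-degree fact is granted and has the pedagogical merit of isolating the transitivity argument.
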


\begin{proof}
Suppose that $H$ is a $\mathcal{C}_2$-subgroup of $R$ that contains $\hat G$. Then $H$ stabilises a decomposition of $V$ as $V = V_1 \oplus \cdots \oplus V_m$, where each $V_i$ is a subspace of $V$ of dimension $d/m < d$. In particular, $H$ permutes the components of this decomposition, and hence there exists a permutation representation $\rho$ from $H$ to the symmetric group $S_m$. Since $|\tilde G|$ does not divide $|S_m|$ for any $m \le d$, it follows that $|\tilde G|$ does not divide the order of the quotient $(\hat G)\rho$ of $\hat G$. As $\tilde G \cong \hat G/Z(\hat G)$, Proposition \ref{prop:hatgquotients} implies that $(\hat G)\rho = 1$.

Now, we can identify $\ker \rho$ with a subgroup of $B:=\mathrm{GL}(V_1) \times \cdots \times \mathrm{GL}(V_m)$ \cite[Ch.~2.2.2]{BHRD}. Let $\pi_i$ be the projection map from $B$ to $\mathrm{GL}(V_i)$ for each $i \in \{1,\ldots,m\}$. As $\hat G \le \ker \rho$, there exists $j \in \{1,\ldots,m\}$ such that the quotient $(\hat G)\pi_j$ of $\hat G$ is nontrivial. Moreover, $(\hat G)\pi_j$ is a subgroup of $\mathrm{GL}(V_j) \cong \mathrm{GL}(d/m,q)$. However, Proposition \ref{prop:faithfultildegmod} shows that no nontrivial quotient of $\hat G$ is a subgroup of $\mathrm{GL}(n,q)$ for any $n < d$. This is a contradiction, and thus $\hat G$ lies in no $\mathcal{C}_2$-subgroup of $R$.
\end{proof}

\begin{lem}
\label{lem:hatginc4}
No $\mathcal{C}_4$-subgroup of $R$ contains $\hat G$.
\end{lem}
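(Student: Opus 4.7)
My plan is to suppose, for a contradiction, that $\hat G \leq H$ for some $\mathcal{C}_4$-subgroup $H$ of $R$. By Definition \ref{def:geomsub}, $H$ stabilises a tensor decomposition $V = V_1 \otimes V_2$ with $d_i := \dim V_i \geq 2$ for each $i$, so that $d_1, d_2 < d$. In particular, when $d$ is prime, as in the case $\tilde G = G_2(q)$, no such decomposition exists and the claim is immediate; so I may assume $d$ is composite.

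The full stabiliser of the decomposition in $\mathrm{GL}(V)$ is the central product $\mathrm{GL}(V_1) \circ \mathrm{GL}(V_2)$, and the natural map from this central product onto $\mathrm{PGL}(V_1) \times \mathrm{PGL}(V_2)$ has kernel $\zgl$. Composing with the inclusion of $\hat G$ and using Proposition \ref{prop:excepcentre} to identify $\hat G \cap \zgl$ with $Z(\hat G)$, I obtain an embedding $\tilde G = \hat G/Z(\hat G) \hookrightarrow \mathrm{PGL}(V_1) \times \mathrm{PGL}(V_2)$; let $\bar\psi_i$ denote its composition with the projection onto $\mathrm{PGL}(V_i)$. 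The key observation is that each $\bar\psi_i$ is nontrivial: were $\bar\psi_1$ trivial, then every $g \in \hat G$ would act on $V$ as a scalar multiple of some $I_{V_1} \otimes B_g$ with $B_g \in \mathrm{GL}(V_2)$, and each subspace $v_1 \otimes V_2$ (for $0 \ne v_1 \in V_1$) would be $\hat G$-invariant. This would give a decomposition of $V$ into $d_1 \geq 2$ proper nonzero $\hat G$-submodules, contradicting the irreducibility of $V$. By the simplicity of $\tilde G$, each $\bar\psi_i$ is therefore an embedding; in particular, $\tilde G \hookrightarrow \mathrm{PGL}(V_1)$.

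Next I will lift this projective embedding to a linear one. Let $\sigma: \mathrm{GL}(V_1) \to \mathrm{PGL}(V_1)$ be the natural quotient and set $Y := \sigma^{-1}(\bar\psi_1(\tilde G))$, so that $Y$ is an extension of $\tilde G$ by the scalar subgroup $Z := \zgl(V_1)$. Since $\tilde G$ is perfect, $Y = Y'Z$, and hence $Y' = [Y,Y] = [Y'Z, Y'Z] = [Y',Y']$ is itself perfect, with $Y'/(Y' \cap Z) \cong \tilde G$ and with $Z(Y') \leq Z$ of order dividing $q-1$ and therefore coprime to $p$. Thus $Y'$ is a perfect central extension of $\tilde G$ whose centre has $p'$-order, so $Y'$ is a quotient of the universal cover $J$ of $\tilde G$ by a normal subgroup of $Z(J)$ containing its Sylow $p$-subgroup; equivalently, $Y'$ is a nontrivial quotient of $\hat G$. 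However, $Y'$ sits inside $\mathrm{GL}(V_1) \cong \mathrm{GL}(d_1,q)$ with $d_1 < d$, contradicting Proposition \ref{prop:faithfultildegmod}.

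The main technical hurdle is this lifting step: a projective embedding of $\tilde G$ only immediately produces a faithful linear representation of some perfect central extension of $\tilde G$, which a priori could be strictly larger than $\hat G$. The argument resolves this by exploiting the fact that $Z(Y) \leq \zgl(V_1)$ has $p'$-order, which forces the resulting cover of $\tilde G$ to factor through $\hat G$ and not merely through $J$. The remaining steps (setting up the projective embedding and ruling out a trivial projection using absolute irreducibility) are comparatively routine.
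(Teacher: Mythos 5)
Your proof is correct, but it takes a genuinely different route from the paper. The paper's proof is short and ``computational'': it invokes a result of Bamberg et al.\ (their Lemma 5.5) showing that the central product $\mathrm{GL}(m,q) \circ \mathrm{GL}(n,q)$ stabilises a subspace of $A^2V$ of dimension $\binom{m}{2}\binom{n+1}{2}$, and then checks against the submodule structure of $A^2V$ established in Theorem~\ref{thm:finiteextsubmods} that $\hat G$ has no submodule of that dimension for any permissible $m$ and $n$. Your argument, by contrast, projects $\tilde G$ into $\mathrm{PGL}(V_1)$, shows the projection is faithful using irreducibility of $V$, and then lifts this to a quasisimple $p'$-central extension inside $\mathrm{GL}(V_1)$; since the lifted group must be a quotient of $\hat G$ sitting inside $\mathrm{GL}(d_1,q)$ with $d_1 < d$, Proposition~\ref{prop:faithfultildegmod} gives the contradiction. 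This is essentially the strategy the paper uses for $\mathcal{C}_2$-subgroups in Lemma~\ref{lem:hatginc2}, transported to $\mathcal{C}_4$. Your approach has the advantage of not depending on the explicit submodule dimensions of $A^2V$ (so it does not need Theorem~\ref{thm:finiteextsubmods} at all), while the paper's approach is more uniform with the rest of \S\ref{sec:overgroupschev} once that structural data is available. One detail worth making explicit in your write-up: the inclusion $Z(Y') \le Z$ follows because the surjection $Y' \twoheadrightarrow \tilde G$ sends $Z(Y')$ into $Z(\tilde G) = 1$, so $Z(Y') \le \ker(Y' \to \tilde G) = Y' \cap Z$; you assert the conclusion but do not spell out this step, and it is what makes $Y'$ genuinely quasisimple rather than merely perfect.
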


\begin{proof}
Suppose that $H$ is a $\mathcal{C}_4$-subgroup of $R$ that contains $\hat G$. Then $H$ lies in the central product $X:=\mathrm{GL}(m,q) \, \circ \, \mathrm{GL}(n,q)$, where\footnote{The inequality $m < \sqrt{d}$ holds because, in each of our cases where $d$ is even, $\sqrt{d}$ is not an integer.} $1 < m < \sqrt{d}$ and $mn = d$ \cite[Ch.~2.2.4]{BHRD}. Bamberg et al.~\cite[Lemma 5.5]{bamberg} show that $X$ stabilises a subspace of $A^2V$ of dimension $\binom{m}{2}\binom{n+1}{2}$, and so the subgroup $\hat G$ of $X$ also stabilises this subspace. However, Theorem \ref{thm:finiteextsubmods} shows that $A^2V$ contains no $\hat G$-submodule of dimension $\binom{m}{2}\binom{n+1}{2}$, for any permitted values of $m$ and $n$. This is a contradiction, and hence $\hat G$ lies in no $\mathcal{C}_4$-subgroup of $R$.
\end{proof}

We now consider the remaining geometric subgroups in the case $R = S$.

\begin{lem}
\label{lem:hatgingeom}
If $\tilde G \ne E_6(q)$, then $\Sigma(\beta)$ is a maximal $\mathcal{C}_8$-subgroup of $\mathrm{SL}(d,q)$, and the only geometric subgroup of $\mathrm{SL}(d,q)$ that contains $\hat G$. If $\tilde G = E_6(q)$, or if $S \ne \mathrm{SL}(d,q)$, then $\hat G$ does not lie in any geometric subgroup of $S$.
\end{lem}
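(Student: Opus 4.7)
The plan is to eliminate, Aschbacher class by Aschbacher class, the possibility that a geometric subgroup of $S$ contains $\hat G$, with the single exception that will produce the $\mathcal{C}_8$-subgroup $\Sigma(\beta)$ in the statement. Combining Proposition \ref{prop:cinclusion} with the absolute irreducibility of $\hat G$ and the fact, both from Lemma \ref{lem:lieconjimages}, that no conjugate of $\hat G$ in $\mathrm{GL}(d,q)$ can be written over a proper subfield of $\mathbb{F}_q$, I would immediately rule out $\mathcal{C}_1$, $\mathcal{C}_3$ and $\mathcal{C}_5$; Lemmas \ref{lem:hatginc2} and \ref{lem:hatginc4} dispose of $\mathcal{C}_2$ and $\mathcal{C}_4$. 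This is the easy half.

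For $\mathcal{C}_6$ and $\mathcal{C}_7$ I plan to use an order comparison. A $\mathcal{C}_6$-subgroup of $R$ can exist only when $d$ is a power $r^n$ of a prime $r \ne p$, which from Table \ref{table:minmodweights} leaves only $d \in \{7,25,27\}$. In each of these cases the order of a $\mathcal{C}_6$-subgroup is bounded above by $|\zgl| \cdot r^{2n} \cdot |\mathrm{Sp}(2n,r)|$, a quantity linear in $q$, whereas $|\hat G|$ grows as a polynomial in $q$ of degree $14$, $52$ or $78$ respectively; the comparison is immediate. Similarly, a $\mathcal{C}_7$-subgroup requires $d = m^k$ with $k \ge 2$, so only $d = 25 = 5^2$ and $d = 27 = 3^3$ arise. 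The stabiliser of such a tensor decomposition has order at most $|\mathrm{GL}(m,q)|^k \cdot k!$, giving roughly $q^{50}$ and $q^{27}$ respectively, which are both smaller than $|F_4(q)|$ and $|E_6(q)|$ by comparing leading terms.

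The class $\mathcal{C}_8$ is present only when $T = \mathrm{GL}(d,q)$, equivalently when $S = \mathrm{SL}(d,q)$, so if $S \ne \mathrm{SL}(d,q)$ there is nothing more to check and the second half of the lemma follows. When $S = \mathrm{SL}(d,q)$ and $\tilde G = E_6(q)$, Proposition \ref{prop:hatgform} guarantees that $\hat G$ preserves no nonzero reflexive bilinear or unitary form, so no $\mathcal{C}_8$-subgroup can contain it, completing that case. When $S = \mathrm{SL}(d,q)$ and $\tilde G \ne E_6(q)$, the same proposition furnishes the non-degenerate form $\beta$ preserved by $\hat G$, and hence $\hat G \le \Sigma(\beta) = \Delta(\beta) \cap \mathrm{SL}(d,q)$. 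By Lemma \ref{lem:absirredpresform}, the only nonzero bilinear or unitary forms preserved by $\hat G$ up to scalars are the scalar multiples of $\beta$, so any $\mathcal{C}_8$-subgroup of $\mathrm{SL}(d,q)$ containing $\hat G$ must stabilise the projective class of $\beta$ and therefore equal $\Sigma(\beta)$. Maximality of $\Sigma(\beta)$ as a $\mathcal{C}_8$-subgroup of $\mathrm{SL}(d,q)$ follows from the standard classification in \cite{kleidman} (or \cite{BHRD} for the small-dimensional cases).

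The main obstacle I anticipate is making the $\mathcal{C}_6$ and $\mathcal{C}_7$ bounds tight enough in the borderline case $(d,\tilde G) = (25, F_4(q))$, where a $\mathcal{C}_7$-stabiliser has order of the same polynomial degree as $|\hat G|$: here one must examine the leading coefficients of the $q$-polynomials rather than just their degrees. In every other case the dimension counts leave generous room, so the elimination is essentially arithmetic once the class-by-class framework above is in place.
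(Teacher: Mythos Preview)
Your proposal is correct and follows essentially the same route as the paper: eliminate $\mathcal{C}_1$--$\mathcal{C}_5$ via the cited results, kill $\mathcal{C}_6$ and $\mathcal{C}_7$ by order comparison, and handle $\mathcal{C}_8$ through Lemma \ref{lem:absirredpresform} and Proposition \ref{prop:hatgform}. The only cosmetic difference is that the paper quotes the precise shapes of the $\mathcal{C}_6$- and $\mathcal{C}_7$-subgroups from the tables in \cite{BHRD}, whereas you use cruder upper bounds; both come down to the same arithmetic. Incidentally, your anticipated obstacle at $(d,\tilde G)=(25,F_4(q))$ does not actually materialise: the $\mathcal{C}_7$ bound $|\mathrm{GL}(5,q)|^2\cdot 2!$ has $q$-degree $50$, while $|F_4(q)|$ has $q$-degree $52$, so even a leading-term comparison is unnecessary.
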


\begin{proof}
By Proposition \ref{prop:cinclusion} and Lemmas \ref{lem:hatginc2} and \ref{lem:hatginc4}, we only need to consider subgroups of $S$ that lie in $\mathcal{C}_6 \cup \mathcal{C}_7 \cup \mathcal{C}_8$. Definition \ref{def:geomsub} shows that $\mathcal{C}_6$-subgroups of $S$ are only defined when $d$ is a power of a prime, i.e., $d \in \{7,25,27\}$, and that $\mathcal{C}_7$-subgroups of $S$ are only defined when $d$ is a power of a positive integer less than $d$, i.e., $d \in \{25,27\}$. If $H_6$ is a $\mathcal{C}_6$-subgroup of $S$, then $S = \mathrm{SL}(d,q)$ and $H_6$ has shape $(A \circ r_{+}^{1+m}).\mathrm{Sp}(m,r)$, where $|A| \le 27$ and $(d,r,m) \in \{(7,7,2),(25,5,4),(27,3,6)\}$, and where $r_{+}^{1+m}$ is the extraspecial group of order $r^{1+m}$ and exponent $r$ \cite[Table 2.9]{BHRD}. Additionally, if $H_7$ is a $\mathcal{C}_7$-subgroup of $S$, then the shape of $H_7$ is either $B.\mathrm{PSL}(n,q)^t.C.S_t$ or $\Omega(n,q)^t.D.S_t$, where $|B| \le 5$, $|C| \le 125$, $|D| \le 4$ and $(d,n,t) \in \{(25,5,2),(27,3,3)\}$ \cite[Table 2.10]{BHRD}. By considering group orders in each case, we see that $\hat G$ does not lie in $H_6$ or in $H_7$.

We now see from Definition \ref{def:geomsub} that if $\hat G$ lies in a geometric subgroup $H$ of $S$, then $S = \mathrm{SL}(d,q)$ and $H$ is a $\mathcal{C}_8$-subgroup of $S$, i.e., $H=\Sigma(\gamma)$ for some non-degenerate unitary or reflexive bilinear form $\gamma$. Lemma \ref{lem:absirredpresform} then implies that $\hat G \le S(\gamma)$. In particular, $\tilde G \ne E_6(q)$ by Proposition \ref{prop:hatgform}. Moreover, $\gamma$ must be a scalar multiple of $\beta$, i.e., $H = \Sigma(\beta)$.  Finally, it follows from \cite[Table 8.35]{BHRD} (when $d = 7$) and \cite[Proposition 7.8.1, Lemma 8.1.6]{kleidman} (in the remaining cases) that $\Sigma(\beta)$ is indeed maximal in $\mathrm{SL}(d,q)$.
\end{proof}

In the proof of the following theorem, if $H$ is a subgroup of $\mathrm{GL}(d,q)$, then we write $\overline H$ to denote the subgroup $\zgl H/\zgl$ of $\mathrm{PGL}(d,q)$.

\begin{thm}
\label{thm:excepmax}
Suppose that $S = I(\beta)^\infty$. Then the $\mathcal{C}_9$-subgroup $N_S(\hat G)$ of $S$ is the unique maximal subgroup of $S$ that contains $\hat G$. Furthermore, if $\tilde G \ne E_6(q)$, then $\Sigma(\beta)$ is the unique maximal subgroup of $\mathrm{SL}(d,q)$ that contains $\hat G$.
\end{thm}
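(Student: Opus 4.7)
The plan is to apply Aschbacher's Theorem (Theorem \ref{thm:aschthm}) to an arbitrary maximal subgroup $M$ of $S$ containing $\hat G$, so that $M$ is either a geometric subgroup or a $\mathcal{C}_9$-subgroup of $S$. Lemma \ref{lem:hatgingeom} immediately rules out the geometric case in both situations covered by the hypothesis $S = I(\beta)^\infty$ (namely $\tilde G = E_6(q)$ with $S = \mathrm{SL}(d,q)$, or $S \ne \mathrm{SL}(d,q)$). Hence $M$ must be a $\mathcal{C}_9$-subgroup of $S$.

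In this case, the $\mathcal{C}_9$ definition together with Schur's Lemma gives that $M^\infty$ is quasisimple: it is perfect and absolutely irreducible, has $Z(M^\infty) \le \zgl$, and $M^\infty/Z(M^\infty)$ coincides with the non-abelian simple normal subgroup $X$ of $M/(M \cap \zgl)$. Since $\hat G = \hat G^\infty \le M^\infty$ is itself quasisimple, projection yields an embedding $\tilde G \cong \hat G/Z(\hat G) \hookrightarrow X$. The crux is to upgrade this embedding to an equality $X = \tilde G$; this is the principal obstacle, and it requires invoking the classification of finite simple groups together with a classification of their low-dimensional absolutely irreducible projective representations in defining characteristic (e.g.~via the tables of L\"ubeck \cite{lubeck}, and the work of Liebeck and Seitz on positive-dimensional closed subgroups of exceptional algebraic groups), to rule out every non-abelian simple proper overgroup of $\tilde G$ admitting a faithful projective representation of degree $d$ in characteristic $p$. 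Once $X = \tilde G$ is established, a comparison of central extensions will conclude the argument: $|Z(M^\infty)|$ divides $|\zgl| = q-1$ and is therefore coprime to $p$, so $M^\infty$ is a quotient of the simply connected cover $\hat G$ of $\tilde G$; combined with the inclusion $\hat G \le M^\infty$, an order comparison forces $M^\infty = \hat G$. Hence $M \le N_S(\hat G) \ne S$ (the latter inequality holding because $\hat G$ is not normal in the almost simple group $S$), and the maximality of $M$ yields $M = N_S(\hat G)$. This simultaneously establishes the maximality and uniqueness of $N_S(\hat G)$ as a subgroup of $S$ containing $\hat G$, and identifies it as a $\mathcal{C}_9$-subgroup.

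For the second statement, I would assume $\tilde G \ne E_6(q)$ and apply the same strategy to a maximal subgroup $N$ of $\mathrm{SL}(d,q)$ containing $\hat G$. By Aschbacher's Theorem, $N$ is geometric or lies in $\mathcal{C}_9$. The geometric case gives $N = \Sigma(\beta)$ directly via Lemma \ref{lem:hatgingeom}, which also confirms that $\Sigma(\beta)$ is maximal in $\mathrm{SL}(d,q)$. The $\mathcal{C}_9$ case (with $T = \mathrm{GL}(d,q)$) is ruled out without any classification input: by Definition \ref{def:classc9}, $N^\infty$ would have to preserve no nonzero reflexive bilinear or unitary form on $V$, yet $\hat G \le N^\infty$ preserves the nonzero form $\beta$ by Proposition \ref{prop:hatgform}. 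Hence $N = \Sigma(\beta)$ in all cases, and the second statement requires no further input beyond the lemmas already in hand.
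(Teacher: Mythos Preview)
Your plan for the first assertion is essentially the paper's: reduce to $\mathcal{C}_9$ via Aschbacher's Theorem and Lemma~\ref{lem:hatgingeom}, identify $M^\infty$ as quasisimple, embed $\tilde G$ into the simple quotient $X$, and then invoke the classification together with low-dimensional representation data to force $X\cong\tilde G$. The paper carries out this last step explicitly, splitting according to whether $X$ is alternating/sporadic/cross-characteristic (ruled out via the Hiss--Malle tables), a classical group in characteristic $p$ (handled via \cite[Proposition 5.4.11]{kleidman} and L\"ubeck's lists, where one must argue carefully that the resulting $H^\infty$ coincides with $U$ itself), or an exceptional group in characteristic $p$ (handled via L\"ubeck and Proposition~\ref{prop:minfieldmod}, with extra care for ${}^2G_2$ and ${}^2E_6$). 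Your reference to ``Liebeck--Seitz on positive-dimensional closed subgroups'' is not quite the right toolkit here; the issue is bounding \emph{finite} quasisimple subgroups with a $d$-dimensional absolutely irreducible representation over $\mathbb{F}_q$, not algebraic overgroups.

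Your argument for the second assertion, however, contains a genuine gap. You claim that a maximal $\mathcal{C}_9$-subgroup $N$ of $\mathrm{SL}(d,q)$ is impossible because Definition~\ref{def:classc9}(v) forces $N^\infty$ to preserve no nonzero form, while ``$\hat G\le N^\infty$ preserves the nonzero form $\beta$''. But these two statements are not in conflict: a proper subgroup of $N^\infty$ can perfectly well preserve a form that $N^\infty$ itself does not (e.g.\ $\Omega(d,q)\le\mathrm{SL}(d,q)$). Nothing in Lemma~\ref{lem:absirredpresform} or Schur's Lemma lets you promote $\hat G$-invariance of $\beta$ to $N^\infty$-invariance without further information about $N^\infty$. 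The paper's route is to run the same classification argument for $U=\mathrm{SL}(d,q)$ as for $U=S$, conclude that $H^\infty=\hat G$, and only \emph{then} observe that $H^\infty$ preserves $\beta$, so that $H\le N_{\mathrm{SL}(d,q)}(\hat G)\le\Sigma(\beta)$ violates Definition~\ref{def:classc9}(v). In other words, the $\mathrm{SL}(d,q)$ case is not classification-free; it needs exactly the same input as the $S$ case, applied once more.
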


\begin{proof}
By Theorem \ref{thm:aschthm} and Lemma \ref{lem:hatgingeom}, it suffices to show that if a maximal $\mathcal{C}_9$-subgroup of $S$ contains $\hat G$, then that maximal subgroup is equal to $N_S(\hat G)$, and that if $\tilde G \ne E_6(q)$, then no maximal $\mathcal{C}_9$-subgroup of $\mathrm{SL}(d,q)$ contains $\hat G$. Let $U \in \{S,\mathrm{SL}(d,q)\}$, and suppose that $H$ is a maximal $\mathcal{C}_9$-subgroup of $U$ that contains $\hat G$. The perfect group $\hat G$ then lies in $H^\infty$, which is easily shown to be quasisimple, with $\hinfbar \cong H^\infty/Z(H^\infty)$. %By Definition \ref{def:classc9}, $\overline H \cong H/(H \cap \zgl)$ is almost simple. The Schreier Conjecture therefore implies that $\hbarinf$ is the non-abelian simple socle of $\overline H$. We also have $\hbarinf = \hinfbar \cong H^\infty/(H^\infty \cap \zgl)$. It follows from the simplicity of $\hbarinf$ that $Z(H^\infty) = H^\infty \cap \zgl$. Therefore, the perfect group $H^\infty$ is quasisimple. Furthermore, as $\hat G$ is perfect, it is a subgroup of $H^\infty$.
Since $\tilde G \cong \hat G/Z(\hat G)$, which is equal to $\hat G/(\hat G \cap \zgl) \cong \overline{\hat G}$ by Proposition \ref{prop:excepcentre}, $\tilde G$ is isomorphic to a subgroup of $\hinfbar$. Note also that $H^\infty$ is a covering group of $\hinfbar$, and is therefore a quotient of the universal cover $Q$ of $\hinfbar$.

Definition \ref{def:classc9} implies that $H^\infty$ is absolutely irreducible and cannot be written over a proper subfield of $\mathbb{F}_q$. The tables in \cite{hiss} can be used to determine the finite, quasisimple, absolutely irreducible subgroups $A$ of $\mathrm{GL}(d,q)$ such that $A/Z(A)$ is not a simple group of Lie type defined over a field of characteristic $p$. In each case, there is no such group $A$ with $|A|$ divisible by $|\hat G|$. Hence $\hinfbar$ is a simple group of Lie type ${}^t Y_\ell(r)$, where $r$ is some power of $p$, with $(t,Y) \notin \{(2,B),(2,F)\}$ as $p > 2$. We will show that in fact $\hinfbar \cong \tilde G$.

Recall that the simply connected version $\hat X$ of $\hinfbar$ is the quotient of $Q$ by the Sylow $p$-subgroup of $Z(Q)$. Using \cite[Theorem 5.1.4]{kleidman} and the fact that $\hinfbar$ contains a copy of $\tilde G$, we see that if $\hat X \not\cong Q$, then $(\tilde G,d) = (G_2(3),7)$ and either $\hinfbar \cong \tilde G$, as claimed, or $\hinfbar \cong \Omega(7,3)$. However, this final case cannot occur, as here $\hinfbar \cong S$, and $\mathrm{SL}(7,3)$ contains no maximal $\mathcal{C}_9$-subgroup \cite[Table 8.36]{BHRD}. Thus we may assume that $H^\infty$ is a quotient of $\hat X \cong Q$. In particular, there exists an absolutely irreducible $d$-dimensional $\mathbb{F}_q[\hat X]$-module that cannot be written over a proper subfield of $\mathbb{F}_q$. It follows from Lemma \ref{lem:untwistedfinitemods} that there exists an irreducible $d$-dimensional $K[X]$-module, where $K:=\overline{\mathbb{F}_p}$ and $X$ is the linear algebraic group associated with $\hinfbar$. We also have from Proposition \ref{prop:minfieldmod} that either $\hinfbar = {}^2 G_2(r)$ or $r \in \{q,q^{1/t},q^2,q^{2/t},q^3,q^{3/t}\}$. Here, we have used the fact that if $d = x^y$ for integers $x > 0$ and $y > 1$, then the tuple $(d,x,y)$ lies in the set $\{(25,5,2),(27,3,3)\}$. In particular, if $\hinfbar \ne {}^2 G_2(r)$, then $r \in \{q^2,q^{2/t}\}$ is only possible if $(\tilde G,p) = (F_4(q),3)$, and $r \in \{q^3,q^{3/t}\}$ is only possible if $\tilde G = E_6(q)$.

Suppose first that $\hinfbar$ is a classical group of Lie type of dimension $b$. Using \cite[Proposition 5.4.11]{kleidman} and L\"ubeck's \cite{lubeck} lists of irreducible modules for linear algebraic groups, we determine that in order for the aforementioned $\mathbb{F}_q[\hat X]$-module and $K[X]$-module to exist, and in order for $|\tilde G|$ to divide $\hinfbar$, we require $b = d$. Moreover, the aforementioned $\mathbb{F}_q[\hat X]$-module is the unique minimal $\mathbb{F}_q[\hat X]$-module up to quasi-equivalence. If $\hinfbar = \mathrm{PSU}(d,r)$, then this module corresponds to the irreducible $\overline{\mathbb{F}_p}[X]$-module $L(a\lambda_i)$, where $a$ is a power of $p$ and $i \in \{1,d-1\}$; and if $\hinfbar = \mathrm{P}\Omega^-(d,r)$, then the module corresponds to $L(a\lambda_1)$ \cite[\S1]{liebeck85} (see also \cite[Appendix A.3]{lubeck}). By considering the permutation that the involutory graph automorphism of $X$ induces on the associated Dynkin diagram, we see from Proposition \ref{prop:minfieldmod} that $r = q^{1/2}$ in the former case and $r = q$ in the latter case. Since there is a unique minimal $\mathbb{F}_q[\hat X]$-module up to quasi-equivalence, there is a unique conjugacy class of subgroups $B$ of $\mathrm{GL}(d,q)$ such that $\zgl \le B$ and $\overline B \cong \hinfbar$ \cite[p.~39--40]{BHRD}. It follows that $\zgl H^\infty$ is conjugate in $\mathrm{GL}(d,q)$ to $\zgl M$, where $$M \in \{\mathrm{SL}(d,q),\mathrm{SU}(d,q^{1/2}),\mathrm{Sp}(d,q),\Omega(d,q),\Omega^\pm(d,q)\}.$$ As $M$ is perfect \cite[Proposition 1.10.3]{BHRD}, the perfect group $H^\infty$ is in fact conjugate in $\mathrm{GL}(d,q)$ to $M$. By Lemma \ref{lem:absirredpresform}, $H^\infty$ preserves at most one nonzero unitary or bilinear form. It follows that if $\tilde G \ne E_6(q)$ and $U$ preserves $\beta$, then $H^\infty = U$. If instead $U = \mathrm{SL}(d,q)$, then Definition \ref{def:classc9} implies that $H^\infty$ preserves no nonzero unitary or reflexive bilinear form, and we again have $H^\infty = U$. In each case, this contradicts the maximality of $H$ in $U$. Thus $\hinfbar$ is not a classical group, i.e., it is an exceptional group of Lie type.

L\"ubeck's \cite{lubeck} lists of irreducible modules imply that if $\tilde G = Y'_{\ell'}(q)$, then $Y'_{\ell'} = Y_\ell$ or $$(Y'_{\ell'},Y_\ell) \in \{(F_4,G_2),(E_6,G_2),(E_7,D_4),(E_8,G_2)\}.$$ In particular, if $\hinfbar = {}^2 G_2(r)$, then $p = 3$, and hence $d \in \{7,27\}$ \cite[Appendix A.49]{lubeck}. It follows from \cite[Remark 5.4.7(b)]{kleidman} that $r = q$ in this case. In each other case, there is no irreducible $K[X]$-module of dimension $2$ or $3$ \cite{lubeck}, and hence $r \in \{q,q^{1/t}\}$ by Proposition \ref{prop:minfieldmod}. In particular, since the graph automorphism of $E_6$ does not fix any minimal $K[E_6]$-module (see Table \ref{table:minmodweights}), Proposition \ref{prop:minfieldmod} implies that if $\tilde G = E_6(q)$ and $\hinfbar ={}^2 E_6(r)$, then $r = q^{1/2}$. Considering group orders in each case shows that we must have $(t,Y,\ell,r) = (1,Y',\ell',q)$, and hence $\hinfbar \cong \tilde G$, as claimed.

We have shown that $\zgl \hat G/\zgl = \overline{\hat G} \cong \tilde G \cong \hinfbar = \zgl H^\infty/\zgl$. As $\hat G \le H^\infty$, we have $\zgl \hat G = \zgl H^\infty$. Since $\hat G$ and $H^\infty$ are perfect, it follows that $\hat G = H^\infty$, and hence $H \le N_U(\hat G)$. Furthermore, since $\hat G < U$, and since $\overline {U}$ is simple, $\zgl U$ does not normalise $\zgl \hat G$. This implies that $U$ does not normalise $\hat G$, and thus $H = N_U(\hat G)$ by the maximality of $H$ in $U$. If $U = S$, then we are done. If instead $U = \mathrm{SL}(d,q)$, then $N_{U}(\hat G)$ lies in $\Sigma(\beta)$ by Lemma \ref{lem:absirredpresform}, and so is not a $\mathcal{C}_9$-subgroup of $U$ by Definition \ref{def:classc9}. Hence in this case, no maximal $\mathcal{C}_9$-subgroup of $U$ contains $\hat G$, as required.
%Observe that if $U$ normalises $\hat G$, then $\zgl U$ must normalise $\zgl \hat G$, and hence $\overline {U}$ normalises  $\overline {\hat G} \cong \tilde G$ , which is proper in $\overline {U}$ as $\hat G < U$. However, $\overline {U}$ is simple, and therefore $N_U(\hat G) < U$.
%Furthermore, since $\hat G < U$, and since $\overline {U}$ is simple, $\zgl U$ does not normalise $\zgl \hat G$. However, $\zgl$ does normalise $\zgl \hat G$, which means that $U$ does not. Since $U$ normalises $\zgl$, this means that $U$ does not normalise $\hat G$. Thus $N_U(\hat G)$ is a proper subgroup of $U$, and so $H = N_U(\hat G)$, by the maximality of $H$ in $U$. 
\end{proof}

Note that Bray, Holt and Roney-Dougal \cite[Table 8.40]{BHRD} previously showed that $G_2(q)$ is a maximal $\mathcal{C}_9$-subgroup of $\Omega(7,q)$.

We now determine the normaliser of $\hat G$ in $\mathrm{GL}(d,q)$.

\begin{lem}
\label{lem:excepnormaliser}
Let $N:=N_{\mathrm{GL}(d,q)}(\hat G)$. If $\tilde G = E_6(q)$, then $N$ is equal to $(\zgl \hat G).(q-1,3)$, a $\mathcal{C}_9$-subgroup of $\mathrm{GL}(d,q)$. If $\tilde G = E_7(q)$, then $N$ is equal to $(\zgl \hat G).2$, a proper subgroup of $\mathrm{CSp}(56,q)$. Otherwise, $N$ is equal to $\zgl \hat G$, a proper, non-maximal subgroup of $\mathrm{GL}(d,q)$.
\end{lem}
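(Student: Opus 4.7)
The plan is to analyse the conjugation homomorphism $\psi \colon N \to \mathrm{Aut}(\hat G)$. By Proposition~\ref{prop:excepcentre} its kernel equals $\zgl$, while its image contains $\mathrm{Inn}(\hat G) \cong \hat G/Z(\hat G) \cong \tilde G$, so $\psi$ induces an embedding $N/(\zgl \hat G) \hookrightarrow \mathrm{Out}(\tilde G)$. The first step is to show that this embedding factors through the subgroup $\mathrm{Out}_{\mathrm{diag}}(\tilde G)$ of diagonal outer automorphisms. For $g \in N$, the map $v \mapsto v g$ gives an isomorphism $V \cong V^{\alpha_g}$ of $\hat G$-modules, where $\alpha_g$ is the automorphism induced by conjugation by $g$. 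If the image of $\alpha_g$ in $\mathrm{Out}(\tilde G)$ had a nontrivial field-automorphism component, then Lemma~\ref{lem:lieconjimages}\ref{lieconj4} would contradict $V \cong V^{\alpha_g}$; and if it had a nontrivial graph component---which, under our hypotheses, can occur only for $\tilde G = E_6(q)$ or $(\tilde G,p) = (G_2(q),3)$---then parts \ref{lieconj6} and \ref{lieconj7} of the same lemma show that $V^{\alpha_g}$ lies in a different $\mathcal{F}$-class from $V$, again a contradiction.

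Next I would invoke the standard structure of $\mathrm{Out}_{\mathrm{diag}}(\tilde G)$: it is trivial for $\tilde G \in \{G_2(q), F_4(q), E_8(q)\}$, cyclic of order $(q-1,3)$ for $\tilde G = E_6(q)$, and cyclic of order $(q-1,2) = 2$ for $\tilde G = E_7(q)$ (using that $q$ is odd). This immediately gives $N = \zgl \hat G$ in the first three cases. To match the upper bound in the remaining cases I would realise each diagonal outer automorphism by an explicit element of $\mathrm{GL}(d,q)$. In the $E_7(q)$ case, Lemma~\ref{lem:absirredpresform} forces $N \le \mathrm{CSp}(56,q)$; since $\hat G \le \mathrm{Sp}(56,q)$ and the similitude character $\mu$ satisfies $\mu(\lambda I) = \lambda^2$, the subgroup $\zgl \hat G$ lies entirely in $\mu^{-1}((\mathbb{F}_q^*)^2)$, so any normaliser with non-square similitude factor supplies the outer $C_2$, its existence following from \cite[Proposition~5.4.18]{kleidman} (already cited in Proposition~\ref{prop:hatgform}). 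For $\tilde G = E_6(q)$, the analogous realisation is obtained by lifting the diagonal automorphisms of $\tilde G$ from the Frobenius fixed-point subgroup of the adjoint algebraic group to the $27$-dimensional simply connected representation, via the discussion surrounding \cite[Proposition~5.4.11]{kleidman}.

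The final verifications are routine. For $\tilde G \in \{G_2(q), F_4(q), E_8(q)\}$, Theorem~\ref{thm:excepmax} gives the proper subgroup $\Sigma(\beta)$ of $\mathrm{SL}(d,q)$ containing $\hat G$, so $\zgl \Sigma(\beta)$ is a proper subgroup of $\mathrm{GL}(d,q)$ strictly containing $N = \zgl \hat G$, establishing non-maximality. Non-maximality for $\tilde G = E_7(q)$ is immediate from $N \le \mathrm{CSp}(56,q) < \mathrm{GL}(56,q)$. For $\tilde G = E_6(q)$, I would verify each condition of Definition~\ref{def:classc9} directly: almost simplicity of $N/\zgl$ from the structure $\tilde G \cdot (q-1,3)$ established above, absolute irreducibility of $N^\infty = \hat G$ and the subfield property from Lemma~\ref{lem:lieconjimages}\ref{lieconj2}, and the absence of preserved nonzero unitary or reflexive bilinear forms from Proposition~\ref{prop:hatgform}. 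The main obstacle throughout is the realisation step for $E_6(q)$ and $E_7(q)$: producing the element of $\mathrm{GL}(d,q)$ whose conjugation on $\hat G$ generates $\mathrm{Out}_{\mathrm{diag}}(\tilde G)$ requires either an explicit algebraic-group construction or a careful appeal to the Kleidman--Liebeck classification, whereas the upper bound on $|N/(\zgl \hat G)|$ follows cleanly from the module-twisting argument in the first paragraph.
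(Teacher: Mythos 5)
Your proof follows essentially the same strategy as the paper's: analyse the conjugation map $\theta\colon N \to \mathrm{Aut}(\hat G)$, identify $\ker\theta = \zgl$ via Proposition~\ref{prop:excepcentre}, embed $N/(\zgl\hat G)$ in the outer automorphism group, and use Lemma~\ref{lem:lieconjimages} to rule out field and graph components. The divergence --- and the genuine gap --- is in the realisation (lower-bound) step, which you yourself flag as ``the main obstacle''. For $E_7(q)$, showing that $\mu(\zgl\hat G) = (\mathbb{F}_q^*)^2$ does not by itself show that $N$ contains an element with non-square similitude factor, and \cite[Proposition~5.4.18]{kleidman} (cited in the paper only to establish that $\hat G$ preserves an alternating form) does not obviously furnish such an element; the $E_6(q)$ appeal to ``the discussion surrounding \cite[Proposition~5.4.11]{kleidman}'' is likewise too vague to constitute a proof. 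The paper closes this gap cleanly by invoking \cite[Propositions~5.1.1, 5.1.9(i)]{BHRD}: for a quasisimple group with a faithful absolutely irreducible $\mathbb{F}_q$-module $V$, the automorphisms induced by the $\mathrm{GL}(V)$-normaliser are exactly those $\alpha$ with $V^\alpha\cong V$, and the diagonal automorphisms always satisfy this; together with Lemma~\ref{lem:lieconjimages} this pins down $(N)\theta$ precisely and supplies the realisation with no explicit construction needed. Your $\mathcal{C}_9$ and non-maximality verifications at the end are fine (passing via $\zgl\Sigma(\beta) < \mathrm{GL}(d,q)$ is a harmless variant of the paper's $N < \Delta(\beta) < \mathrm{GL}(d,q)$, both resting on Theorem~\ref{thm:excepmax} and Lemma~\ref{lem:hatgingeom}).
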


\begin{proof}
First note that $\zgl \hat G \trianglelefteq N$. Let $\theta: N \to \mathrm{Aut}(\hat G)$ be the action of $N$ on $\hat G$ induced by conjugation. Then $(\hat G)\theta = \mathrm{Inn}(\hat G)$, and $\ker \theta = C_N(\hat G)$, which is equal to $\zgl$ by Proposition \ref{prop:excepcentre}. Hence $\theta$ induces a monomorphism from $N/(\zgl \hat G)$ to $\mathrm{Out}(\hat G)$. We therefore obtain the structure of $N$ by determining when an ``outer automorphism'' $\alpha \in \mathrm{Aut}(G)$ lies in $(N)\theta$. Since $N = N_{\Delta(\beta)}(\hat G)$ (using Lemma \ref{lem:absirredpresform} in the case $\beta \ne 0$), and since $V$ is a faithful, absolutely irreducible module for the quasisimple group $\hat G$, this occurs exactly when $V^\alpha \cong V$. By Lemma \ref{lem:lieconjimages} and \cite[Proposition 5.1.1, Proposition 5.1.9(i)]{BHRD}, this is equivalent to $\alpha$ being a diagonal automorphism of $\hat G$. The group $\hat G$ has two nontrivial diagonal automorphisms if $\tilde G = E_6(q)$ and $q \equiv 1 \pmod 3$; one if $\tilde G = E_7(q)$; and zero otherwise (see \cite[Ch.~4]{wilson}). Thus the structure of $N$ is as required.

Suppose now that $\tilde G = E_6(q)$. Since $N/(\zgl \hat G)$ is soluble, we have $N^\infty = (\zgl \hat G)^\infty$, which is equal to $\hat G^\infty = \hat G$. Lemma \ref{lem:lieconjimages} and Proposition \ref{prop:hatgform} show that $N^\infty$ satisfies all properties required by Definition \ref{def:classc9} for $N$ to be a $\mathcal{C}_9$-subgroup of $\mathrm{GL}(d,q)$. We also see, by considering group orders, that $N$ does not contain $\mathrm{SL}(d,q)$. Moreover, it is easy to show that $N/(N \cap \zgl) = N/\zgl$ is isomorphic to $(\hat G/Z(\hat G)).(q-1,3)$, which is the almost simple group $\tilde G.(q-1,3)$. Thus $N$ is a $\mathcal{C}_9$-subgroup of $\mathrm{GL}(d,q)$. Finally, if $\tilde G \ne E_6(q)$, then by considering group orders, we see that $N < \Delta(\beta) < \mathrm{GL}(d,q)$.
\end{proof}

Dedekind's Identity now implies that $N_S(\hat G)$ lies in the set $\{Z(S) \hat G,(Z(S) \hat G).3\}$ if $\tilde G = E_6(q)$ with $q \equiv {1 \pmod 3}$; $N_S(\hat G)$ lies in the set $\{Z(S) \hat G,(Z(S) \hat G).2\}$ if $\tilde G = E_7(q)$; and $N_S(\hat G) = Z(S)\hat G$ otherwise.

\begin{prop}
\label{prop:hatgspomegaover}
Suppose that $\tilde G \ne E_6(q)$, and that $S = I(\beta)^\infty$. Additionally, let $H$ be a subgroup of $\mathrm{SL}(d,q)$ that contains $\hat G$. Then $S \not\le H$ if and only if $H \le N_{\mathrm{SL}(d,q)}(\hat G)$.
\end{prop}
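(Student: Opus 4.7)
The plan is to prove each direction using Theorem~\ref{thm:excepmax} together with the structural description of $N_S(\hat G)$ supplied by Lemma~\ref{lem:excepnormaliser} and the remark that follows it.

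For the ``if'' direction, I would note that $N_S(\hat G) = N_{\mathrm{SL}(d,q)}(\hat G) \cap S$. By the first part of Theorem~\ref{thm:excepmax}, $N_S(\hat G)$ is a maximal (hence proper) subgroup of $S$, so $S \not\le N_S(\hat G)$. This forces $S \not\le N_{\mathrm{SL}(d,q)}(\hat G)$, and therefore $S \not\le H$ for any subgroup $H$ of $N_{\mathrm{SL}(d,q)}(\hat G)$.

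For the converse, assume $S \not\le H$. Then certainly $H \ne \mathrm{SL}(d,q)$, so by the second part of Theorem~\ref{thm:excepmax}, $H$ is contained in the unique maximal subgroup $\Sigma(\beta)$ of $\mathrm{SL}(d,q)$ containing $\hat G$. Since $S = I(\beta)^\infty$ is characteristic in $I(\beta)$ and $I(\beta) \trianglelefteq \Delta(\beta) \ge \Sigma(\beta)$, we have $S \trianglelefteq \Sigma(\beta)$. Also, $\hat G \le I(\beta)$ by Proposition~\ref{prop:hatgform}, and $\hat G$ is perfect, so $\hat G = \hat G^\infty \le I(\beta)^\infty = S$. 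For each $h \in H$ the conjugate $\hat G^h$ therefore lies in both $H$ and $S^h = S$, so $\hat G^h \le H \cap S$. Because $\hat G \le H \cap S$ and $H \cap S \ne S$ (as $S \not\le H$), the first part of Theorem~\ref{thm:excepmax} applied to $S$ gives $H \cap S \le N_S(\hat G)$, and hence $\hat G^h \le N_S(\hat G)$.

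It remains to upgrade this containment to $\hat G^h = \hat G$. By Lemma~\ref{lem:excepnormaliser} and the discussion following it (using $\tilde G \ne E_6(q)$), the group $N_S(\hat G)$ is either $Z(S)\hat G$ or (only when $\tilde G = E_7(q)$) an extension $(Z(S)\hat G).2$. In either case $Z(S) \le \zgl$ is central and any coset representative of the outer ``$.2$'' part normalises $\hat G$, so $N_S(\hat G)/\hat G$ is abelian; therefore $N_S(\hat G)^\infty = \hat G$. Since $\hat G^h$ is perfect, it lies in $N_S(\hat G)^\infty = \hat G$, and comparing cardinalities forces $\hat G^h = \hat G$. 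Thus every $h \in H$ normalises $\hat G$, giving $H \le N_{\mathrm{SL}(d,q)}(\hat G)$. The one delicate technical point is the passage from $H$ to $H \cap S$ (justified by $S \trianglelefteq \Sigma(\beta)$) together with the verification that $N_S(\hat G)^\infty = \hat G$; the rest is a direct application of the maximality results already in hand.
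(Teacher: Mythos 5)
Your proof is correct and follows essentially the same strategy as the paper's: reduce to $H\cap S$ via $H\le\Sigma(\beta)$ and $S\trianglelefteq\Sigma(\beta)$, apply Theorem~\ref{thm:excepmax} to put $H\cap S$ inside $N_S(\hat G)$, and then use a derived-series argument to recover $\hat G$. The packaging differs slightly: the paper shows $(H\cap S)^\infty=\hat G$ (via $N_{\mathrm{GL}(d,q)}(\hat G)^\infty=\hat G$, which comes from Lemma~\ref{lem:excepnormaliser}) and then observes that $H$ normalises the characteristic subgroup $(H\cap S)^\infty$ of $H\cap S\trianglelefteq H$; you instead show $N_S(\hat G)^\infty=\hat G$ directly and conclude $\hat G^h=\hat G$ element-by-element for $h\in H$. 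Both are valid, and your direct argument that $S\trianglelefteq\Sigma(\beta)$ via $S$ characteristic in $I(\beta)\trianglelefteq\Delta(\beta)$ is a pleasant self-contained replacement for the paper's citation. Two small points of rigour: first, your assertion that $N_S(\hat G)/\hat G$ is abelian isn't immediately justified as stated (``any coset representative of the ``$.2$'' part normalises $\hat G$'' only shows the representative lies in $N_S(\hat G)$, not that it centralises $Z(S)\hat G/\hat G$) — what makes it work is that $Z(S)\hat G/\hat G$ is \emph{central} in $N_S(\hat G)/\hat G$ (because $Z(S)\le Z(S)$), and a group that is central-by-cyclic is abelian; alternatively, solubility alone suffices to conclude $N_S(\hat G)^\infty\le\hat G$. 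Second, the step ``$\hat G$ perfect and $\hat G\le I(\beta)$ implies $\hat G\le I(\beta)^\infty$'' is correct but worth stating explicitly via $\hat G=\hat G^{(n)}\le I(\beta)^{(n)}$ for all $n$ (or by noting the paper has already established $\hat G\le S$ before this proposition).
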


\begin{proof}
Suppose that $S \not\le H$. Then $H \cap S$ is a proper subgroup of $S$ containing $\hat G$, and so Theorem \ref{thm:excepmax} implies that $H \cap S$ lies in the maximal subgroup $N_S(\hat G)$ of $S$. Since $\hat G$ is perfect, we have $\hat G \le (H \cap S)^\infty \le N^\infty$, where $N:=N_{\mathrm{GL}(d,q)}(\hat G)$. Lemma \ref{lem:excepnormaliser} shows that $N/(\zgl \hat G)$ is soluble, and hence $N^\infty = (\zgl \hat G)^\infty$, which is equal to $\hat G^\infty = \hat G$. Therefore, $\hat G$ is the characteristic subgroup $(H \cap S)^\infty$ of $H \cap S$. Furthermore, Theorem \ref{thm:excepmax} implies that the proper subgroup $H$ of $\mathrm{SL}(d,q)$ lies in $\Sigma(\beta)$, which normalises $S$ by \cite[p.~14]{kleidman}. Hence $H \cap S \trianglelefteq H$, and so $H$ normalises the characteristic subgroup $\hat G$ of $H \cap S$, as required. Conversely, as $N_S(\hat G) < S$, no subgroup of $N_{\mathrm{SL}(d,q)}(\hat G)$ contains $S$.
\end{proof}

\section{Stabilisers of submodules of Lie powers}
\label{sec:liepowersubmodstab}

In this section, we determine the stabilisers in $\mathrm{GL}(d,q)$ of relevant subspaces of the Lie powers of $\mathbb{F}_q[\hat G]$-modules given in Theorem \ref{thm:finiteextsubmods}. We retain the notation outlined at the start of the previous section. In particular, $V$ is a minimal $\mathbb{F}_q[\hat G]$-module.

Recall that if $\tilde G = E_7(q)$, then $\hat G$ lies in the symplectic group $\mathrm{Sp}(56,q)$. In \S\ref{sec:highestweights}, we claimed that applying Theorem \ref{thm:univap} to the $\hat G$-submodules of $L^3V$ (with $q = p$) is more useful than applying this theorem to the $\hat G$-submodules of $L^2V$. The following lemma yields the reason for this.

\begin{lem}
\label{lem:sp56lrv}
Let $\tilde G = E_7(q)$.
\begin{enumerate}[label={(\roman*)}]
\item The group $\mathrm{CSp}(56,q)$ stabilises each $\hat G$-submodule of $L^2V$.
\item Suppose that $p > 3$. Then each of $\mathrm{Sp}(56,q)$ and $\mathrm{CSp}(56,q)$ stabilises exactly two nonzero proper subspaces of $L^3V$, of dimension $56$ and $58464$, respectively. If $p = 19$, then the latter subspace contains the former, and otherwise, $L^3V$ splits as the direct sum of these subspaces.
\end{enumerate}
\end{lem}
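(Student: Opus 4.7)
Since $\hat G \le \mathrm{Sp}(56,q) \le \mathrm{CSp}(56,q)$, every $\mathrm{CSp}(56,q)$-submodule of $L^2V = A^2V$ is automatically a $\hat G$-submodule, so it suffices to establish the converse. The form $\beta$ supplies two canonical $\mathrm{CSp}(56,q)$-invariant subspaces of $A^2V$: the one-dimensional subspace $\langle \beta^\# \rangle$, where $\beta^\#$ is the image of $\beta$ under the isomorphism $A^2V^* \cong A^2V$ induced by $\beta$; and the codimension-one subspace $\ker \phi$, where $\phi \colon A^2V \to \mathbb{F}_q$ is defined by $\phi(v \wedge w) := \beta(v, w)$. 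A short calculation in a symplectic basis gives $\phi(\beta^\#) = 28$, so $\langle \beta^\# \rangle \subseteq \ker \phi$ precisely when $p = 7$, matching the exceptional prime of Table \ref{table:a2vstructfinite}. I then plan to read off from that table that $\{0, \langle \beta^\# \rangle, \ker \phi, A^2V\}$ are in fact \emph{all} of the $\hat G$-submodules of $A^2V$: the unique one-dimensional $\hat G$-submodule recorded there must equal $\langle \beta^\# \rangle$ (since the trivial composition factor of $A^2V$ has multiplicity one, Schur's Lemma applies), and a dual argument identifies the codimension-one $\hat G$-submodule with $\ker \phi$.

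\textbf{Part (ii).} This assertion concerns only the $\mathrm{Sp}(56,q)$- and $\mathrm{CSp}(56,q)$-submodule lattices of $L^3V$, independently of $\hat G$. Using the realization $L^3V \cong (A^2V \otimes V)/A^3V$ (valid for $p > 3$), the form $\beta$ furnishes two distinguished $\mathrm{Sp}(56,q)$-equivariant maps: an embedding $\iota \colon V \hookrightarrow L^3V$ sending $v$ to the bracket $[\beta^\#, v]$, with $\dim \iota(V) = 56$; and a surjection $\pi \colon L^3V \twoheadrightarrow V$ obtained by $\beta$-contracting two of the three tensor slots, with $\dim \ker \pi = 58464$. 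Both $\iota(V)$ and $\ker \pi$ are $\mathrm{CSp}(56,q)$-invariant, since scalars preserve $\beta$ and $\beta^\#$ up to a common factor. By Schur's Lemma, $\pi \circ \iota = c \cdot \mathrm{id}_V$ for some $c \in \mathbb{F}_q$; evaluating on a single symplectic basis vector yields $c = 2n + 1 = 57 = 3 \cdot 19$ (with $n = 28$), which vanishes modulo $p$ (given $p > 3$) if and only if $p = 19$. This immediately gives the direct sum $L^3V = \iota(V) \oplus \ker \pi$ for $p \ne 19$ and the strict containment $\iota(V) \subsetneq \ker \pi$ for $p = 19$, with the asserted dimensions.

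To finish, I must still rule out any further nonzero proper $\mathrm{Sp}(56,q)$-submodules of $L^3V$; once that is done, the identification $\mathrm{CSp}(56,q) = \mathrm{Sp}(56,q) \cdot \zgl$, together with the fact that scalars preserve every subspace, shows that the $\mathrm{Sp}$- and $\mathrm{CSp}$-submodule lattices of $L^3V$ coincide. I plan to invoke the classical composition-factor decomposition of the ``hook'' module $L^3V \cong S_{(2,1)}V$ for the symplectic group: over $K = \overline{\mathbb{F}_p}$ its composition factors are the natural module $V$ and the fundamental irreducible $L(\varpi_1 + \varpi_2)$ of dimension $58464$, each of multiplicity one, and standard dimension-counting then eliminates extra submodules over $\mathbb{F}_q$. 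The main obstacle will be verifying that the $58464$-dimensional piece remains $\mathrm{Sp}$-irreducible in every odd characteristic $p > 3$ (rather than developing an unexpected modular sub-structure); this requires weight-theoretic or Weyl-module arguments in the spirit of those already used in \S\ref{sec:highestweights}, applied now to the symplectic group rather than to $\hat G$.
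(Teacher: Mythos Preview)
Your approach to Part~(i) is genuinely different from the paper's: where the paper computes the $\mathrm{Sp}(56,q)$-composition factors of $A^2V$ via highest-weight theory for the algebraic group $C_{28}$ and then matches dimensions against Table~\ref{table:a2vstructfinite}, you instead exhibit the two nontrivial $\mathrm{CSp}$-invariant subspaces directly from the form $\beta$. This is more elementary and conceptually cleaner. One caveat: your Schur's-Lemma justification that $\langle\beta^{\#}\rangle$ is the unique one-dimensional $\hat G$-submodule invokes ``multiplicity one'' for the trivial composition factor, but Table~\ref{table:e7a2v} shows that for $p=7$ the trivial factor has multiplicity two. The conclusion still holds because $A^2V$ is uniserial there (Table~\ref{table:a2vstructfinite}), but your stated reason does not cover that case.

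Part~(ii) has a genuine gap at $p=19$. Your plan asserts that over $\overline{\mathbb{F}_p}$ the $\mathrm{Sp}$-composition factors of $L^3V$ are the natural module and the irreducible $L(\varpi_1+\varpi_2)$ of dimension $58464$, each with multiplicity one. This is correct for $p\notin\{3,19\}$, but for $p=19$ the module $L(\varpi_1+\varpi_2)$ has dimension only $58408$, and $L^3V$ has \emph{three} $\mathrm{Sp}$-composition factors: two copies of the natural module and one copy of $L(\varpi_1+\varpi_2)$. Indeed, your own containment $\iota(V)\subsetneq\ker\pi$ at $p=19$ is already inconsistent with the length-two composition series you describe: if $\ker\pi$ were an irreducible $58464$-dimensional module, it could not contain the $56$-dimensional submodule $\iota(V)$. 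The paper handles $p=19$ by computing the correct composition factors (via L\"ubeck's data for $C_{28}$) and then using the known $\hat G$-submodule lattice of $L^3V$ from Figure~\ref{fig:e7l3vsub} to force uniseriality of the $\mathrm{Sp}$-module. You will need some comparable extra argument, since the $58464$-dimensional $\mathrm{Sp}$-submodule is genuinely reducible at $p=19$ and your ``main obstacle'' cannot be overcome as stated.
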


\begin{proof}
Let $\tilde X$ be the simple group of Lie type $\mathrm{PSp}(56,q) = C_{28}(q)$. The associated (simple, simply connected) linear algebraic group over $K:=\overline{\mathbb{F}_p}$ is $X = C_{28}$, and the (finite) simply connected version of $\tilde X$ is $\hat X = \mathrm{Sp}(56,q)$ \cite[p.~193]{malle}. The $56$-dimensional $K[X]$-module $U$ with highest weight\footnote{The weight associated with $C_{28}$ that we write as $\lambda_i$ is written as $\lambda_{29-i}$ by L\"ubeck \cite{lubeck}.} $\lambda_1$ is self-dual \cite[p.~132--133]{malle}, and it follows from \cite[\S1]{liebeck85} that $U$ is the unique minimal $K[X]$-module up to isomorphism and twisting by a field automorphism. Lemma \ref{lem:untwistedfinitemods} then implies that there is a unique (in the same way) minimal $\mathbb{F}_q[\hat X]$-module $W$, of dimension $56$. Hence the irreducible $56$-dimensional $\hat G$-module $V$ is isomorphic to $W|_{\hat G}$. Similarly, $L^rV \cong (L^rW)|_{\hat G}$ for each $r \in \{2,3\}$.

Suppose now that $p \notin \{2,7\}$. We see from \cite[\S1]{liebeck85} that $L^2U \cong A^2U$ has two composition factors, namely, $L(0)$ of dimension $1$ and $L(\lambda_2)$ of dimension $1539$, each lying in the set $\mathcal{L}$ defined in \S\ref{sec:liepowerschev}. Theorem \ref{thm:semisimplekgmod} then implies that $L^2U$ is multiplicity free, and so $L^2W$ is also multiplicity free by Proposition \ref{prop:extrestrictiso} and Lemma \ref{lem:finitemodscompfactors}. In particular, $L^2W$ is the direct sum of a $1$-dimensional submodule and a $1539$-dimensional submodule. By Theorem \ref{thm:finiteextsubmods}, $\hat G$ stabilises the same subspaces of $L^2V$ as $\mathrm{Sp}(56,q)$. Since $\mathrm{Sp}(56,q) \trianglelefteq \mathrm{CSp}(56,q)$, and since the irreducible $\hat G$-submodules of the multiplicity free module $L^2V$ are not equidimensional, $\mathrm{CSp}(56,q)$ stabilises the same subspaces of $L^2V$ as $\mathrm{Sp}(56,q)$.

If instead $p = 7$, then the $\hat G$-module $L^2V$ is uniserial, and it has three composition factors, of dimension $1$, $1$ and $1538$, respectively, by Theorem \ref{thm:finiteextsubmods}. In fact, these are exactly the dimensions of the $\mathrm{Sp}(56,q)$-composition factors of $L^2W$ \cite[\S1]{liebeck85}. Hence $\mathrm{Sp}(56,q)$ is uniserial, and in particular, $\mathrm{Sp}(56,q)$ stabilises the same subspaces of $L^2V$ as $\hat G$. Since $\mathrm{Sp}(56,q) \trianglelefteq \mathrm{CSp}(56,q)$, and since $L^2W$ has a unique composition series, it follows from Clifford's Theorem that $\mathrm{CSp}(56,q)$ stabilises the same subspaces of $L^2V$ as $\mathrm{Sp}(56,q)$.

We now apply the methods used to derive Theorem \ref{thm:l3vkgmod} in order to determine the composition factors of the $K[X]$-module $L^3U$. Magma calculations show that the Weyl orbit of $\lambda_1$ has size $56$. Since the weight multiset $\Lambda(U)$ has size $\dim(U)=56$, and since $\Lambda(U)$ contains the Weyl orbit of $\lambda_1$, it follows that $\Lambda(U)$ is precisely this Weyl orbit. We also calculate that the highest weight of $L^3U$ is $\lambda_1 + \lambda_2$. The computations described in \cite[\S3]{lubeck} can be used to show that if $p \notin \{3,19\}$, then the irreducible module $L(\lambda_1 + \lambda_2)$ has dimension $58464$, and the weight multiset for this module consists of one, two and $54$ copies of the Weyl orbits of the weights $\lambda_1 + \lambda_2$, $\lambda_3$ and $\lambda_1$, respectively \cite{lubeckpriv}. When these weights are excluded from the weight multiset for $L^3U$, $56$ weights remain, the highest of which is $\lambda_1$. It follows that if $p \notin \{3,19\}$, then $L^3U$ has two composition factors, of dimension $56$ and $58464$, respectively, each lying in $\mathcal{L}$. In this case, $L^3U$ is multiplicity free by Theorem \ref{thm:semisimplekgmod}, as is $L^3W$ by Proposition \ref{prop:extrestrictiso} and Lemma \ref{lem:finitemodscompfactors}. Specifically, $L^3W$ is the direct sum of an irreducible submodule of dimension $56$ and an irreducible submodule of dimension $58464$. As above, $\mathrm{CSp}(56,q)$ stabilises the same subspaces of $L^3V$ as $\mathrm{Sp}(56,q)$.

Finally, if $p = 19$, then the irreducible module $L(\lambda_1 + \lambda_2)$ has dimension $58408$, and the weight multiset for this module consists of one, two and $53$ copies of the Weyl orbits of the weights $\lambda_1 + \lambda_2$, $\lambda_3$ and $\lambda_1$, respectively \cite{lubeckpriv}. When these weights are excluded from the weight multiset for $L^3U$, $112$ weights remain, the highest of which is $\lambda_1$, with multiplicity $2$. Hence $L^3W$ has two $56$-dimensional composition factors and one $58408$-dimensional composition factor. The $\hat G$-submodule structure of $L^3V$ given in Figure \ref{fig:e7l3vsub} for the case $q = 19$ implies that $L^3W$ is uniserial, with the dimensions of its submodules as required. An easy adaptation of the uniserial case proof of Theorem \ref{thm:finiteextsubmods}\ref{finitestructa2v} shows that the submodule structure of $L^3W$ is as required even if $q > p = 19$. Again, $\mathrm{CSp}(56,q)$ stabilises the same subspaces of $L^3V$ as $\mathrm{Sp}(56,q)$.
\end{proof}

We are therefore not able to distinguish between $\mathrm{CSp}(56,q)$ and the simply connected version $\hat G$ of $E_7(q)$ by considering how these groups act on $L^2V$. Thus applying Theorem \ref{thm:univap} to the nonzero proper $\hat G$-submodules of $L^2V$ yields the same $p$-groups as applying the theorem to the nonzero proper $\mathrm{CSp}(56,q)$-submodules of $L^2V$.

\begin{prop}
\label{prop:normhatgsubs}
Let $r \in \{2,3\}$, and suppose that $p > r$. Then $\zgl \hat G$ stabilises each $\hat G$-submodule of $L^rV$. Furthermore, if $N_{\mathrm{GL}(d,q)}(\hat G)$ does not stabilise every $\hat G$-submodule of $L^rV$, then $r = 3$, and:
\begin{enumerate}[label={(\roman*)}]
\item $\tilde G = E_6(q)$, $p = 5$, and $q \equiv 1 \pmod 3$; or \label{e63norm}
%\item $\tilde G = E_7(q)$, $r = 2$, $p = 7$, and $q > p$; \label{e72norm}
\item $\tilde G = E_7(q)$, $p \in \{7,11,19\}$, and $q > p$. \label{e73norm}
\end{enumerate}
\end{prop}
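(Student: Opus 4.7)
The plan is to dispose of the first claim with a short scalar argument and then reduce the second claim to a case analysis controlled by the structure of $N := N_{\mathrm{GL}(d,q)}(\hat G)$ and of $L^rV$. First, $\zgl$ acts on $V$ by scalar multiplication and hence on $L^rV$ by $r$-th powers of scalars; in particular $\zgl$ fixes every subspace of $L^rV$, and combined with the tautological $\hat G$-stability of its own submodules, this yields the first claim.

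For the second claim, the plan is to invoke Lemma \ref{lem:excepnormaliser} to reduce: $N = \zgl\hat G$ except in the two cases $\tilde G = E_6(q)$ with $q \equiv 1 \pmod 3$ (giving $|N/\zgl\hat G| = 3$) and $\tilde G = E_7(q)$ (giving $|N/\zgl\hat G| = 2$); outside these the first claim finishes. Within them, when $r = 2$: for $\tilde G = E_6(q)$, Theorem \ref{thm:finiteextsubmods}(i) makes $A^2V$ irreducible, so $N$-stability is vacuous; for $\tilde G = E_7(q)$, Proposition \ref{prop:hatgform} supplies a non-degenerate alternating form $\beta$ preserved by $\hat G$, Lemma \ref{lem:absirredpresform}(iv) forces $N \le \Delta(\beta) = \mathrm{CSp}(56,q)$, and Lemma \ref{lem:sp56lrv}(i) gives that $\mathrm{CSp}(56,q)$ stabilises every $\hat G$-submodule of $L^2V$.

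The heart of the argument is the $r = 3$ case. For $\tilde G = E_7(q)$ with $p \notin \{3, 7, 11, 19\}$, Theorem \ref{thm:finiteextsubmods}(iii) makes $L^3V$ multiplicity free with two composition factors, and Lemma \ref{lem:sp56lrv}(ii) together with $N \le \mathrm{CSp}(56,q)$ yields $N$-stability of both proper nonzero $\hat G$-submodules. For $\tilde G = E_7(q)$ with $p \in \{7, 11, 19\}$ and $q = p$, the plan is to use the explicit direct-sum decompositions of $L^3V$ produced (via Magma) in the proof of Theorem \ref{thm:finiteextsubmods}(iii) to enumerate every $\hat G$-submodule and check that the resulting dimensions are pairwise distinct, whence $N$'s dimension-preserving permutation action on the submodule lattice is forced to be trivial. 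For $\tilde G = E_6(q)$ with $p > 5$, Theorem \ref{thm:finiteextsubmods}(iii) gives multiplicity freeness of $L^3V$, and a similar dimension check using the composition factor data in Appendix \ref{sec:compkgl3v} completes the argument. The remaining configurations --- $E_6(q)$ with $p = 5$ and $q \equiv 1 \pmod 3$ (automatically $q > p$, since $5 \not\equiv 1 \pmod 3$) and $E_7(q)$ with $p \in \{7, 11, 19\}$ and $q > p$ --- are precisely the exceptional cases (i) and (ii) of the statement.

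The main obstacle is the $q = p$ exceptional-prime enumeration for $E_7(q)$: one must follow the internal submodule structure of the reducible direct summands (such as the $6536$-dimensional summand in the $p = 19$ case) to confirm pairwise distinctness of all $\hat G$-submodule dimensions. The fundamentally unresolved situation is the $q > p$ exceptional-prime regime, where the submodule structure of $L^3V$ is only conjectured (Conjecture \ref{conj:allq}); without it, $N$-stability of each $\hat G$-submodule can be neither verified nor refuted, so these cases must be declared as potential exceptions in the proposition.
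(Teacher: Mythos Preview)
Your overall strategy matches the paper's closely: the scalar argument for $\zgl\hat G$, the reduction via Lemma~\ref{lem:excepnormaliser} to the two cases where $N \ne \zgl\hat G$, the $r=2$ treatment via irreducibility (for $E_6$) and via $N \le \mathrm{CSp}(56,q)$ together with Lemma~\ref{lem:sp56lrv}(i) (for $E_7$), and for $r=3$ the use of normality of $\zgl\hat G$ in $N$ together with dimension data to force $N$ to fix each $\hat G$-submodule. For the exceptional-prime $E_7(p)$ cases with $q = p$, your direct enumeration (checking that all $\hat G$-submodules of $L^3V$ have pairwise distinct dimensions, so that the permutation action of $N$ on them is trivial) is a valid and slightly simpler alternative to the paper's propagation argument, which starts from the two $\mathrm{CSp}(56,q)$-stable submodules supplied by Lemma~\ref{lem:sp56lrv}(ii) and spreads $N$-stability through the lattice using coprime-index and complement arguments.

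There is, however, a genuine error in your treatment of $\tilde G = E_7(q)$ with $p \notin \{3,7,11,19\}$. You assert that $L^3V$ is multiplicity free \emph{with two composition factors}, and then invoke Lemma~\ref{lem:sp56lrv}(ii) to conclude that $N \le \mathrm{CSp}(56,q)$ stabilises ``both proper nonzero $\hat G$-submodules''. But by Table~\ref{table:e7l3v} (equivalently Figure~\ref{fig:e7l3vsub}(a)), $L^3V$ has \emph{four} composition factors here, of dimensions $56$, $912$, $6480$ and $51072$, and hence fourteen proper nonzero $\hat G$-submodules. Lemma~\ref{lem:sp56lrv}(ii) says only that $\mathrm{CSp}(56,q)$ stabilises exactly two subspaces of $L^3V$ (of dimensions $56$ and $58464$); it gives no control over the remaining twelve $\hat G$-submodules, so the inclusion $N \le \mathrm{CSp}(56,q)$ is not enough. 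The fix is to apply here the very argument you use for $E_6(q)$ with $p>5$: since $L^3V$ is multiplicity free and its four irreducible submodules have pairwise distinct dimensions, $N$ (permuting the irreducible $\hat G$-submodules because $\zgl\hat G \trianglelefteq N$) must fix each of them, and hence every direct sum. This is precisely how the paper handles all the multiplicity-free cases in one stroke.
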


\begin{proof}
Since $\zgl$ acts on $V$ by scalar multiplication, the definition of the action of $\mathrm{GL}(d,q)$ on $L^rV$ implies that $\zgl$ also acts on $L^rV$ by scalar multiplication. Hence $\zgl$ stabilises every subspace of $L^rV$, and so $\zgl \hat G$ stabilises each $\hat G$-submodule of $L^rV$. Let $N:=N_{\mathrm{GL}(d,q)}(\hat G)$. Lemma \ref{lem:excepnormaliser} implies that if $\tilde G \in \{G_2(q),F_4(q),E_8(q)\}$, or if $\tilde G = E_6(q)$ with $q \not\equiv 1 \pmod 3$, then $N = \zgl \hat G$. Otherwise, if $L^rV$ is multiplicity free, then we see from Theorem \ref{thm:finiteextsubmods} that no two irreducible submodules of $L^rV$ are equidimensional. We also have $\zgl \hat G \trianglelefteq N$, and thus $N$ stabilises the same subspaces of $L^rV$ as $\zgl \hat G$.

We now consider the remaining cases where $L^rV$ is not multiplicity free, and either \begin{NoHyper}\ref{e63norm} and \ref{e73norm}\end{NoHyper} do not apply, or $r = 2$. Theorem \ref{thm:finiteextsubmods} implies that $\tilde G = E_7(q)$, with $p = 7$ if $r = 2$, and with $q = p \in \{7,11,19\}$ if $r = 3$. Recall from Lemma \ref{lem:excepnormaliser} that ${N < \mathrm{CSp}(56,q)}$. Hence Lemma \ref{lem:sp56lrv} implies that $N$ stabilises each $\hat G$-submodule of $L^2V$, and that $N$ stabilises the $\hat G$-submodules of $L^3V$ of dimension $56$ and $58464$. Let $\mathcal{N}$ denote the set of $\hat G$-submodules of $L^3V$ stabilised by $N$. We can show that $\mathcal{N}$ contains all $\hat G$-submodules of $L^3V$ using the following:
\begin{enumerate}[label={(\alph*)}]
\item if $X,U \in \mathcal{N}$, and if $X = U \oplus W$ for some $\hat G$-submodule $W$ of $L^3V$, then $W \in \mathcal{N}$;
\item if $U,W \in \mathcal{N}$ and $W \subseteq U$, then $\mathcal{N}$ contains each $\hat G$-submodule in at least one $\hat G$-composition series for $U$ containing $W$; and
\item if $U \in \mathcal{N}$ and $U$ is multiplicity free, then $\mathcal{N}$ contains each $\hat G$-submodule of $U$.
\end{enumerate}
These properties hold because the normal subgroup $\zgl \hat G$ of $N$ has index coprime to $p$ by Lemma \ref{lem:excepnormaliser}; because $\zgl \hat G$ stabilises each $\hat G$-submodule of $L^3V$; and because no two submodules of $L^3V$ are equidimensional.

We detail our argument in the case $p = 7$; the other cases are similar. Recall the submodule structure of $L^3V$ from Figure \ref{fig:e7l3vsub}. Let $U_k$ denote the unique $\hat G$-submodule of $L^3V$ of dimension $k$, when such a submodule exists. As $U_{56} \in \mathcal{N}$, (b) implies that $\mathcal{N}$ contains each module in some composition series for $L^3V$ containing $U_{56}$. Hence either $U_{52040} \in \mathcal{N}$, $U_{7448} \in \mathcal{N}$, or $\{U_{6536}, U_{57608}\} \subset \mathcal{N}$. Since $L^3V = U_{7448} \oplus U_{51072}$ and $U_{57608} = U_{6536} \oplus U_{51072}$, and since $U_{52040} = U_{56} \oplus U_{912} \oplus U_{51072}$ is multiplicity free, it follows from (a) and (c) that $U_{51072} \in \mathcal{N}$. Thus $U_{51128} = U_{56} \oplus U_{51072} \in \mathcal{N}$. As $L^3V = U_{51128} \oplus U_{7392}$, we also have $U_{7392} \in \mathcal{N}$ by (a). The submodule $U_{7392}$ is uniserial, and hence (b) implies that $\mathcal{N}$ contains each of its submodules. We have shown that $N$ stabilises each submodule of each direct summand in the decomposition $L^3V = U_{56} \oplus U_{7392} \oplus U_{51072}$. Therefore, $N$ stabilises all direct sums of submodules of these summands, which accounts for all $\hat G$-submodules of $L^3V$.
\end{proof}

In particular, $N_{\mathrm{GL}(d,q)}(\hat G)$ stabilises every $\hat G$-submodule of $L^rV$ whenever $q = p$. In fact, if Conjecture \ref{conj:allq} holds, i.e., if the $\hat G$-submodule structure of $L^3V$ depends only on $p$ and not $q$, then the arguments in the above proof for the case $\tilde G = E_7(q)$ and $p \in \{7,11,19\}$ hold even if $q \ne p$. Furthermore, if this conjecture holds, then a similar argument using Figure \ref{fig:e6l3vsub} shows that $N_{\mathrm{GL}(d,p)}(\hat G)$ stabilises all submodules of $L^3V$ whenever $\tilde G = E_6(q)$ and $p = 5$.

\begin{thm}
\label{thm:lrvhatgstab}
Suppose that $p > r$, where $r = 3$ if $\tilde G \in \{E_6(q),E_7(q)\}$, and $r = 2$ otherwise. Suppose also that $q = p$ if $\tilde G = E_6(q)$ with $p = 5$, or if $\tilde G = E_7(q)$ with $p \in \{7,11,19\}$. Additionally, let $X$ be a nonzero proper submodule of $L^rV$. Then either $\mathrm{GL}(d,q)_X = N_{\mathrm{GL}(d,q)}(\hat G)$, or $\tilde G = E_7(q)$ and $\mathrm{CSp}(56,q)$ stabilises $X$. %$\mathrm{SL}(d,q)_X = N_{\mathrm{SL}(d,q)}(\hat G)$, or $\tilde G = E_7(q)$ and $\mathrm{Sp}(56,q)$ stabilises $X$. 
\end{thm}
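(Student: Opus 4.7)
The plan is to bound $H := \mathrm{GL}(d,q)_X$ between $N := N_{\mathrm{GL}(d,q)}(\hat G)$ and $\mathrm{GL}(d,q)$ using the overgroup analysis of \S\ref{sec:overgroupschev}, then show it must collapse either to $N$ or, in the $E_7(q)$ case, to a subgroup of $\mathrm{CSp}(56,q)$. The stated hypotheses on $p$ and $q$ are precisely those needed to invoke Proposition \ref{prop:normhatgsubs}, which gives $N \le H$. Since $\zgl$ acts by scalars on $L^rV$ (by multilinearity of the Lie bracket), we have $\zgl \le H$; writing $H_0 := H \cap \mathrm{SL}(d,q)$ and $N_{SL} := N_{\mathrm{SL}(d,q)}(\hat G)$, a determinant argument yields $H = \zgl H_0$ and $N = \zgl N_{SL}$, while perfectness of $\hat G$ gives $\hat G \le H_0$.

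I would then apply the appropriate overgroup dichotomy. If $\tilde G \ne E_6(q)$, Proposition \ref{prop:hatgspomegaover} forces either $H_0 \le N_{SL}$ or $S := I(\beta)^\infty \le H_0$; if $\tilde G = E_6(q)$, Theorem \ref{thm:excepmax} identifies $N_{SL}$ as the unique maximal overgroup of $\hat G$ in $\mathrm{SL}(d,q)$, so either $H_0 \le N_{SL}$ or $H_0 = \mathrm{SL}(d,q) =: S$. In the first alternative, $H = \zgl H_0 \le \zgl N_{SL} = N$, so $H = N$ follows at once from the reverse inclusion. Otherwise $S$ stabilises $X$, and I would analyse the different cases of $\tilde G$.

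For $\tilde G = E_7(q)$, $S = \mathrm{Sp}(56,q)$, and Lemma \ref{lem:sp56lrv} shows that $\mathrm{Sp}(56,q)$ and $\mathrm{CSp}(56,q)$ stabilise exactly the same subspaces of $L^rV$ under the present assumptions, so $\mathrm{CSp}(56,q)$ stabilises $X$, which is the exceptional conclusion. The main obstacle is to rule out the possibility $S \le H_0$ when $\tilde G \in \{G_2(q), F_4(q), E_8(q), E_6(q)\}$. For the three orthogonal types, one must show that $S = \Omega^\varepsilon(d,q)$ does not stabilise any proper nonzero $\hat G$-submodule of $L^2V \cong A^2V$; the key observation is that $A^2V$, with its $S$-action inherited from the inclusion $\hat G \le \mathrm{SO}^\varepsilon(d,q) \le \mathrm{GL}(V)$, is essentially the adjoint module for the classical linear algebraic group of type $B_{(d-1)/2}$ or $D_{d/2}$, whose $S$-composition-factor dimensions (obtainable by the highest-weight techniques of \S\ref{sec:highestweights}) are incompatible with the dimensions of the proper nonzero $\hat G$-submodules recorded in Table \ref{table:a2vstructfinite}. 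A parallel analysis of the $\mathrm{SL}(27,q)$-submodule structure of $L^3V$ against Figure \ref{fig:e6l3vsub} disposes of $\tilde G = E_6(q)$.
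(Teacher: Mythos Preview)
Your overall strategy matches the paper's: bound $H := \mathrm{GL}(d,q)_X$ below by $N$ via Proposition~\ref{prop:normhatgsubs}, pass to $H_0 := H \cap \mathrm{SL}(d,q)$, invoke the overgroup dichotomy of \S\ref{sec:overgroupschev}, and then rule out the alternative $S \le H_0$ case by case. There is, however, a genuine gap.

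The assertion $H = \zgl H_0$ does not follow from a determinant argument. You would need every element of $H$ to differ from a scalar by an element of determinant $1$, i.e.\ every determinant occurring in $H$ to be a $d$-th power in $\mathbb{F}_q^\times$; this fails in general (for example $\zgl\,\mathrm{SL}(7,q) < \mathrm{GL}(7,q)$ whenever $q \equiv 1 \pmod 7$). Nothing so far excludes an $H$ with $H_0 = N_{\mathrm{SL}(d,q)}(\hat G)$ but $H > N$. The paper closes this gap differently: since $\mathrm{SL}(d,q) \trianglelefteq \mathrm{GL}(d,q)$, the group $H$ normalises $H_0$. In the alternative $H_0 = N_{\mathrm{SL}(d,q)}(\hat G) =: M$, one checks (via Lemma~\ref{lem:excepnormaliser} and Dedekind's Identity) that $M/(Z(\mathrm{SL}(d,q))\hat G)$ has order at most $3$, so $M^\infty = \hat G$ and $\hat G$ is characteristic in $M$. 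Hence $H \le N_{\mathrm{GL}(d,q)}(M) \le N_{\mathrm{GL}(d,q)}(\hat G) = N$, and combined with $N \le H$ this gives $H = N$.

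For ruling out $S \le H_0$, your plan for the orthogonal types would eventually succeed, but the paper avoids any computation: it simply cites \cite[\S1]{liebeck85} for the fact that $\Omega^\varepsilon(d,q)$ acts irreducibly on $L^2V$, so $S$ stabilises no nonzero proper subspace. Your $E_6(q)$ sketch is too vague. The paper does not analyse the $\mathrm{SL}(27,q)$-module $L^3V$ by highest-weight methods at all; instead it observes (from \cite[Lemma 3.1]{bamberg}) that $\mathrm{GL}(27,q)$ acts irreducibly on $L^3V$, and then applies Clifford's Theorem: the $\mathrm{SL}(27,q)$-constituents would all be equidimensional, whereas Theorem~\ref{thm:finiteextsubmods} shows that no two $\hat G$-submodules of $L^3V$ share a dimension, forcing $L^3V$ to be $\mathrm{SL}(27,q)$-irreducible.
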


\begin{proof}
Let $N:=N_{\mathrm{GL}(d,q)}(\hat G)$, $M:=N_{\mathrm{SL}(d,q)}(\hat G)$, and $Z_{\mathrm{SL}}:=Z(\mathrm{SL}(d,q))$. Lemma \ref{lem:excepnormaliser} and Dedekind's Identity imply that the normal subgroup $Z_{\mathrm{SL}} \hat G$ of $M$ has index at most $3$. Thus $M^\infty = (Z_{\mathrm{SL}} \hat G)^\infty = \hat G^\infty$, which is equal to the perfect group $\hat G$. Hence $\hat G$ is a characteristic subgroup of $M$, and so $N_{\mathrm{GL}(d,q)}(M) \le N$. Moreover, ${\mathrm{SL}(d,q) \trianglelefteq \mathrm{GL}(d,q)}$, and so $M = {N \cap \mathrm{SL}(d,q) \trianglelefteq N}$. It follows that $N_{\mathrm{GL}(d,q)}(M) = N$. We also have $\mathrm{GL}(d,q)_X \le N_{\mathrm{GL}(d,q)}(\mathrm{SL}(d,q)_X)$. Since $N$ stabilises $X$ by Proposition \ref{prop:normhatgsubs}, we have $M \le \mathrm{SL}(d,q)_X$, and it suffices to show that $\mathrm{SL}(d,q)_X = M$.

Suppose that $\tilde G \in \{G_2(q),F_4(q),E_8(q)\}$. Recall that $\hat G \le S(\beta)$ for some non-degenerate orthogonal form $\beta$ on $V$. Let $\varepsilon \in \{\circ,+,-\}$ be the type of this form. Then by Proposition \ref{prop:hatgspomegaover}, $M$ is the largest subgroup of $\mathrm{SL}(d,q)$ that contains $\hat G$ but does not contain $\Omega^\varepsilon(d,q)$. The group $\Omega^\varepsilon(d,q)$ acts irreducibly on $L^2V$ \cite[\S1]{liebeck85}, and hence $\mathrm{SL}(d,q)_X = M$.

Next, suppose that $\tilde G = E_7(q)$, so that $d = 56$. Then Proposition \ref{prop:hatgspomegaover} implies that $M$ is the largest subgroup of $\mathrm{SL}(56,q)$ that contains $\hat G$ but does not contain $\mathrm{Sp}(56,q)$. Furthermore, if $\mathrm{Sp}(56,q)$ stabilises $X$, then so does $\mathrm{CSp}(56,q)$ by Lemma \ref{lem:sp56lrv}. Thus if $\mathrm{CSp}(56,q)$ does not stabilise $X$, then $\mathrm{SL}(56,q)_X = M$.

Finally, suppose that $\tilde G = E_6(q)$. Then $M$ is a maximal subgroup of $\mathrm{SL}(d,q)$ by Theorem \ref{thm:excepmax}. Bamberg et al.~\cite[Lemma 3.1]{bamberg} show that $\mathrm{GL}(d,q)$ acts irreducibly on $L^3V$. As no two $\hat G$-submodules of $L^3V$ are equidimensional by Theorem \ref{thm:finiteextsubmods}, it follows that no two $\mathrm{SL}(d,q)$-submodules of $L^3V$ are equidimensional. Clifford's Theorem then implies that $\mathrm{SL}(d,q)$ acts irreducibly on $L^3V$, and hence $\mathrm{SL}(d,q)_X = M$.
\end{proof}

If Conjecture \ref{conj:allq} holds, then we do not need to assume that $q = p$ in any case in the above lemma.

Recall that if $\tilde G \in \{(G_2(q),E_8(q)\}$, then $\hat G \cong \tilde G$. The following lemma will allow us to apply Theorem \ref{thm:expclass2stab} to these groups (with $q = p$) to yield $p$-groups that Theorem \ref{thm:univap} does not yield.

\begin{lem}
\label{lem:directsuml2vstab}
Suppose that $\tilde G \in \{(G_2(q),E_8(q)\}$. Then $\tilde G \cong \hat G$ is the stabiliser in $\mathrm{GL}(d,q)$ of a proper subspace $M$ of $V \oplus L^2V$ of maximal dimension. Moreover, $M \cong L^2V$, and $M$ does not contain $V$ or $L^2V$.
\end{lem}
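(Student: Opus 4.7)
The plan is to construct $M$ explicitly as a ``graph-type'' $\hat G$-submodule of $V \oplus L^2V$ arising from the quotient $L^2V / N \cong V$ afforded by Theorem \ref{thm:finiteextsubmods}, and then verify the four claimed properties in turn.

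By Theorem \ref{thm:finiteextsubmods}, there is a maximal $\hat G$-submodule $N$ of $L^2V$ with $L^2V / N \cong V$. Composing the quotient map with such an isomorphism yields a surjective $\hat G$-homomorphism $\pi\colon L^2V \to V$ with kernel $N$. I set
\[ M := \{(\pi(y), y) : y \in L^2V\} \subseteq V \oplus L^2V. \]
Since $\pi$ is $\hat G$-equivariant, $M$ is a $\hat G$-submodule, and the assignment $y \mapsto (\pi(y), y)$ is a $\hat G$-isomorphism from $L^2V$ onto $M$. Thus $M \cong L^2V$ and $\dim M = \dim L^2V$. Direct computation gives $M \cap (V \oplus 0) = 0$ and $M \cap (0 \oplus L^2V) = 0 \oplus N \subsetneq 0 \oplus L^2V$, so $M$ contains neither direct summand.

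To see that $\dim M$ is maximal among dimensions of proper $\hat G$-submodules of $V \oplus L^2V$ (equivalently, of proper subspaces stabilised by $\hat G$), I would analyse an arbitrary such submodule $X$ via its projection to $V$. Either $X \subseteq 0 \oplus L^2V$, giving $\dim X \le \dim L^2V$ directly, or the projection is all of $V$ (as $V$ is irreducible); in the latter case $X_0 := X \cap (0 \oplus L^2V)$ is a proper $\hat G$-submodule of $L^2V$, so $\dim X = d + \dim X_0$, and the bound $\dim X_0 \le \dim L^2V - d$ follows from the existence of $N$.

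The main step is identifying $\mathrm{GL}(d,q)_M$ with $\hat G$. The inclusion $\hat G \le \mathrm{GL}(d,q)_M$ is automatic. Conversely, a straightforward calculation shows that $g \in \mathrm{GL}(d,q)$ preserves $M$ if and only if $g \pi = \pi g$ as linear maps $L^2V \to V$. In particular, any such $g$ preserves $N = \ker \pi$, so by Theorem \ref{thm:lrvhatgstab} and Lemma \ref{lem:excepnormaliser}, $g \in N_{\mathrm{GL}(d,q)}(\hat G) = \zgl \hat G$. Writing $g = z g'$ with $z \in \zgl$ and $g' \in \hat G$, the scalar $z$ acts as $z$ on $V$ but as $z^2$ on $L^2V$, since the natural action of a scalar matrix on an exterior square is quadratic. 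The identity $g\pi(y) = \pi(gy)$ for all $y \in L^2V$ then reduces to $z \pi(g'y) = z^2 \pi(g'y)$, and surjectivity of $\pi$ forces $z = 1$, so $g = g' \in \hat G$. I expect this final identification to be the main obstacle: it is precisely the quadratic scaling on $L^2V$, combined with Theorem \ref{thm:lrvhatgstab}'s determination of $\mathrm{GL}(d,q)_N$, that distinguishes $\hat G$ from the strictly larger group $\zgl \hat G$.
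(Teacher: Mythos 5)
Your proposal is correct and takes essentially the same route as the paper: $M$ is the graph of the $\hat G$-epimorphism $\pi\colon L^2V \twoheadrightarrow V$; Theorem~\ref{thm:lrvhatgstab} together with Lemma~\ref{lem:excepnormaliser} pins the stabiliser of $\ker\pi$ (hence of $M$) inside $\zgl\hat G$; and the quadratic action of scalars on $L^2V$ versus the linear action on $V$ rules out nontrivial elements of $\zgl$. One minor imprecision worth noting: the bound $\dim X_0 \le \dim L^2V - d$ in your maximality step does not follow from the mere \emph{existence} of a maximal submodule $N$ with $d$-dimensional quotient, but from the stronger statement of Theorem~\ref{thm:finiteextsubmods} that $N$ is the \emph{largest} maximal submodule of $L^2V$ (equivalently, that every nonzero quotient of $L^2V$ has dimension at least $d$), which is exactly what the paper invokes.
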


\begin{proof}
Theorem \ref{thm:finiteextsubmods} shows that there exists an $\mathbb{F}_q[\tilde G]$-epimorphism $\theta:L^2V \to V$ whose kernel is a nonzero maximal submodule of $L^2V$. It follows that $V \oplus L^2V$ contains the submodule $M:= {\{((u)\theta,u) \mid u \in L^2V\}}$. This submodule does not contain the direct summand $V$, and is isomorphic, but not equal, to $L^2V$. Since the composition factors of $V \oplus L^2V$ are $V$ and the composition factors of $L^2V$, it follows from Theorem \ref{thm:finiteextsubmods} that $\dim(L^2V)$ is the largest possible dimension of a proper submodule of $V \oplus L^2V$.

Let $H:=\mathrm{GL}(d,q)_M$, so that $\theta$ extends to an $\mathbb{F}_q[H]$-epimorphism. Then $H$ stabilises $\ker \theta$, and Lemma \ref{lem:excepnormaliser} and Theorem \ref{thm:lrvhatgstab} imply that $\tilde G \le H \le \zgl \tilde G$. Let $z \in \zgl$, so that the action of $z$ on $V$ is equivalent to multiplication by some scalar $\mu \in \mathbb{F}_q \setminus \{0\}$. Then for each $u \in L^2V$, we have $((u)\theta,u)^z = (\mu(u)\theta,\mu^2 u)$, which is equal to $((\mu u)\theta,\mu^2 u)$ by the linearity of $\theta$. Thus $z \in \mathrm{GL}(d,q)_M$ if and only if $\mu = 1$, i.e., $z = 1$. Hence $H = \tilde G$.
\end{proof}

\section{Inducing exceptional Chevalley groups on $P/\Phi(P)$}
\label{sec:mainproof}

We can now induce on the Frattini quotient of a $p$-group the simply connected version of a given exceptional Chevalley group (defined over an appropriate field of prime order), or the normaliser of this simply connected group in $\mathrm{GL}(d,p)$. We again retain the notation outlined at the start of \S\ref{sec:overgroupschev}. In particular, $p$ is odd. Recall that $\hat G = \nonsplit{3}{\tilde G}$ if $\tilde G = E_6(q)$ with $q \equiv {1 \pmod 3}$; that $\hat G = \nonsplit{2}{\tilde G}$ if $\tilde G = E_7(q)$; and that $\hat G \cong \tilde G$ otherwise.

As in \S\ref{sec:univpgps}, we write $P_U$ (respectively, $Q_W$) to denote the quotient of the universal $p$-group $\Gamma(d,p,2)$ (respectively, $\Gamma(d,p,3)$) by a proper subgroup $U$ of $L^2V$ (respectively, a proper subgroup $W$ of $L^3V$). Recall also that if $P$ is a $p$-group of rank $d$, then $A(P) \le \mathrm{GL}(d,p)$ is the group induced by $\mathrm{Aut}(P)$ on $P/\Phi(P)$, and that if $A(P) < \mathrm{GL}(d,p)$, then the exponent-$p$ class of $P$ is at least $2$. We begin by highlighting some proper subgroups of $\mathrm{GL}(d,p)$ that cannot be induced on the Frattini quotient of a $p$-group of low exponent-$p$ class (in some cases, with low exponent or low nilpotency class also assumed).

\begin{thm}
\label{thm:nopgpthm}
Suppose that $q = p$ (which is odd), with $p > 3$ if $\tilde G \in \{E_6(p),E_7(p)\}$.
\begin{enumerate}[label={(\roman*)}]
\item Assume that $(\tilde G,H) \in \{(F_4(p),\zgl F_4(p)),(E_6(p),\mathrm{GL}(27,p)),(E_7(p),\mathrm{CSp}(56,p))\},$ and let $K$ be a proper subgroup of $H$ that contains $\hat G$. Then there is no $p$-group $P$ of exponent-$p$ class $2$ such that $A(P) = K$. \label{nopgpthm1}
\item Assume that $\tilde G \in \{G_2(p),E_8(p)\}$, and let $K$ be a proper subgroup of $\zgl \hat G$ that contains $\hat G$. If $P$ is a $p$-group of exponent-$p$ class $2$ satisfying $A(P) = K$, then $P$ has nilpotency class $2$ and exponent $p^2$. \label{nopgpthm2}
\item Assume that $\tilde G \in \{E_6(p),E_7(p)\}$, and let $K$ be a proper subgroup of $N_{\mathrm{GL}(d,p)}(\hat G)$ that contains $\hat G$. Then there is no $p$-group $P$ such that $A(P) = K$ and $P \cong Q_W$ for some proper subspace $W$ of $L^3V$. \label{nopgpthm3}
\end{enumerate}
\end{thm}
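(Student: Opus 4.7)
The plan for \ref{nopgpthm1} and \ref{nopgpthm2} is to use Theorem \ref{thm:expclass2stab} to write each $p$-group $P$ of exponent-$p$ class $2$ and rank $d$ as $P \cong E^*/X$ for some proper $\hat G$-invariant subspace $X \le V \oplus L^2V$, with $A(P) = \mathrm{GL}(d,p)_X$, and then to enumerate the $\hat G$-submodules of $V \oplus L^2V$ and compute their stabilisers in $\mathrm{GL}(d,p)$. Part \ref{nopgpthm3} reduces similarly, via Theorem \ref{thm:univap}, to a stabiliser computation for $\hat G$-submodules of $L^3V$.

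For \ref{nopgpthm1}, I would first observe, using Theorem \ref{thm:finiteextsubmods} together with the composition factor dimensions in Appendix \ref{sec:compkga2v}, that for $\tilde G \in \{F_4(p), E_6(p), E_7(p)\}$ the modules $V$ and $L^2V$ share no composition factor ($\dim V \in \{26, 27, 56\}$ matches no composition factor dimension of $L^2V$). Consequently, by a Goursat-type argument, every $\hat G$-submodule of $V \oplus L^2V$ decomposes as $A \oplus B$ with $A \in \{0, V\}$ and $B$ a $\hat G$-submodule of $L^2V$, and $\mathrm{GL}(d,p)_{A \oplus B} = \mathrm{GL}(d,p)_B$ since $\mathrm{GL}(d,p)$ stabilises both direct summands. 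If $B \in \{0, L^2V\}$ then $A(P) = \mathrm{GL}(d,p) \ne K$. If $B$ is nonzero and proper, then Theorem \ref{thm:lrvhatgstab} (for $\tilde G = F_4(p)$) together with Lemma \ref{lem:excepnormaliser} gives $\mathrm{GL}(d,p)_B = \zgl \hat G = H$, while Lemma \ref{lem:sp56lrv} (for $\tilde G = E_7(p)$) gives $\mathrm{CSp}(56,p) \le \mathrm{GL}(d,p)_B$ with $\mathrm{CSp}(56,p) = H$; in each instance $A(P) \supseteq H$, contradicting $K < H$. The $\tilde G = E_6(p)$ case admits no nonzero proper $\hat G$-submodule of $L^2V$, as $L^2V$ is irreducible by Theorem \ref{thm:finiteextsubmods}.

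For \ref{nopgpthm2}, with $\tilde G \in \{G_2(p), E_8(p)\}$, Theorem \ref{thm:finiteextsubmods}(ii) shows that $V$ is a composition factor of $L^2V$, so the Goursat argument no longer forces $X$ to be a direct sum. Theorem \ref{thm:expclass2stab} splits the possibilities for $X$ into: (a) $L^2V \le X$, in which case $P$ is abelian and $A(P) = \mathrm{GL}(d,p)_{X \cap V}$ which, by irreducibility of $V$ and properness of $X$, equals $\mathrm{GL}(d,p)$; (b) $V \le X$, in which case $P$ has exponent $p$ and $A(P) = \mathrm{GL}(d,p)_Y$ for $Y := X \cap L^2V$ a proper $\hat G$-submodule, which by Theorem \ref{thm:lrvhatgstab} and Lemma \ref{lem:excepnormaliser} lies in $\{\mathrm{GL}(d,p), \zgl \hat G\}$; or (c) neither $V \le X$ nor $L^2V \le X$, in which case Theorem \ref{thm:expclass2stab} shows $P$ is non-abelian of exponent $p^2$. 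In (a) and (b), $A(P)$ is $\mathrm{GL}(d,p)$ or $\zgl \hat G$, neither of which equals the given $K$ (proper in $\zgl \hat G$); so only (c) is possible, and the conclusion then follows since exponent-$p$ class $2$ forces nilpotency class at most $2$.

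For \ref{nopgpthm3}, Theorem \ref{thm:univap} gives $A(Q_W) = \mathrm{GL}(d,p)_W$, and $\hat G \le K$ forces $W$ to be a $\hat G$-submodule of $L^3V$. If $W = 0$ then $A(Q_W) = \mathrm{GL}(d,p) \ne K$; otherwise Theorem \ref{thm:lrvhatgstab} gives $\mathrm{GL}(d,p)_W = N_{\mathrm{GL}(d,p)}(\hat G)$, except possibly in the $\tilde G = E_7(p)$ case where $\mathrm{CSp}(56,p) \le \mathrm{GL}(d,p)_W$ and $\mathrm{CSp}(56,p) \supsetneq N_{\mathrm{GL}(56,p)}(\hat G)$ by Lemma \ref{lem:excepnormaliser}. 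In every scenario $A(Q_W) \supseteq N_{\mathrm{GL}(d,p)}(\hat G) \supsetneq K$, a contradiction. The main subtle step is the Goursat-style reduction in \ref{nopgpthm1} to direct-sum $\hat G$-submodules of $V \oplus L^2V$; this rests on the composition factor data of Theorem \ref{thm:finiteextsubmods} and must be checked uniformly, including at the exceptional primes $p = 3$ for $F_4$ and $p = 7$ for $E_7$.
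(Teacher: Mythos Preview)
Your proposal is correct and follows essentially the same route as the paper. The only difference is that where you invoke Theorem~\ref{thm:lrvhatgstab} to compute the exact stabiliser of each nonzero proper $\hat G$-submodule, the paper instead appeals to Proposition~\ref{prop:normhatgsubs} (and Lemma~\ref{lem:sp56lrv}) to show merely that $H$, $\zgl\hat G$, or $N_{\mathrm{GL}(d,p)}(\hat G)$ stabilises every such submodule; either suffices, and your Goursat-type reduction for~\ref{nopgpthm1} and case split for~\ref{nopgpthm2} match the paper's argument exactly (note $d=25$ when $(\tilde G,p)=(F_4(p),3)$, but this does not affect the composition-factor comparison).
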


\begin{proof}
In case \ref{nopgpthm1}, Theorem \ref{thm:finiteextsubmods} shows that no $\hat G$-composition factor of $L^2V$ is isomorphic to the irreducible $\hat G$-module $V$. Hence the submodules of $V \oplus L^2V$ in this case are the direct sums of the submodules of the direct summands. Note also that a $p$-group of exponent-$p$ class $2$ has nilpotency class at most $2$ and exponent at most $p^2$. As $V$ is irreducible, it suffices by Theorems \ref{thm:expclass2stab} and \ref{thm:univap} to show that each $\hat G$-submodule of $L^2V$ is stabilised by $H$ or $\zgl \hat G$ in case \ref{nopgpthm1} or \ref{nopgpthm2}, respectively, and that $N_{\mathrm{GL}(d,p)}(\hat G)$ stabilises each $\hat G$-submodule of $L^3V$ in case \ref{nopgpthm3}. Indeed, this is satisfied in the $\tilde G = E_6(p)$ case of \ref{nopgpthm1}, as here $L^2V$ is irreducible by Theorem \ref{thm:finiteextsubmods}. The result follows from Lemma \ref{lem:sp56lrv} in the $\tilde G = E_7(p)$ case of \ref{nopgpthm1}, and from Proposition \ref{prop:normhatgsubs} in all remaining cases.
%
%\begin{enumerate}[label={(\roman*)}]
%\item If $\tilde G = E_6(p)$, then since $L^2V$ is an irreducible $\hat G$-module by Theorem \ref{thm:finiteextsubmods}, $H$ stabilises each $\hat G$-submodule of $L^2V$. In the other cases, Lemma \ref{lem:sp56lrv} and Proposition \ref{prop:normhatgsubs} imply the same conclusion. We also have from Theorem \ref{thm:finiteextsubmods} that no $\hat G$-composition factor of $L^2V$ is isomorphic to the irreducible module $V$. Thus each $\hat G$-submodule of $V \oplus L^2V$ is equal to $U$ or to $V \oplus U$, for some $\hat G$-submodule $U$ of $L^2V$. As $H$ stabilises every such submodule, there is no proper subspace $X$ of $V \oplus L^2V$ with $\mathrm{GL}(d,p) = K$. Hence Theorem \ref{thm:expclass2stab} yields the result.
%
%\item Proposition \ref{prop:normhatgsubs} implies that $\zgl \hat G$ stabilises each $\hat G$-submodule of $L^2V$. Similarly, $\mathrm{GL}(d,p)$ stabilises each $\hat G$-submodule of the irreducible module $V$. Theorem \ref{thm:expclass2stab} therefore implies that $P$ is not abelian, and does not have exponent $p$. The only remaining $p$-groups of exponent-$p$ class $2$ have nilpotency class $2$ and exponent $p^2$, completing the proof.
%
%\item Proposition \ref{prop:normhatgsubs} implies that $N_{\mathrm{GL}(d,p)}(\hat G)$ stabilises each $\hat G$-submodule of $L^3V$. The result therefore follows from Theorem \ref{thm:univap}. \hfill $\square$
%\end{enumerate}
\end{proof}

Note that the condition in Theorem \ref{thm:nopgpthm}\ref{nopgpthm1} that $K$ is a proper subgroup of $H$ is necessary. This fact is clear when $H = \mathrm{GL}(27,p)$; it follows from \cite[Lemma 5.8, Table 6.1]{bamberg} when $H = \mathrm{CSp}(56,q)$; and we will see in Theorem \ref{thm:mainthm} that it holds when $H = \zgl F_4(p)$. Note also that the groups $Q_W$ mentioned in Theorem \ref{thm:nopgpthm}\ref{nopgpthm3} have exponent $p$ and nilpotency class $3$ (and hence exponent-$p$ class $3$) by Lemma \ref{lem:gamma23quotients}. However, there may exist a $p$-group $P$ of exponent $p$ and nilpotency class $3$, with $P$ not isomorphic to $Q_W$ for any proper subspace $W$ of $L^3V$, such that $A(P)$ is a group $K$ defined in Theorem \ref{thm:nopgpthm}\ref{nopgpthm3}.

We are now able to state and prove the main theorem of this paper. Recall the definitions of optimal and quasi-optimal $p$-groups, from Definitions \ref{def:optimalpgp} and \ref{def:quasioptimalpgp}, respectively.

\begin{thm}\leavevmode
\label{thm:mainthm}
Suppose that $q = p$ (which is odd).
\begin{enumerate}[label={(\roman*)}]
\item \label{mainthm1} If $\tilde G \in \{G_2(p),F_4(p),E_8(p)\}$, then each $p$-group that is optimal with respect to $N_{\mathrm{GL}(d,p)}(\hat G)$ has exponent-$p$ class $2$, nilpotency class $2$ and exponent $p$.
\item \label{mainthm2} If $\tilde G \in \{G_2(p),E_8(p)\}$, then each $p$-group that is optimal with respect to $\hat G$ has exponent-$p$ class $2$, nilpotency class $2$ and exponent $p^2$.
\item \label{mainthm3} If $\tilde G \in \{E_6(p),E_7(p)\}$ and $p > 3$, then each $p$-group that is optimal (or quasi-optimal) with respect to $N_{\mathrm{GL}(d,p)}(\hat G)$ has exponent-$p$ class $3$, nilpotency class $3$ and exponent $p$.
\end{enumerate}
Table \ref{table:mainthmtable} specifies the properties of each optimal $p$-group in case (i) or (ii), and each quasi-optimal $p$-group in case (iii). Finally, each $p$-group in case (i) or (ii) has a unique nontrivial proper characteristic subgroup.
\end{thm}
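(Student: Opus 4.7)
The plan is to handle each part by first exhibiting a concrete candidate $p$-group with the advertised invariants, and then verifying Definition \ref{def:optimalpgp} (or \ref{def:quasioptimalpgp}) clause by clause against the constraints provided by Theorems \ref{thm:univap}, \ref{thm:expclass2stab}, \ref{thm:lrvhatgstab} and \ref{thm:nopgpthm}. The underlying strategy is uniform: Theorem \ref{thm:univap} reduces the problem of realising a given overgroup of $\hat G$ as $A(P)$ to identifying a suitable stabiliser of a submodule of $L^rV$ (or of $V \oplus L^2V$ via Theorem \ref{thm:expclass2stab}), and the results of \S\ref{sec:liepowersubmodstab} tell us precisely which submodules have which stabilisers.

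For part \ref{mainthm1}, let $U$ be the proper $\hat G$-submodule of $L^2V$ of largest dimension, as identified in Theorem \ref{thm:finiteextsubmods} and Table \ref{table:a2vstructfinite}, and set $P := P_U$. Then Theorem \ref{thm:gamma23} and Lemma \ref{lem:gamma23quotients} give exponent $p$, nilpotency class $2$, and hence exponent-$p$ class $2$; Theorem \ref{thm:univap} combined with Theorem \ref{thm:lrvhatgstab} (whose $\mathrm{CSp}(56,p)$ caveat does not apply as $\tilde G \ne E_7(p)$) yields $A(P) = N_{\mathrm{GL}(d,p)}(\hat G)$. For optimality: Lemma \ref{lem:excepnormaliser} forces exponent-$p$ class $\geq 2$; exponent $p^2$ is ruled out because Theorem \ref{thm:expclass2stab}\ref{exppsqstab} would then require $A(P)$ to stabilise a proper nonzero subspace of the irreducible $\hat G$-module $V$; nilpotency class must be exactly $2$ since otherwise $P$ would be abelian of exponent $p^2$, already excluded; and minimising the order amounts to maximising $\dim U$, which is exactly our choice. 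Part \ref{mainthm2} follows the same template using Lemma \ref{lem:directsuml2vstab}: take the submodule $M \subset V \oplus L^2V$ identified there, set $P := E^*/M$, and use Theorem \ref{thm:expclass2stab} together with Theorem \ref{thm:nopgpthm}\ref{nopgpthm2} to verify the invariants and force optimality of the choice.

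For part \ref{mainthm3}, Theorem \ref{thm:nopgpthm}\ref{nopgpthm1} excludes exponent-$p$ class $2$ outright, so the target invariants are those realised by $Q_W$ for a suitable $W \subset L^3V$. I choose $W$ to be a proper $\hat G$-submodule of $L^3V$ of maximum dimension whose full $\mathrm{GL}(d,p)$-stabiliser is $N_{\mathrm{GL}(d,p)}(\hat G)$: for $E_6(p)$ every nonzero proper submodule qualifies by Theorem \ref{thm:lrvhatgstab}, while for $E_7(p)$ Lemma \ref{lem:sp56lrv} requires that $W$ avoid the two $\mathrm{CSp}(56,p)$-fixed subspaces of dimensions $56$ and $58464$, with the admissible candidates enumerated from Figure \ref{fig:e7l3vsub}. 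Setting $P := Q_W$, Lemma \ref{lem:gamma23quotients} and Theorem \ref{thm:univap} yield the required group; quasi-optimality follows from the maximality of $\dim W$. The UCS claim in parts \ref{mainthm1} and \ref{mainthm2} is a direct application of Theorem \ref{thm:ucsgroups}: in each case the relevant quotient $(V \oplus L^2V)/X$ is isomorphic to the irreducible $\hat G$-module $V$ (in part \ref{mainthm2}) or to a simple composition factor of $L^2V$ (in part \ref{mainthm1}), and the Frattini quotient $V$ is itself irreducible by hypothesis.

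The main obstacle will be the case analysis for $E_7(p)$ with $p \in \{7,11,19\}$: Figure \ref{fig:e7l3vsub} exhibits several intermediate submodules, and one must carefully tabulate the dimensions of all direct sums of $\hat G$-summands, cross-reference them against the two $\mathrm{CSp}(56,p)$-fixed dimensions from Lemma \ref{lem:sp56lrv}, and confirm that the chosen $W$ is indeed of maximum permissible dimension. A secondary subtlety is that the Bamberg--Freedman--Morgan construction for $G_2(p)$ in \cite{sdfg2} must be recovered as the $\tilde G = G_2(p)$ case of part \ref{mainthm1}, which serves as a consistency check for the general framework.
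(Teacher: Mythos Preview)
Your proposal is correct and follows essentially the same approach as the paper: choose the (second-)largest proper $\hat G$-submodule $X$ of the relevant Lie power (or of $V\oplus L^2V$), identify its $\mathrm{GL}(d,p)$-stabiliser via Theorem~\ref{thm:lrvhatgstab}, Lemma~\ref{lem:sp56lrv} and Lemma~\ref{lem:directsuml2vstab}, and check optimality against Theorem~\ref{thm:nopgpthm} and Definition~\ref{def:optimalpgp}. One minor confusion worth fixing: in part~\ref{mainthm1} your attempt to ``rule out exponent $p^2$'' is unnecessary --- clause~\ref{optimal2} of Definition~\ref{def:optimalpgp} only asks that no competitor have \emph{smaller} exponent, which is vacuous once $P_U$ has exponent $p$ --- and the reason you cite via Theorem~\ref{thm:expclass2stab}\ref{exppsqstab} applies only to \emph{abelian} $P$, not to arbitrary $P$ of exponent $p^2$.
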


\begin{table}[ht]
\centering
\renewcommand{\arraystretch}{1.2}
\caption{The properties of each optimal or quasi-optimal $p$-group $P$ from Theorem \ref{thm:mainthm}. Here, $t:=(3,p-1)$, and $c$ denotes both the exponent-$p$ class and nilpotency class of $P$.}
\label{table:mainthmtable}
\begin{tabular}{ |c|c|c|c|c|c| }
\hline
$\tilde G$ & $d$ & $c$ & {\parbox{1.8cm}{\centering \vspace{.15cm}Exponent\\of $P$\vspace{.15cm}}} & $|P|$ & $A(P)$\\% & {\parbox{2.5cm}{\centering \vspace{.15cm}Aschbacher\\class of $A(P)$\vspace{.15cm}}}\\
\hline
\hline
$G_2(p)$ & $7$ & $2$ & $p$ & $p^{14}$ & $\zgl G_2(p)$\\
\hline
$G_2(p)$ & $7$ & $2$ & $p^2$ & $p^{14}$ & $G_2(p)$\\
\hline
$F_4(3)$ & $25$ & $2$ & $3$ & $3^{77}$ & $\zgl F_4(3)$\\
\hline
$F_4(p), p > 3$ & $26$ & $2$ & $p$ & $p^{78}$ & $\zgl F_4(p)$\\
\hline
$E_6(p), p > 3$ & $27$ & $3$ & $p$ & $p^{456}$ & $(\zgl (\nonsplit{t}{E_6(p)})).t$\\
\hline
$E_7(p), p > 3$ & $56$ & $3$ & $p$ & $p^{2508}$ & $(\zgl (\nonsplit{2}{E_7(p)})).2$\\
\hline
$E_8(p)$ & $248$ & $2$ & $p$ & $p^{496}$ & $\zgl E_8(p)$\\
\hline
$E_8(p)$ & $248$ & $2$ & $p^2$ & $p^{496}$ & $E_8(p)$\\
\hline
\end{tabular}
\end{table}

\begin{proof}
Theorem \ref{thm:finiteextsubmods} gives the dimensions of the proper $\hat G$-submodules of $L^2V$ (respectively, $L^3V$) in case \ref{mainthm1} (respectively, in case \ref{mainthm3}). By Lemma \ref{lem:sp56lrv} and Theorem \ref{thm:lrvhatgstab}, $N:=N_{\mathrm{GL}(d,p)}(\hat G)$ is the stabiliser in $\mathrm{GL}(d,p)$ of the largest of these submodules, except when $\tilde G = E_7(p)$, in which case $N$ is the stabiliser in $\mathrm{GL}(d,p)$ of the second largest proper submodule. Similarly, Lemma \ref{lem:directsuml2vstab} implies that in case \ref{mainthm2}, $\hat G$ is the stabiliser in $\mathrm{GL}(d,p)$ of a proper $\hat G$-submodule of $V \oplus L^2V$ of maximal dimension, with this submodule isomorphic to $L^2V$ but not containing $V$ or $L^2V$. Let $X$ be the specified proper submodule whose stabiliser in $\mathrm{GL}(d,p)$ is $N$ or $\hat G$. It follows from Theorem \ref{thm:univap} that $N$ is equal to $A(P_X)$ or $A(Q_X)$ in case \ref{mainthm1} or \ref{mainthm3}, respectively. In case \ref{mainthm2}, Theorem \ref{thm:expclass2stab} shows that $\hat G$ is equal to $A(E^*/X)$, where $E^*$ is the $p$-covering group of the elementary abelian $p$-group of rank $d$. Since $V$ is an irreducible $\hat G$-module, and since $(L^2V)/X \cong (V \oplus L^2V)/(V \oplus X)$ in case \ref{mainthm1}, Theorems \ref{thm:expclass2stab} and \ref{thm:ucsgroups} imply that each $p$-group $P_X$ or $E^*/X$ has a unique nontrivial proper characteristic subgroup.

Recall that if a $p$-group has exponent $p$, then its exponent-$p$ class is equal to its nilpotency class, and that a $p$-group of exponent-$p$ class $2$ has nilpotency class at most $2$ and exponent at most $p^2$. Hence Lemma \ref{lem:gamma23quotients} and Theorem \ref{thm:expclass2stab} show that, in each case, the $p$-group $P_X$, $E^*/X$ or $Q_X$ has the exponent-$p$ class, nilpotency class and exponent of the $p$-group given in Table \ref{table:mainthmtable}. Now, the order of $\Gamma(d,p,r)$ is given in Theorem \ref{thm:gamma23}, while we have from Proposition \ref{prop:vstarext} that $E^*$ is an extension of $V \times L^2V$ by $V$. Since $L^2V \cong A^2V$, we have $|P_X| = p^{d(d+1)/2-\dim(X)}$, $|E^*/X| = p^{(d^2+d)/2-\dim(X)}$ and $|Q_X| = p^{d(d+1)(2d+1)/6-\dim(X)}$, with $\dim(X) = (d^2-d)/2$ in case \ref{mainthm2}. In each case, the order of this $p$-group is given in Table \ref{table:mainthmtable}. The structure of $A(P) = N$ in cases \ref{mainthm1} and \ref{mainthm3} is given by Lemma \ref{lem:excepnormaliser}. Note that this lemma also implies that $N$ is a proper subgroup of $\mathrm{GL}(d,p)$ in each case, and also a proper subgroup of $\mathrm{CSp}(56,p)$ if $\tilde G = E_7(p)$.

We now show that the specified $p$-groups are optimal or quasi-optimal as required. In each case, the optimal $p$-group has exponent-$p$ class at least $2$. Hence the $p$-group $P_X$ in case \ref{mainthm1} is indeed optimal with respect to $N$. It also follows from Theorem \ref{thm:nopgpthm}\ref{nopgpthm2} that $E^*/X$ is optimal with respect to $\hat G$ in case \ref{mainthm2}, and from Theorem \ref{thm:nopgpthm}\ref{nopgpthm1} that $Q_X$ is quasi-optimal with respect to $N$ in case \ref{mainthm3}. By definition, each $p$-group that is optimal with respect to $N$ in case \ref{mainthm3} has the same exponent-$p$ class, exponent and nilpotency class as $Q_X$.
\end{proof}

When calculating the order of $A(P)$ in cases \ref{mainthm1} and \ref{mainthm3} of the above theorem, it is useful to know the order of $\zgl \cap \hat G$. Proposition \ref{prop:excepcentre} implies that this order is $3$ if $\tilde G = E_6(p)$ with $p \equiv {1 \pmod 3}$, and $2$ if $\tilde G = E_7(p)$. Otherwise, $\hat G \cong \tilde G$ is non-abelian and simple, and hence $\zgl \cap \hat G  = 1$.

The $p$-groups in cases \ref{mainthm1}, \ref{mainthm2} and \ref{mainthm3} of Theorem \ref{thm:mainthm} can be constructed as specific quotients of the universal $p$-group $\Gamma(d,p,2)$, the $p$-covering group $E^*$ of the elementary abelian $p$-group of rank $d$, and the universal $p$-group $\Gamma(d,p,3)$, respectively, as detailed in the proof of the theorem. The $p$-groups in case \ref{mainthm1} can also be constructed as corresponding quotients of $E^*$, by Theorem \ref{thm:expclass2stab}. Note also that the $\tilde G = G_2(p)$ case of \ref{mainthm1} was previously proved by Bamberg, Freedman and Morgan \cite{sdfg2}. However, Theorem \ref{thm:mainthm} is completely disjoint from the work of Bamberg et al.~\cite{bamberg} mentioned in \S\ref{sec:intro}, which covers maximal subgroups of $\mathrm{GL}(d,p)$ that do not lie in the Aschbacher classes $\mathcal{C}_6$ or $\mathcal{C}_9$. Indeed, Lemma \ref{lem:excepnormaliser} shows that $A(P)$ is a $\mathcal{C}_9$-subgroup of $\mathrm{GL}(d,p)$ if $\tilde G = E_6(p)$, and otherwise $A(P)$ is not maximal in $\mathrm{GL}(d,p)$.

Now, if $Q$ is a $p$-group isomorphic to $P$, then $A(P)$ and $A(Q)$ are the images of representations afforded by isomorphic modules, with $V$ the restriction of the former module to $\hat G$. However, our method of constructing an optimal or quasi-optimal $p$-group in each case of Theorem \ref{thm:mainthm} does not depend on the choice of the minimal $\mathbb{F}_p[\hat G]$-module $V$. Therefore, if there are two isomorphism classes of minimal $\mathbb{F}_p[\hat G]$-modules, then there are at least two isomorphism classes of optimal or quasi-optimal $p$-groups. By Lemma \ref{lem:lieconjimages}, this is the case when $\tilde G$ is equal to $G_2(3)$ or to $E_6(p)$ for some $p > 3$. If $M$ is a minimal $\mathbb{F}_p[\hat G]$-module isomorphic to $V$, then $M = V^x$ for some $x \in \mathrm{GL}(V)$, and it is easy to see that $L^rM = (L^rV)^x$ for each $r \in \{2,3\}$. In particular, since no two submodules of $L^rV$ are equidimensional by Theorem \ref{thm:finiteextsubmods}, $x$ maps the submodule $X$ of $L^rV$ from the proof of Theorem \ref{thm:mainthm} to the unique submodule of $L^rM$ of dimension $\dim(X)$, which we will denote by $Y$. It follows from the definition of the action of $\mathrm{GL}(d,p)$ on $\Gamma(d,p,r)$ and the definitions of $P_X$ and $Q_X$ that $x$ induces an isomorphism from $P_X$ to $P_Y$ or from $Q_X$ to $Q_Y$, as appropriate. Therefore, in cases \ref{mainthm1} and \ref{mainthm3} of Theorem \ref{thm:mainthm}, there are exactly two isomorphism classes of optimal or quasi-optimal $p$-groups, respectively, if $\tilde G$ is equal to $G_2(3)$ or to $E_6(p)$, and exactly one isomorphism class otherwise.

We conclude with a note on a variation of Theorem \ref{thm:mainthm}\ref{mainthm2}.

\begin{rem}
\label{rem:liealg}
Let $\tilde H := {}^t Y_\ell(q)$ be an exceptional group of Lie type that is not a Suzuki or Ree group, with $\hat H$ the simply connected version of $\tilde H$, and $H$ the associated (simple, simply connected) linear algebraic group. In addition, let $U$ be the irreducible $\overline{\mathbb{F}_p}[H]$-module whose highest weight is the highest weight of the Lie algebra of $H$. Then:
\begin{enumerate}[label={(\roman*)}]
\item \label{liealg1} $A^2U$ is multiplicity free if $p>5$, or if $p = 5$ and $\tilde H \ne E_8(q)$;
\item \label{liealg1a} $U$ is isomorphic to a composition factor of $A^2U$;
\item \label{liealg2} the absolutely irreducible $\hat H$-modules corresponding (per Lemma \ref{lem:untwistedfinitemods}) to $U$ and the composition factors of $A^2U$ can all be written over $\mathbb{F}_q$; and
\item \label{liealg3} these $\hat H$-modules are actually faithful modules for $\tilde H$.
\end{enumerate}
Hence in the case $q = p$, we can adapt the proof of Theorem \ref{thm:mainthm}\ref{mainthm2} to construct a $p$-group $Q$ of rank $\dim(U)$, exponent $p$-class $2$, nilpotency class $2$ and exponent $p^2$ such that $A(Q)$ contains $\tilde H$, but does not contain any nontrivial scalar matrix of $\mathrm{GL}(\dim(U),p)$.
\end{rem}

Note that $U$ is equal to the Lie algebra of $H$, unless $Y_\ell \in \{G_2,E_6\}$ and $p=3$. Moreover, if $Y_\ell = E_8$, then $U$ is a minimal $\overline{\mathbb{F}_p}[H]$-module, and so $Q$ is a $p$-group mentioned in Theorem \ref{thm:mainthm}\ref{mainthm2}. In general, \ref{liealg1} and \ref{liealg1a} follow from the methods used to derive Theorem \ref{thm:a2vkgmod}. If $t \ne 1$, then the graph automorphism of $H$ of order $t$ fixes $U$ and each composition factor of $A^2U$, and the dimension of each of these irreducible modules is not an integral power of any integer other than itself. Hence \ref{liealg2} follows from Proposition \ref{prop:minfieldmod}. Additionally, $Z(H)$ acts trivially on $U$ in each case (see \cite[Appendix A.2]{lubeck}). Since $Z(\hat H) \le Z(H)$ \cite[Corollary 24.13]{malle}, and since $\tilde H$ is simple and isomorphic to $\hat H/Z(\hat H)$, we obtain \ref{liealg3}. Finally, when $q = p$, the argument used in the proof of Lemma \ref{lem:directsuml2vstab} implies that $A(Q)$ contains no nontrivial scalars. However, whether $A(Q)$ is equal to or properly contains $\tilde H$ is an open question, which may benefit from the approach of \S\ref{sec:overgroupschev}--\ref{sec:liepowersubmodstab}. The relationship here between $Q$ and the Lie algebra of $H$ is an example of the more general duality between certain $p$-groups and algebras discussed in \cite{glasbyduality}.

\subsection*{Acknowledgements}
This work was supported by a Hackett Postgraduate Research Scholarship and an Australian Government Research Training Program at the University of Western Australia. The author is also grateful to Martin Liebeck, Donna Testerman and Gunter Malle for helpful discussions about highest weight theory; to John Bamberg and Luke Morgan for general helpful discussions and detailed feedback; and to the referee for useful recommendations.

\bibliographystyle{plain}
\bibliography{Paperrefs}

\begin{appendices}

\section{Composition factors of exterior squares of minimal $K[G]$-modules}
\label{sec:compkga2v}

Tables \ref{table:g2a2v1}--\ref{table:e8a2v} list the composition factors of the exterior square $A^2V$ of each irreducible $K[G]$-module $V$ from Theorem \ref{thm:a2vkgmod}. Here, $V$ is a minimal $K[G]$-module, unless $(G,V) = (G_2,L(\lambda_2))$ with $p \ne 3$, or $(G,V) = (F_4,L(\lambda_1))$ with $p > 2$. In each case, the highest weights of all composition factors of $A^2V$ are comparable with respect to the partial order $\le$ defined in \S\ref{sec:highestweights}, and we list these composition factors by descending highest weight.

\begin{table}[H]
\centering
\renewcommand{\arraystretch}{1.1}
\caption{The composition factors of $A^2V$, with $G = G_2$ and $V = L(\lambda_1)$.}
\label{table:g2a2v1}
\begin{tabular}{ |c|c|c|c| }
\hline
Condition on $p$ & Composition factor & Dimension & Multiplicity \\
\hline
\hline
\multirow{2}{*}{$p = 2$} & $L(\lambda_2)$ & $14$ & $1$\\
 & $L(0)$ & $1$ & $1$\\
\hline
\multirow{2}{*}{$p = 3$} & $L(\lambda_2)$ & $7$ & $1$\\
 & $L(\lambda_1)$ & $7$ & $2$\\
\hline
\multirow{2}{*}{$p > 3$} & $L(\lambda_2)$ & $14$ & $1$\\
 & $L(\lambda_1)$ & $7$ & $1$\\
\hline
\end{tabular}
\end{table}

\begin{table}[H]
\centering
\renewcommand{\arraystretch}{1.1}
\caption{The composition factors of $A^2V$, with $G = G_2$ and $V = L(\lambda_2)$.}
\label{table:g2a2v2}
\begin{tabular}{ |c|c|c|c| }
\hline
Condition on $p$ & Composition factor & Dimension & Multiplicity \\
\hline
\hline
\multirow{5}{*}{$p = 2$} & $L(3 \lambda_1) \cong L(\lambda_1) \otimes L(\lambda_1)^{\phi}$ & $36$ & $1$\\
 & $L(2\lambda_1) \cong L(\lambda_1)^\phi$ & $6$ & $2$\\
 & $L(\lambda_2)$ & $14$ & $2$\\
 & $L(\lambda_1)$ & $6$ & $2$\\
 & $L(0)$ & $1$ & $3$\\
\hline
\multirow{2}{*}{$p = 3$} & $L(3 \lambda_1) \cong L(\lambda_1)^\phi$ & $7$ & $1$\\
 & $L(\lambda_2)$ & $7$ & $2$\\
\hline
\multirow{2}{*}{$p > 3$} & $L(3 \lambda_1)$ & $77$ & $1$\\
 & $L(\lambda_2)$ & $14$ & $1$\\
\hline
\end{tabular}
\end{table}

\begin{table}[H]
\centering
\renewcommand{\arraystretch}{1.1}
\caption{The composition factors of $A^2V$, with $G = F_4$ and $V = L(\lambda_1)$.}
\label{table:f4a2v1}
\begin{tabular}{ |c|c|c|c| }
\hline
Condition on $p$ & Composition factor & Dimension & Multiplicity \\
\hline
\hline
\multirow{4}{*}{$p = 2$} & $L(\lambda_2)$ & $246$ & $1$\\
 & $L(2\lambda_4) \cong L(\lambda_4)^\phi$ & $26$ & $1$\\
 & $L(\lambda_1)$ & $26$ & $2$\\
 & $L(0)$ & $1$ & $1$\\
\hline
\multirow{2}{*}{$p = 3$} & $L(\lambda_2)$ & $1222$ & $1$\\
 & $L(\lambda_1)$ & $52$ & $2$\\
\hline
\multirow{2}{*}{$p > 3$} & $L(\lambda_2)$ & $1274$ & $1$\\
 & $L(\lambda_1)$ & $52$ & $1$\\
\hline
\end{tabular}
\end{table}

\begin{table}[H]
\centering
\renewcommand{\arraystretch}{1.1}
\caption{The composition factors of $A^2V$, with $G = F_4$ and $V = L(\lambda_4)$.}
\label{table:f4a2v2}
\begin{tabular}{ |c|c|c|c| }
\hline
Condition on $p$ & Composition factor & Dimension & Multiplicity \\
\hline
\hline
\multirow{4}{*}{$p = 2$} & $L(\lambda_3)$ & $246$ & $1$\\
 & $L(\lambda_1)$ & $26$ & $1$\\
 & $L(\lambda_4)$ & $26$ & $2$\\
 & $L(0)$ & $1$ & $1$\\
\hline
\multirow{2}{*}{$p = 3$} & $L(\lambda_3)$ & $196$ & $1$\\
 & $L(\lambda_1)$ & $52$ & $2$\\
\hline
\multirow{2}{*}{$p > 3$} & $L(\lambda_3)$ & $273$ & $1$\\
 & $L(\lambda_1)$ & $52$ & $1$\\
\hline
\end{tabular}
\end{table}

\begin{table}[H]
\centering
\renewcommand{\arraystretch}{1.1}
\caption{The composition factors of $A^2(V_1)$ and $A^2(V_2)$, with $G = E_6$, $V_1 = L(\lambda_1)$, and $V_2 = L(\lambda_6)$.}
\label{table:e6a2v}
\begin{tabular}{ |c|c|c|c|c| }
\hline
Condition on $p$ & {\parbox{3.0cm}{\centering \vspace{.15cm}Composition factor of $A^2(V_1)$\vspace{.15cm}}} & {\parbox{3.0cm}{\centering \vspace{.15cm}Composition factor of $A^2(V_2)$\vspace{.15cm}}} & Dimension & Multiplicity \\
\hline
\hline
\multirow{2}{*}{$p = 2$} & $L(\lambda_3)$ & $L(\lambda_5)$ & $324$ & $1$\\
 & $L(\lambda_6)$ & $L(\lambda_1)$ & $27$ & $1$\\
\hline
$p > 2$ & $L(\lambda_3)$ & $L(\lambda_5)$ & $351$ & $1$\\
\hline
\end{tabular}
\end{table}

\begin{table}[H]
\centering
\renewcommand{\arraystretch}{1.1}
\caption{The composition factors of $A^2V$, with $G = E_7$ and $V = L(\lambda_7)$.}
\label{table:e7a2v}
\begin{tabular}{ |c|c|c|c| }
\hline
Condition on $p$ & Composition factor & Dimension & Multiplicity \\
\hline
\hline
\multirow{3}{*}{$p = 2$} & $L(\lambda_6)$ & $1274$ & $1$\\
 & $L(\lambda_1)$ & $132$ & $2$\\
 & $L(0)$ & $1$ & $2$\\
\hline
\multirow{2}{*}{$p = 7$} & $L(\lambda_6)$ & $1538$ & $1$\\
 & $L(0)$ & $1$ & $2$\\
\hline
\multirow{2}{*}{$p \notin \{2,7\}$} & $L(\lambda_6)$ & $1539$ & $1$\\
 & $L(0)$ & $1$ & $1$\\
\hline
\end{tabular}
\end{table}

\begin{table}[H]
\centering
\renewcommand{\arraystretch}{1.1}
\caption{The composition factors of $A^2V$, with $G = E_8$ and $V = L(\lambda_8)$.}
\label{table:e8a2v}
\begin{tabular}{ |c|c|c|c| }
\hline
Condition on $p$ & Composition factor & Dimension & Multiplicity \\
\hline
\hline
\multirow{4}{*}{$p = 2$} & $L(\lambda_7)$ & $26504$ & $1$\\
 & $L(\lambda_1)$ & $3626$ & $1$\\
 & $L(\lambda_8)$ & $248$ & $2$\\
 & $L(0)$ & $1$ & $2$\\
\hline
\multirow{2}{*}{$p \in \{3,5\}$} & $L(\lambda_7)$ & $30132$ & $1$\\
 & $L(\lambda_8)$ & $248$ & $2$\\
\hline
\multirow{2}{*}{$p>5$} & $L(\lambda_7)$ & $30380$ & $1$\\
 & $L(\lambda_8)$ & $248$ & $1$\\
\hline
\end{tabular}
\end{table}

\section{Composition factors of third Lie powers of minimal $K[G]$-modules}
\label{sec:compkgl3v}

We now list, in Tables \ref{table:e6l3v} and \ref{table:e7l3v}, the composition factors of the module $(A^2V \otimes V)/A^3V$ for each minimal $K[G]$-module $V$ from Theorem \ref{thm:l3vkgmod}. The module $(A^2V \otimes V)/A^3V$ is isomorphic to the third Lie power $L^3V$ of $V$ when $p > 3$. In each case, the highest weights of all composition factors of $(A^2V \otimes V)/A^3V$ are comparable with respect to the partial order $\le$ defined in \S\ref{sec:highestweights}, and we list these composition factors by descending highest weight.

\begin{table}[H]
\centering
\caption{The composition factors of $U:=(A^2(V_1) \otimes V_1)/A^3(V_1)$ and $W:=(A^2(V_2) \otimes V_2)/A^3(V_2)$, with $G = E_6$, $V_1 = L(\lambda_1)$, and $V_2 = L(\lambda_6)$.}
\label{table:e6l3v}
\begin{tabular}{ |c|c|c|c|c| }
\hline
Condition on $p$ & {\parbox{2.5cm}{\centering \vspace{.15cm}Composition factor of $U$ \vspace{.15cm}}} & {\parbox{2.5cm}{\centering \vspace{.15cm}Composition factor of $W$\vspace{.15cm}}} & Dimension & Multiplicity \\
\hline
\hline
\multirow{3}{*}{$p = 2$} & $L(\lambda_1+\lambda_3)$ & $L(\lambda_5+\lambda_6)$ & $5824$ & $1$\\
 & $L(\lambda_1+\lambda_6)$ & $L(\lambda_1+\lambda_6)$ & $572$ & $1$\\
 & $L(\lambda_2)$ & $L(\lambda_2)$ & $78$ & $2$\\
\hline
\multirow{5}{*}{$p = 3$} & $L(\lambda_1+\lambda_3)$ & $L(\lambda_5+\lambda_6)$ & $2404$ & $1$\\
 & $L(\lambda_4)$ & $L(\lambda_4)$ & $2771$ & $1$\\
 & $L(\lambda_1+\lambda_6)$ & $L(\lambda_1+\lambda_6)$ & $572$ & $2$\\
 & $L(\lambda_2)$ & $L(\lambda_2)$ & $77$ & $3$\\
 & $L(0)$ & $L(0)$ & $1$ & $2$\\
\hline
\multirow{3}{*}{$p = 5$} & $L(\lambda_1+\lambda_3)$ & $L(\lambda_5+\lambda_6)$ & $5746$ & $1$\\
 & $L(\lambda_1+\lambda_6)$ & $L(\lambda_1+\lambda_6)$ & $650$ & $1$\\
 & $L(\lambda_2)$ & $L(\lambda_2)$ & $78$ & $2$\\
\hline
\multirow{3}{*}{$p > 5$} & $L(\lambda_1+\lambda_3)$ & $L(\lambda_5+\lambda_6)$ & $5824$ & $1$\\
 & $L(\lambda_1+\lambda_6)$ & $L(\lambda_1+\lambda_6)$ & $650$ & $1$\\
 & $L(\lambda_2)$ & $L(\lambda_2)$ & $78$ & $1$\\
\hline
\end{tabular}
\end{table}

\begin{table}[H]
\centering
\caption{The composition factors of $(A^2V \otimes V)/A^3V$, with $G = E_7$ and $V = L(\lambda_7)$.}
\label{table:e7l3v}
\begin{tabular}{ |c|c|c|c| }
\hline
Condition on $p$ & Composition factor & Dimension & Multiplicity \\
\hline
\hline
\multirow{4}{*}{$p = 2$} & $L(\lambda_6+\lambda_7)$ & $50160$ & $1$\\
 & $L(\lambda_1+\lambda_7)$ & $6480$ & $1$\\
 & $L(\lambda_2)$ & $912$ & $2$\\
 & $L(\lambda_7)$ & $56$ & $1$\\
\hline
\multirow{5}{*}{$p = 3$} & $L(\lambda_6+\lambda_7)$ & $24264$ & $1$\\
 & $L(\lambda_5)$ & $25896$ & $1$\\
 & $L(\lambda_1+\lambda_7)$ & $6480$ & $1$\\
 & $L(\lambda_2)$ & $856$ & $2$\\
 & $L(\lambda_7)$ & $56$ & $3$\\
\hline
\multirow{4}{*}{$p = 7$} & $L(\lambda_6+\lambda_7)$ & $51072$ & $1$\\
 & $L(\lambda_1+\lambda_7)$ & $5568$ & $1$\\
 & $L(\lambda_2)$ & $912$ & $2$\\
 & $L(\lambda_7)$ & $56$ & $1$\\
\hline
\multirow{4}{*}{$p = 11$} & $L(\lambda_6+\lambda_7)$ & $44592$ & $1$\\
 & $L(\lambda_1+\lambda_7)$ & $6480$ & $2$\\
 & $L(\lambda_2)$ & $912$ & $1$\\
 & $L(\lambda_7)$ & $56$ & $1$\\
\hline
\multirow{4}{*}{$p = 19$} & $L(\lambda_6+\lambda_7)$ & $51072$ & $1$\\
 & $L(\lambda_1+\lambda_7)$ & $6424$ & $1$\\
 & $L(\lambda_2)$ & $912$ & $1$\\
 & $L(\lambda_7)$ & $56$ & $2$\\
\hline
\multirow{4}{*}{$p \notin \{2,3,7,11,19\}$} & $L(\lambda_6+\lambda_7)$ & $51072$ & $1$\\
 & $L(\lambda_1+\lambda_7)$ & $6480$ & $1$\\
 & $L(\lambda_2)$ & $912$ & $1$\\
 & $L(\lambda_7)$ & $56$ & $1$\\
\hline
\end{tabular}
\end{table}

\section{Magma computations}
\label{sec:magmacalc}

In this appendix, we discuss important details about the computations in the Magma \cite{magma} computer algebra system involved in the proof of Theorem \ref{thm:finiteextsubmods}. Let $q$ be a power of an odd prime $p$, let $\tilde G = Y_m(q)$ be an exceptional Chevalley group, and let $\hat G$ be the simply connected version of $\tilde G$. We can construct a minimal $\mathbb{F}_q[\hat G]$-module $V$ in Magma using the command \texttt{GModule(ChevalleyGroup("Y",m,q))}, unless $(\tilde G,p) = (F_4(q),3)$, in which case $V$ is a proper submodule of the original module.

When $q = p$ is an ``exceptional prime'' for $\tilde G$, we can determine the submodule structure of $L^2V \cong A^2V$ using the \texttt{SubmoduleLattice} command. However, when $\tilde G \in \{E_7(p), E_8(p)\}$, it is significantly faster to determine this structure by iterating the \texttt{MaximalSubmodules} command. Computations using the latter method in the case $\tilde G = E_8(5)$ required a CPU time of 4 hours and 7.9 GB of RAM. Note that all computations mentioned in this appendix were performed using a 2.6 GHz CPU.

Suppose now that $p > 3$, and let $d:=\dim(V)$. Recall that $L^3V$ is isomorphic to $(A^2V \otimes V)/A^3V$. If $\{e_1,\ldots,e_d\}$ is a basis for $V$, then $$\{(e_i \wedge e_j) \otimes e_k + (e_j \wedge e_k) \otimes e_i - (e_i \wedge e_k) \otimes e_j \mid 1 \le i < j < k \le d\}$$ is a basis for the submodule $A^3V$ of $A^2V \otimes V$ \cite[p.~2938]{bamberg}. We can therefore construct $L^3V$ in Magma as the quotient of $A^2V \otimes V$ by the submodule with this basis. Note that Magma orders its exterior square and tensor product basis vectors in descending canonical order. For example, if $U$ is a $3$-dimensional module with basis $\{e_1,e_2,e_3\}$, then the ordered basis of $A^2U \otimes U$ in Magma is
\begin{align*}
\{&(e_2 \wedge e_3) \otimes e_3, (e_2 \wedge e_3) \otimes e_2, (e_2 \wedge e_3) \otimes e_1,\\
&(e_1 \wedge e_3) \otimes e_3, (e_1 \wedge e_3) \otimes e_2, (e_1 \wedge e_3) \otimes e_1,\\
&(e_1 \wedge e_2) \otimes e_3, (e_1 \wedge e_2) \otimes e_2, (e_1 \wedge e_2) \otimes e_1\}.
\end{align*}
In the case of $\tilde G = E_6(5)$, we can determine the submodule structure of $L^3V$ by iterating the \texttt{MaximalSubmodules} command. Here, the \texttt{subset} operator can be used to check that if a submodule $U$ of $L^3V$ contains a submodule $X$, and if another submodule $W$ of $L^3V$ contains a submodule $Y$ with the same dimension as $X$, then $Y=X$.

Assume now that $\tilde G = E_7(p)$ with $p \in \{7,11,19\}$, and let $R$ be a minimal $\mathbb{F}_5[\hat X]$-module, with $\hat X$ the simply connected version of $E_7(5)$. Figure \ref{fig:e7l3vsub} shows that if $n$ is the dimension of a direct summand of $L^3V$ mentioned in the proof of Theorem \ref{thm:finiteextsubmods}, then there is a unique $n$-dimensional submodule $R_n$ of $L^3R$. In order to construct the $n$-dimensional direct summand of $L^3V$, we first construct $R_n$ and its maximal submodules by iterating the \texttt{MaximalSubmodules} command. Next, we find a vector $r_n \in R_n$ such that:
\begin{enumerate}[label={(\roman*)}]
\item $r_n$ is an element of the basis for $R_n$ used by Magma;
\item $R_n$ is the smallest submodule of $L^3R$ that contains $r_n$; and
\item if $r_n = \sum_{i=1}^k \alpha_i s_i$, where $\{s_1,\ldots,s_k\}$ is the basis for $L^3R$ used by Magma, then $\alpha_i \in \{-1,0,1\}$ for each $i$.
\end{enumerate}
We can then construct the $n$-dimensional direct summand of $L^3V$ as the submodule generated by $\sum_{i=1}^k \alpha_i t_i$, where $\{t_1,\ldots,t_k\}$ is the basis for $L^3V$ used by Magma. Our computations to obtain all necessary vectors $r_n$ required a CPU time of 49 hours and 31.8 GB of RAM. Note that after constructing $L^3R$, we deleted variables that were no longer necessary in order to reduce the amount of RAM used.

To complete the proof of Theorem \ref{thm:finiteextsubmods}, we must determine the submodule structure of the reducible direct summand $U$ of $L^3V$. In the cases $p \in \{7,19\}$, this was achieved by iterating the \texttt{MaximalSubmodules} command. However, $\dim(U)$ is very large when $p = 11$. For a faster computation in this case, we computed first the irreducible submodules of $L^3V$, and then the irreducible submodules of the quotient of $U$ by the irreducible submodule of dimension $6480$. Together with the dimensions of the composition factors of $L^3V$, our computational results imply that the submodule structure of $U$ is as required.

Our computations (including the construction of $L^3V$ but excluding the construction of the vectors $r_n \in L^3R$) required a CPU time of 0.6 hours in the $E_7(7)$ case; 223 hours in the $E_7(11)$ case; and 4 hours in the $E_7(19)$ case. The maximum RAM usage in each case was 14.5 GB, 67.0 GB and 38.7 GB, respectively. As above, we deleted variables when they were no longer necessary in order to minimise the RAM usage.

\end{appendices}

\end{document}